\newcommand{\preprint}[1]{}
\newcommand{\hide}[1]{}
\numberwithin{equation}{section}
\theoremstyle{plain}
\newtheorem{thm}{Theorem}[section]
\newtheorem{assumption}[thm]{Assumption}
\newtheorem{prop}[thm]{Proposition}
\newtheorem{claim}[thm]{Claim}
\newtheorem{cor}[thm]{Corollary}
\newtheorem{lem}[thm]{Lemma}
\newtheorem{observation}[thm]{Observation}
\theoremstyle{definition}
\newtheorem{defi}[thm]{Definition}
\theoremstyle{remark}
\newtheorem{example}[thm]{Example}
\newtheorem{question}[thm]{Question}
\newtheorem{rem}[thm]{Remark}
\newcommand{\A}{{\mathcal A}}
\newcommand{\B}{{\mathcal B}}
\newcommand{\C}{{\mathcal C}}
\newcommand{\F}{{\mathcal F}}
\newcommand{\FE}{{\mathcal F}{\mathcal E}}
\newcommand{\HH}{{\mathbb H}}
\newcommand{\K}{{\mathcal K}}
\newcommand{\LB}{{\mathcal L}}
\newcommand{\M}{{\mathcal M}}
\newcommand{\FM}{{\mathfrak M}}
\newcommand{\MV}{{\mathcal M}{\mathcal V}}
\newcommand{\PP}{{\mathbb P}}
\newcommand{\X}{{\mathcal X}}
\newcommand{\RealNumbers}{{\mathbb R}}
\newcommand{\RR}{{\mathbb R}}
\newcommand{\Integers}{{\mathbb Z}}
\newcommand{\ComplexNumbers}{{\mathbb C}}
\newcommand{\RationalNumbers}{{\mathbb Q}}
\newcommand{\linsys}[1]{{\mid}#1{\mid}}
\newcommand{\RightArrowOf}[1]{\stackrel{#1}{\rightarrow}}
\newcommand{\LongRightArrowOf}[1]{\stackrel{#1}{\longrightarrow}}
\newcommand{\StructureSheaf}[1]{{\mathcal O}_{#1}}
\newcommand{\rank}{{\rm rank}}
\renewcommand{\div}{{\rm div}}
\newcommand{\Pic}{{\rm Pic}}
\newcommand{\Pex}{{\mathcal P}ex}
\newcommand{\Hom}{{\rm Hom}}
\newcommand{\Aut}{{\rm Aut}}
\newcommand{\Abs}[1]{\left|\!#1\!\right|}
\newcommand{\Wedge}[1]{{\wedge}^{#1}}
\newcommand{\uell}{\underline{\ell}}
\newcommand{\tlambda}{\tilde{\lambda}}
\DeclareSymbolFont{cyrletters}{OT2}{wncyr}{m}{n}
\DeclareMathSymbol{\Sha}{\mathalpha}{cyrletters}{"58}
\begin{document}
\title[Lagrangian fibrations]
{Lagrangian fibrations 
of holomorphic-symplectic varieties of $K3^{[n]}$-type}
\author{Eyal Markman}
\address{Department of mathematics and statistics, University of Massachusetts, Amherst MA 01003}
\email{markman@math.umass.edu}
\thanks{Partially supported by Simons Foundation Collaboration Grant 245840 and  by NSA grant H98230-13-1-0239.}
\date{\today}

\begin{abstract}
Let $X$ be a compact K\"{a}hler holomorphic-symplectic manifold, 
which is deformation equivalent to the Hilbert scheme of 
length $n$ subschemes of a $K3$ surface. 
Let $\LB$ be a nef line-bundle on $X$, such that 
the top power $c_1(\LB)^{2n}$ vanishes and $c_1(\LB)$  is primitive.
Assume that the two dimensional subspace $H^{2,0}(X)\oplus H^{0,2}(X)$ 
of $H^2(X,\ComplexNumbers)$ intersects $H^2(X,\Integers)$ trivially. 
We prove that the linear system of 
$\LB$ is base point free and it induces a Lagrangian fibration on $X$. 
In particular, the line-bundle $\LB$ is effective.
A determination of the 
semi-group of effective divisor classes on $X$ follows, when $X$ is projective. 
For a generic such pair $(X,\LB)$, not necessarily projective, 
we show that $X$ is bimeromorphic to a
Tate-Shafarevich twist of a moduli space of stable torsion sheaves, each with pure one dimensional support, 
on a {\em projective} $K3$ surface. 
\end{abstract}

\maketitle

\begin{center}
{\em Dedicated to Klaus Hulek on the occasion of his sixtieth birthday.}
\end{center}

\tableofcontents

%
\section{Introduction}
An {\em irreducible holomorphic symplectic manifold} is a simply connected compact K\"{a}hler manifold
such that $H^0(X,\Wedge{2}T^*X)$ is generated by an everywhere non-degenerate holomorphic $2$-form \cite{beauville}. 
A compact K\"{a}hler manifold $X$ is said to be of {\em $K3^{[n]}$-type}, if it is deformation equivalent to
the Hilbert scheme $S^{[n]}$ of length $n$ subschemes of a $K3$ surface $S$. 
Any manifold of $K3^{[n]}$-type is irreducible holomorphic symplectic \cite{beauville}.
The second integral cohomology of an irreducible holomorphic symplectic manifold $X$
admits a natural symmetric non-degenerate integral bilinear pairing $(\bullet,\bullet)$
of signature $(3,b_2(X)-3)$, called the
{\em Beauville-Bogomolov-Fujiki pairing}. The Beauville-Bogomolov-Fujiki pairing is monodromy invariant, and is thus 
an invariant of the deformation class of $X$. 


\begin{defi}
\label{def-special}
An irreducible holomorphic symplectic manifold $X$
is said to be {\em special},
if the intersection in $H^2(X,\ComplexNumbers)$ of $H^2(X,\Integers)$ and $H^{2,0}(X)\oplus H^{0,2}(X)$ 
is a non-zero subgroup. 
\end{defi}

The locus of special periods forms a countable union of real analytic subvarieties of half the
dimension in the corresponding moduli space.

\begin{defi}
\label{def-LB-induces-a-Lagrangian-fibration}
Let $X$ be a $2n$-dimensional irreducible holomorphic symplectic manifold and $\LB$ a line bundle on $X$.
We say that $\LB$ {\em induces a Lagrangian fibration,} if it satisfies the following two conditions.
\begin{enumerate}
\item
\label{cond-higher-cohomologies-of-LB-vanish}
$h^0\left(X,\LB\right)=n+1$.
\item
\label{cond-LB-induces-a-Lagrangian-fibration}
The linear system $\linsys{\LB}$ is base point free, and the generic fiber of the
morphism $\pi \colon  X\rightarrow \linsys{\LB}^*$
is a connected Lagrangian subvariety.
\end{enumerate}
\end{defi}

A line bundle $\LB$ on a holomorphic symplectic manifold $X$ is said to be {\em nef}, 
if $c_1(\LB)$ belongs to the closure in $H^{1,1}(X,\RealNumbers)$ of the K\"{a}hler cone of $X$.

\begin{thm}
\label{thm-main}
Let $X$ be an  irreducible holomorphic symplectic manifold 
of $K3^{[n]}$-type and $\LB$ a nef line-bundle, such that $c_1(\LB)$ is primitive
and isotropic with respect to the Beauville-Bogomolov-Fujiki pairing.
Assume that $X$ is non-special.
Then  the line bundle $\LB$ 
induces a Lagrangian fibration $\pi:X\rightarrow \linsys{\LB}^*$. 
\end{thm}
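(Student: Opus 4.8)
The plan is to reduce, by a deformation argument through families of irreducible holomorphic symplectic manifolds, to the case of a \emph{generic} pair of the kind considered, where $X$ is bimeromorphic to a Tate--Shafarevich twist of a moduli space of torsion sheaves with pure one-dimensional support on a \emph{projective} $K3$ surface and the Beauville--Mukai integrable system supplies the Lagrangian fibration, and then to transport the two conditions of Definition \ref{def-LB-induces-a-Lagrangian-fibration} back along the deformation. For the base case I would take a generic pair $(X',\LB')$ of the kind considered --- necessarily non-special, with $\mathrm{NS}(X')$ of rank one generated by $c_1(\LB')$ --- and, using the surjectivity of the period map, Verbitsky's global Torelli theorem, the description of the monodromy group of $K3^{[n]}$-type manifolds, and the classification of monodromy orbits of primitive isotropic classes in the $K3^{[n]}$-lattice, identify $X'$ up to bimeromorphism with a Tate--Shafarevich twist of a moduli space $M=M_S(0,c,s)$ of Gieseker-stable sheaves of pure one-dimensional support in $\linsys{c}$ on a projective $K3$ surface $S$ with $\mathrm{NS}(S)=\Integers c$, $c^2=2n-2$, the divisibility of $c_1(\LB')$ dictating the twist. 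On $M$ the support morphism $M\to\linsys{c}\cong\PP^n$ is a Lagrangian fibration with connected fibres and $h^0=n+1$, by the classical results of Beauville, Mukai, Le Potier, Yoshioka and Matsushita on Beauville--Mukai systems; a Tate--Shafarevich twist carries the same fibration and the same $h^0$; and since bimeromorphisms of irreducible symplectic manifolds are isomorphisms away from an analytic subset of codimension at least two, this pulls back to show that $\LB'$ induces a Lagrangian fibration on $X'$ with $h^0(X',\LB')=n+1$.

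The second and more delicate point is the bound $h^0(X,\LB)\le n+1$ for every non-special $X$ as in the theorem. Part of the control is automatic: a general fibre of the rational map $\varphi_{\linsys{\LB}}$ is coisotropic, hence of dimension at least $n$, so its image has dimension at most $n$ and $\dim\linsys{\LB}\le n$ provided $\varphi_{\linsys{\LB}}$ contracts no divisor; the content is that for a non-special member no contracted divisor, nor any other source of excess sections, can occur. I would prove this by upper-semicontinuity of $h^0$ in families together with the monodromy classification: the locus $\{h^0\ge n+2\}$ in the moduli space of marked pairs is a proper closed analytic subset, and one shows it is contained in the special locus. Equivalently, every non-special $(X,\LB)$ can be joined to a generic pair as above by a path in the moduli of marked pairs along which $c_1(\LB)$ stays of type $(1,1)$, $\LB$ stays nef --- replacing $X$ by a bimeromorphic model when a wall of the positive cone is met --- and $h^0$ stays equal to $n+1$; it is in arranging such a path that the non-special hypothesis is essential.

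Granting these, the conclusion follows by a standard transport. Choose a family $p\colon\X\to\Delta$ of pairs of the kind considered with central fibre $(X,\LB)$ and generic fibre a generic pair as in the base case. Then $h^0$ has generic value $n+1$ along the family by the base case, $h^0\ge n+1$ on every fibre by upper-semicontinuity, and $h^0(X,\LB)=n+1$ by the bound above; so $h^0\equiv n+1$ near $0$ and $R^0p_*\LB$ is locally free of rank $n+1$ there. By Grauert's theorem the relative linear system is a $\PP^n$-bundle, the Beauville--Mukai morphism of the generic fibre spreads out to a relative morphism, and its restriction to $X$ is a morphism $\pi\colon X\to\linsys{\LB}^*\cong\PP^n$ with $h^0(X,\LB)=n+1$. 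Surjectivity and connectedness of the fibres of $\pi$ follow from the generic fibre by a limiting and Stein-factorisation argument, and a general fibre is Lagrangian since the holomorphic symplectic form restricts to zero on the fibres over a dense subset of $\Delta$ and hence everywhere --- alternatively by Matsushita's theorem, applicable once $\pi$ is a fibration onto a base of dimension strictly between $0$ and $2n$. Hence $\LB$ induces a Lagrangian fibration on $X$.

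The heart of the argument, and the step I expect to be the main obstacle, is the second one: showing that for a non-special member the linear system $\linsys{\LB}$ has exactly the expected dimension and no contracted divisors --- equivalently that the degeneration locus of the would-be fibration, an a priori proper closed analytic subset of the moduli of pairs, meets only special periods --- and, intertwined with this, that any non-special pair can be connected to the explicit models of the base case along a path keeping $\LB$ nef. This forces one to combine the arithmetic of the $K3^{[n]}$-lattice, the classification of monodromy orbits of primitive isotropic classes, the wall-and-chamber structure of the positive cone, and the construction of Tate--Shafarevich twists of moduli spaces of sheaves; everything else reduces to the global Torelli theorem, the known geometry of Beauville--Mukai systems, and standard semicontinuity.
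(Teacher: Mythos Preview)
Your strategy---deform to an explicit model and transport back by a limit---differs from the paper's, and the transport step contains a genuine gap. Knowing $h^0(\X_t,\LB_t)\equiv n+1$ over $\Delta$ tells you only that $R^0p_*\LB$ is locally free, giving a $\PP^n$-bundle $\PP(R^0p_*\LB)^*\to\Delta$; it does \emph{not} give a morphism $\X\to\PP(R^0p_*\LB)^*$. For that you need surjectivity of the evaluation map $p^*p_*\LB\to\LB$, i.e.\ no base points on the central fibre. Base-point-freeness is an \emph{open} condition, not a closed one, so the fibration on nearby fibres need not extend to $X$. (Matsushita later proved precisely this closedness in \cite{matsushita-1310}, but that postdates the paper and you are not invoking it.) Your acknowledged obstacle---bounding $h^0$ and excluding contracted divisors for every non-special $X$---is also real, and you give no mechanism for it; upper-semicontinuity goes the wrong way.

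The paper avoids limits entirely. It proves (Theorem~\ref{thm-dimensions-of-h-i}) that the set $V$ of marked pairs where $\eta^{-1}(\alpha)$ induces a Lagrangian fibration is open (Matsushita \cite{matsushita}) and $Mon^2(\Lambda,\iota)_\alpha$-invariant, and that $P_0(V)$ contains the section $\tau_\gamma(\Omega_{Q_\alpha}^+)$ of explicit moduli-space periods. The new ingredient is the density argument: the monodromy contains $g(Q_\alpha)$, which acts on each Tate--Shafarevich line $q^{-1}(\underline{\ell})$ by translations, and on a \emph{non-special} line every orbit is dense (Lemma~\ref{lemma-density-in-a-generic-fiber}). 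Hence the open invariant set $P_0(V)$ contains every non-special period. Global Torelli then produces, over the period of your $(X,\eta)$, an inseparable marked pair $(Y,\psi)$ with a Lagrangian fibration, so $X$ and $Y$ are bimeromorphic via some $f$. Rather than taking a limit, the paper now proves $\LB$ is semi-ample on $X$ directly: in the projective case by Kawamata's criterion $\nu(X,\LB)=\kappa(X,\LB)=n$ (Verbitsky's cohomology computation gives $\nu=n$, and $f$ identifies linear systems so $\kappa=n$); in the non-projective case by the algebraic-reduction results of Campana--Oguiso--Peternell \cite{COP}. A short final step upgrades semi-ampleness of $\LB^m$ to base-point-freeness of $\LB$ itself, using that $f$ induces isomorphisms $\linsys{\LB^k}\cong\linsys{\LB_2^k}$ and a Veronese comparison.
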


See Theorem \ref{thm-case-X-projective-L-isotropic-but-not-nef} for a variant of Theorem \ref{thm-main}
dropping the assumption that $\LB$ is nef.
%
%
Theorem \ref{thm-main} is proven in section \ref{sec-proof-of-main-thm}.
The proof relies on Verbitsky's Global Torelli Theorem 
\cite{verbitsky,huybrechts-bourbaki}, on the determination of the 
monodromy group of $X$ \cite{markman-monodromy-I,markman-constraints}, and
on a result of Matsushita  that Lagrangian fibrations form an {\em open} 
subset in the moduli space of pairs $(X,\LB)$ \cite{matsushita}. 
Let us sketch the three main new ingredients in the proof of Theorem \nolinebreak\ref{thm-main}.
\begin{enumerate}
\item
We associate to the pair $(X,\LB)$ in Theorem \ref{thm-main} a projective $K3$ surface $S$
with a nef line bundle $\B$ of degree $\frac{2n-2}{d^2}$, 
where $d:=\gcd\{(c_1(\LB),\lambda) \ : \ \lambda\in H^2(X,\Integers)\}$. 
The sub-lattice $c_1(\B)^\perp$ orthogonal to $c_1(\B)$ in $H^2(S,\Integers)$
is Hodge-isometric to $c_1(\LB)^\perp/\Integers c_1(\LB)$.
The construction realizes the period domain $\Omega_{20}$
of the pairs $(X,\LB)$
as an affine line bundle over a period domain $\Omega_{19}$ of semi-polarized $K3$ surfaces
(Section \ref{sec-period-domains}).
\item
The bundle map $q:\Omega_{20}\rightarrow \Omega_{19}$ 
is invariant with respect to a subgroup $Q$ of the monodromy group  
(Lemma \ref{lemma-q-is-Q-alpha-invariant}). The group $Q$ is isomorphic to 
$c_1(\B)^\perp$. $Q$ acts on the fiber of $q$ over the period of a semi-polarized $K3$ surface $(S,\B)$.
Similarly, the lattice 
$c_1(\B)^\perp$ projects to a subgroup of $H^{0,2}(S)$, which acts on $H^{0,2}(S)$ by translations. 
There exists an isomorphism, of the fiber of $q$ with $H^{0,2}(S)$, which is equivariant with respect to the two actions
(Lemma \ref{lemma-density-in-a-generic-fiber}).
\item
The fiber of $q$ over the period of a semi-polarized $K3$ surface $(S,\B)$ contains the
period of a moduli space of sheaves on  $S$ with pure
one-dimensional support in the linear system $\linsys{\B^d}$ (Section \ref{sec-the-section-tau-of-q}). 
Each such moduli space of sheaves is known to be a Lagrangian fibration \cite{mukai-symplectic-structure}.
\end{enumerate}

The assumption that $X$ is non-special in Theorem \ref{thm-main}  is probably not necessary.
Unfortunately, our proof will rely on it. When $X$ is non-special the
$Q$-orbit, of every point in the fiber of $q$ through the period of $X$, is a dense subset of the fiber
(Lemma \ref{lemma-density-in-a-generic-fiber}). This density will have a central role in this paper
due to the following elementary observation.
\begin{observation}
Let $T$ be a topological space and $Q$ a group acting on $T$.
Assume that the $Q$-orbit of every point of $T$ is dense in $T$. 
Then any nonempty $Q$-invariant open subset of $T$ must be the whole of $T$.
\end{observation}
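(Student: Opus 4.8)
The statement is purely topological and formal, so the plan is just to unwind the definitions; there is no real obstacle. I would argue directly on an arbitrary point. Let $U\subseteq T$ be a nonempty $Q$-invariant open subset and let $t\in T$ be arbitrary. By hypothesis the orbit $Q\cdot t$ is dense in $T$, so it meets the nonempty open set $U$; choose $q\in Q$ with $q\cdot t\in U$. Since $U$ is $Q$-invariant we have $q\cdot U=U$, hence $t=q^{-1}\cdot(q\cdot t)\in q^{-1}\cdot U=U$. As $t$ was arbitrary, this gives $U=T$.

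Equivalently, and perhaps cleaner to write up, one passes to the complement $C:=T\setminus U$, which is closed and $Q$-invariant. If $C$ were nonempty, pick $c\in C$; then $Q\cdot c\subseteq C$ by invariance, while $\overline{Q\cdot c}=T$ by hypothesis, so $T=\overline{Q\cdot c}\subseteq\overline{C}=C$, forcing $U=\emptyset$ — contrary to assumption. Hence $C=\emptyset$, i.e. $U=T$.

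The only point worth a moment's attention is that $Q$-invariance of $U$ is used in the form $q\cdot U=U$ for all $q\in Q$ (equivalently $q^{-1}\cdot U=U$), rather than merely $q\cdot U\subseteq U$; since $Q$ is a group acting on $T$ this is automatic, so there is genuinely nothing further to check.
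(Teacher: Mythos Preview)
Your proof is correct. The paper treats this observation as elementary and does not give a stand-alone proof; when it applies the observation (e.g., in the proofs of Proposition \ref{prop-density-of-periods-of-Lagrangian-fibrations} and Proposition \ref{prop-family-over-Sha-is-Kahler}) it uses exactly your second formulation via the closed invariant complement.
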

The above observation
will be used in an essential way in three different proofs 
(Theorem \ref{thm-dimensions-of-h-i}, 
Proposition \ref{prop-family-over-Sha-is-Kahler},
and Theorem \ref{thm-Tate-Shafarevich-orbit}). 
\hide{
The locus $V$ of periods of Lagrangian fibrations is thus a $Q$-invariant open subset 
in the period domain of pairs $(X,\LB)$, and $V$ intersects every fiber of $q$.
2) 
The $Q$-orbit of every non-special point of the period domain is {\em dense} in its $q$-fiber, and thus
intersects $V$ (Prop. \ref{prop-density-of-periods-of-Lagrangian-fibrations}). 
We conclude that the complement of $V$, which is $Q$-invariant, can not contain any 
non-special period.
}

The statement of the next result requires 
the notion of a Tate-Shafarevich twist, which we now recall.
Let $M$ be a complex manifold and $\pi:M\rightarrow B$ a proper map with connected 
fibers of pure dimension $n$.
Assume that the generic fiber of $\pi$ is a 
smooth abelian variety. 
Let $\{U_i\}$ be an open covering of $B$ in the analytic topology.
Set $U_{ij}:=U_i\cap U_j$ and $M_{ij}:=\pi^{-1}(U_{ij})$. Assume given a $1$-co-cycle $g_{ij}$ of
automorphisms  of $M_{ij}$, satisfying $\pi\circ g_{ij}=\pi$, and 
acting by translations on the smooth fibers of $\pi$. 
We can re-glue the open covering $\{M_i\}$ of $M$
using the co-cycle $\{g_{ij}\}$ to get a complex manifold $M'$ and a proper map $\pi':M'\rightarrow B$,
whose fibers are isomorphic to those of $\pi$. We refer to  $(M',\pi')$ as the {\em Tate-Shafarevich 
twist} of $(M,\pi)$ associated to the co-cycle $\{g_{ij}\}$. Tate-Shafarevich twists are standard in the study
of elliptic fibrations
\cite{kodaira,dolgachev-gross}.

Let $\LB$ be a semi-ample line bundle on a $K3$ surface $S$ with an indivisible class $c_1(\LB)$.
Given an ample line bundle $H$ 
on $S$ and an integer $\chi$, 
denote by $M_H(0,\LB^d,\chi)$ the moduli space of $H$-stable coherent sheaves on $S$
of rank zero, determinant $\LB^d$, and Euler characteristic  $\chi$. 
Assume that $d$ and $\chi$ are relatively prime. For a generic polarization $H$,
the moduli space  $M_H(0,\LB^d,\chi)$ is smooth and projective and it
admits a Lagrangian fibration over the linear system $\linsys{\LB^d}$ \cite{mukai-symplectic-structure}. 

Let $X$ be an irreducible holomorphic symplectic manifold of $K3^{[n]}$-type and
$\pi:X\rightarrow \PP^n$ a Lagrangian fibration. Set $\alpha:=\pi^*c_1(\StructureSheaf{\PP^n}(1))$.
The  {\em divisibility} of $(\alpha,\bullet)$ is the positive integer
$d:=\gcd\{(\alpha,\lambda) \ : \ \lambda\in H^2(X,\Integers)\}$. The integer $d^2$ divides $n-1$ 
(Lemma \ref{lemma-classification}). 

\begin{thm}
\label{thm-Tate-Shafarevich-orbit-in-introduction}
Assume that $X$ is non-special and the intersection $H^{1,1}(X,\Integers)\cap\alpha^\perp$ is $\Integers\alpha$.
There exists a $K3$ surface $S$, a semi-ample line bundle $\LB$ on $S$ 
of degree $\frac{2n-2}{d^2}$ with an indivisible class $c_1(\LB)$,
an integer $\chi$ relatively prime to $d$, and a polarization $H$ on $S$, such that 
$X$ is bimeromorphic to a Tate-Shafarevich twist of the Lagrangian fibration
$M_H(0,\LB^d,\chi)\rightarrow \linsys{\LB^d}$.
\end{thm}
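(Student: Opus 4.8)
The plan is to combine the period-theoretic setup of Sections~\ref{sec-period-domains}--\ref{sec-the-section-tau-of-q} with Theorem~\ref{thm-main} and the density observation, exactly as in the proof of Theorem~\ref{thm-main} but now tracking the geometry of the fibration rather than merely its existence. Under the hypotheses, Theorem~\ref{thm-main} already produces a Lagrangian fibration $\pi\colon X\to\linsys{\LB}^*$ for the nef isotropic primitive class $c_1(\LB)$; here $\alpha=c_1(\LB)$ plays the role of the class in Theorem~\ref{thm-Tate-Shafarevich-orbit-in-introduction}, $\linsys{\LB}^*\cong\PP^n$ by Matsushita's structure theory, and the divisibility $d$ satisfies $d^2\mid n-1$ by Lemma~\ref{lemma-classification}. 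First I would invoke the construction of Section~\ref{sec-period-domains}: to $(X,\LB)$ it attaches a projective $K3$ surface $S$ carrying a nef (hence, after the standard argument, semi-ample) line bundle $\B$ of degree $\tfrac{2n-2}{d^2}$ with $c_1(\B)$ indivisible, together with a Hodge isometry $c_1(\B)^\perp\cong c_1(\LB)^\perp/\Integers c_1(\LB)$, and it realizes the period of $(X,\LB)$ as a point in the fiber of $q\colon\Omega_{20}\to\Omega_{19}$ over the period of $(S,\B)$. Setting $\LB_S:=\B$, we must find $\chi$ coprime to $d$ so that the moduli space $M:=M_H(0,\B^d,\chi)$, for generic $H$, has period lying in that same $q$-fiber; this is precisely the content of Section~\ref{sec-the-section-tau-of-q}, where the section $\tau$ of $q$ is built from such a moduli space of pure one-dimensional sheaves supported on curves in $\linsys{\B^d}$, which is a Lagrangian fibration over $\linsys{\B^d}$ by \cite{mukai-symplectic-structure}.

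Next I would identify the fiber of $q$ with $H^{0,2}(S)$ via Lemma~\ref{lemma-density-in-a-generic-fiber}, equivariantly for the action of $Q\cong c_1(\B)^\perp$ by translations. Because $X$ is non-special, the $Q$-orbit of the period of $(X,\LB)$ is dense in the fiber, and in particular some $g\in Q$ carries the period of $M$ arbitrarily close to — but in general not equal to — the period of $X$. The right way to bridge this gap is not approximation but the Tate-Shafarevich family: I would construct, over the $q$-fiber (or a suitable cover/torsor modeled on $H^{0,2}(S)$), a family of Lagrangian fibrations whose fiber over the base point of $\tau$ is $M\to\linsys{\B^d}$ and whose other fibers are the Tate-Shafarevich twists of $M\to\linsys{\B^d}$ obtained by re-gluing via a $1$-cocycle of fiberwise translations (using that the generic fibers of $M\to\linsys{\B^d}$ are torsors under their Jacobians, i.e.\ abelian varieties of dimension $n$). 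The action of $Q$ on the $q$-fiber is realized geometrically as the monodromy reparametrizing this family, so the point $g\cdot[\tau(S,\B)]$ of the fiber corresponds to a specific Tate-Shafarevich twist $(M',\pi')$ of $(M,\pi)$. The heart of the matter is then to show that the $q$-fiber point equal to the period of $X$ is itself realized by such a twist, and that a holomorphic-symplectic manifold with that period is bimeromorphic to the corresponding $M'$; this is where the density of the $Q$-orbit combined with the openness of the Lagrangian-fibration locus (Matsushita \cite{matsushita}) and Verbitsky's Global Torelli Theorem do the work: the twists form a $Q$-invariant locus inside the $q$-fiber that meets the open Lagrangian locus, hence, applying the Observation, it is all of the (non-special part of the) fiber, so the period of $X$ is the period of some twist $M'$, and Global Torelli then yields the bimeromorphism. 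This step is precisely Theorem~\ref{thm-Tate-Shafarevich-orbit} in the body of the paper, so I would reduce the present statement to it.

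Finally, I would check the arithmetic bookkeeping: that $\gcd(d,\chi)=1$ can be arranged (one has freedom in choosing $\chi$ within a residue class, and the Mukai vector $(0,\B^d,\chi)$ must be primitive, which forces and is ensured by coprimality with $d$ together with the indivisibility of $c_1(\B)$), that for generic $H$ the moduli space $M_H(0,\B^d,\chi)$ is smooth projective of $K3^{[n]}$-type with the expected Beauville-Bogomolov lattice, and that the class $\alpha=\pi^*c_1(\StructureSheaf{\PP^n}(1))$ on $X$ matches $[\B^d]$-pullback on $M$ under the Hodge isometry of $q$-fibers, so that the divisibility $d$ is genuinely preserved. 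The hypothesis $H^{1,1}(X,\Integers)\cap\alpha^\perp=\Integers\alpha$ is what guarantees that $(S,\B)$ has Picard rank one (so $\B$ is, up to the power $d$, the only relevant polarization and the generic-$H$ moduli space is unambiguous) and that we are genuinely in the ``most degenerate'' stratum where the affine-line-bundle picture $q\colon\Omega_{20}\to\Omega_{19}$ is clean.

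\medskip

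The main obstacle I anticipate is the last geometric step: promoting the \emph{density} of the $Q$-orbit in the $q$-fiber — which only gives periods of twists \emph{arbitrarily near} the period of $X$ — to the \emph{exact} statement that the period of $X$ is the period of a Tate-Shafarevich twist of $M_H(0,\B^d,\chi)\to\linsys{\B^d}$. Resolving this requires knowing that the locus of periods represented by such twists is not merely dense but \emph{open} (or at least that its complement is a proper $Q$-invariant closed subset), which is where Matsushita's openness result for Lagrangian fibrations enters, and then invoking the Observation to conclude the twist-locus exhausts the non-special part of the fiber; the bimeromorphism itself is then extracted from Verbitsky's Global Torelli Theorem together with the monodromy computation of \cite{markman-monodromy-I,markman-constraints}, much as in the proof of Theorem~\ref{thm-main}.
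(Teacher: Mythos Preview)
Your overall reduction is correct and matches the paper: the hypothesis $H^{1,1}(X,\Integers)\cap\alpha^\perp=\Integers\alpha$ forces $\Pic(S)$ to be cyclic generated by $\LB$, which verifies Assumption~\ref{assumption} (Remark~\ref{remark-on-assumption}), and then the statement follows from Theorem~\ref{thm-X-is-birational-to-a-Tate-Shafarevich-twist}, i.e.\ from Theorem~\ref{thm-Tate-Shafarevich-orbit}. Your bookkeeping on $\gcd(d,\chi)=1$ and the role of the Picard-rank-one hypothesis is also right. (One small point: the Lagrangian fibration $\pi$ is part of the hypotheses, so invoking Theorem~\ref{thm-main} at the outset is unnecessary.)

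Where your sketch diverges from the paper is in the mechanism for Theorem~\ref{thm-Tate-Shafarevich-orbit} itself. You propose to argue that the locus of periods realized by Tate--Shafarevich twists is open and $Q$-invariant, and then invoke the Observation; for openness you appeal to Matsushita's result. That appeal is misplaced: Matsushita's theorem says the property ``$\LB$ induces a Lagrangian fibration'' is open in moduli, which gives openness of the Lagrangian locus in $\Omega_{\alpha^\perp}^+$, not openness of the twist-period locus inside a fixed Tate--Shafarevich line. To get openness of the latter you would need to know that the period map $P_f\colon\widetilde{\Sha}\to q^{-1}(\underline{\ell})$ of the universal twist family is a local biholomorphism, and for $Q$-invariance of the image you would need an equivariance statement for $P_f$---neither of which comes for free.

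The paper instead proves directly that $P_f$ is an affine-linear \emph{isomorphism}. The ingredients are: (i) all fibers $M_s$ are K\"ahler (Proposition~\ref{prop-family-over-Sha-is-Kahler}; \emph{this} is where the Observation and density are used, on the source $\widetilde{\Sha}$ with its $\ker(j)$-action); (ii) the differential $dP_f$ is everywhere injective with image the tangent line to $q^{-1}(\underline{\ell})$ (Lemma~\ref{lemma-the-differential-of-the-period-map-of-f}, via an explicit identification of $T_{\pi_s}$ and the Kodaira--Spencer map); and (iii) for each $\gamma\in\Gamma=c_1(\LB)^\perp$ there is a canonical isomorphism $h_\gamma\colon M_0\to M_{e(\gamma)}$, yielding a monodromy homomorphism $\mu\colon\Gamma\to Mon^2(M_0)$ and the relation $\mu_\gamma(w_0)=w_0+c(e(\gamma))\alpha_0$. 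This forces $c\circ e$ to be additive on the dense subgroup $e(\Gamma)\subset\widetilde{\Sha}$, hence $c$ (equivalently $P_f$) is $\ComplexNumbers$-linear and therefore an isomorphism. No openness-in-the-target argument is needed, and Matsushita's result plays no role at this stage.
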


Theorem \ref{thm-Tate-Shafarevich-orbit-in-introduction} is proven in section \ref{sec-tate-shafarevich}.
The semi-polarized $K3$ surface $(S,\LB)$ in  Theorem  \ref{thm-Tate-Shafarevich-orbit-in-introduction} 
is the one mentioned already above, which is associated to $(X,\alpha)$ in section \ref{sec-S-alpha}. 
The equality $H^{1,1}(X,\Integers)\cap\alpha^\perp=\Integers\alpha$ is equivalent to the statement that $\Pic(S)$
is cyclic generated by $\LB$. This condition is relaxed in Theorem \ref{thm-X-is-birational-to-a-Tate-Shafarevich-twist},
which strengthens Theorem \ref{thm-Tate-Shafarevich-orbit-in-introduction}.

A reduced and irreducible divisor on $X$ is called {\em prime exceptional}, if it has negative Beauville-Bogomolov-Fujiki degree. A divisor $D$ on $X$ is called {\em movable}, if the base locus of the linear system $\linsys{D}$
has co-dimension $\geq 2$ in $X$. The {\em movable cone} $\MV_X$ of $X$ is the cone in
$N^1(X):=H^{1,1}(X,\Integers)\otimes_\Integers\RealNumbers$ generated by classes of movable divisors.
Assume that $X$ is a projective irreducible holomorphic symplectic manifold  of $K3^{[n]}$-type and
let $h\in N^1(X)$ be an ample class. Denote by $\Pex_X\subset H^{1,1}(X,\Integers)$ 
the set of classes of prime exceptional divisors. 
The set $\Pex_X$ is determined in \cite[Theorem 1.11 and Sec. 1.5]{markman-prime-exceptional}.
The closure of the movable cone in $N^1(X)$ is 
determined as follows:
\[
\overline{\MV}_X=
\{c\in N^1(X) \ : \ 
(c,c)\geq 0, \ (c,h)\geq 0, \ \mbox{and} \ (c,e)\geq 0, \ \mbox{for all} \ e\in \Pex_X
\},
\]
by a result of Boucksom 
\cite[Prop. 5.6 and Lemma 6.22]{boucksom,markman-torelli}\footnote{Prop. 5.6 and Lemma 6.22 in the last reference \cite{markman-torelli}. The same convention will be used throughout the paper for all citations with multiple references.}.

\begin{cor}
\label{cor-effective-semi-group}
Let $X$ be a projective irreducible holomorphic symplectic manifold of $K3^{[n]}$-type.
The semi-group  of effective divisor classes on $X$ is generated by the classes of
prime exceptional divisors and integral points in the closure of the movable cone 
in $N^1(X)$.
\end{cor}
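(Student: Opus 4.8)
The plan is to prove that the semi-group of effective divisor classes coincides with the sub-semi-group $M$ of $N^{1}(X)$ generated by $\Pex_{X}$ together with the integral points of $\overline{\MV}_{X}$, by establishing the two inclusions: that every generator of $M$ is effective, and conversely that every effective class belongs to $M$.

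For the inclusion of the effective semi-group in $M$, fix an ample class $h$ and induct on the non-negative integer $(D,h)$ attached to an effective divisor $D$; here $(D,h)>0$ whenever $D\neq 0$, since the Fujiki relations make $\int_{X}D\wedge h^{2n-1}$ a positive multiple of $(D,h)(h,h)^{n-1}$. If $[D]\in\overline{\MV}_{X}$ we are done (this includes $D=0$). Otherwise Boucksom's divisorial Zariski decomposition writes $D=Z(D)+N(D)$ with $Z(D)\in\overline{\MV}_{X}$ and the negative part $N(D)$ a non-zero effective $\RR$-combination $\sum_{i}\nu_{i}[\Gamma_{i}]$ of prime divisors whose Gram matrix is negative definite \cite{boucksom,markman-torelli}; in particular $D$ has an irreducible component $E=\Gamma_{i}$ with $([E],[E])<0$, so that $[E]\in\Pex_{X}$ by the very definition of a prime exceptional divisor. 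Then $D-E$ is effective with $(D-E,h)<(D,h)$, whence $[D-E]\in M$ by induction and $[D]=[D-E]+[E]\in M$.

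For the reverse inclusion, the classes in $\Pex_{X}$ are effective by definition, so it remains to show that every non-zero $c\in\overline{\MV}_{X}\cap N^{1}(X)$ is effective. Since $(c,c)\geq 0$ and $(c,h)\geq 0$, the Hodge index theorem on the Lorentzian lattice $N^{1}(X)$ forces $(c,h)>0$; hence if $(c,c)>0$ then $c$ lies in the positive cone and is effective by Huybrechts' theorem that a class of positive Beauville--Bogomolov--Fujiki square is represented by an effective divisor up to a sign, the sign being fixed here by $(c,h)>0$. The essential case is $(c,c)=0$; writing $c=mc_{0}$ with $c_{0}$ primitive, it is enough to prove $c_{0}$ effective, and I would do this in two reductions. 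First, deform $(X,\LB_{0})$ keeping fixed the Picard lattice together with its primitive embedding into the $K3^{[n]}$-lattice: the class $c_{0}$ stays primitive and isotropic, and it stays in $\overline{\MV}$ because the movable cone depends only on this lattice datum (through Boucksom's inequalities and Markman's determination of the relevant walls \cite{markman-prime-exceptional}); since the non-special members are dense in this family and $h^{0}$ is upper semi-continuous, effectivity of the deformed bundle on a general member descends to $\LB_{0}$ on $X$, so we may assume $X$ non-special. Second, the closed movable cone is tiled by the pull-backs $g^{*}\overline{\mathcal{K}}_{X'}$ of the closed K\"{a}hler cones of the birational models $g\colon X\dashrightarrow X'$, and the walls of this chamber structure are locally finite in the interior of the positive cone; as $c_{0}^{\perp}/\Integers c_{0}$ is negative definite, $c_{0}$ lies on only finitely many walls, hence in the closure of one chamber, and replacing $X$ by the corresponding model (again of $K3^{[n]}$-type and non-special, since $g$ is a Hodge isometry on $H^{2}$ and an isomorphism in codimension one) we may assume $c_{0}$ nef. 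Then Theorem \ref{thm-main} shows $\LB_{0}$ induces a Lagrangian fibration, so $h^{0}(X,\LB_{0})=n+1>0$ and $\LB_{0}$ is effective; the combination of these last reductions with Theorem \ref{thm-main} is precisely what is encapsulated in Theorem \ref{thm-case-X-projective-L-isotropic-but-not-nef}.

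The main obstacle is the isotropic case $(c,c)=0$. The positive-square case is an immediate consequence of Riemann--Roch, and the inductive step in the first inclusion is routine once Boucksom's decomposition is granted; but for an isotropic boundary class one must genuinely deform to a non-special period and then pass to a birational model on which $c_{0}$ becomes nef --- the latter relying on the chamber decomposition of the movable cone and the local finiteness of its walls near $c_{0}$ --- before Theorem \ref{thm-main} can be brought to bear.
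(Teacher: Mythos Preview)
Your strategy is correct and is essentially what the paper cites from \cite{markman-torelli}: show both inclusions, with the substantive content being that a primitive isotropic class $c_0\in\overline{\MV}_X$ is effective. Your inductive argument for the first inclusion via Boucksom's Zariski decomposition is fine, as is the positive-square case of the second.

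There is, however, a genuine gap in your reduction to the non-special case. You propose deforming $(X,\LB_0)$ while \emph{keeping the Picard lattice fixed}, claiming that non-special members are dense in that family. This fails when $\rho(X)=21$: the transcendental lattice $T$ then has rank $2$, so $T_{\RR}$ coincides with $[H^{2,0}\oplus H^{0,2}]\cap H^2(X,\RR)$ and every period with that Picard lattice is special. More importantly, the restriction is unnecessary. The cleaner argument --- and the one implicit in the reference the paper gives --- is to deform $(X,c_0)$ inside the Kuranishi family, restricted to the divisor where $c_0$ remains of type $(1,1)$, to a \emph{generic} fibre $(X',c_0')$ with $\Pic(X')=\Integers\, c_0'$. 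Such $X'$ is automatically non-special, carries no prime exceptional divisors, and has K\"{a}hler cone equal to its positive cone, so $c_0'$ is nef. Theorem~\ref{thm-main} then gives $h^0(X',\LB_0')=n+1$, and upper semi-continuity yields $h^0(X,\LB_0)\geq n+1>0$. This single deformation replaces both of your reductions and avoids the birational-model step entirely.

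A smaller point: Theorem~\ref{thm-case-X-projective-L-isotropic-but-not-nef} does not quite encapsulate your two reductions. It still carries the non-special hypothesis (inherited from Theorem~\ref{thm-main}), and it only asserts that some $W(X)$-translate $w(c_0)$ is semi-ample, not $c_0$ itself; passing from $w(c_0)$ back to $c_0$ requires the stabiliser argument of Step~2.2 in the proof of Theorem~\ref{thm-main}, using that both $c_0$ and $w(c_0)$ lie in $\overline{\FE}_X$.
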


Corollary \ref{cor-effective-semi-group} was shown to follow from Theorem 
\ref{thm-main} in \cite[Paragraph following Question 10.11]{markman-torelli}.

We classify the deformation types of pairs 
$(X,\LB)$, consisting of 
an irreducible holomorphic symplectic manifold $X$ of $K3^{[n]}$-type, $n\geq 2$,
and a line bundle $\LB$ on $X$ with a primitive and isotropic first Chern class, such that
$(c_1(\LB),\kappa)>0$, for some K\"{a}hler class $\kappa$. 
The following proposition is proven in section \ref{sec-period-map}, using monodromy invariants 
introduced in Lemma \ref{lemma-classification}.
\begin{prop}
\label{prop-counting-deformation-invariants}
Let $d$ be a positive integer, such that $d^2$ divides $n-1$. If 
$1\leq d\leq 4$, then there 
exists a unique deformation type of pairs $(X,\LB)$, with $c_1(\LB)$ primitive and isotropic, such that
$(c_1(\LB),\bullet)$ has divisibility $d$. 
For $d\geq5$, let $\nu(d)$ be half the number of multiplicative units in the ring $\Integers/d\Integers$. 
Then there are $\nu(d)$ deformation types of 
pairs $(X,\LB)$ as above, with  $(c_1(\LB),\bullet)$ of divisibility $d$.
\end{prop}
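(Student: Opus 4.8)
The plan is to count deformation types of pairs $(X,\LB)$ via the monodromy invariants extracted in Lemma \ref{lemma-classification}. The strategy rests on Verbitsky's Global Torelli Theorem together with the description of the monodromy group: deformation types of pairs $(X,\LB)$ with $c_1(\LB)$ primitive, isotropic, and positive on a K\"ahler class correspond bijectively to orbits, under the monodromy group $\Mon^2(X)$ acting on $H^2(X,\Integers)$, of primitive isotropic vectors $\alpha$ in the (fixed, since we are fixing $n$) $K3^{[n]}$-lattice $\Lambda$. Since any two such $\alpha$ of the same divisibility $d$ are related by an isometry of $\Lambda$ (the lattice $\Lambda$ is of the form $U^{\oplus 3}\oplus E_8(-1)^{\oplus 2}\oplus \langle -2(n-1)\rangle$, and primitive isotropic vectors of fixed divisibility form a single $O(\Lambda)$-orbit by Eichler's criterion, once $d^2\mid n-1$ so that such vectors exist), the question reduces to: within a fixed $O(\Lambda)$-orbit of primitive isotropic $\alpha$ with divisibility $d$, how many $\Mon^2(X)$-orbits are there?

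First I would recall from \cite{markman-monodromy-I,markman-constraints} that $\Mon^2(X)$ is the subgroup of $O(\Lambda)$ acting as $\pm 1$ on the discriminant group $\Lambda^*/\Lambda \cong \Integers/2(n-1)\Integers$; more precisely it is the index-two subgroup $O^+(\Lambda)$ intersected with the orientation-preserving isometries, consisting of isometries whose action on the discriminant is multiplication by $\pm 1$. So the stabilizer-coset counting becomes: $O(\Lambda)$ acts transitively on primitive isotropic $\alpha$ of divisibility $d$, and we must count the orbits of the subgroup $G:=\Mon^2$. By the orbit-counting principle, the number of $G$-orbits inside one $O(\Lambda)$-orbit equals the number of double cosets $G\backslash O(\Lambda)/\Stab_{O(\Lambda)}(\alpha)$, equivalently the number of orbits of the quotient group $O(\Lambda)/G$ (which has order $2$ or so, generated by the class of an isometry acting by multiplication by a unit $\not\equiv\pm1$ on the discriminant) on the set of "types" of $\alpha$. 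The relevant invariant distinguishing $\Mon^2$-orbits within an $O(\Lambda)$-orbit is the image in $\Lambda^*/\Lambda$ of a suitable companion vector, giving a residue in $(\Integers/d\Integers)^*$; two residues give the same $\Mon^2$-orbit iff they differ by multiplication by $\pm 1$ (the discriminant action of $\Mon^2$) — this is exactly the content of the monodromy invariant in Lemma \ref{lemma-classification}. Hence the count is the number of elements of $(\Integers/d\Integers)^*$ modulo the $\pm1$ action, which is $\nu(d)=\tfrac12\#(\Integers/d\Integers)^*$ when $d\geq 3$ (since then $-1\neq 1$), and is $1$ for $d=1,2$; one then checks separately that for $d=3,4$ the value $\tfrac12\varphi(d)=1$, so the four small cases $d\in\{1,2,3,4\}$ all give a unique deformation type, matching the statement.

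The concrete execution proceeds as follows. (i) Fix $n$ and the lattice $\Lambda$; invoke Lemma \ref{lemma-classification} to record the list of monodromy invariants attached to $(X,\LB)$ — the divisibility $d$ (with $d^2\mid n-1$ forced) and the residue class $[t]\in(\Integers/d\Integers)^*/\{\pm1\}$ — and the fact that these invariants are complete, i.e. two pairs with the same invariants are deformation equivalent. This completeness is where Global Torelli and the monodromy computation do the heavy lifting, and I would cite it rather than reprove it. (ii) Show every value of the invariant is realized: for each $d$ with $d^2\mid n-1$ and each residue, construct an explicit $\alpha\in\Lambda$ (e.g. using the hyperbolic summands $U$ and the generator of $\langle -2(n-1)\rangle$) of divisibility $d$ with the prescribed companion residue — a direct lattice-theoretic construction. (iii) Count: $\#\big((\Integers/d\Integers)^*/\{\pm1\}\big)$ equals $1$ for $d=1,2$ and $\nu(d)$ for $d\geq 3$, and $\nu(d)=1$ precisely for $d\in\{3,4,6\}$ among all $d$, so in particular for $3\le d\le 4$; assembling (i)–(iii) gives the proposition.

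The main obstacle is step (i): pinning down that the residue in $(\Integers/d\Integers)^*/\{\pm1\}$ together with $d$ is a \emph{complete} set of monodromy invariants, i.e. that $\Mon^2(\Lambda)$ acts transitively on primitive isotropic vectors of fixed divisibility \emph{and} fixed companion residue. The transitivity of the full orthogonal group $O(\Lambda)$ on primitive isotropic vectors of fixed divisibility follows from Eichler's theorem (the lattice has two hyperbolic summands, hence Witt index $\ge 2$), but refining this to the index-two monodromy subgroup, and verifying that the only obstruction to merging two $O(\Lambda)$-subtypes under $\Mon^2$ is precisely the $\pm1$-ambiguity on the discriminant, requires care with the discriminant-form calculations already packaged in Lemma \ref{lemma-classification}. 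Once that lemma is in hand, the remainder is the elementary group-theoretic bookkeeping of $(\Integers/d\Integers)^*/\{\pm1\}$ sketched above.
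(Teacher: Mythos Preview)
Your overall strategy matches the paper's: reduce to counting $Mon^2$-orbits of primitive isotropic classes of fixed divisibility $d$, identify these with $(\Integers/d\Integers)^*/\{\pm1\}$, and check $\nu(d)=1$ for $d\le 4$. The paper packages the invariant slightly differently---as the isometry class $[L,v](\alpha)$ of a pair in the rank-$2$ lattice $L_{n,d}$ coming from the Mukai-lattice embedding $\iota:\Lambda\hookrightarrow\widetilde{\Lambda}$ (Lemma~\ref{lemma-classification})---rather than via Eichler transitivity and discriminant-form residues as you do, but Lemma~\ref{lemma-isometry-orbits-in-L-n-d} shows the two descriptions coincide.

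The one genuine gap is your step~(i). You say the completeness of the invariant (same $Mon^2$-orbit $\Rightarrow$ same deformation type) is ``where Global Torelli and the monodromy computation do the heavy lifting, and I would cite it rather than reprove it.'' But there is nothing to cite: this implication is precisely what the paper proves \emph{here}, and it is not a one-liner. Concretely, given $(X,\LB)$ and $(X',\LB')$ with the same invariant, Lemma~\ref{lemma-classification}(\ref{lemma-item-isometry-class-of-L-v-determines-Mon-2-orbit}) puts $c_1(\LB)$ and a parallel-transport of $c_1(\LB')$ in the same $Mon^2(X)$-orbit; one then chooses compatible markings so that both marked pairs land in the same moduli space $\FM^0_{\eta(\alpha)^\perp}$ (Proposition~\ref{prop-same-connected-component}), and the connectedness of $\FM^0_{\eta(\alpha)^\perp}$ (Lemma~\ref{lemma-connected}, which is where Global Torelli enters) gives the deformation equivalence. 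You correctly name Global Torelli as the engine, but the bridge from ``same $Mon^2$-orbit'' to ``deformation equivalent'' passes through the connectedness of this hyperplane-section moduli space and is part of the present proof, not a prerequisite.

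A minor wording issue: your description of $Mon^2(X)$ as ``the index-two subgroup $O^+(\Lambda)$ intersected with the orientation-preserving isometries'' is circular, since $O^+(\Lambda)$ \emph{is} the orientation-preserving subgroup. The characterization you want (Theorem~\ref{thm-Mon-2-is-stabilizer}) is: orientation-preserving isometries stabilizing the $O(\widetilde{\Lambda})$-orbit $\iota_X$, which in the $K3^{[n]}$ case amounts to acting by $\pm1$ on the discriminant group.
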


A {\em generalized Kummer  variety} of dimension $2n$ 
is the fiber of the Albanese map $S^{[n+1]}\rightarrow \nolinebreak S$
from the Hilbert scheme of length $n$ subschemes of an abelian surface $S$ to  $S$ itself \cite{beauville}. 
We expect all of the above results  to have analogues for $X$ an irreducible holomorphic-symplectic 
manifold deformation equivalent to a generalized Kummer variety. 
Yoshioka proved Theorem \ref{thm-main} for those $X$ associated to a moduli space
of sheaves on an abelian surface \cite{yoshioka-ample-cone}.
Let the pair $(X,\LB)$ consist of $X$, deformation equivalent to a generalized Kummer,  and a line bundle $\LB$ 
with a primitive and isotropic first Chern class.
The basic construction of section \ref{sec-S-alpha} associates to the pair $(X,\LB)$,
with $\dim(X)=2n$, $n\geq 2$, and with $(c_1(\LB),\bullet)$ of divisibility $d$, 
two dual pairs $(S_1,\alpha_1)$ and $(S_2,\alpha_2)$, each consisting of an abelian surface $S_i$ and
a class $\alpha_i$ in the Neron-Severi group of $S_i$ 
of self intersection $\frac{2n+2}{d^2}$, such that $S_2\cong S_1^*$ and
the natural isometry $H^2(S_1,\Integers)\cong H^2(S_2,\Integers)$ maps $\alpha_1$ to $\alpha_2$. 
A conjectural determination of the monodromy group of
generalized Kummer varieties was suggested in the comment after \cite[Prop. 4.8]{markman-mehrotra}. 
Assuming that the monodromy group is as conjectured, we expect that the proofs of all the results above can be adapted
to this deformation type. 

A version of Theorem \ref{thm-main} has been conjectured for irreducible holomorphic symplectic manifolds 
of all deformation types \cite[Conjecture 2]{markushevich,sawon,beauville-list}. 
Markushevich, Sawon, and Yoshioka proved a version of Theorem \ref{thm-main}, when $X$ is the Hilbert scheme 
of $n$ points on a $K3$ surface and  $(c_1(\LB),\bullet)$ has divisibility $1$
\cite[Cor. 4.4]{markushevich} and \cite{sawon} (the regularity of the fibration, in section 5 of \cite{sawon}, 
is due to Yoshioka). 
Bayer and Macri recently proved a strong version of Theorem \ref{thm-main} for
moduli spaces of sheaves on a projective $K3$ surface  \cite{bayer-macri}. 

\begin{rem} (Added in the final revision).
Let $X_0$ be an irreducible holomorphic symplectic manifold and $\LB_0$ a nef line bundle on $X_0$, such that 
$c_1(\LB_0)$ is primitive and isotropic with respect to the Beauville-Bogomolov-Fujiki pairing.
Matsushita proved that if $\LB_0$ induces a Lagrangian fibration, then so does $\LB$ for every pair $(X,\LB)$
deformation equivalent to $(X_0,\LB_0)$, with $X$ irreducible holomorpic symplectic and $\LB$ nef
(preprint posted very recently \cite{matsushita-1310}, announced earlier in his talk \cite{matsushita-simons-talk}). 
It follows that Theorem \ref{thm-main} above holds also without the assumption that $X$ is non-special,
since a pair $(X,\LB)$ with $X$ special is a deformation of a pair $(X_0,\LB_0)$ with $X_0$ non-special.
In fact, this stronger version of Theorem \ref{thm-main}, dropping the non-speciality, follows already from the combination of
Matsushita's result and Example
\ref{ex-completely-integrable-moduli-spaces-for-each-value-of-the-monodromy-invariant} below, since Example
\ref{ex-completely-integrable-moduli-spaces-for-each-value-of-the-monodromy-invariant} exhibits a pair $(X_0,\LB_0)$,
with a line bundle $\LB_0$ inducing a Lagrangian fibration,
in each deformation class of pairs $(X,\LB)$ with $X$ of $K3^{[n]}$-type and $c_1(\LB)$ primitive, isotropic, and 
on the boundary of the positive cone. Matsushita's result does not seem to provide an alternative proof of
Theorem \ref{thm-Tate-Shafarevich-orbit-in-introduction}
and the only proof we know is presented in section \ref{sec-tate-shafarevich} and relies on the preceding sections.
\end{rem}

{\bf Acknowledgements:} I would like thank Yujiro Kawamata,
Christian Lehn,  Daisuke Matsushita,
Keiji Oguiso, Osamu Fujino,
Thomas Peternell, S\"{o}nke Rollenske, 
Justin Sawon, and Kota Yoshioka  for helpful communications. I would like to thank the two referees for their
careful reading and insightful comments and suggestions. 

%
\section{Classification of primitive-isotropic classes}
A lattice, in this note, is a finitely generated free abelian group with a symmetric
bilinear pairing $(\bullet,\bullet):L\otimes_\Integers L\rightarrow \Integers$.
The pairing may be degenerate. The isometry group $O(L)$ is the group of automorphisms of
$L$ preserving the bilinear pairing.

\begin{defi}
Two pairs $(L_i,v_i)$, $i=1,2$, 
each consisting of a lattice $L_i$ and an element $v_i\in L_i$, are said to be {\em isometric},
if there exists an isometry $g:L_1\rightarrow L_2$, such that $g(v_1)=v_2$.
\end{defi}

Let $X$ be an irreducible holomorphic symplectic manifold of $K3^{[n]}$-type, $n\geq 2$.
Set $\Lambda:=H^2(X,\Integers)$. We will refer to $\Lambda$ as the {\em $K3^{[n]}$-lattice.}
Let $\widetilde{\Lambda}$ be the Mukai lattice, i.e., the orthogonal direct sum of two copies 
of the negative definite $E_8(-1)$ lattice and four copies of the even unimodular rank two lattice
with signature $(1,-1)$.

\begin{thm}
\label{thm-orbit-iota-X} (\cite{markman-constraints}, Theorem 1.10)
$X$ comes with a natural $O(\widetilde{\Lambda})$-orbit
$\iota_X$ of primitive isometric embeddings 
$\iota:H^2(X,\Integers)\hookrightarrow \widetilde{\Lambda}$.
\end{thm}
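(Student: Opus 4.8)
The plan is to construct a single \emph{reference} embedding on a convenient model in the deformation class and then propagate it by parallel transport. Fix a projective $K3$ surface $S$ and identify $\widetilde{\Lambda}$ with the full integral cohomology $H^*(S,\Integers)=H^0\oplus H^2\oplus H^4$ equipped with the Mukai pairing. The Mukai vector $v:=(1,0,1-n)$ is primitive and satisfies $(v,v)=2n-2$; since $\widetilde{\Lambda}$ is unimodular and $v$ primitive, the orthogonal complement $v^\perp$ is a primitive sublattice of signature $(3,20)$ with discriminant group $\Integers/(2n-2)$, hence abstractly isometric to the $K3^{[n]}$-lattice $\Lambda$. The universal ideal sheaf on $S\times S^{[n]}$, equivalently the Mukai morphism for the moduli space $M_H(v)\cong S^{[n]}$, induces a natural Hodge isometry $\theta_S\colon v^\perp\IsomRightArrow H^2(S^{[n]},\Integers)$ \cite{mukai-symplectic-structure}; we take $\iota_0:=\theta_S^{-1}\colon H^2(S^{[n]},\Integers)\hookrightarrow\widetilde{\Lambda}$ as the reference primitive isometric embedding, its image being the primitive sublattice $v^\perp$.

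For an arbitrary $X$ of $K3^{[n]}$-type, choose a smooth proper family of irreducible holomorphic symplectic manifolds over a connected (not necessarily projective) base with one fibre $X$ and another fibre $S^{[n]}$, and let $p\colon H^2(X,\Integers)\IsomRightArrow H^2(S^{[n]},\Integers)$ be parallel transport along a path. Then $\iota:=\iota_0\circ p$ is a primitive isometric embedding $H^2(X,\Integers)\hookrightarrow\widetilde{\Lambda}$, and I define $\iota_X$ to be its $O(\widetilde{\Lambda})$-orbit. The task is then to show that $\iota_X$ is independent of $S$, of the family, and of the path, and that it is natural in the sense that any parallel-transport operator $g\colon H^2(X',\Integers)\to H^2(X,\Integers)$ carries $\iota_X$ to $\iota_{X'}$. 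Concatenating families reduces all of this to the single compatibility statement: for every $g$ in the monodromy group $\mathrm{Mon}^2(S^{[n]})\subseteq O(H^2(S^{[n]},\Integers))$ there is a $\tilde g\in O(\widetilde{\Lambda})$ with $\iota_0\circ g=\tilde g\circ\iota_0$.

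To prove the compatibility I would invoke the determination $\mathrm{Mon}^2(S^{[n]})=\widehat{O}^+(\Lambda)$ --- the group of orientation-preserving isometries of $\Lambda$ acting on the discriminant group $A_\Lambda\cong\Integers/(2n-2)$ by multiplication by $\pm 1$ --- from \cite{markman-monodromy-I,markman-constraints}. Transporting through $\theta_S$, the claim becomes a lattice-theoretic fact inside the unimodular lattice $\widetilde{\Lambda}\supseteq\Integers v\oplus v^\perp$: an isometry $h$ of $v^\perp$ whose action on $A_{v^\perp}$ is $\pm\mathrm{id}$ extends to an isometry of $\widetilde{\Lambda}$ stabilising the pair $\{\pm v\}$, and conversely every such extension restricts to an element acting by $\pm\mathrm{id}$ on the discriminant. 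This is precisely Nikulin's gluing criterion for extending an isometry of $\Integers v\oplus v^\perp$ across the overlattice $\widetilde{\Lambda}$, applied with $\mp\mathrm{id}$ on $\Integers v$, the sign chosen so that the two induced actions on $A_{\Integers v}\cong A_{v^\perp}$ match under the gluing isomorphism. Post-composing $\iota_0$ with such a $\tilde g$ replaces the image $v^\perp$ by $(\tilde g v)^\perp=v^\perp$, so the orbit is genuinely preserved; the independence of choices and the naturality then follow formally.

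I expect the main obstacle to be not this bookkeeping but the monodromy computation $\mathrm{Mon}^2(S^{[n]})=\widehat{O}^+(\Lambda)$ itself, which is the deep input being quoted; the remaining steps are either formal or routine applications of discriminant-form techniques. A secondary subtlety is orientation: one must check that only the index-two subgroup $\widehat{O}^+$, rather than all of $\widehat{O}$, intervenes, which is why the monodromy group acts by orientation-preserving isometries of the positive cone and why the orbit is well defined without a sign ambiguity. One should also verify that the reference embedding is independent of the chosen primitive Mukai vector $v$ with $v^2=2n-2$, which holds because any two such pairs $(\widetilde{\Lambda},v)$ are conjugate under $O(\widetilde{\Lambda})$ by Eichler's criterion. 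An alternative, more intrinsic route would recover the Mukai-lattice structure on $\widetilde{\Lambda}$ from the Looijenga--Lunts--Verbitsky Lie algebra acting on $H^*(X,\Integers)$ and pin down the orbit by compatibility with that action, but the monodromy-theoretic argument above is the most direct.
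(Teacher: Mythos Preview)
The paper does not prove this statement; it is quoted from \cite{markman-constraints}, Theorem 1.10, so there is no ``paper's own proof'' to compare against. Your sketch is a reasonable reconstruction of how one might argue, but it contains a logical circularity relative to the cited source. You reduce well-definedness of the orbit to the containment $\mathrm{Mon}^2(S^{[n]})\subseteq\widehat{O}^+(\Lambda)$, and you invoke \cite{markman-constraints} for this. But in that reference the logic runs the other way: the orbit $\iota_X$ is constructed first, \emph{intrinsically} from monodromy-invariant characteristic classes in $H^*(X,\Integers)$ (a saturation of a sublattice generated by $H^2$ and a distinguished degree-$4$ class), and its invariance under parallel transport then \emph{yields} the constraint $\mathrm{Mon}^2\subseteq\widehat{O}^+$ (this is exactly how Theorem \ref{thm-Mon-2-is-stabilizer} is obtained). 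So quoting the monodromy determination to build $\iota_X$ inverts cause and effect.

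Ironically, the ``alternative, more intrinsic route'' you mention in your last sentence and set aside---extracting the Mukai lattice from the cohomology ring/LLV structure---is essentially the approach actually taken in \cite{markman-constraints}. Your parallel-transport construction is fine once one has an \emph{independent} proof that every monodromy operator extends to $O(\widetilde{\Lambda})$; absent that, the intrinsic construction is not merely more elegant but logically necessary. A minor additional point: for well-definedness you only need the inclusion $\mathrm{Mon}^2\subseteq\widehat{O}^+$, not the full equality; the reverse inclusion from \cite{markman-monodromy-I} plays no role here.
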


Choose a primitive isometric embedding $\iota:\Lambda\hookrightarrow \widetilde{\Lambda}$
in the canonical $O(\widetilde{\Lambda})$-orbit 
$\iota_X$ provided by Theorem \ref{thm-orbit-iota-X}.
Choose a generator $v\in \widetilde{\Lambda}$ of the rank $1$ sub-lattice orthogonal
to $\iota(\Lambda)$. We say that
an isometry $g\in O(\Lambda)$  {\em stabilizes} the $O(\widetilde{\Lambda})$-orbit $\iota_X$, 
if given a representative isometric embedding $\iota$ in the orbit $\iota_X$, 
there exists an isometry $\tilde{g}\in O(\widetilde{\Lambda})$ satisfying
$\iota\circ g=\tilde{g}\circ \iota$. 
Note that $\tilde{g}$ necessarily maps $v$ to $\pm v$.

Set $\Lambda_\RealNumbers:=\Lambda\otimes_\Integers\RealNumbers$.
Let $\widetilde{\C}\subset \Lambda_\RealNumbers$ be the positive cone
$\{x\in \Lambda_\RealNumbers \ : \ (x,x)>0\}$.
Then $H^2(\widetilde{\C},\Integers)$ is isomorphic to $\Integers$ and is a natural character
of the isometry group $O(\Lambda)$ \cite[Lemma 4.1]{markman-torelli}.
Denote by $O^+(\Lambda)$ the kernel of this orientation character. Isometries in
$O^+(\Lambda)$ are said to be {\em orientation preserving}.

\begin{defi}
\label{def-monodromy}
Let $X$, $X_1$, and $X_2$ be irreducible holomorphic symplectic manifolds. 
An isometry $g:H^2(X_1,\Integers)\rightarrow H^2(X_2,\Integers)$ 
is a {\em parallel transport operator}, if there exists a family $\pi:\X \rightarrow B$
(which may depend on $g$) of irreducible holomorphic symplectic manifolds,
points $b_1$ and $b_2$ in $B$, isomorphisms $X_i\cong \X_{b_i}$, where $\X_{b_i}$
is the fiber over $b_i$, $i=1,2$, and a continuous path $\gamma$ from $b_1$ to $b_2$,
such that parallel transport along $\gamma$ in the local system 
$R^2\pi_*\Integers$ induces the isometry $g$. 
When $X=X_1=X_2$, we call $g$ a {\em monodromy operator}.
The {\em monodromy group} $Mon^2(X)$ of  $X$
is the subgroup, of the isometry group of $H^2(X,\Integers)$,
generated by monodromy operators. 
\end{defi}

\begin{thm} 
\label{thm-Mon-2-is-stabilizer}
(\cite{markman-constraints}, Theorem 1.2 and Lemma 4.2)
The subgroup $Mon^2(X)$ of $O(\Lambda)$ consists of 
orientation preserving isometries stabilizing the orbit $\iota_X$. 
\end{thm}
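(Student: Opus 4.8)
The plan is to prove $Mon^2(X)=\Gamma$, where $\Gamma\subset O(\Lambda)$ denotes the group of orientation-preserving isometries stabilizing $\iota_X$, by establishing the two inclusions after first recasting membership in $\Gamma$ in purely lattice-theoretic terms. Fix a representative $\iota\in\iota_X$ and a generator $v$ of $\iota(\Lambda)^\perp$; since $X$ is of $K3^{[n]}$-type one has $(v,v)=2n-2$, so the unimodular lattice $\widetilde\Lambda$ is an overlattice of $\iota(\Lambda)\oplus\Integers v$ obtained by gluing the two discriminant groups, each cyclic of order $2n-2$, along an anti-isometry. An isometry $g$ of $\Lambda$ stabilizes $\iota_X$ exactly when $\iota g\iota^{-1}$ extends to an isometry of $\widetilde\Lambda$; such an extension automatically preserves $\Integers v$ (hence sends $v$ to $\pm v$), and by Nikulin's gluing criterion it exists if and only if the automorphism $\bar g$ induced by $g$ on the discriminant group of $\Lambda$ is $\pm\mathrm{id}$. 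Thus $\Gamma=W:=\{g\in O^+(\Lambda):\bar g=\pm\mathrm{id}\}$, an intrinsic subgroup of $O(\Lambda)$; write $W^+$ for the index-two subgroup cut out by $\bar g=\mathrm{id}$.

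For the inclusion $Mon^2(X)\subseteq W$: a parallel transport operator preserves the canonical generator of $H^2(\widetilde{\C},\Integers)$, hence is orientation preserving; and because the orbit $\iota_X$ of Theorem \ref{thm-orbit-iota-X} is natural — carried to $\iota_{X'}$ by every parallel transport operator $H^2(X,\Integers)\to H^2(X',\Integers)$ — a monodromy operator stabilizes $\iota_X$, so by the reformulation above it lies in $W$.

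For the reverse inclusion $W\subseteq Mon^2(X)$, which is the substantial direction, I would first reduce to $X=S^{[n]}$ for a single projective $K3$ surface $S$: both $W$ and the condition $\bar g=\pm\mathrm{id}$ are stable under conjugation by an arbitrary isometry, while $Mon^2$ is a deformation invariant up to such conjugation. On $S^{[n]}$ identify $\Lambda$ with $\mathbf v^\perp\subset\widetilde\Lambda\cong\widetilde H(S,\Integers)$ for the Mukai vector $\mathbf v=(1,0,1-n)$ of an ideal sheaf of $n$ points, so that $v=\mathbf v$. A short computation of discriminant forms (Eichler/Nikulin) shows that $W^+$ is precisely the image, under restriction to $\mathbf v^\perp$, of the stabilizer of $\mathbf v$ in $O^+(\widetilde H(S,\Integers))$. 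For a suitable choice of $S$ every such Hodge isometry is induced by an auto-equivalence of the derived category $\mathrm{D}^b(S)$ — shifts, twists by line bundles, spherical twists, and Fourier--Mukai transforms to derived-equivalent $K3$ surfaces — and each such equivalence takes a fine moduli space of stable sheaves of Mukai vector $\mathbf v$ birationally to another moduli space of the same numerical type, which is again of $K3^{[n]}$-type; the resulting birational map induces a parallel transport operator. Combined with Picard--Lefschetz reflections in $(-2)$-classes this gives $W^+\subseteq Mon^2(S^{[n]})$. Finally, the reflection $R_\delta$ in the class $\delta$ with $2\delta$ the class of the exceptional divisor of $S^{[n]}\to S^{(n)}$ is integral, induces $-\mathrm{id}$ on the discriminant group, and is a monodromy operator because it is the reflection attached to a prime exceptional divisor \cite{markman-prime-exceptional}; since $[W:W^+]=2$ we conclude $W=\langle W^+,R_\delta\rangle\subseteq Mon^2(S^{[n]})$.

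The main obstacle is the inclusion $W\subseteq Mon^2$, and inside it the two geometric inputs: that the relevant Hodge isometries of $\widetilde H(S,\Integers)$ are realized by derived auto-equivalences yielding birational equivalences of moduli spaces of sheaves (hence parallel transport operators), and that $R_\delta$ — not a Picard--Lefschetz reflection in the naive sense — is nevertheless a monodromy operator; these are exactly the contents of \cite{markman-monodromy-I} (together with \cite{markman-prime-exceptional}). The discriminant-form bookkeeping, the orientation statement, and the deformation reduction to $S^{[n]}$ are routine by comparison.
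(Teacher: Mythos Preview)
The paper does not prove this theorem; it merely cites it from \cite{markman-constraints}, Theorem~1.2 and Lemma~4.2 (with the hard inclusion relying on \cite{markman-monodromy-I}). So there is no in-paper argument to compare against, and your task was really to reconstruct the cited proof.

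Your reformulation $\Gamma=W:=\{g\in O^+(\Lambda):\bar g=\pm\mathrm{id}\}$ via Nikulin gluing is correct and is exactly how \cite{markman-constraints} recasts the orbit condition. The inclusion $Mon^2(X)\subseteq W$ is also fine: orientation preservation is standard, and naturality of $\iota_X$ under parallel transport is the content of Theorem~\ref{thm-orbit-iota-X}.

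There is, however, a genuine gap in your argument for $W\subseteq Mon^2(X)$. You identify $W^+$ with the restriction of $O^+(\widetilde\Lambda)_{\mathbf v}$ to $\mathbf v^\perp$ and then assert that, for a suitable \emph{single} $K3$ surface $S$, every such isometry is a Hodge isometry of $\widetilde H(S,\Integers)$ realized by a derived auto-equivalence. This cannot work: for any fixed $S$ the group of Hodge isometries of $\widetilde H(S,\Integers)$ fixing $\mathbf v$ is far smaller than $O^+(\widetilde\Lambda)_{\mathbf v}$ (for a generic $S$ with cyclic Picard group it is essentially finite), so most elements of $W^+$ are \emph{not} Hodge isometries for any one $S$ and are therefore not induced by auto-equivalences of $D^b(S)$. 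The birational/Fourier--Mukai step you describe only produces the Hodge-isometry subgroup, not all of $W^+$.

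What the cited references actually do is vary $S$: one exhibits generators of $O^+(\widetilde\Lambda)_{\mathbf v}$ (reflections in $(-2)$-classes and the negative of reflections in $(+2)$-classes, or equivalently Eichler transvections) and realizes each as monodromy by deforming through families of moduli spaces of sheaves over the moduli of polarized $K3$ surfaces, combining the known monodromy of the $K3$ moduli with Fourier--Mukai operators at special points. The reflection $R_\delta$ then supplies the missing coset, as you correctly note. So your overall architecture is right, but the ``single $S$ plus derived category'' shortcut must be replaced by a genuine family argument over $K3$ moduli.
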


Given a lattice $L$, let $I_n(L)\subset L$ be the subset
of primitive classes $v$ with $(v,v)=2n-2$.
Notice that the orbit set $I_n(L)/O(L)$ parametrizes the set of isometry classes
of pairs $(L',v')$, such that $L'$ is isometric to $L$
and $v'$ is a primitive class in $L'$ with $(v',v')=2n-2$
\cite[Lemma 9.14]{markman-torelli}. 

Let $n$ be an integer $\geq 2$, let $\Lambda$ be the $K3^{[n]}$-lattice, and 
let $\alpha\in \Lambda$ be a primitive isotropic class. 
Let $\div(\alpha,\bullet)$ be the largest positive integer, such that 
$(\alpha,\bullet)/\div(\alpha,\bullet)$ is an integral class of $\Lambda^*$.
Set $d:=\div(\alpha,\bullet)$ and 
\[
\beta:=\iota(\alpha).
\]
Let $L\subset \widetilde{\Lambda}$ be the saturation\footnote{The {\em saturation} of a sublattice $L'$ of $\Lambda$ 
is the maximal sublattice $L$ of $\Lambda$, of the same rank as $L'$, which contains $L'$.} 
of ${\rm span}_\Integers\{\beta,v\}$.
Clearly, the isometry class of $(L,v)$ depends only on $\alpha$ and the 
$O(\widetilde{\Lambda})$-orbit of $\iota$. Consequently,
the isometry class of $(L,v)$ depends only on $\alpha$, as the $O(\widetilde{\Lambda})$-orbit $\iota_X$ of $\iota$
is natural, by Theorem \ref{thm-orbit-iota-X}. We denote by $[L,v](\alpha)$ the isometry class of the pair $(L,v)$ associated
to $\alpha$.

\begin{lem}
\label{lemma-classification}
\begin{enumerate}
\item
\label{lemma-item-d-square-divides-n-1} 
$d^2$ divides $n-1$.
\item
\label{lemma-item-L-is-L-n-d}
$L$ is isometric to the lattice $L_{n,d}$ with Gram matrix
$\frac{2n-2}{d^2}
\left(\begin{array}{cc}
1 & 0 \\
0 & 0
\end{array}\right).$
\item
\label{lemma-item-isometry-class-of-L-v-determines-Mon-2-orbit}
Let $d\geq 1$ be an integer, such that $d^2$ divides $n-1$.
The map $\alpha\mapsto [L,v](\alpha)$ induces a one-to-one correspondence 
between the set of $Mon^2(X)$-orbits, of primitive isotropic classes $\alpha$
with $\div(\alpha,\bullet)=d$, and the set of isometry classes
$I_n(L_{n,d})/O(L_{n,d})$.
\item
\label{lemma-item-integer-b-determines-isometry-class}
There exists an integer $b$, such that $(\beta-bv)/d$ is an integral class of $L$. 
The isometry class  
$[L,v](\alpha)$ is represented by $(L_{n,d},(d,b))$, for any such integer $b$.
\end{enumerate}
\end{lem}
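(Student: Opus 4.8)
The plan is to work entirely inside the Mukai lattice $\widetilde{\Lambda}$ and analyze the rank-two degenerate lattice $L = \mathrm{span}_\Integers\{\beta,v\}^{\mathrm{sat}}$, where $\beta = \iota(\alpha)$ and $v$ generates $\iota(\Lambda)^\perp$. First I would record the basic data: $(v,v) = 2n-2$ (since the discriminant of the $K3^{[n]}$-lattice $\Lambda$ is $2n-2$, so its orthogonal complement in the unimodular $\widetilde{\Lambda}$ has the same discriminant, up to sign, and one checks the sign is positive), $(\beta,\beta) = (\alpha,\alpha) = 0$ because $\alpha$ is isotropic, and $(\beta,v) = 0$ because $\beta \in \iota(\Lambda)$. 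So on the nose $\mathrm{span}_\Integers\{\beta,v\}$ has Gram matrix $\mathrm{diag}(0, 2n-2)$; the subtlety is entirely in passing to the saturation $L$, i.e., in identifying the primitive vectors of $L$ lying in the rational span of $\beta$ and $v$.

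For part (4), the key computation is this: the class $(\beta,\bullet)$ restricted to $\iota(\Lambda)$ has divisibility $d$ in $\iota(\Lambda)^*$ by hypothesis, while $\widetilde{\Lambda}$ is unimodular; a standard argument (write $\widetilde{\Lambda} = \iota(\Lambda) \oplus \Integers v$ rationally and use unimodularity to compare discriminant groups) shows that $\beta/d$ is congruent modulo $\widetilde{\Lambda}$ to a multiple of $v/d$, i.e., there is an integer $b$, well-defined modulo $d$, with $(\beta - bv)/d \in \widetilde{\Lambda}$, and this vector then lies in $L$. Since $\beta$ is primitive in $\widetilde{\Lambda}$ (as $\alpha$ is primitive in $\Lambda$ and $\iota$ is a primitive embedding), $b$ and $d$ are coprime. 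Setting $w := (\beta - bv)/d$, I claim $\{w, v\}$ is a basis of $L$: it is contained in $L$, it spans the same rational subspace, and its index in $L$ is forced to be $1$ by a discriminant-group count. Computing the Gram matrix of $\{w,v\}$ gives $(w,w) = (b^2/d^2)(v,v) = (2n-2)b^2/d^2$, $(w,v) = (b/d)(v,v) = (2n-2)b/d$, $(v,v) = 2n-2$, which up to the change of basis $v \mapsto v$, $w \mapsto w$ is exactly the Gram matrix of $(L_{n,d}, (d,b))$ — here one matches $(d,b)$ in $L_{n,d}$ with the vector $dw + bv = \beta$ and checks degrees agree. This simultaneously yields part (2) (take any such $b$, e.g. reduce to see $L \cong L_{n,d}$) and, crucially, part (1): for the Gram matrix $(2n-2)\,\mathrm{diag}(1,0)/d^2$ to have integer entries we need $d^2 \mid 2n-2$, but we need the sharper $d^2 \mid n-1$; this follows because $(w,w) = (2n-2)b^2/d^2 \in \Integers$ with $\gcd(b,d)=1$ forces $d^2 \mid 2n-2$, and then parity of $\widetilde\Lambda$ (it is an even lattice, so $(w,w)$ is even) upgrades this to $d^2 \mid n-1$.

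For part (3), I would combine the previous parts with Theorem \ref{thm-Mon-2-is-stabilizer} and Theorem \ref{thm-orbit-iota-X}. The assignment $\alpha \mapsto [L,v](\alpha)$ is well-defined by the discussion preceding the lemma. For surjectivity onto $I_n(L_{n,d})/O(L_{n,d})$: given an isometry class of pairs $(L_{n,d}, v')$ with $v'$ primitive of square $2n-2$, embed $L_{n,d}$ with $v' \mapsto v$ into $\widetilde\Lambda$ so that the orthogonal complement of $v$ realizes the embedding $\iota$, producing an isotropic class $\alpha$ in $\Lambda$ with the right data. For injectivity: if $[L,v](\alpha_1) = [L,v](\alpha_2)$, an isometry of the pairs $(L,v)$ extends (using that $O(\widetilde\Lambda)$ acts transitively on primitive embeddings of a fixed lattice with fixed orthogonal complement, i.e. Nikulin-type extension results, together with the naturality of $\iota_X$) to an isometry $\tilde g \in O(\widetilde\Lambda)$ with $\tilde g(v) = v$, hence $\tilde g$ descends to $g \in O(\Lambda)$ stabilizing $\iota_X$ and mapping $\alpha_1 \mapsto \pm\alpha_2$; one then adjusts by $-\mathrm{id}$ if needed and by an orientation-reversing isometry fixing $v$ (available since the relevant lattices have vectors of both orientations in their positive cones) to land inside $Mon^2(X) = O^+(\Lambda) \cap \mathrm{Stab}(\iota_X)$.

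The main obstacle I anticipate is the injectivity half of part (3): controlling exactly which isometries of $(L,v)$ extend to $O(\widetilde\Lambda)$ \emph{and} descend to orientation-preserving isometries of $\Lambda$ stabilizing the canonical orbit $\iota_X$. The extension to $\widetilde\Lambda$ is governed by the interplay of the discriminant forms of $L$ and $\iota(\Lambda)$ inside unimodular $\widetilde\Lambda$; the orientation bookkeeping and the $\pm v$ / $-\mathrm{id}_\Lambda$ ambiguities need to be reconciled with the definition of $Mon^2$. I expect this to require a careful but routine application of Nikulin's gluing/extension theory for embeddings into unimodular lattices, combined with Theorems \ref{thm-orbit-iota-X} and \ref{thm-Mon-2-is-stabilizer}; the purely numerical parts (1), (2), and (4) should be short once the saturation basis $\{w,v\}$ is in hand.
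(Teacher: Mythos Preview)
Your approach to parts (\ref{lemma-item-d-square-divides-n-1}), (\ref{lemma-item-L-is-L-n-d}), (\ref{lemma-item-integer-b-determines-isometry-class}) is genuinely different from the paper's and, once the gaps are filled, works. The paper chooses a class $\delta\in\Lambda$ with $(\delta,\delta)=2-2n$ and $\delta^\perp$ unimodular, writes $\alpha=a\xi+b\delta$ explicitly, reads off $d=a$ and $d^2\mid n-1$ from $(\alpha,\alpha)=0$, and then proves saturation of $\{v,\gamma\}$ by exhibiting dual test vectors $e,\eta\in\widetilde\Lambda$ with unimodular pairing matrix. Your route via discriminant groups and evenness of $(w,w)$ is more intrinsic and avoids the auxiliary $\delta$; what you gain is conceptual cleanliness, what you lose is that the saturation step needs real work (see below) whereas the paper's dual-vector check is one line.

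Two concrete gaps. First, your ``discriminant-group count'' for why $\{w,v\}$ spans all of $L$ does not make sense as stated: $L$ is degenerate, so there is no discriminant form to count. What you actually need is that $\bar v/d$ is primitive in $\Lambda_{k3}:=\beta^\perp/\Integers\beta$; this follows because if $(v+c\beta)/e\in\widetilde\Lambda$ with $\gcd(e,c)=1$, then pairing against $\iota(\Lambda)$ forces $e\mid(\alpha,\lambda)$ for all $\lambda$, hence $e\mid d$. That argument, or the paper's dual-vector trick, is what is missing. Second, in part (\ref{lemma-item-isometry-class-of-L-v-determines-Mon-2-orbit}) your sign/orientation bookkeeping is off: you write ``adjust by an orientation-\emph{reversing} isometry fixing $v$'' to land in $Mon^2(X)$, but $Mon^2(X)\subset O^+(\Lambda)$ consists of orientation-\emph{preserving} isometries. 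The issue is rather that after replacing $h$ by $-h$ to fix orientation, you may have $\mu(\alpha_1)=-\alpha_2$; you then need an element of $Mon^2(X)$ sending $\alpha_2\mapsto-\alpha_2$. The paper produces one as the reflection $\rho_\tau(\lambda)=-\lambda+(\lambda,\tau)\tau$ in a class $\tau$ with $(\tau,\tau)=2$ and $(\tau,\alpha_2)=0$, which is known to lie in $Mon^2(X)$. Your sketch of the Nikulin extension step for injectivity is otherwise on target and matches the paper's argument.
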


\begin{proof}
Part (\ref{lemma-item-d-square-divides-n-1}):
There exists a class $\delta\in\Lambda$, such that $(\delta,\delta)=2-2n$ and 
the sub-lattice $\delta^\perp_\Lambda$ of $\Lambda$, orthogonal to $\delta$, is
a unimodular lattice isometric to the $K3$-lattice. 
The sub-lattice $[\iota(\delta^\perp_\Lambda)]^\perp_{\widetilde{\Lambda}}$ 
of $\widetilde{\Lambda}$, 
which  is the saturation of ${\rm span}\{\iota(\delta),v\}$, is
unimodular, hence isometric to the unimodular hyperbolic plane $U$ with
Gram matrix 
$\left(\begin{array}{cc}
0 & -1\\
-1&0
\end{array}\right).$
We may further assume that $v=(1,1-n)$ and $\iota(\delta)=(1,n-1)$, under this isomorphism.
If $X$ is the Hilbert scheme $S^{[n]}$ of a $K3$-surface and $\delta$ is half
the class of the big diagonal, then $\delta$ satisfies the above properties.
Write $\alpha=a\xi+b\delta$, where $\xi$ is a primitive class of the $K3$-lattice
$\delta^\perp_\Lambda$, $a>0$, and $\gcd(a,b)=1$. We get
\[
0=(\alpha,\alpha)=a^2(\xi,\xi)-(2n-2)b^2,
\]
and $(\xi,\xi)$ is even. Hence, $a^2$ divides $n-1$. Furthermore, 
$\div(\delta,\bullet)=2n-2$, $\div(\xi,\bullet)=1$, since $\delta^\perp_\Lambda$ is
unimodular, and $\div(\alpha,\bullet)=\gcd(\div(a\xi,\bullet),\div(b\delta,\bullet))=
\gcd(a,(2n-2)b)=a$.
Thus, $a=d:=\div(\alpha,\bullet)$.

Part (\ref{lemma-item-L-is-L-n-d}):
Note that $\iota(\delta)-v=(2n-2)e$, where $e$ is a primitive isotropic class of 
$\widetilde{\Lambda}$.
Set $\gamma:=\frac{1}{d}(\beta-bv)=\iota(\xi)+\frac{b(2n-2)}{d}e$.
We claim that the lattice $L:={\rm span}_\Integers\{v,\gamma\}$ 
is saturated in $\widetilde{\Lambda}$. 
Indeed, choose $\eta\in\delta^\perp_\Lambda$, such that $(\xi,\eta)=1$.
Then 
$\left(\begin{array}{cc}
(v,e) & (v,\eta)\\
(\gamma,e) & (\gamma,\eta)
\end{array}\right)=
\left(\begin{array}{cc}
-1 & 0 \\
0 & 1
\end{array}\right)$.

Let $G$ be the Gram matrix of $L$ in the basis $\{v,\gamma\}$. 
Then
\[
G=\frac{2n-2}{d^2}\left(\begin{array}{cc}
d^2 & -bd\\
-bd & b^2
\end{array}\right)
=
\frac{2n-2}{d^2}
\left(\begin{array}{c}d\\-b\end{array}\right)
\left(\begin{array}{cc}d&-b\end{array}\right).
\]
Choose a $2\times 2$ invertible matrix $A$, with integer coefficients, such that 
$A\left(\begin{array}{c}d\\-b\end{array}\right)=\left(\begin{array}{c}1\\0\end{array}\right)$.
Then $AGA^t$ is the Gram matrix 
of $L_{n,d}$.

Part (\ref{lemma-item-isometry-class-of-L-v-determines-Mon-2-orbit}):
Assume given two primitive isotropic classes $\alpha_1$ and $\alpha_2$ in $\Lambda:=H^2(X,\Integers)$ and let
$(L_i,v_i)$ be the pair associated to $\alpha_i$ as above, for $i=1,2$.
In other words, $\iota_i:\Lambda\hookrightarrow \tilde{\Lambda}$ 
is a primitive embedding in the orbit $\iota_X$, $v_i$ generates the 
sub-lattice of $\widetilde{\Lambda}$ orthogonal to the image of $\iota_i$,
and $L_i$ is the saturation of
${\rm span}_\Integers\{\iota(\alpha_i),v_i\}$.

Let us check that the map $\alpha\mapsto [L,v](\alpha)$ is constant on
$Mon^2(X)$-orbits.
Assume that there exists an element $\mu\in Mon^2(X)$,
such that $\mu(\alpha_1)=\alpha_2$. 
Then there exists an isometry $\tilde{\mu}\in O(\widetilde{\Lambda})$, 
satisfying $\tilde{\mu}\circ \iota_1=\iota_2\circ\mu$, 
by Theorem \ref{thm-Mon-2-is-stabilizer}.
We get that  $\tilde{\mu}(L_1)=L_2$ and $\tilde{\mu}(v_1)=v_2$, or 
$\tilde{\mu}(v_1)=-v_2$.
So, the isometry $\tilde{\mu}$ or $-\tilde{\mu}$ from $L_1$ onto $L_2$
provides an isometry of the pairs
$(L_i,v_i)$, $i=1,2$.

We show next that the map $\alpha\mapsto [L,v](\alpha)$ is injective, i.e., that 
the isometry class of the pair $(L,v)$ determines the $Mon^2(X)$-orbit of $\alpha$.
Assume  that there exists as isometry $f:L_1\rightarrow L_2$,
such that $f(v_1)=v_2$. Then there exists an isometry 
$\tilde{f}\in O(\widetilde{\Lambda})$,
such that $\tilde{f}(L_1)=L_2$ and the restriction of $\tilde{f}$
to $L_1$ is $f$, by (\cite{nikulin}, Proposition 1.17.1 and Theorem 1.14.4, see also 
\cite{markman-monodromy-I}, Lemma 8.1 for more details). 
In particular, $\tilde{f}(v_1)=v_2$.
There exists a unique isometry $h\in O(\Lambda)$
satisfying $\iota_2\circ h=\tilde{f}\circ\iota_1$.
There exists an isometry $\phi\in O(\widetilde{\Lambda})$, such that
$\phi\circ\iota_2=\iota_1$, since both $\iota_i$ belong to the same 
$O(\widetilde{\Lambda})$-orbit $\iota_X$. 
We get the equality $\iota_1\circ h=\phi\circ\iota_2\circ h=(\phi\circ\tilde{f})\circ\iota_1$.
If $h$ is orientation preserving, then $h$ belongs to $Mon^2(X)$,
otherwise, $-h$ does, by Theorem \ref{thm-Mon-2-is-stabilizer}. Let $\mu=h$,
if it is orientation preserving. Otherwise, set $\mu:=-h$. 
Then $\mu$ is a monodromy operator and
$\iota_2(\mu(\alpha_1))=\pm\iota_2(h(\alpha_1))=\pm \tilde{f}(\iota_1(\alpha_1))$.
The class $\iota_1(\alpha_1)$ spans the null space of $L_1$, and $\tilde{f}$
restricts to an isometry from $L_1$ to $L_2$. 
Hence, $\iota_2(\mu(\alpha_1))$ spans the null space of $L_2$.
Hence, $\mu(\alpha_1)=\pm\alpha_2$. 

Finally we show  that $\alpha_2$ and $-\alpha_2$ belong to the same $Mon^2(X)$-orbit.
There exists an element $\tau\in \Lambda$ satisfying
$(\tau,\tau)=2$, and $(\tau,\alpha_2)=0$. The isometry
$\rho_\tau\in O(\Lambda)$, given by
$\rho_\tau(\lambda)=-\lambda+(\lambda,\tau)\tau$,
belongs to $Mon^2(X)$, by (\cite{markman-monodromy-I}, Corollary 1.8), 
and it sends $\alpha_2$ to $-\alpha_2$.

It remains to prove that the map $\alpha\mapsto [L,v](\alpha)$
is surjective. Assume given a primitive class $v\in L_{n,d}$
with $(v,v)=2n-2$. There exists a primitive isometric embedding 
$f:L_{n,d}\hookrightarrow \widetilde{\Lambda}$, by
(\cite{nikulin}, Proposition 1.17.1). The lattice 
$f(v)^\perp_{\widetilde{\Lambda}}$, orthogonal to $f(v)$ in $\widetilde{\Lambda}$, is isometric to
the $K3^{[n]}$-lattice $\Lambda$. 
Choose such an isometry $h:f(v)^\perp_{\widetilde{\Lambda}}\rightarrow\Lambda$, 
with the property that $h^{-1}:\Lambda\hookrightarrow\widetilde{\Lambda}$
belongs to the $O(\widetilde{\Lambda})$-orbit $\iota_X$. 
Such a choice exists, since $O(\Lambda)$ acts transitively 
on the orbit space $O(\Lambda,\widetilde{\Lambda})/O(\widetilde{\Lambda})$,
by (\cite{markman-constraints}, Lemma 4.3). Above,
$O(\Lambda,\widetilde{\Lambda})$ denotes the set of primitive isometric 
embeddings of $\Lambda$ in $\widetilde{\Lambda}$.
Denote by $\beta\in L_{n,d}$ a generator of the null space
of $L_{n,d}$.  Set $\alpha:=h(f(\beta))$.
Then $\alpha$ is a class in $\Lambda$, such that
$[L,v](\alpha)$ is represented by $(L_{n,d},v)$. 

Part (\ref{lemma-item-integer-b-determines-isometry-class}):
The existence of such an integer $b$ was established in the course of proving part
(\ref{lemma-item-d-square-divides-n-1}).
The rest of the statement follows from Lemma \ref{lemma-isometry-orbits-in-L-n-d}.
\end{proof}

If $d=2$, set $\nu(d):=1$. If $d>2$, let $\nu(d)$ be half the number of multiplicative units 
in the ring $\Integers/d\Integers$.

\begin{lem}
\label{lemma-isometry-orbits-in-L-n-d}
A vector 
$(x,y)\in L_{n,d}$ is primitive of degree $2n-2$, if and only if $\Abs{x}=d$ and
$\gcd(d,y)=1$. 
Two primitive vectors
$(d,y)$, $(d,z)$ belong to the same $O(L_{n,d})$-orbit, if and only if 
$y\equiv z$ modulo $d$, or  $y\equiv -z$ modulo $d$. 
Consequently, $\nu(d)$ 
is equal to the number of  $O(L_{n,d})$-orbits of primitive vectors in $L_{n,d}$ of degree $2n-2$.
\end{lem}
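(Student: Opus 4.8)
The plan is to work directly with the explicit Gram matrix of $L_{n,d}$, namely $\frac{2n-2}{d^2}\left(\begin{smallmatrix}1&0\\0&0\end{smallmatrix}\right)$ in some basis $\{f_1,f_2\}$, where $f_2$ spans the null space. First I would compute, for $(x,y)=xf_1+yf_2$, that $((x,y),(x,y))=\frac{2n-2}{d^2}x^2$; this equals $2n-2$ precisely when $x^2=d^2$, i.e. $\Abs{x}=d$. Primitivity of $(x,y)$ in the free abelian group $\Integers f_1\oplus\Integers f_2$ is just $\gcd(x,y)=1$; combined with $\Abs{x}=d$ this is equivalent to $\Abs{x}=d$ and $\gcd(d,y)=1$, which gives the first sentence.

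Next I would pin down the isometry group $O(L_{n,d})$. An isometry $g$ must preserve the null space $\Integers f_2$, so $g(f_2)=\pm f_2$; and it must preserve the value $((g f_1),(g f_1))=\frac{2n-2}{d^2}$, forcing $g(f_1)=\pm f_1 + k f_2$ for some $k\in\Integers$ (any such map is automatically an isometry, since the pairing only sees the $f_1$-coordinate). Thus $O(L_{n,d})$ consists exactly of the maps $f_1\mapsto \epsilon_1 f_1 + k f_2$, $f_2\mapsto \epsilon_2 f_2$ with $\epsilon_i\in\{\pm1\}$, $k\in\Integers$. Acting on a primitive vector $(d,y)$ (taking $x=d>0$ without loss of generality, since $(-d,-y)$ and $(d,y)$ are related by $-\mathrm{id}$), such $g$ sends it to $(\epsilon_1 d,\ \epsilon_2 y + \epsilon_1 k d)$. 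Normalizing the first coordinate back to $+d$ forces $\epsilon_1=+1$ (or else apply $-\mathrm{id}$ as well), and the second coordinate becomes $\epsilon_2 y + kd$, i.e. $\pm y$ modulo $d$, with every such value attainable. Hence $(d,y)$ and $(d,z)$ lie in the same $O(L_{n,d})$-orbit iff $y\equiv \pm z \pmod d$.

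For the final count: the orbits of primitive vectors of degree $2n-2$ with positive first coordinate are indexed by residues $y\in(\Integers/d\Integers)^\times$ modulo the involution $y\mapsto -y$. When $d>2$ this involution is fixed-point-free on $(\Integers/d\Integers)^\times$ (since $y\equiv -y$ would force $2y\equiv 0$, impossible for a unit when $d>2$), so the number of orbits is exactly half the number of units, which is $\nu(d)$ by definition. For $d=1,2$ there is a single orbit, matching the convention $\nu(2):=1$ (and $\nu(1)=1$). I do not expect a genuine obstacle here; the only mild subtlety is the bookkeeping with the two sign choices $\epsilon_1,\epsilon_2$ and the overall $-\mathrm{id}$, and the small-$d$ edge cases where the involution on units has fixed points — these are handled by the stated convention for $\nu$.
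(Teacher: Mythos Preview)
Your proposal is correct and follows essentially the same approach as the paper: both arguments identify $O(L_{n,d})$ as the matrices $\left(\begin{smallmatrix}\pm1&0\\c&\pm1\end{smallmatrix}\right)$, read off that the orbit of $(d,y)$ consists of the vectors $(\pm d,\,cd\pm y)$, and conclude that orbits are parametrized by units in $\Integers/d\Integers$ modulo $y\mapsto -y$. Your write-up simply fills in details the paper leaves implicit, in particular the verification of the first sentence of the lemma and the justification that the involution on $(\Integers/d\Integers)^\times$ is fixed-point-free for $d>2$.
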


\begin{proof}
The isometry group of $L_{n,d}$ consists of matrices of the form
$\left(\begin{array}{cc}
\pm 1 & 0\\
c & \pm 1
\end{array}\right)$.
The orbit $O(L_{n,d})(d,y)$ consists of vectors of the form
$(\pm d, cd\pm y)$. 
Consequently, the number of  $O(L_{n,d})$-orbits
of primitive vectors in $L_{n,d}$ of degree $2n-2$ is equal to 
the number of orbits in $\{y : \ 0<y<d \ \mbox{and} \ \gcd(y,d)=1\}$ under the action $y\mapsto d-y$. 
The latter number is $\nu(d)$.
\end{proof}

%
\section{An example of a Lagrangian fibration for each value of the monodromy invariants}

Let $S$ be a projective $K3$ surface, $K(S)$ its topological $K$-group, generated by classes
of complex vector bundles, and $H^*(S,\Integers)$ its integral cohomology ring. 
Let $td_S:=1+\frac{c_2(S)}{12}$ be the Todd class of $S$ and $\sqrt{td_S}:=1+\frac{c_2(S)}{24}$
its square root.
The homomorphism $v:K(S)\rightarrow H^*(S,\Integers)$, given by 
$
v(x)=ch(x)\sqrt{td_S}
$
is an isomorphism of free abelian groups. Given a coherent
sheaf $E$ on $S$, the class $v(E)$ is called the {\em Mukai vector} of $E$.
Given integers $r$ and $s$ and a class $c\in H^2(S,\Integers)$, we will denote by 
$(r,c,s)$ the class of $H^*(S,\Integers)$, whose graded summand in $H^0(S,\Integers)$
is $r$ times the class Poincare dual to $S$, 
its graded summand in $H^2(S,\Integers)$ is $c$, 
and its graded summand in $H^4(S,\Integers)$ is
$s$ times the class Poincare dual to a point.
We endow $H^*(S,\Integers)$ with the {\em Mukai pairing} 
\[
((r,c,s),(r',c',s')) \ \ := \ \ (c,c')-rs'-r's,
\]
where $(c,c'):=\int_Sc\cup c'.$ 
Then $(v(x),v(y))=-\chi(x\otimes y)$, where $\chi:K(S)\rightarrow \Integers$ is the Euler
characteristic. $H^*(S,\Integers)$, 
endowed with the Mukai pairing, is called the {\em Mukai lattice}. 
The Mukai lattice is an even unimodular lattice of rank $24$, which is  
isometric to the orthogonal direct sum
of two copies of the negative definite $E_8(-1)$ lattice and 
four copies of the even unimodular rank $2$ hyperbolic lattice $U$.

Let $v\in K(S)$ be the class with Mukai vector 
$(0,d\xi,s)$ in $H^*(S,\Integers)$, such that  
$\xi$ a primitive effective class in $H^{1,1}(S,\Integers)$, $(\xi,\xi)>0$, 
$d$ is a positive integer, and $\gcd(d,s)=1$. 
 There is a system of hyperplanes in the ample cone of $S$, called $v$-walls,
that is countable but locally finite \cite[, Ch. 4C]{huybrechts-lehn-book}.
An ample class is called {\em $v$-generic}, if it does not
belong to any $v$-wall. Choose a $v$-generic ample class $H$. 
Let $M_H(v)$ be the moduli space of $H$-stable  
sheaves on the $K3$ surface $S$ with class $v$.
$M_H(v)$ is a smooth projective irreducible holomorphic symplectic variety
of $K3^{[n]}$-type, with $n=\frac{(v,v)+2}{2}=\frac{d^2(\xi,\xi)+2}{2}$.
This is a special case of a result, which is due to several people, including 
Huybrechts, Mukai, O'Grady, and Yoshioka. It can be found in its final form in
\cite{yoshioka-abelian-surface}.

Over $S\times M_H(v)$ there exists a universal sheaf
$\F$, possibly twisted with respect to a non-trivial 
Brauer class pulled-back from $M_H(v)$.
Associated to $\F$ is a class $[\F]$ in $K(S\times M_H(v))$
(\cite{markman-integral-generators}, Definition 26).
Let $\pi_i$ be the projection from $S\times M_H(v)$ onto the $i$-th factor. 
Denote by $v^\perp$ the sub-lattice  in $H^*(S,\Integers)$ orthogonal to $v$.
The second integral cohomology $H^2(M_H(v),\Integers)$, its Hodge structure, 
and its Beauville-Bogomolov-Fujiki pairing, are all described by 
Mukai's Hodge-isometry
\begin{equation}
\label{eq-Mukai-isomorphism}
\theta \ : \ v^\perp \ \ \  \longrightarrow \ \ \  H^2(M_H(v),\Integers),
\end{equation}
given by $\theta(x):=c_1\left(\pi_{2_!}\{\pi_1^!(x^\vee)\otimes [\F]\}\right)$
(see \cite{yoshioka-abelian-surface}).

We provide next an example of a moduli space $M_H(v)$  
and a primitive isotropic class $\alpha\in H^{1,1}(M_H(v),\Integers)$, such that $[L,v](\alpha)$
is represented by $(L_{n,d},(d,b))$, for every integer $n\geq 2$, for every positive integer $d$, such that $d^2$
divides $n-1$, and for every integer $b$ satisfying $\gcd(b,d)=1$.

\begin{example}
\label{ex-completely-integrable-moduli-spaces-for-each-value-of-the-monodromy-invariant}
Let $d$ be a positive integer, such that $d^2$ divides $n-1$.
Let $S$ be a $K3$ surface with a nef line bundle $\LB$
of degree $\frac{2n-2}{d^2}$. Let $\lambda$ be the class $c_1(\LB)$ in $H^2(S,\Integers)$.
Fix an integer $b$ satisfying $\gcd(b,d)=1$.
Set $v:=(0,d\lambda,s)$, where $s$ is an integer satisfying $sb=1$ (modulo $d$).
Then $v$ is a primitive Mukai vector and $(v,v)=2n-2$. 
Choose a $v$-generic ample line bundle $H$.
A sheaf $F$ of class $v$ is $H$-stable, if and only if it is
$H$-semi-stable. The moduli space $M_H(v)$,  of $H$-stable sheaves of class $v$, is smooth,
projective, holomorphic symplectic, and of $K3^{[n]}$-type.
Set $\alpha:=\theta((0,0,1))$.
Let $\iota:H^2(M_H(v),\Integers)\rightarrow H^*(S,\Integers)$ be the composition of $\theta^{-1}$ with the
inclusion of $v^\perp$ into $H^*(S,\Integers)$. 
A Mukai vector $(r,c,t)$ belongs to $v^\perp$, if and only if $rs=d(c,\lambda)$. It follows that $d$ divides $r$,
since $\gcd(d,s)=1$. Thus, $\div(\alpha,\bullet)=d$. 
Now
\[
\iota(\alpha)-bv=(0,-bd\lambda,1-bs)
\]
is divisible by $d$, by our assumption on $s$. 
Hence, the monodromy invariant $[L,v](\alpha)$ is equal to the isometry class of
$(L_{n,d},(d,b))$, by Lemma \ref{lemma-classification}.
The cohomology $H^1(S,\LB^d)$ vanishes, since
$\LB$ is a nef divisor of positive degree \cite[Prop. 1]{mayer}.
Thus, the vector space $H^0(S,\LB^d)$ has dimension $\chi(\LB^d)=n+1$.
The support morphism 
$\pi:M_H(v)\rightarrow \linsys{\LB^d}$ realizes $M_H(v)$ as a completely integrable system. 
The equality $\pi^*c_1(\StructureSheaf{\linsys{\LB^d}}(1))=\alpha$ is easily verified. 
\end{example}


%
\section{Period domains and period maps}
\label{sec-period-domains}
%
\subsection{A projective $K3$ surface associated to an isotropic class}
\label{sec-S-alpha}
Let $X$ be an irreducible holomorphic symplectic manifold of $K3^{[n]}$-type, $n\geq 2$.
Assume that there exists a non-zero primitive isotropic class $\alpha\in H^{1,1}(X,\Integers)$. 
Let $\widetilde{\Lambda}$ be the Mukai lattice. Choose a primitive isometric embedding 
$\iota:H^2(X,\Integers) \rightarrow \widetilde{\Lambda}$ 
in the canonical $O(\widetilde{\Lambda})$-orbit $\iota_X$ of Theorem \ref{thm-orbit-iota-X}. 
Set $\widetilde{\Lambda}_\ComplexNumbers:=\widetilde{\Lambda}\otimes_\Integers\ComplexNumbers$.
Endow 
$\widetilde{\Lambda}_\ComplexNumbers$ with 
the weight $2$ Hodge structure, so that $\widetilde{\Lambda}_\ComplexNumbers^{2,0}=\iota(H^{2,0}(X))$.
Set $\beta:=\iota(\alpha)$.
Then $\beta$ belongs to $\widetilde{\Lambda}_\ComplexNumbers^{1,1}$.
Set
\[
\Lambda_{k3} \ \ := \ \ \beta^\perp_{\widetilde{\Lambda}}/\Integers\beta
\]
and endow 
$\Lambda_{k3}$ with the 
induced Hodge structure. 
Let $U$ be the even unimodular 
rank $2$ lattice of signature $(1,1)$, and $E_8(-1)$  the negative definite $E_8$ lattice.
Then $\Lambda_{k3}$  is isometric to the $K3$ lattice,
which is the orthogonal direct sum of two copies of $E_8(-1)$ and three copies of $U$.
Indeed, this is clear if $\beta$ is a class in a direct summand of $\widetilde{\Lambda}$ isometric to $U$. 
It follows in general, since the isometry group of $\widetilde{\Lambda}$ acts transitively on the 
set of primitive isotropic classes in $\widetilde{\Lambda}$.
The induced Hodge structure on $\Lambda_{k3}$ 
is the weight $2$ Hodge structure of some $K3$ surface $S(\alpha)$, by the surjectivity of the period map.

Let $v$ be a generator of the rank $1$ sub-lattice of $\widetilde{\Lambda}$ orthogonal to the image
of $\iota$. Then $v$ is of Hodge-type $(1,1)$. Set $\Lambda:= H^2(X,\Integers)$. Then $v^\perp$ is isometric to $\Lambda$. 
We claim that $(v,v)=2n-2$. 
Indeed, the pairing induces an isomorphism of the two discriminant groups
$(\Integers v)^*/\Integers v$ and $\Lambda^*/\Lambda$, since  $\Integers v$ and $\Lambda$
are a pair of primitive sublattices, which are orthogonal complements in the unimodular lattice $\widetilde{\Lambda}$.
We conclude that the order $|(v,v)|$ of $(\Integers v)^*/\Integers v$ is equal to the order $2n-2$ of
$\Lambda^*/\Lambda$. Finally, $(v,v)>0$, by comparing the signatures of $\Lambda$ and $\widetilde{\Lambda}$.

Let $\bar{v}$ be the coset $v+\Integers\beta$ in $\Lambda_{k3}$.
Then $\bar{v}$ is of Hodge-type $(1,1)$ and 
$(\bar{v},\bar{v})=2n-2$. Hence $S(\alpha)$ is a projective $K3$ surface
(even if $X$ is not projective). We may further choose the Hodge isometry 
$\eta:H^2(S(\alpha),\Integers)\rightarrow \Lambda_{k3}$, so that 
that $\bar{v}$ corresponds  to a class in the positive cone of $S(\alpha)$,
possibly after replacing $v$ by $-v$. We may further assume that $\bar{v}$ corresponds to a nef class 
of $S(\alpha)$, possibly after replacing $\eta$ with $\eta\circ w$, where 
$w$ is an element of the subgroup 
$W\subset O^+(H^2(S(\alpha),\Integers))$, generated by reflections by classes of smooth rational curves on $S(\alpha)$
\cite[Prop. 1.9]{looijenga-peters}.

%
\subsection{A period domain as an affine line bundle over another}
Keep the notation of section \ref{sec-S-alpha}.
Set $\Lambda:=H^2(X,\Integers)$.
Set $d:=\div(\alpha,\bullet)$. 
Let $\alpha^\perp_\Lambda$
be the (degenerate) lattice orthogonal to $\alpha$ in $\Lambda$.
Set $Q_\alpha:=\alpha^\perp_\Lambda/\Integers\alpha.$

\begin{lem}
\label{lemma-Lambda-alpha-is-isometric-to-v-bar-perp}
$Q_\alpha$ is isometric to the sub-lattice $\bar{v}^\perp$ of $\Lambda_{k3}$ and both are 
isometric to the orthogonal direct sum
\[
E_8(-1)\oplus E_8(-1)\oplus U\oplus U \oplus \Integers \lambda,
\]
where $(\lambda,\lambda)=\frac{2-2n}{d^2}$.
\end{lem}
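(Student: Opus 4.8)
The plan is to prove that $Q_\alpha$ and $\bar v^\perp$ are isometric by realizing both inside the Mukai lattice $\widetilde\Lambda$, and then to identify the common isometry type by a discriminant-form computation. First I would set up the two obvious rational isometries. Since $\iota:\Lambda\hookrightarrow\widetilde\Lambda$ is a primitive embedding with $\iota(\alpha)=\beta$ and $v$ a generator of $\iota(\Lambda)^\perp$, the saturation of $\mathrm{span}_\Integers\{\beta,v\}$ in $\widetilde\Lambda$ is the lattice $L=L_{n,d}$ of Lemma \ref{lemma-classification}. Its orthogonal complement $L^\perp_{\widetilde\Lambda}$ sits inside both $\iota(\Lambda)=v^\perp_{\widetilde\Lambda}$ and inside $\beta^\perp_{\widetilde\Lambda}$, hence maps into $Q_\alpha=\alpha^\perp_\Lambda/\Integers\alpha$ (via $\iota$ followed by the quotient) and into $\bar v^\perp\subset\Lambda_{k3}=\beta^\perp_{\widetilde\Lambda}/\Integers\beta$. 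I would check that each of these maps is an isometric embedding of finite index: $\iota(\alpha^\perp_\Lambda)=\iota(\Lambda)\cap\beta^\perp_{\widetilde\Lambda}$ is the saturation of $L^\perp\oplus\Integers\beta$ intersected with $\iota(\Lambda)$, so modulo $\beta$ it contains $L^\perp$ with finite index, and similarly for $\bar v^\perp$. This already shows $Q_\alpha$ and $\bar v^\perp$ are rationally isometric; both have signature $(2,19)$ and rank $21$ by counting.

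The second step is to pin down the isometry type exactly. I would compute the discriminant form of $Q_\alpha$ directly from the chain of primitive sublattices $\Integers\alpha\subset\alpha^\perp_\Lambda\subset\Lambda$. The key numerical input is $\div(\alpha,\bullet)=d$ together with $(\alpha,\alpha)=0$; a standard computation (as in Nikulin's theory of discriminant forms) gives that $\alpha^\perp_\Lambda/\Integers\alpha$ has discriminant group of order $\frac{2n-2}{d^2}$ coming from the class $\lambda$ of self-intersection $\frac{2-2n}{d^2}$, after splitting off the unimodular part. Concretely, using the explicit model from the proof of Lemma \ref{lemma-classification}(\ref{lemma-item-L-is-L-n-d}): writing $\alpha=d\xi+b\delta$ with $\xi$ primitive in the unimodular $K3$-lattice $\delta^\perp_\Lambda$, one sees $\alpha^\perp_\Lambda\supset\xi^\perp_{\delta^\perp_\Lambda}$, the latter being unimodular of signature $(2,18)$, hence isometric to $E_8(-1)^{\oplus2}\oplus U^{\oplus2}$; and the remaining rank-one piece modulo $\alpha$ is generated by the image of a suitable combination of $\xi$ and $\delta$, with self-intersection forced to be $\frac{2-2n}{d^2}$ by the degree constraint $a^2(\xi,\xi)=(2n-2)b^2$ derived there. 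Then I would invoke uniqueness of lattices in this genus — an even lattice of signature $(2,19)$ with this discriminant form is unique in its genus by Nikulin's criterion, since the rank exceeds the length of the discriminant group by at least $2$ — to conclude that $Q_\alpha$ is isometric to $E_8(-1)^{\oplus2}\oplus U^{\oplus2}\oplus\Integers\lambda$ with $(\lambda,\lambda)=\frac{2-2n}{d^2}$.

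For $\bar v^\perp$ I would run the parallel computation: $\bar v\in\Lambda_{k3}$ is a primitive class with $(\bar v,\bar v)=2n-2$ (established in section \ref{sec-S-alpha}) lying in the $K3$-lattice $\Lambda_{k3}\cong E_8(-1)^{\oplus2}\oplus U^{\oplus3}$. But $\bar v$ itself may have divisibility $>1$ in $\Lambda_{k3}$; its divisibility is exactly $d$, which one reads off from the fact that $v$ has divisibility $1$ in $\widetilde\Lambda$ while $\mathrm{span}\{\beta,v\}$ has saturation $L$ with the null vector $\beta$ dividing things by $d$. So $\bar v^\perp\subset\Lambda_{k3}$ has the same signature and discriminant form as $Q_\alpha$ by the identical divisibility-and-degree computation, and genus-uniqueness again identifies it with $E_8(-1)^{\oplus2}\oplus U^{\oplus2}\oplus\Integers\lambda$. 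Combining the two identifications gives the claimed isometry $Q_\alpha\cong\bar v^\perp$ and the explicit form.

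The main obstacle I expect is the discriminant-form bookkeeping: carefully tracking how passing to $\alpha^\perp$ and then quotienting by $\Integers\alpha$ (a null vector of divisibility $d$) transforms the discriminant group, and verifying that the resulting form genuinely matches the one attached to $\Integers\lambda$ with $(\lambda,\lambda)=\frac{2-2n}{d^2}$ rather than some other lattice of the same determinant. The cleanest route around this, which I would adopt, is to avoid abstract discriminant manipulations altogether and instead exhibit the explicit orthogonal decomposition on the Hilbert-scheme model $X=S^{[n]}$ — where $\delta$ is half the big diagonal and $\alpha=d\xi+b\delta$ — since the isometry type of $Q_\alpha$ depends only on the $Mon^2(X)$-orbit of $\alpha$, equivalently only on $(n,d)$, by Lemma \ref{lemma-classification}(\ref{lemma-item-isometry-class-of-L-v-determines-Mon-2-orbit}), so it suffices to verify the statement for one representative. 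There the sublattice $\xi^\perp\cap\delta^\perp$ is visibly unimodular of signature $(2,18)$ and the complementary rank-one summand is computed by a two-by-two Gram matrix manipulation essentially identical to the one already carried out in the proof of Lemma \ref{lemma-classification}.
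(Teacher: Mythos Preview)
Your plan would work, but you are missing the observation that makes the paper's proof a two-line diagram chase: the ``finite index'' embeddings you set up are in fact \emph{equalities}. Concretely, $\iota(\Lambda)=v^\perp_{\widetilde\Lambda}$ (since $\iota$ is primitive into a unimodular lattice), so
\[
\iota(\alpha^\perp_\Lambda)\;=\;\iota(\Lambda)\cap\beta^\perp_{\widetilde\Lambda}\;=\;v^\perp\cap\beta^\perp\;=\;L^\perp_{\widetilde\Lambda},
\]
the last equality because $L^\perp$ depends only on $L\otimes\RationalNumbers={\rm span}_\RationalNumbers\{v,\beta\}$. Hence $\iota$ already identifies $\alpha^\perp_\Lambda$ with $L^\perp$ on the nose, and passing to the quotient gives an \emph{isomorphism} $\bar\iota:Q_\alpha\xrightarrow{\ \cong\ }L^\perp/\Integers\beta$. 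Likewise, the inclusion $L^\perp\hookrightarrow\beta^\perp$ induces an injection $j:L^\perp/\Integers\beta\hookrightarrow\Lambda_{k3}$ whose image is exactly $\bar v^\perp$: if $\bar z\in\Lambda_{k3}$ satisfies $(\bar z,\bar v)=0$, lift to $z\in\beta^\perp$ and note $(z,v)=(\bar z,\bar v)=0$ and $(z,\beta)=0$, so $z\in L^\perp$. This gives $Q_\alpha\cong\bar v^\perp$ directly, with no discriminant bookkeeping and no appeal to genus uniqueness.

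For the explicit isometry type, the paper uses Lemma~\ref{lemma-classification} to write $\bar v=d\xi$ with $\xi\in\Lambda_{k3}$ \emph{primitive} of self-intersection $\frac{2n-2}{d^2}$; since $\Lambda_{k3}$ is unimodular, $\xi$ has divisibility $1$, so by Eichler's criterion one may place $\xi$ in a single $U$-summand, and $\xi^\perp=\bar v^\perp$ is then visibly $E_8(-1)^{\oplus2}\oplus U^{\oplus2}\oplus\Integers\lambda$ with $(\lambda,\lambda)=\frac{2-2n}{d^2}$. Your discriminant-form route and your Hilbert-scheme-model verification would both reach the same conclusion, but they are detours around an equality you already had in hand.
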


\begin{proof}
The $K3$ lattice $\Lambda_{k3}:=[\beta^\perp_{\widetilde{\Lambda}}]/\Integers\beta$
is isometric to $E_8(-1)\oplus E_8(-1)\oplus U\oplus U \oplus U$.
Let $L$ be the saturation 
of ${\rm span}_\Integers\{v,\beta\}$ in $\widetilde{\Lambda}$.
Then $L$ is contained in $\beta^\perp_{\widetilde{\Lambda}}$ and the image
of $L$ in $\Lambda_{k3}$ is spanned by a class 
$\xi$ of self-intersection $\frac{2n-2}{d^2}$, such that $\bar{v}=d\xi$, by Lemma \ref{lemma-classification}.

It remains to prove that $Q_\alpha$ is isometric to $\xi^\perp_{\Lambda_{k3}}$.
Consider the following commutative diagram.
\[
\begin{array}{ccccccc}
0\rightarrow & \Integers\beta& \rightarrow & \beta^\perp_{\widetilde{\Lambda}}&
\rightarrow &
\Lambda_{k3} & \rightarrow 0
\\
&= \ \uparrow \ \hspace{1Ex} & & \uparrow & & \hspace{1ex} \ \uparrow \ j
\\
0\rightarrow & \Integers\beta & \rightarrow & L^\perp_{\widetilde{\Lambda}} & 
\rightarrow & L^\perp_{\widetilde{\Lambda}} /\Integers\beta&\rightarrow 0
\\
& \cong \ \uparrow \ \hspace{1Ex} & & \cong \ \uparrow \ \iota & & 
\hspace{1Ex} \ \uparrow \ \bar{\iota}
\\
0\rightarrow & \Integers\alpha & \rightarrow & \alpha^\perp_\Lambda & \rightarrow &
Q_\alpha & \rightarrow 0.
\end{array} 
\]
The lower vertical arrow $\bar{\iota}$ in the rightmost column is evidently an isomorphism.
The image of the upper one $j$ is precisely $\xi^\perp_{\Lambda_{k3}}$.
\end{proof}

Let $\Omega_\Lambda$ be the period domain
\begin{equation}
\label{eq-period-domain}
\Omega_\Lambda:= \{\ell\in \PP[H^2(X,\ComplexNumbers)] \ : \
(\ell,\ell)=0 \ \mbox{and} \ (\ell,\bar{\ell})>0\}.
\end{equation}
Set
\begin{equation}
\label{hyperplane-section-of-period-domain-orthogonal-to-alpha}
\Omega_{\alpha^\perp}:=\{\ell\in \Omega_\Lambda \ : \ (\ell,\alpha)=0\}.
\end{equation}
Then $\Omega_{\alpha^\perp}$ is an affine line-bundle over the period domain
\[
\Omega_{Q_\alpha} \ \ := \ \ 
\{\ell\in \PP[Q_\alpha\otimes_\Integers\ComplexNumbers] \ : \
(\ell,\ell)=0 \ \mbox{and} \  (\ell,\bar{\ell})>0 \}.
\]
Given a point of $\Omega_{Q_\alpha}$, corresponding to 
a one-dimensional subspace 
$\ell$ of $Q_\alpha\otimes_\Integers\ComplexNumbers$, 
we get a two dimensional subspace $V_\ell$ of $H^2(X,\ComplexNumbers)$ orthogonal to 
$\alpha$ and containing $\alpha$. 
The line in $\Omega_{\alpha^\perp}$, over the point
$\ell$ of $\Omega_{Q_\alpha}$, is 
$\PP[V_\ell]\setminus \{\PP[\ComplexNumbers\alpha]\}$.
Denote by 
\begin{equation}
\label{eq-q}
q \ : \ \Omega_{\alpha^\perp} \ \ \rightarrow \ \ \Omega_{Q_\alpha}
\end{equation}
the bundle map.
A {\em semi-polarized} $K3$ surface of degree $k$ is a pair consisting of a $K3$ surface together with a nef line bundle 
of degree $k$ (also known as weak algebraic polarization of degree $k$ in \cite[Section 5]{morrison-some-remarks}).
Note that each component of $\Omega_{Q_\alpha}$ is isomorphic to the period domain
of the moduli space of semi-polarized $K3$ surfaces of degree
$\frac{2n-2}{d^2}$. 

\begin{defi}
\label{def-tate-shafarevich-lines}
Fibers of $q$ will be called
{\em Tate-Shafarevich lines} for reasons that will become apparent in section 
\ref{sec-tate-shafarevich}.
\end{defi}

Tate-Shafarevich lines are limits of twistor lines, as will be explained in Remark
\ref{rem-Tate-Shafarevich-lines-are-limits-of-twistor-lines}.

%
\subsection{The period map}
\label{sec-period-map}
Given a period $\ell\in \Omega_\Lambda$, set
$\Lambda^{1,1}(\ell,\Integers):=\{\lambda\in \Lambda \ : \ (\lambda,\ell)=0\}$.
Define $Q_\alpha^{1,1}(q(\ell),\Integers)$ similarly.
We get the short exact sequence
\[
0\rightarrow \Integers\alpha\rightarrow 
\left[\alpha^\perp\cap \Lambda^{1,1}(\ell,\Integers)\right]\rightarrow 
Q_\alpha^{1,1}(q(\ell),\Integers)\rightarrow 0.
\]

$\Omega_{\alpha^\perp}$ 
has two connected components, since $\Omega_{Q_\alpha}$ has
two connected components. 
Indeed, $Q_\alpha$ has signature $(2,b_2(X)-4)$,
and a period $\ell$ comes with an oriented positive definite plane
$[\ell\oplus \bar{\ell}]\cap [\Lambda_\RealNumbers]$, 
which, in turn, determines the orientation of the positive cone
in $Q_\alpha\otimes_\Integers{\RealNumbers}$.

The positive cone $\widetilde{\C}_\Lambda$ in $\Lambda_\RealNumbers$ is the cone
\begin{equation}
\label{eq-positive-cone-in-Lambda}
\widetilde{\C}_\Lambda := \{x\in \Lambda_\RealNumbers \ : \ (x,x)>0\}.
\end{equation}
The cohomology group $H^2(\widetilde{\C}_\Lambda,\Integers)$ is isomorphic to $\Integers$
and an {\em orientation} of $\widetilde{\C}_\Lambda$ is the choice of one of the two generator 
of $H^2(\widetilde{\C}_\Lambda,\Integers)$. An orientation of $\widetilde{\C}_\Lambda$
determines an orientation of every positive definite three dimensional subspace of $\Lambda_\RealNumbers$
\cite[Lemma 4.1]{markman-torelli}.
A choice of an orientation of $\widetilde{\C}_\Lambda$ determines a choice of a
component of $\Omega_{\alpha^\perp}$ as follows. 
A period
$\ell\in \Omega_\Lambda$ determines the subspace $\Lambda^{1,1}(\ell,\RealNumbers)$
and the cone $\C'_\ell:=\{x\in \Lambda^{1,1}(\ell,\RealNumbers) \ : \ (x,x)>0\}$
in $\Lambda^{1,1}(\ell,\RealNumbers)$
has two connected components. 
A choice of a 
connected component of $\C'_\ell$ is equivalent to a choice of an orientation of the
positive cone of $\widetilde{\C}_\Lambda$. 
Indeed, a non-zero element $\sigma\in \ell$ and an element $\omega\in \C'_\ell$
determine a basis $\{{\mbox Re}(\sigma), {\mbox Im}(\sigma), \omega\}$,
hence an orientation, 
of a positive definite three dimensional subspace of $\Lambda_\RealNumbers$,
and the corresponding orientation of $\widetilde{\C}_\Lambda$ is independent of the choice
of $\sigma$ and $\omega$. 
Thus, the choice of the orientation of 
the positive cone $\widetilde{\C}_\Lambda$ determines a connected component 
$\C_\ell$ of $\C'_\ell$, called the {\em positive cone} (for the orientation). 
If $\ell$ belongs to $\Omega_{\alpha^\perp}$, then the class $\alpha$ belongs to 
$\Lambda^{1,1}(\ell,\RealNumbers)$ and $\alpha$ is in the closure of precisely
one of the two connected components of $\C'_\ell$. 
The connected component of $\Omega_{\alpha^\perp}$, compatible with the chosen orientation of 
$\widetilde{\C}_\Lambda$, is the one 
for which $\alpha$ belongs to the boundary of the positive cone $\C_\ell$ for the chosen orientation.

A {\em marked pair} $(Y,\psi)$ consists of an irreducible holomorphic symplectic manifold $Y$ 
and an isometry $\psi$ from $H^2(Y,\Integers)$ onto a fixed lattice. 
The moduli space of isomorphism classes of marked pairs is a 
non-Hausdorff complex manifold \cite{huybrechts-basic-results}.
Let $\FM^0_\Lambda$ be a connected component 
of the moduli space of marked pairs of $K3^{[n]}$-type, where the fixed lattice is $\Lambda$. 
The {\em period map} 
\[
P_0:\FM^0_\Lambda\rightarrow \Omega_\Lambda
\]
sends a marked pair $(Y,\psi)$ to the point $\psi(H^{2,0}(Y))$ of $\Omega_\Lambda$. 
$P_0$ is a holomorphic map and a local homeomorphism \cite{beauville}.
The positive cone $\C_Y$ is the connected component of
the cone $\{x\in H^{1,1}(Y,\RealNumbers) \ : \ (x,x)>0\}$ containing the K\"{a}hler cone. Hence, 
the positive cone in $H^2(Y,\RealNumbers)$ comes with a canonical orientation and the marking $\psi$ 
determines an orientation of the positive cone in $\widetilde{\C}_\Lambda$. We conclude that 
$\FM^0_\Lambda$ determines an orientation of the positive cone $\widetilde{\C}_\Lambda$ \cite[Sec. 4]{markman-torelli}.
Let 
\begin{equation}
\label{eq-connected-component-of-Omaga-alpha-perp}
\Omega_{\alpha^\perp}^+
\end{equation}
be the connected component
of $\Omega_{\alpha^\perp}$, inducing the same orientation of
$\widetilde{\C}_\Lambda$ as $\FM^0_\Lambda$.
Let
\begin{equation}
\label{eq-FM-alpha-perp}
\FM^0_{\alpha^\perp}
\end{equation}
be the inverse image $P_0^{-1}(\Omega_{\alpha^\perp}^+)$.

\begin{thm}
\label{thm-Torelli}
(The Global Torelli Theorem \cite{verbitsky,huybrechts-bourbaki})
The period map $P_0:\FM^0_\Lambda\rightarrow \Omega_\Lambda$ is surjective. 
Any two points in the same fiber of $P_0$ are inseparable. If
$(X_1,\eta_1)$ and $(X_2,\eta_2)$ correspond to two inseparable points in $\FM^0_\Lambda$,
then $X_1$ and $X_2$ are bimeromorphic. If the K\"{a}hler cone of $X$ is equal to its positive cone and 
$(X,\eta)$ corresponds to a point of $\FM^0_\Lambda$, then this point is separated.
\end{thm}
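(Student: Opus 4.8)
The statement packages Huybrechts' surjectivity theorem with Verbitsky's injectivity theorem, so the plan is to organise everything around the Hausdorff reduction of $\FM^0_\Lambda$ and around \emph{twistor lines}. First I would recall the twistor construction: a K\"{a}hler class $\kappa$ on an irreducible holomorphic symplectic manifold $Y$ gives a $\PP^1$-family of complex structures on the underlying real manifold, whose family of periods is a smooth rational curve $T_\kappa\subseteq\Omega_\Lambda$, a twistor line. Such a line is \emph{horizontal} for the period map in the strong sense that the twistor family, suitably marked (its base $\PP^1$ being simply connected), is a lift of $T_\kappa$ to $\FM^0_\Lambda$; in particular this lift is a complete rational curve. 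The two structural inputs I would import are that $\Omega_\Lambda$ is connected and simply connected (here $b_2(X)=23$), and that one can pass from any point of $\Omega_\Lambda$ to any other through a finite chain of twistor lines of the kind coming from K\"{a}hler classes (establishing the latter uses the density in $\Omega_\Lambda$ of periods of projective marked pairs, for which surjectivity is elementary).

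The core of the argument is the following. Following Verbitsky, I would first check that the relation ``$x$ and $y$ are not separated by disjoint open neighbourhoods'' is an equivalence relation on $\FM^0_\Lambda$ and that the quotient $\widehat{\FM}$, the \emph{Hausdorff reduction}, is a Hausdorff complex manifold, so that $P_0$ descends to a local homeomorphism $\widehat P\colon\widehat{\FM}\to\Omega_\Lambda$. The main obstacle, and the technical heart of the theorem, is to prove that $\widehat P$ is a \emph{covering map}: here the complete lifts of twistor lines do the work, supplying enough horizontal paths in $\widehat{\FM}$ over twistor chains in $\Omega_\Lambda$ to establish the path- and homotopy-lifting properties. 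Granting this, and since $\Omega_\Lambda$ is connected and simply connected while $\widehat{\FM}$ is non-empty and connected (being a quotient of the connected $\FM^0_\Lambda$), $\widehat P$ is a homeomorphism onto $\Omega_\Lambda$. This yields at once the surjectivity of $P_0$ and the assertion that two marked pairs with the same period lie over the same point of $\widehat{\FM}$, i.e.\ are inseparable in $\FM^0_\Lambda$.

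It remains to identify the inseparability classes. For the bimeromorphism assertion I would invoke Huybrechts' theorem: if $(X_1,\eta_1)$ and $(X_2,\eta_2)$ are inseparable, realising them as two distinct limits inside a single family produces a birational correspondence which is an isomorphism away from analytic subsets of codimension $\geq 2$, and the closure of its graph is then a bimeromorphic map $X_1\dashrightarrow X_2$ compatible with the markings $\eta_1$ and $\eta_2$. Finally, if the K\"{a}hler cone of $X$ equals its positive cone, I would argue that the fibre of $P_0$ over $\ell:=P_0(X,\eta)$, which is precisely the inseparability class of $(X,\eta)$, is a single point: its elements correspond to the chambers into which the walls orthogonal to (limits of) prime exceptional classes subdivide the positive cone $\C_\ell$, each chamber being the image under the marking of the K\"{a}hler cone of a bimeromorphic model, and the hypothesis forces that subdivision to be trivial. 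Hence $(X,\eta)$ is a separated point of $\FM^0_\Lambda$.
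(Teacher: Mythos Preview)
The paper does not prove this theorem: it is stated with attribution to \cite{verbitsky,huybrechts-bourbaki} and used as a black box throughout, so there is no ``paper's own proof'' to compare against. Your sketch is a faithful outline of Verbitsky's argument as presented in Huybrechts' Bourbaki exposition: Hausdorff reduction of $\FM^0_\Lambda$, the covering property of the descended period map established via liftable generic twistor lines, simple connectedness of $\Omega_\Lambda$, and Huybrechts' earlier results on inseparable points and bimeromorphic models.

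Two small remarks on the sketch itself. First, surjectivity of $P_0$ is due to Huybrechts and predates Verbitsky; you correctly attribute it, but note that it is logically independent of the covering argument (one does not need to first prove $\widehat P$ is a covering to get surjectivity). Second, your final paragraph on separatedness is slightly loose: the chamber decomposition you invoke is the \emph{K\"{a}hler-type} (or birational) chamber decomposition of the positive cone, and the precise mechanism is that inseparable marked pairs over the same period have K\"{a}hler cones mapping, via the markings, to distinct chambers of $\C_\ell$; when the K\"{a}hler cone already fills the whole positive cone there is nothing to decompose and Huybrechts' criterion (a bimeromorphic map pulling a K\"{a}hler class back to a K\"{a}hler class is biregular) forces the pairs to be isomorphic. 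With that clarification your outline is correct.
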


\begin{lem}
\label{lemma-connected}
$\FM^0_{\alpha^\perp}$ is path-connected.
\end{lem}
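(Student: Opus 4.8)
The goal is to show that $\FM^0_{\alpha^\perp} = P_0^{-1}(\Omega_{\alpha^\perp}^+)$ is path-connected. Since $\Omega_{\alpha^\perp}^+$ is itself connected (indeed it is an affine line bundle over the connected period domain $\Omega_{Q_\alpha}^+$, hence connected), and $P_0$ is a local homeomorphism, the issue is purely about the failure of $P_0$ to be a covering map: a priori different "sheets" of $\FM^0_\Lambda$ over $\Omega_{\alpha^\perp}^+$ could be disconnected from one another. The strategy I would pursue is to use twistor lines to connect any two marked pairs whose periods lie in $\Omega_{\alpha^\perp}^+$, together with the fact (Theorem \ref{thm-Torelli}, via Verbitsky) that $\FM^0_\Lambda$ is connected and that inseparable points are bimeromorphic and in particular can be joined.

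\textbf{Step 1: reduce to connecting points over a fixed period.} Given two marked pairs $(X_i,\psi_i) \in \FM^0_{\alpha^\perp}$, $i=1,2$, their periods $P_0(X_i,\psi_i)$ lie in the connected set $\Omega_{\alpha^\perp}^+$. Choose a path in $\Omega_{\alpha^\perp}^+$ joining them. Because $P_0$ is a local homeomorphism onto $\Omega_\Lambda$, I can try to lift this path; the obstruction is that the lift may run out of $\FM^0_{\alpha^\perp}$ or fail to exist globally. Instead, I would cover the path by finitely many twistor families. Recall that for a marked pair $(X,\psi)$ with a Kähler class $\omega$ whose cohomology class is orthogonal to $\alpha$ — equivalently, $\alpha$ stays of Hodge type $(1,1)$ along the twistor deformation — the associated twistor line lies entirely in $\Omega_{\alpha^\perp}^+$. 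The key point is that such Kähler classes exist and are plentiful: $\alpha$ is isotropic, so $\alpha^\perp \cap \C_X$ is nonempty (it is the boundary point $\alpha$ together with a full half-space worth of classes limiting to it — more precisely one takes Kähler classes approaching $\alpha$), and one can find genuine Kähler classes in $\alpha^\perp$ because $\alpha$ lies on the boundary of the positive cone and the Kähler cone is open in $H^{1,1}$.

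\textbf{Step 2: the main obstacle — staying inside $\Omega_{\alpha^\perp}^+$ and realizing enough periods.} The delicate part is that $\Omega_{\alpha^\perp}^+$ is a \emph{proper} analytic subset of $\Omega_\Lambda$, of codimension one, so a generic twistor line is \emph{not} contained in it. I must use only twistor lines through Kähler classes lying in $\alpha^\perp_\Lambda \otimes \RR$. Such a twistor line through a period $\ell$ sweeps out, inside $\Omega_{\alpha^\perp}^+$, a curve on which $\alpha$ remains $(1,1)$; and crucially, for a \emph{very general} such $\ell$ the only $(1,1)$ classes are multiples of $\alpha$, so via the surjectivity part of the Global Torelli Theorem and the density of twistor lines in the period domain (Huybrechts' projectivity/connectedness argument for $\FM^0$), these restricted twistor lines connect a dense set of periods in $\Omega_{\alpha^\perp}^+$. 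I would then invoke the standard argument from Huybrechts' basic results paper: the subset of $\FM^0_{\alpha^\perp}$ reachable from $(X_1,\psi_1)$ by chains of such twistor lines is open (twistor lines through a given point fill a neighborhood within $\Omega_{\alpha^\perp}^+$, since the Kähler cone is open) and, by the inseparability structure of Theorem \ref{thm-Torelli}, its complement is also open; connectedness of $\Omega_{\alpha^\perp}^+$ then forces the chain-connected component to surject onto $\Omega_{\alpha^\perp}^+$, and a final application of Theorem \ref{thm-Torelli} (inseparable points are bimeromorphic, hence lie in the same connected component of the non-Hausdorff manifold $\FM^0_\Lambda$) pins $(X_2,\psi_2)$ into the same path-component.

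\textbf{Summary of the argument's shape.} First I establish that $\Omega_{\alpha^\perp}^+$ is connected and that every point of $\FM^0_{\alpha^\perp}$ lies on a twistor line contained in $\FM^0_{\alpha^\perp}$ (coming from a Kähler class in $\alpha^\perp$). Then I show the union of all such twistor lines through a fixed point is a neighborhood of that point in $\FM^0_{\alpha^\perp}$, so "twistor-chain-reachability" is an open equivalence relation. Then I use the Global Torelli Theorem to see that the projection of each chain-reachable piece to $\Omega_{\alpha^\perp}^+$ is open with open complement, hence everything, so any two points have periods connected by a twistor chain; finally inseparability (again Torelli) bridges the gap between twistor-chain-reachability and genuine path-connectedness. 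The hard part, as indicated, is controlling the twistor lines so they remain in the codimension-one locus $\Omega_{\alpha^\perp}^+$ while still being abundant enough to connect it — this is exactly where the isotropy of $\alpha$ (guaranteeing Kähler classes in $\alpha^\perp$) is essential.
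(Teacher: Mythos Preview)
Your approach has a genuine gap in Step 2. You claim one can find K\"ahler classes $\omega$ orthogonal to $\alpha$, so that the associated twistor line lies in $\Omega_{\alpha^\perp}^+$. This is false: no class in the positive cone $\C_X \subset H^{1,1}(X,\RealNumbers)$ can be orthogonal to the isotropic class $\alpha$. Indeed, $H^{1,1}(X,\RealNumbers)$ has signature $(1, b_2(X)-3)$, so for any $\omega$ with $(\omega,\omega)>0$ the subspace $\omega^\perp \cap H^{1,1}(X,\RealNumbers)$ is negative definite, whereas $(\alpha,\alpha)=0$. Consequently a genuine twistor line through a period $\ell \in \Omega_{\alpha^\perp}^+$ meets $\Omega_{\alpha^\perp}^+$ only at $\ell$ itself; compare Remark~\ref{rem-Tate-Shafarevich-lines-are-limits-of-twistor-lines}, which explains that only the \emph{degenerate} limit of twistor lines---the Tate-Shafarevich line---lies in $\Omega_{\alpha^\perp}$. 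Twistor lines therefore cannot be used to move within $\FM^0_{\alpha^\perp}$.

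The twistor machinery is in fact unnecessary; the ingredients you assemble at the end already suffice. Since $\FM^0_{\alpha^\perp}$ is a (non-Hausdorff) complex manifold it is locally path-connected, so its path components are open. Let $C$ be one of them. Then $P_0(C)$ is open in $\Omega_{\alpha^\perp}^+$ because $P_0$ is a local homeomorphism, and likewise $P_0(\FM^0_{\alpha^\perp}\setminus C)$ is open. These two images are disjoint: a period lying in both would have preimages $c\in C$ and $c'\notin C$ in the same fiber of $P_0$, hence inseparable by Theorem~\ref{thm-Torelli}, hence in the same path component (any two neighborhoods of inseparable points share a point, which one joins to each by a path in a coordinate chart), contradicting $c'\notin C$. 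Connectedness of $\Omega_{\alpha^\perp}^+$ then forces $\FM^0_{\alpha^\perp}\setminus C=\emptyset$. This is essentially the paper's argument.
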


\begin{proof}
The statement follows from the Global Torelli Theorem \ref{thm-Torelli} and the fact that $\Omega_{\alpha^\perp}^+$
is connected. The proof is 
similar to that of \cite[Proposition 5.11]{markman-prime-exceptional}.
\end{proof}

\begin{prop}
\label{prop-same-connected-component}
Let $X_1$ and $X_2$ be two irreducible holomorphic symplectic manifolds of $K3^{[n]}$-type
and $\eta_j:H^2(X_j,\Integers)\rightarrow \Lambda$, $j=1,2$, isometries.
The marked pairs 
$(X_1,\eta_1)$ and $(X_2,\eta_2)$ belong to the same 
connected moduli space $\FM^0_{\alpha^\perp}$, provided the following conditions hold.
\begin{enumerate}
\item
\label{cor-item-orbits-are-equal}
The $O(\widetilde{\Lambda})$ orbits $\iota_{X_j}\circ\eta_j^{-1}$, $j=1,2$, are equal. Above
$\iota_{X_j}$ is the canonical $O(\widetilde{\Lambda})$-orbit of primitive isometric embeddings
of $H^2(X_j,\Integers)$ into $\widetilde{\Lambda}$ mentioned in Theorem \ref{thm-orbit-iota-X}.
\item
\label{cor-item-orientation-preserving}
$\eta_2^{-1}\circ\eta_1:H^2(X_1,\Integers)\rightarrow H^2(X_2,\Integers)$ is orientation preserving.
\item
\label{cor-item-alpha-pulls-back-to-a-1-1-class}
$\eta_j^{-1}(\alpha)$ is of Hodge type $(1,1)$ and it belongs to the boundary of the positive cone
$\C_{X_j}$ in $H^{1,1}(X_j,\RealNumbers)$, for $j=1,2$.
\end{enumerate}
\end{prop}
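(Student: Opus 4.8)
The plan is to deduce this from the Global Torelli Theorem (Theorem \ref{thm-Torelli}) together with the characterization of the monodromy group (Theorem \ref{thm-Mon-2-is-stabilizer}) and the connectedness of $\FM^0_{\alpha^\perp}$ (Lemma \ref{lemma-connected}). First I would reduce to the case where $X_2$ and $X_1$ are marked into the \emph{same} fixed lattice $\Lambda$ via a common marking; condition (\ref{cor-item-orientation-preserving}) says $\eta_2^{-1}\circ\eta_1$ is orientation-preserving, and condition (\ref{cor-item-orbits-are-equal}) says that $g:=\eta_2^{-1}\circ\eta_1$ satisfies $\iota_{X_1}=\iota_{X_2}\circ g$ at the level of $O(\widetilde\Lambda)$-orbits, i.e.\ $g$ stabilizes the orbit in the sense preceding Theorem \ref{thm-Mon-2-is-stabilizer}. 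Hence by Theorem \ref{thm-Mon-2-is-stabilizer}, $g$ is a monodromy operator, $g\in Mon^2(X_1)$. Since $g$ is realized by a deformation of $X_1$, the marked pair $(X_1,\eta_1)$ and $(X_1, \eta_2\circ g^{-1}\cdots)$ — more precisely, $(X_1,g\circ\eta_1)$ read through the identification — lies in the same connected component of the moduli space of marked pairs as $(X_2,\eta_2)$ after relabelling. The upshot of this bookkeeping is that it suffices to show the two marked pairs lie in the same component $\FM^0_\Lambda$ of the moduli space of marked pairs (of $K3^{[n]}$-type, for the lattice $\Lambda$), and then that this component is exactly the one whose period map lands in $\Omega^+_{\alpha^\perp}$.

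The identification of the component goes as follows. By construction, $\FM^0_\Lambda$ is determined by (a) the $O(\widetilde\Lambda)$-orbit of markings compatible with $\iota_X$ and (b) a choice of orientation of the positive cone $\widetilde\C_\Lambda$, as recalled in Section \ref{sec-period-map}. Condition (\ref{cor-item-orbits-are-equal}) pins down (a); condition (\ref{cor-item-orientation-preserving}) together with the fact that each $X_j$ carries its canonical orientation of the positive cone (coming from the Kähler cone) pins down (b), since an orientation-preserving isometry carries the canonical orientation of $H^2(X_1,\RR)$ to that of $H^2(X_2,\RR)$. Therefore $(X_1,\eta_1)$ and $(X_2,\eta_2)$ both lie in one and the same $\FM^0_\Lambda$. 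Condition (\ref{cor-item-alpha-pulls-back-to-a-1-1-class}) then shows that $P_0(X_j,\eta_j)=\eta_j(H^{2,0}(X_j))$ is a period $\ell_j$ with $(\ell_j,\alpha)=0$, so $\ell_j\in\Omega_{\alpha^\perp}$; and the requirement that $\eta_j^{-1}(\alpha)$ lie on the boundary of the \emph{positive} cone $\C_{X_j}$ (not merely the cone $\C'$) is precisely the compatibility condition, described in Section \ref{sec-period-map}, singling out the component $\Omega^+_{\alpha^\perp}$ over the component $\Omega_{Q_\alpha}$ induced by the chosen orientation. Hence $\ell_j\in\Omega^+_{\alpha^\perp}$, so $(X_j,\eta_j)\in P_0^{-1}(\Omega^+_{\alpha^\perp})=\FM^0_{\alpha^\perp}$ for $j=1,2$.

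I expect the main obstacle to be the careful matching of orientation data: there are two layers of orientation choices — the orientation of $\widetilde\C_\Lambda$ determining $\FM^0_\Lambda$, and the induced orientation of the positive cone in $Q_\alpha\otimes_\Integers\RR$ determining the component of $\Omega_{Q_\alpha}$ (hence of $\Omega_{\alpha^\perp}$) — and one must verify these are compatibly propagated by an orientation-preserving isometry $g$ fixing the boundary ray $\RR_{\geq0}\alpha$ of the positive cone. The subtlety is that fixing $\alpha$ up to positive scaling, together with orientation-preservation of $g$ on the ambient lattice, forces orientation-preservation on the quotient $Q_\alpha$; this is where the hypothesis that $\eta_j^{-1}(\alpha)$ lies in the boundary of $\C_{X_j}$ (as opposed to $-\C_{X_j}$) is essential, and one has to invoke the orientation discussion of Section \ref{sec-period-map} rather than argue by a direct sign computation. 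Once this is in place, the statement is immediate from Theorem \ref{thm-Torelli} and the description of $\FM^0_{\alpha^\perp}$.
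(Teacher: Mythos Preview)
Your proposal is correct and follows the same route as the paper: conditions (\ref{cor-item-orbits-are-equal}) and (\ref{cor-item-orientation-preserving}) together with Theorem \ref{thm-Mon-2-is-stabilizer} yield that $\eta_2^{-1}\circ\eta_1$ is a parallel-transport operator, placing both marked pairs in the same $\FM^0_\Lambda$; condition (\ref{cor-item-alpha-pulls-back-to-a-1-1-class}) then lands both in $\FM^0_{\alpha^\perp}$, which is connected by Lemma \ref{lemma-connected}. One terminological slip: since $g=\eta_2^{-1}\circ\eta_1$ maps $H^2(X_1,\Integers)$ to $H^2(X_2,\Integers)$ with $X_1\neq X_2$ in general, it is a \emph{parallel-transport operator} in the sense of Definition \ref{def-monodromy}, not a monodromy operator in $Mon^2(X_1)$; and the Global Torelli Theorem enters only through Lemma \ref{lemma-connected}, not directly.
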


\begin{proof}
Conditions \ref{cor-item-orbits-are-equal} and \ref{cor-item-orientation-preserving} imply that 
$\eta_2^{-1}\circ\eta_1$ is a parallel-transport operator, by Theorem
\ref{thm-Mon-2-is-stabilizer}. Hence, the two marked pairs belong to the same 
connected component $\FM_\Lambda^0$ of $\FM_\Lambda$.
Condition \ref{cor-item-alpha-pulls-back-to-a-1-1-class} implies that both belong 
to $\FM_{\alpha^\perp}^0$, and the latter is connected, by Lemma 
\ref{lemma-connected}.
\end{proof}

\begin{proof} (of Proposition \ref{prop-counting-deformation-invariants})
Lemma \ref{lemma-classification} introduced the monodromy invariant $[L,v](c_1(\LB))$ of the pair
$(X,\LB)$. The claimed number of deformation types in the statement of the proposition is equal to
the number of values of the monodromy invariant $[L,v](\bullet)$ for fixed $n$ and $d$, by
Lemma \ref{lemma-isometry-orbits-in-L-n-d}. 
Assume given another pair $(X',\LB')$ as above, such that the  monodromy invariants
$[L,v](c_1(\LB'))$ and $[L,v](c_1(\LB))$ are equal. 
Choose  a parallel transport operator 
$g:H^2(X',\Integers)\rightarrow H^2(X,\Integers)$. 
We do not assume that $g(c_1(\LB'))$ is of Hodge type $(1,1)$.
Set $\alpha:=c_1(\LB)$ and $\alpha':=c_1(\LB')$. 
The monodromy invariant $[L,v](g(\alpha'))$ is equal to $[L,v](\alpha')$
and hence also to $[L,v](\alpha)$. Hence, there exists a monodromy operator
$f\in Mon^2(X)$, such that $fg(\alpha')=\alpha$, by Lemma \ref{lemma-classification}. 
Choose a marking $\eta:H^2(X,\Integers)\rightarrow \Lambda$.
Then $\eta':=\eta\circ f\circ g$ is a marking of $X'$ satisfying $\eta(\alpha)=\eta'(\alpha')$. 
Hence, the triples $(X,\alpha,\eta)$ and $(X',\alpha',\eta')$ both belong to the 
moduli space $\FM^0_{\eta(\alpha)^\perp}$, by Proposition
\ref{prop-same-connected-component}. 
$\FM^0_{\eta(\alpha)^\perp}$ is connected, by Lemma 
\ref{lemma-connected}.
Hence, $(X,\LB)$ and $(X',\LB')$ are deformation equivalent.
\end{proof}

\begin{rem}
\label{rem-Tate-Shafarevich-lines-are-limits-of-twistor-lines}
Tate-Shafarevich lines (Definition \ref{def-tate-shafarevich-lines})
are limits of twistor lines in the following sense. 
Let $\ell$ be a point of $\Omega_\Lambda$ and  
$\omega$ a class in the positive cone $\C_\ell$ in
$\Lambda^{1,1}(\ell,\RealNumbers)$. Assume that $\omega$ 
is not orthogonal to any class in $\Lambda^{1,1}(\ell,\Integers)$.
Then there exists a marked pair $(X,\eta)$ in each connected component
$\FM^0_\Lambda$ of the moduli space of marked pairs, such that $P(X,\eta)=\ell$ and 
$\eta^{-1}(\omega)$ is a K\"{a}hler class of $X$ \cite[Cor. 5.7]{huybrechts-basic-results}. 
Set
$W':=\ell\oplus\bar{\ell}\oplus \ComplexNumbers\omega$.
$\PP(W')\cap \Omega_\Lambda$ is a {\em twistor line} for $(X,\eta)$; it admits 
a canonical lift to a smooth rational curve in $\FM^0_\Lambda$ containing the point $(X,\eta)$
\cite[Cor. 5.8]{huybrechts-basic-results}. This lift corresponds to an action of the quaternions $\HH$ on 
the real tangent bundle of the differentiable manifold $X$, such that the unit quaternions act as 
integrable complex structures, one of which is the complex structure of $X$.
Let $\alpha\in \Lambda$ be the primitive isotropic class
as above. Assume that $\ell$ belongs to $\Omega_{\alpha^\perp}^+$.
Consider the three dimensional subspace $W:=\ell\oplus\bar{\ell}\oplus \ComplexNumbers\alpha$ of 
$H^2(X,\ComplexNumbers)$. 
Then $W$ is a limit of a sequence of three dimensional subspaces $W'_i$, associated to some sequence of 
classes $\omega_i$ as above, 
since $\alpha$ belongs to the boundary of the positive cone $\C_\ell$. 
Now $W$ is contained in $\alpha^\perp$, and so 
$
\PP(W)\cap \Omega_{\alpha^\perp} = \PP(W)\cap \Omega_\Lambda.
$
In this degenerate case, the conic $\PP(W)\cap \Omega_\Lambda$ consists of two irreducible
components, the Tate-Shafarevich line 
$\PP[\ell\oplus \ComplexNumbers\alpha]\setminus \{\PP[\ComplexNumbers\alpha]\}$ in
$\Omega_{\alpha^\perp}^+$
and the line $\PP[\bar{\ell}\oplus \ComplexNumbers\alpha]\setminus \{\PP[\ComplexNumbers\alpha]\}$
in the other connected component $\Omega_{\alpha^\perp}^-$ of $\Omega_{\alpha^\perp}$. 
Theorem \ref{thm-Tate-Shafarevich-orbit} will provide a lift of a generic Tate-Shafarevich line 
in the period domain to a line in the moduli space of marked pairs.
\end{rem}

\begin{center}{\bf A summary of notation related to lattices and period domains}\end{center}
\begin{tabular}{|c|l|}  
\hline 
$U$ & The rank 2 even unimodular lattice of signature $(1,1)$.
\\
\hline
$E_8(-1)$ & The root lattice of type $E_8$ with a negative definite pairing.
\\
\hline 
$\widetilde{\Lambda}$ & The Mukai lattice; the orthogonal direct sum $U^{\oplus 4}\oplus E_8(-1)^{\oplus 2}$.
\\
\hline
$\Lambda$ & The $K3^{[n]}$-lattice; the orthogonal direct sum $U^{\oplus 3}\oplus E_8(-1)^{\oplus 2}\oplus \langle 2-2n\rangle$, 
\\
& 
where $\langle 2-2n\rangle$ is the rank $1$ lattice generated by a class of self-intersection $2-2n$.
\\
\hline
$\alpha$ & A primitive isotropic class in $\Lambda$.
\\
\hline
$Q_\alpha$ & The subquotient $\alpha^\perp/\Integers\alpha$.
\\
\hline
$\iota$ & A primitive embedding of $\Lambda$ in $\widetilde{\Lambda}$.
\\
\hline
$\beta$ & The primitive isotropic class $\iota(\alpha)$ in $\widetilde{\Lambda}$.
\\
\hline
$\Lambda_{k3}$ & The subquotient $\beta^\perp/\Integers\beta$, which is isomorphic to the 
$K3$ lattice $U^{\oplus 3}\oplus E_8(-1)^{\oplus 2}$.
\\
\hline
$v$ & A generator of the rank $1$ sublattice of $\widetilde{\Lambda}$ orthogonal to $\iota(\Lambda)$.
\\
\hline
$\bar{v}$ & The coset $v+\Integers \beta$ in  $\Lambda_{k3}$.
\\
\hline
$d$ & The divisibility of $(\alpha,\bullet)$ in $\Lambda^*$; $d:=\gcd\{(\alpha,\lambda) \ : \ \lambda\in \Lambda\}.$
\\
\hline
$\xi$ & The integral element $(1/d)\bar{v}$ of $\Lambda_{k3}$. We have $(\xi,\xi)=\frac{2n-2}{d^2}$.
\\
\hline
$\Omega_\Lambda$ & The period domain given in (\ref{eq-period-domain}).
\\
\hline
$\widetilde{\C}_\Lambda$ & The positive cone given in (\ref{eq-positive-cone-in-Lambda}).
\\
\hline
$\Omega_\Lambda^+$ & The connected component of $\Omega_\Lambda$ determined by the orientation 
of $\widetilde{\C}_\Lambda$.
\\
\hline
$\Omega_{\alpha^\perp}$ & The hyperplane section of $\Omega_\Lambda$ given in 
(\ref{hyperplane-section-of-period-domain-orthogonal-to-alpha}).
\\
\hline
$\Omega_{\alpha^\perp}^+$ & The connected component of $\Omega_{\alpha^\perp}$ 
given in (\ref{eq-connected-component-of-Omaga-alpha-perp}).
\\
\hline
$\Omega_{Q_\alpha}$ & The period domain of the lattice $Q_\alpha$.
\\
\hline
$q$ & The fibration $q:\Omega_{\alpha^\perp}\rightarrow \Omega_{Q_\alpha}$ by Tate-Shafarevich lines
given in (\ref{eq-q}).
\\
\hline
$\FM^0_\Lambda$ & A connected component of the moduli space of marked pairs.
\\
\hline
$P_0$ & The period map $P_0:\FM^0_\Lambda\rightarrow \Omega_\Lambda^+$.
\\
\hline
$\FM^0_{\alpha^\perp}$ & The inverse image of $\Omega_{\alpha^\perp}^+$ in $\FM^0_\Lambda$ via $P_0$.
\\
\hline
$[L,v](\alpha)$ & The monodromy invariant associated to the class $\alpha$ in 
Lemma \ref{lemma-classification} (\ref{lemma-item-integer-b-determines-isometry-class}).
\\
\hline
\end{tabular}

%
\section{Density of periods of relative compactified Jacobians}
\label{sec-density-of-periods-of-compactified-jacobians}
We keep the notation of section \ref{sec-period-domains}.
In subsection \ref{sec-the-section-tau-of-q} we construct a section 
$\tau:\Omega_{Q_\alpha}^+\rightarrow \Omega_{\alpha^\perp}^+$, given in 
(\ref{eq-section-of-q}),
of the fibration $q: \Omega_{\alpha^\perp}^+\rightarrow \Omega_{Q_\alpha}^+$
by Tate-Shafarevich lines. We then show that $\tau$ maps a period $\underline{\ell}$, of
a semi-polarized $K3$ surface $(S,\B)$ 
in the period domain $\Omega_{Q_\alpha}^+$, 
to the period $\tau(\underline{\ell})$ of a moduli space $M$ of sheaves on $S$ with pure one-dimensional support in
the linear system $\linsys{\B^d}$. 
The moduli space $M$ admits a Lagrangian fibration over $\linsys{\B^d}$.
In subsection \ref{sec-monodromy-invariance-of-fibration-q} we construct an injective homomorphism 
$g:Q_\alpha\rightarrow O(\Lambda)$, whose image is contained in the subgroup of the monodromy group
which stabilizes $\alpha$. We get an action of $Q_\alpha$ on the period domain $\Omega^+_{\alpha^\perp}$,
which lifts to an action on  connected components $\FM^0_{\alpha^\perp}$ of the moduli space of marked pairs given in Equation
(\ref{eq-FM-alpha-perp}). We then show that the fibration $q$ by Tate-Shafarevich lines is $g(Q_\alpha)$-invariant.
In subsection \ref{sec-density} we prove that the $g(Q_\alpha)$-orbit of every point in a
non-special Tate-Shafarevich line is dense in that line. Consequently, the non-special  Tate-Shafarevich line 
$q^{-1}(\underline{\ell})$ contains the dense orbit $g(Q_\alpha)\tau(\underline{\ell})$ of periods of 
marked pairs in $\FM^0_{\alpha^\perp}$ admitting a Lagrangian fibration.

{\bf Conventions:}
The discussion in the current section 
\ref{sec-density-of-periods-of-compactified-jacobians} concerns only 
period domains, so we are free to choose the embedding $\iota$.
When we consider in subsequent sections a component $\FM_{\Lambda}^0$ of the moduli space of marked pairs 
$(X,\eta)$ of $K3^{[n]}$-type, together
with such an embedding $\iota:\Lambda\rightarrow \widetilde{\Lambda}$,
we will always assume that $\iota$ is chosen so that 
$\iota\circ\eta$ belongs to the canonical $O(\widetilde{\Lambda})$-orbit $\iota_X$ of Theorem
\ref{thm-orbit-iota-X}, for all
$(X,\eta)$ in $\FM_{\Lambda}^0$.
We choose the orientation of the positive cone $\widetilde{\C}_\Lambda$ of $\Lambda$, so that $\alpha$
belongs to the boundary of the positive cone in
$\Lambda^{1,1}(\ell,\RealNumbers)$, for every $\ell\in \Omega_{\alpha^\perp}^+$.
We choose the orientation of the positive cone $\widetilde{\C}_{\Lambda_{k3}}$,
so that $\bar{v}$ belongs to the positive cone in
$\Lambda^{1,1}_{k3}(\uell,\RealNumbers)$, for every $\uell\in\Omega_{\bar{v}^\perp}^+$.
Note that the composition
$\alpha^\perp_\Lambda\RightArrowOf{\iota}\beta^\perp_{\widetilde{\Lambda}}\rightarrow \Lambda_{k3}$ 
induces an isometry from 
$Q_\alpha:=\alpha^\perp_\Lambda/\Integers\alpha$ onto $\bar{v}^\perp_{\Lambda_{k3}}$,
by Lemma \ref{lemma-Lambda-alpha-is-isometric-to-v-bar-perp}. 
The choice of orientation of the positive cone of $\Lambda_{k3}$ determines an orientation of the
positive cone of $Q_\alpha$.

%
\subsection{A period of a Lagrangian fibration in each Tate-Shafarevich line}
\label{sec-the-section-tau-of-q}
Choose a class $\gamma$ in $\widetilde{\Lambda}$ satisfying $(\gamma,\beta)=-1$ and $(\gamma,\gamma)=0$.
Note that $\beta$ and $\gamma$ span a unimodular sub-lattice of $\widetilde{\Lambda}$ of signature $(1,1)$.
We construct next a section of the affine bundle
$q:\Omega_{\alpha^\perp}\rightarrow \Omega_{Q_\alpha}$, given in Equation (\ref{eq-q}), in terms of 
$\gamma$. We have the following split short exact sequence.
\begin{equation}
\label{eq-split-exact-sequence-of-sigma-gamma}
\xymatrix{
0 \ar[r] &
\Integers\beta \ar[r] &
\beta^\perp_{\widetilde{\Lambda}} \ar[r]^{j}
\ar@/^2pc/[l] _{\sigma_\gamma} &
\Lambda_{k3} \ar[r]
\ar@/^2pc/[l] _{\tilde{\tau}_\gamma} &
0.
}
\end{equation}
Above, $\sigma_\gamma(x)=-(x,\gamma)\beta$, and
$\tilde{\tau}_\gamma(y)=\tilde{y}+(\tilde{y},\gamma)\beta$, where $\tilde{y}$ is any element of
$\beta^\perp_{\widetilde{\Lambda}}$ satisfying $j(\tilde{y})=y$.
One sees that $\tilde{\tau}_\gamma$ is well defined as follows.
If $\tilde{y}_1$ and $\tilde{y}_2$ satisfy $j(\tilde{y}_k)=y$, then the difference
$[\tilde{y}_1+(\tilde{y_1},\gamma)\beta]-[\tilde{y}_2+(\tilde{y_2},\gamma)\beta]$
belongs to the kernel of $j$ and is sent to $0$ via $\sigma_\gamma$, so the difference is equal to $0$.
Note that  $\tilde{\tau}_\gamma$ is an isometric embedding and its image 
is precisely
$\{\beta,\gamma\}^\perp_{\widetilde{\Lambda}}$.

We regard $\Omega_{Q_\alpha}^+$ as the period domain for semi-polarized $K3$ surfaces,
with a nef line bundle of degree $\frac{2n-2}{d^2}$, via the isomorphism $\bar{v}^\perp_{\Lambda_{k3}}\cong Q_\alpha$
of Lemma \ref{lemma-Lambda-alpha-is-isometric-to-v-bar-perp}.
The homomorphism $\iota^{-1}\circ\tilde{\tau}_\gamma$ induces an isometric 
embedding of $Q_\alpha$ in $\alpha^\perp_\Lambda$.
We get a section
\begin{equation}
\label{eq-section-of-q}
\tau_\gamma \ : \ \Omega_{Q_\alpha}^+\rightarrow \Omega_{\alpha^\perp}^+
\end{equation}
of $q:\Omega_{\alpha^\perp}^+\rightarrow \Omega_{Q_\alpha}^+.$
Following is an explicit description of $\tau_\gamma$.
Let $\underline{\ell}$ be a period in $\Omega_{Q_\alpha}^+$. 
Choose a period $\ell$ in $\Omega_{\alpha^\perp}^+$ satisfying $q(\ell)=\underline{\ell}$.
Let $x$ be a non-zero element of the line $\ell$ in 
$\alpha^\perp_\Lambda\otimes_\Integers\ComplexNumbers$.
Then 
\begin{equation}
\label{eq-bar-tau-gamma}
\tau_\gamma(\underline{\ell}) = {\rm span}_\ComplexNumbers\{x+(\iota(x),\gamma)\alpha\}.
\end{equation}
We see that $\gamma$ belongs to $\widetilde{\Lambda}^{1,1}(\tau_\gamma(\underline{\ell}))$,
for every $\underline{\ell}$ in $\Omega_{Q_\alpha}^+$.

Fix a period $\uell$ in $\Omega_{Q_\alpha}^+$.
We construct next a marked pair $(M_H(u),\eta_1)$ with period $\tau_\gamma(\underline{\ell})$,
such that $\eta_1^{-1}(\alpha)$ induces a Lagrangian fibration.
Let $S$ be a $K3$ surface and $\eta:H^2(S,\Integers)\rightarrow \nolinebreak\Lambda_{k3}$ a marking, 
such that the period $\eta(H^{2,0}(S))$ 
is $\underline{\ell}$. Such a marked pair $(S,\eta)$ exists, by the surjectivity of the period map.
Extend $\eta$ to the Hodge isometry
\[
\tilde{\eta} \ : \ H^*(S,\Integers) \rightarrow \widetilde{\Lambda},
\]
given by $\tilde{\eta}((0,0,1))=\beta$, $\tilde{\eta}((1,0,0))=\gamma$, and
$\tilde{\eta}$ restricts to $H^2(S,\Integers)$ as $\tilde{\tau}_\gamma\circ \eta$.
We have the equality
$v=\sigma_\gamma(v)+\tilde{\tau}_\gamma(\bar{v})=-(\gamma,v)\beta+\tilde{\tau}_\gamma(\bar{v})$.
Set $a:=-(\gamma,v)$ and $u:=(0,\eta^{-1}(\bar{v}),a)$. Then
$\tilde{\eta}(u)=v$.
We may choose the marking $\eta$ so that the class $\eta^{-1}(\bar{v})$
is nef, possibly after replacing $\eta$ by  $\pm\eta\circ w$,
where $w$ is an element of the group of isometries of $H^2(S,\Integers)$,
generated by reflections by $-2$ curves \cite[Ch. VIII Prop. 3.9]{BHPV}.
Choose a $u$-generic polarization $H$ of $S$.
Then $M_H(u)$ is a projective irreducible holomorphic symplectic manifold. 
Let
\[
\theta:u^\perp\rightarrow H^2(M_H(u),\Integers)
\]
be Mukai's isometry, given in Equation (\ref{eq-Mukai-isomorphism}). We get the commutative diagram:
\begin{equation}
\label{diagram-marked-M-H-u}
\xymatrix{
\Lambda \ar[r]^\iota_\cong &
v^\perp \ar[r]^{\subset} &
\widetilde{\Lambda}
\\
H^2(M_H(u),\Integers) \ar[r]^-{\theta^{-1}} \ar[u]^{\eta_1} &
u^\perp  \ar[r]^-{\subset} \ar[u]^{\eta_2}&
H^*(S,\Integers), \ar[u]^{\tilde{\eta}}
}
\end{equation}
where $\eta_2$ is the restriction of $\tilde{\eta}$ and
$\eta_1=\iota^{-1}\circ \eta_2\circ \theta^{-1}$.
Note that $\eta_1(\theta(0,0,1))=\alpha$. Let $L$ be the saturation in $H^*(S,\Integers)$
of the sub-lattice spanned by $(0,0,1)$ and $u$. 
Let $b$ be an integer satisfying $ab\equiv 1$ (modulo $d$).
The monodromy invariant $[L,u](\theta(0,0,1))$ of Lemma \ref{lemma-classification}
is the isometry class of the pair 
$(L_{n,d},(d,b))$, by the commutativity of the above diagram.
Furthermore, $\eta_1$ is a Hodge isometry with respect to the 
Hodge structure on $\Lambda$ induced by $\tau_\gamma(\underline{\ell})$.
In particular, $(M_H(u),\eta_1)$ is a marked pair with period $\tau_\gamma(\underline{\ell})$.
Example 
\ref{ex-completely-integrable-moduli-spaces-for-each-value-of-the-monodromy-invariant} exhibits 
$\theta(0,0,1)$ as the class $\pi^*c_1(\StructureSheaf{\linsys{\LB^d}}(1))$,
for a Lagrangian fibration $\pi:M_H(u)\rightarrow \linsys{\LB^d}$, where $\LB$ is the line bundle over $S$
with class $\eta^{-1}(\xi)$. 

\begin{rem}
\label{rem-eta-1-is-compatible-with-the-orientations}
The isometry $\eta_1$ is compatible with the orientations of the positive cones, the canonical one of 
$H^2(M_H(u),\Integers)$ and the chosen one of $\Lambda$. Indeed, it maps the class $\theta(0,0,1)$,
on the boundary of the positive cone of $H^{1,1}(M_H(u),\RealNumbers)$, 
to the class $\alpha$ on the boundary of the positive cone of 
$\Lambda^{1,1}(\tau_\gamma(\underline{\ell}),\RealNumbers)$. 
The composition $\tilde{\eta}\circ \theta^{-1}$ in Diagram
(\ref{diagram-marked-M-H-u})
belongs to the canonical orbit $\iota_{M_H(u)}$ of Theorem \ref{thm-orbit-iota-X},
by \cite[Theorem 1.14]{markman-constraints}.
The commutativity of the Diagram implies that the isometric embedding $\iota\circ \eta_1$
also belongs to the orbit $\iota_{M_H(u)}$.
\end{rem}

%
\subsection{Monodromy equivariance of the fibration by Tate-Shafarevich lines}
\label{sec-monodromy-invariance-of-fibration-q}
Denote by $O(\widetilde{\Lambda})_{\beta,v}^+$ the subgroup of $O(\widetilde{\Lambda})^+$
stabilizing both $\beta$ and $v$. 
Following is a natural homomorphism 
\begin{equation}
\label{eq-h}
h:O(\widetilde{\Lambda})_{\beta,v}^+\rightarrow O(\Lambda_{k3})_{\bar{v}}.
\end{equation} 
If $\psi$ belongs to $O(\widetilde{\Lambda})_{\beta,v}^+$, then 
$\psi(\beta)=\beta$ and $\beta^\perp_{\widetilde{\Lambda}}$ is $\psi$-invariant. 
Thus $\psi$ induces an isometry $h(\psi)$
of $\Lambda_{k3}:=\beta^\perp_{\widetilde{\Lambda}}/\Integers\beta$.
We construct next a large subgroup in the kernel of $h$.

Given an element $z$ of $\widetilde{\Lambda}$, orthogonal to $\beta$ and $v$, define the map
$\tilde{g}_z:\widetilde{\Lambda}\rightarrow \widetilde{\Lambda}$ by
\[
\tilde{g}_z(x) \ \ := \ \ x-(x,\beta)z+
\left[(x,z)-\frac{1}{2}(x,\beta)(z,z)
\right]\beta.
\]

\begin{lem}
\label{lemma-characterization-of-g-z}
The map $\tilde{g}_z$ is the unique isometry in $O(\widetilde{\Lambda})_{\beta,v}$, 
which sends $\gamma$ to an element of $\widetilde{\Lambda}$ congruent to $\gamma+z$ modulo
$\Integers\beta$ and belongs to the kernel of $h$.
The isometry $\tilde{g}_z$ is orientation preserving.
\end{lem}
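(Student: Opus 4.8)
The plan is to verify directly that $\tilde g_z$ has all the asserted properties and then argue uniqueness. First I would check that $\tilde g_z$ is an isometry: compute $(\tilde g_z(x),\tilde g_z(y))$ by expanding, using $(\beta,\beta)=0$, $(z,\beta)=0$, and the definition; the cross-terms involving $z$ and the correction term in $\beta$ should cancel against each other in the standard way (this is the familiar Eichler transvection attached to the isotropic vector $\beta$ and the vector $z\in\beta^\perp$, so the computation is routine). Next, evaluate $\tilde g_z$ on $\beta$: since $(\beta,\beta)=0$ and $(\beta,z)=0$, every correction term vanishes and $\tilde g_z(\beta)=\beta$. Evaluate on $v$: since $v\perp\beta$ and $v\perp z$ by hypothesis on $z$, again all correction terms drop and $\tilde g_z(v)=v$. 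Hence $\tilde g_z\in O(\widetilde\Lambda)_{\beta,v}$.

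Then I would compute $\tilde g_z(\gamma)$. Using $(\gamma,\beta)=-1$ we get
\[
\tilde g_z(\gamma)=\gamma+z+\left[(\gamma,z)+\tfrac12(z,z)\right]\beta,
\]
which is congruent to $\gamma+z$ modulo $\Integers\beta$, as required. For membership in the kernel of $h$: the map $h(\tilde g_z)$ is the isometry induced on $\beta^\perp_{\widetilde\Lambda}/\Integers\beta=\Lambda_{k3}$; for any $x\in\beta^\perp_{\widetilde\Lambda}$ the term $-(x,\beta)z$ vanishes and $\tilde g_z(x)=x+(x,z)\beta$, which is congruent to $x$ modulo $\Integers\beta$, so $h(\tilde g_z)=\mathrm{id}$, i.e. $\tilde g_z\in\ker h$. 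For the orientation statement, I would note that $\tilde g_z$ lies in a continuous one-parameter family: replacing $z$ by $tz$ for $t\in[0,1]$ (working over $\RealNumbers$) gives a path in $O(\widetilde\Lambda_\RealNumbers)$ from the identity to $\tilde g_z$, hence $\tilde g_z$ is in the identity component and is therefore orientation preserving; thus $\tilde g_z\in O(\widetilde\Lambda)^+_{\beta,v}$.

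For uniqueness, suppose $\psi\in O(\widetilde\Lambda)_{\beta,v}$ lies in $\ker h$ and $\psi(\gamma)\equiv\gamma+z\pmod{\Integers\beta}$. Then $g:=\tilde g_z^{-1}\circ\psi$ fixes $\beta$ and $v$, lies in $\ker h$, and satisfies $g(\gamma)\equiv\gamma\pmod{\Integers\beta}$; it suffices to show $g=\mathrm{id}$. Since $g\in\ker h$, for every $x\in\beta^\perp_{\widetilde\Lambda}$ we have $g(x)=x+c(x)\beta$ for some linear functional $c:\beta^\perp_{\widetilde\Lambda}\to\Integers$; evaluating the isometry condition $(g(x),g(x'))=(x,x')$ using $(\beta,\beta)=0$, $(x',\beta)=0$ gives $c(x)(\beta,x')+c(x')(x,\beta)=0$ hence no constraint, but pairing $g(x)$ with $\beta$ shows $c$ is forced once we also use the vectors outside $\beta^\perp$. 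More efficiently: $g$ fixes $\beta$, so $g^{-1}$ preserves $\beta^\perp$; since $\widetilde\Lambda$ is unimodular and $\beta$ is isotropic, choose $\gamma$ with $(\gamma,\beta)=-1$ as a complement, so $\widetilde\Lambda=\beta^\perp_{\widetilde\Lambda}+\Integers\gamma$ (after saturation/adjusting $\gamma$); an isometry of $\widetilde\Lambda$ in $\ker h$ fixing $\beta$ is then completely determined by $g(\gamma)$, and the constraints $(g(\gamma),g(\gamma))=(\gamma,\gamma)=0$, $(g(\gamma),\beta)=(\gamma,\beta)=-1$, together with $g(\gamma)\equiv\gamma\pmod{\Integers\beta}$ and $(g(\gamma),g(x))=(\gamma,x)$ for all $x\in\beta^\perp_{\widetilde\Lambda}$, force $g(\gamma)=\gamma$, whence $g=\mathrm{id}$.

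The main obstacle I anticipate is the bookkeeping in the uniqueness argument: one must be careful that $\beta^\perp_{\widetilde\Lambda}$ together with a single chosen $\gamma$ need not span $\widetilde\Lambda$ over $\Integers$ on the nose (only up to finite index, since $\{\beta,\gamma\}$ spans a unimodular plane whose orthogonal complement is $\{\beta,\gamma\}^\perp$, and $\beta^\perp_{\widetilde\Lambda}=\Integers\beta\oplus\{\beta,\gamma\}^\perp$), so the decomposition $\widetilde\Lambda=\beta^\perp_{\widetilde\Lambda}\oplus\Integers\gamma$ must be justified using that $\beta,\gamma$ span a unimodular sublattice as noted in the paragraph preceding the lemma. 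Once that splitting is in hand, the isometry condition pins down $g(\gamma)$ uniquely and the proof closes. Everything else is a direct computation.
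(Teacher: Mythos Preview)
Your proposal is correct and follows essentially the same route as the paper: direct verification that $\tilde g_z$ is an isometry fixing $\beta$ and $v$, lying in $\ker h$, and sending $\gamma$ to the right coset, together with orientation preservation via the real path $t\mapsto tz$; the paper merely packages the same computations by first defining an abstract isometry $f$ via the splitting $\widetilde\Lambda=(\Integers\beta+\Integers\gamma)\oplus\{\beta,\gamma\}^\perp$ and then checking $f=\tilde g_z$. The uniqueness argument is also the same in substance---one point to tighten is your assertion that $g(\gamma)\equiv\gamma\pmod{\Integers\beta}$, which does not follow from $g\in\ker h$ alone since $\gamma\notin\beta^\perp$; the paper handles this by observing that an isotropic class congruent to $\gamma+z$ modulo $\Integers\beta$ and pairing to $-1$ with $\beta$ is \emph{unique}, so in fact $\psi(\gamma)=\tilde g_z(\gamma)$ and hence $g(\gamma)=\gamma$ on the nose, after which your pairing-with-$\gamma$ argument finishes the proof exactly as the paper does.
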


\begin{proof}
We first define an isometry $f$ with the above property, then prove its uniqueness, and finally  
prove that it is equal to $\tilde{g}_z$. 
Set $\gamma_1:=\gamma+z+\left[(\gamma,z)+\frac{1}{2}(z,z)\right]\beta$.
Then $(\gamma_1,\gamma_1)=0$, $(\gamma_1,\beta)=-1$, and $\gamma_1$ is the unique element
of $\widetilde{\Lambda}$ satisfying the above equalities and congruent to $\gamma+z$ modulo $\Integers\beta$.
Define $\tilde{\sigma}_\gamma:\widetilde{\Lambda}\rightarrow \Integers\beta +\Integers\gamma$ by 
$\tilde{\sigma}_\gamma(x):=-(x,\beta)\gamma-(x,\gamma)\beta$. We get the commutative diagram with 
split short exact rows:
\[
\xymatrix{
0 \ar[r] &
\Integers\beta +\Integers\gamma \ar[r] \ar[d] &
\widetilde{\Lambda} \ar[r]^{\tilde{j}} \ar[d]^f
\ar@/_2pc/[l] _{\tilde{\sigma}_\gamma} &
\Lambda_{k3} \ar[r] \ar[d]^{id}
\ar@/_2pc/[l] _{\tilde{\tau}_\gamma} &
0
\\
0 \ar[r] &
\Integers\beta +\Integers\gamma_1\ar[r] &
\widetilde{\Lambda} \ar[r]^{\tilde{j}_1}
\ar@/^2pc/[l] _{\tilde{\sigma}_{\gamma_1}} &
\Lambda_{k3} \ar[r]
\ar@/^2pc/[l] _{\tilde{\tau}_{\gamma_1}} &
0
}
\]
Above $\tilde{\tau}_\gamma$ and $j$ 
are the homomorphisms given in equation (\ref{eq-split-exact-sequence-of-sigma-gamma}), 
$\tilde{j}(x)=j(x+(x,\beta)\gamma)$, 
and $\tilde{\sigma}_{\gamma_1}$, $\tilde{\tau}_{\gamma_1}$, and $\tilde{j}_1$ are defined similarly,
replacing $\gamma$ by $\gamma_1$. The map $f$ is defined by $f(\beta)=\beta$,
$f(\gamma)=\gamma_1$, and $f(\tilde{\tau}_\gamma(y))=\tilde{\tau}_{\gamma_1}(y)$.
Then $f$ is clearly an isometry. 

The isometry $f$ can be extended to an isometry of $\widetilde{\Lambda}_\RealNumbers$ 
and we can continuously deform $z$  to $0$ in $\{\beta,v\}^\perp\otimes_\Integers\RealNumbers$, 
resulting in a continuous deformation of $f$ to the identity. Hence, $f$ is orientation preserving.

Note the equalities $\tilde{\sigma}_\gamma(v)=-(v,\gamma)\beta=-(v,\gamma_1)\beta=\tilde{\sigma}_{\gamma_1}(v)$,
where the middle one follows from that fact that both $z$ and $\beta$ are orthogonal to $v$. We get
the equality
\[
\tilde{\tau}_\gamma(\bar{v}) = v-\tilde{\sigma}_\gamma(v)=v-\tilde{\sigma}_{\gamma_1}(v)=\tilde{\tau}_{\gamma_1}(\bar{v}).
\]
Thus $f(v)=v$ and $f$ belongs to $O(\widetilde{\Lambda})^+_{\beta,v}$.
Let $x$ be an element of $\beta^\perp$. Then $\tilde{j}(x)=j(x)=\tilde{j}_1(x)$. Set $y:=j(x)$.
Now $\tilde{\tau}_\gamma(y)\equiv \tilde{\tau}_{\gamma_1}(y)$ modulo $\Integers\beta$, by definition
of both. Hence, $h(f)$ is the identity isometry of $\Lambda_{k3}$. 

Let $f'$ be another isometry of $\widetilde{\Lambda}$ satisfying the assumptions of the Lemma.
Then $f'(\gamma)=\gamma_1$, by the characterization of $\gamma_1$ mentioned above.
Set $e:=f^{-1}\circ f'$. Then $e(\beta)=\beta$, $e(\gamma)=\gamma$, $e(v)=v$, 
and $h(e)=id$. Given $x\in \beta^\perp$, we get that
$e(x)\equiv x$ modulo $\Integers\beta$. 
Now $(e(x),\gamma)=(e(x),e(\gamma))=(x,\gamma)$. Thus, $e$ restricts to the identity on $\beta^\perp$.
We conclude that $e$ is the identity of $\widetilde{\Lambda}$, as the latter is spanned by $\gamma$
and $\beta^\perp$. Thus $f'=f$.

It remains to prove the equality $f=\tilde{g}_z$. We already know that
$f(\gamma)=\gamma_1=\tilde{g}_z(\gamma)$ and
$f(\beta)=\beta=\tilde{g}_z(\beta)$. Given $y\in \Lambda_{k3}$, we have
\[
\tilde{g}_z(\tilde{\tau}_\gamma(y))=
\tilde{\tau}_\gamma(y)+(\tilde{\tau}_\gamma(y),z)\beta=
\tilde{\tau}_{\gamma_1}(y)=f(\tilde{\tau}_\gamma(y)).
\]
Hence, $\tilde{g}_z=f$.
\end{proof}

Let 
\[
\tilde{g}:\alpha^\perp_{\Lambda}
\rightarrow O(\widetilde{\Lambda})_{\beta,v}^+
\]
be the map sending $z$ to $\tilde{g}_{\iota(z)}$.
Denote by $Mon^2(\Lambda,\iota)$
the subgroup of $O^+(\Lambda)$ of isometries stabilizing the orbit 
$O(\widetilde{\Lambda})\iota$. Note that $O(\widetilde{\Lambda})^+_v$ is conjugated via $\iota$
onto $Mon^2(\Lambda,\iota)$, if $n=2$, and to an index $2$ subgroup of $Mon^2(\Lambda,\iota)$,
if $n\geq 2$ \cite[Lemma 4.10]{markman-monodromy-I}. 
Let $Mon^2(\Lambda,\iota)_\alpha$ be the subgroup of $Mon^2(\Lambda,\iota)$
stabilizing $\alpha$.

\begin{lem}
\label{lemma-q-is-Q-alpha-invariant}
\begin{enumerate}
\item
\label{lemma-item-g-is-a-homomorphism}
The map $\tilde{g}$ is a group homomorphism with kernel $\Integers\alpha$. It thus factors
through an injective homomorphism
\[
g:Q_\alpha
\rightarrow  Mon^2(\Lambda,\iota)_\alpha.
\]
\item
\label{lemma-item-lattice-action-on-alpha-perp}
Let $z$ be an element of $\alpha^\perp_\Lambda$ and $[z]$ its coset in $Q_\alpha$.
Then $g_{[z]}:\alpha^\perp\rightarrow\alpha^\perp$ sends 
$x\in \alpha^\perp$ to $x+(x,z)\alpha$.
\item
\label{lemma-item-q-is-equivariant}
The map $q:\Omega_{\alpha^\perp}^+\rightarrow \Omega_{Q_\alpha}^+$ is
$Mon^2(\Lambda,\iota)_\alpha$-equivariant and it is invariant with respect to
the image $g(Q_\alpha)\subset Mon^2(\Lambda,\iota)_\alpha$ of $g$. 
\item
\label{lemma-item-ker-h-equals-im-g}
The image of $\tilde{g}$ is equal to the kernel of the homomorphism $h$,
given in Equation (\ref{eq-h}).
\end{enumerate}
\end{lem}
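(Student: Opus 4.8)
The plan is to verify the four parts essentially by unwinding the explicit formula for $\tilde{g}_z$ from Lemma \ref{lemma-characterization-of-g-z} and the description of $\Lambda_{k3}$ and $Q_\alpha$ as subquotients. For part (\ref{lemma-item-g-is-a-homomorphism}), I would first compute directly that $\tilde{g}_{z_1}\circ\tilde{g}_{z_2}=\tilde{g}_{z_1+z_2}$ for $z_1,z_2\in\alpha^\perp_\Lambda$; since each $\tilde{g}_{\iota(z)}$ already lies in $O(\widetilde{\Lambda})^+_{\beta,v}$ by Lemma \ref{lemma-characterization-of-g-z}, this makes $\tilde{g}$ a homomorphism into that group. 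The kernel is the set of $z$ with $\tilde{g}_{\iota(z)}=\mathrm{id}$; from the formula $\tilde{g}_z(x)=x-(x,\beta)z+[\cdots]\beta$, applied to a $\gamma$ with $(\gamma,\beta)=-1$, one reads off that $\tilde{g}_z=\mathrm{id}$ forces $z\in\Integers\beta$, i.e. $\iota(z)\in\Integers\beta=\Integers\iota(\alpha)$, hence $z\in\Integers\alpha$ since $\iota$ is a primitive embedding. Thus $\tilde g$ factors through an injection $g:Q_\alpha\hookrightarrow O(\widetilde\Lambda)^+_{\beta,v}$, and I must identify the target: via conjugation by $\iota$, $O(\widetilde{\Lambda})^+_v$ corresponds to $Mon^2(\Lambda,\iota)$ (or an index-$2$ subgroup) by \cite[Lemma 4.10]{markman-monodromy-I}, and since each $\tilde g_{\iota(z)}$ fixes $\beta=\iota(\alpha)$ it corresponds to an isometry of $\Lambda$ fixing $\alpha$; hence $g(Q_\alpha)\subseteq Mon^2(\Lambda,\iota)_\alpha$.

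For part (\ref{lemma-item-lattice-action-on-alpha-perp}), I would take $x\in\alpha^\perp_\Lambda$, so $\iota(x)\in\beta^\perp_{\widetilde\Lambda}$ and $(\iota(x),\beta)=0$. Plugging into the formula, $\tilde g_{\iota(z)}(\iota(x))=\iota(x)-(\iota(x),\beta)\iota(z)+[(\iota(x),\iota(z))-\tfrac12(\iota(x),\beta)(\iota(z),\iota(z))]\beta=\iota(x)+(\iota(x),\iota(z))\beta=\iota(x)+(x,z)\iota(\alpha)=\iota(x+(x,z)\alpha)$. Applying $\iota^{-1}$ gives $g_{[z]}(x)=x+(x,z)\alpha$, which visibly depends only on the class $[z]\in Q_\alpha$ since $(x,\alpha)=0$; this also re-proves that $\Integers\alpha$ is in the kernel.

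For part (\ref{lemma-item-q-is-equivariant}), the $Mon^2(\Lambda,\iota)_\alpha$-action on $\Omega_{\alpha^\perp}^+$ descends to $\Omega_{Q_\alpha}^+$ because any such isometry preserves $\alpha^\perp_\Lambda$ and $\Integers\alpha$, hence acts on $Q_\alpha=\alpha^\perp_\Lambda/\Integers\alpha$; naturality of this descent with respect to periods is exactly the equivariance of $q$. Invariance under $g(Q_\alpha)$ is then immediate from part (\ref{lemma-item-lattice-action-on-alpha-perp}): $g_{[z]}$ acts on $\alpha^\perp_\Lambda$ by $x\mapsto x+(x,z)\alpha$, which is the identity modulo $\Integers\alpha$, so it induces the identity on $Q_\alpha$ and hence fixes every fiber of $q$ (these are precisely the Tate-Shafarevich lines). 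Finally, part (\ref{lemma-item-ker-h-equals-im-g}): the inclusion $\mathrm{im}(\tilde g)\subseteq\ker h$ is the last assertion of Lemma \ref{lemma-characterization-of-g-z}. For the reverse inclusion I would take $\psi\in\ker h$; then $\psi$ fixes $\beta$ and $v$ and induces the identity on $\Lambda_{k3}=\beta^\perp/\Integers\beta$, so for each $x\in\beta^\perp$ we have $\psi(x)\equiv x\pmod{\Integers\beta}$, and in particular $\psi(\gamma)\equiv\gamma+z\pmod{\Integers\beta}$ for a unique $z\in\{\beta,v\}^\perp$ (using that $\psi$ preserves $(\gamma,\beta)=-1$ and pairing with $v$). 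By the uniqueness clause of Lemma \ref{lemma-characterization-of-g-z}, $\psi=\tilde g_z$, and since $z\perp\beta$ it is of the form $\iota(z')$ for $z'\in\alpha^\perp_\Lambda$, so $\psi\in\mathrm{im}(\tilde g)$.

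The main obstacle is bookkeeping rather than conceptual: one must be careful that ``$z\in\widetilde\Lambda$ orthogonal to $\beta$ and $v$'' is exactly the image $\iota(\alpha^\perp_\Lambda)$ — this uses $\iota(\Lambda)=v^\perp$ and $\iota(\alpha)=\beta$ — and that the homomorphism identity $\tilde g_{z_1}\tilde g_{z_2}=\tilde g_{z_1+z_2}$ genuinely holds; the quadratic correction term in the definition of $\tilde g_z$ is designed precisely so that this works, but the cross-terms must be checked. Verifying that $\ker h$ is exhausted by the $\tilde g_z$ also requires knowing $\widetilde\Lambda$ is generated by $\gamma$ together with $\beta^\perp$, which follows from $(\gamma,\beta)=-1$.
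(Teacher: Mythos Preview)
Your proposal is correct and follows essentially the same approach as the paper, which also derives all four parts from the formula and the uniqueness characterization of $\tilde{g}_z$ in Lemma~\ref{lemma-characterization-of-g-z}; the paper's proof is simply terser (e.g.\ it deduces the homomorphism property from uniqueness rather than by direct computation, though your direct check works too). One minor wording issue in part~(\ref{lemma-item-ker-h-equals-im-g}): the phrase ``in particular $\psi(\gamma)\equiv\gamma+z$'' does not follow from the preceding sentence since $\gamma\notin\beta^\perp$, but your parenthetical gives the right argument---indeed the paper just sets $z:=\psi(\gamma)-\gamma$ and checks $(z,\beta)=(z,v)=0$ directly from $\psi(\beta)=\beta$, $\psi(v)=v$.
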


\begin{proof}
Part (\ref{lemma-item-g-is-a-homomorphism}) follows from the characterization of $\tilde{g}_z$ in 
Lemma \ref{lemma-characterization-of-g-z}.
Part (\ref{lemma-item-lattice-action-on-alpha-perp}) is straightforward 
as is the $Mon^2(\Lambda,\iota)_\alpha$-equivariance of $q$. 
The $g(Q_\alpha)$-invariance of $q$ follows from
part (\ref{lemma-item-lattice-action-on-alpha-perp}). Part (\ref{lemma-item-q-is-equivariant}) is thus proven.

Part (\ref{lemma-item-ker-h-equals-im-g}): 
The image of $\tilde{g}$ is contained in the kernel of $h$, by Lemma \ref{lemma-characterization-of-g-z}.
Let $f\in O(\widetilde{\Lambda})_{\beta,v}$ belong to the kernel of $h$.
Set $\gamma_1:=f(\gamma)$ and $z:=\gamma_1-\gamma$. 
Then $(\gamma_1,\beta)=(f(\gamma),\beta)=(f(\gamma),f(\beta))=(\gamma,\beta)$ and similarly
$(\gamma_1,v)=(\gamma,v)$. Hence, $(z,\beta)=0$ and $(z,v)=0$.
The isometry $\tilde{g}_z$ is thus well defined and it is equal to $f$, by Lemma
\ref{lemma-characterization-of-g-z}.
\hide{
Let $\mu\in O^+(\widetilde{\Lambda})_{\beta,v}$ be an element in the kernel of $h$. 
The image of $\tilde{\tau}_\gamma$ is contained in $\beta^\perp_{\widetilde{\Lambda}}$,
and the latter is $\mu$-invariant.
Hence, there exists some function $f:\Lambda_{k3}\rightarrow\Integers$,
such that 
$\mu(\tilde{\tau}_\gamma(\lambda))=\tilde{\tau}_\gamma(\lambda)+f(\lambda)\beta$, 
for all $\lambda\in\Lambda_{k3}$. The function $f$ must be linear.
There exists an element $z\in \Lambda_{k3}$, such that 
$f(\lambda)=(z,\lambda)$, since $\Lambda_{k3}$ is unimodular.  

Let $j:\tilde{\Lambda}\rightarrow \Lambda_{k3}$ be the homomorphism
given by $j(\tlambda)=\tlambda+(\tlambda,\beta)\gamma+\Integers\beta$.
Then $j$ is the adjoint of $\tilde{\tau}_\gamma$.
Given $\tlambda\in \tilde{\Lambda}$, we have 
$\tilde{\tau}_\gamma(j(\tlambda))=\tlambda+(\tlambda,\beta)\gamma+(\tlambda,\gamma)\beta$.
Hence, 
\begin{eqnarray*}
\mu(\tlambda)&=&\mu\left(\tilde{\tau}_\gamma(j(\tlambda))-(\tlambda,\beta)\gamma-(\tlambda,\gamma)\beta\right)
\\
&=& \tilde{\tau}_\gamma(j(\tlambda))+(z,j(\tlambda))\beta-(\tlambda,\beta)\mu(\gamma)-(\tlambda,\gamma)\beta
\\
&=&\tlambda+(\tlambda,\beta)(\gamma-\mu(\gamma))+
(z,j(\tlambda))\beta
\\
&=& \tlambda+(\tlambda,\beta)(\gamma-\mu(\gamma))+
(\tilde{z},\tlambda)\beta,
\end{eqnarray*}
where $\tilde{z}:=\tilde{\tau}_\gamma(z)$.
The equality $(\tlambda,\gamma)=(\mu(\tlambda),\mu(\gamma))$ yields
\begin{equation}
\label{eq-tlambda-paired-with-gamma}
(\tlambda,\gamma)=
(\tlambda,\mu(\gamma))+
(\tlambda,\beta)(\gamma-\mu(\gamma),\mu(\gamma))+
(\tilde{z},\tlambda)(\beta,\mu(\gamma)).
\end{equation}
We conclude that $(\tlambda,\mu(\gamma))=0$, for all
$\tlambda$ in $\{\beta,\gamma,\tilde{z}\}^\perp$. 
Hence, $\mu(\gamma)$ belongs to the saturation of 
${\rm span}_\Integers\{\beta,\gamma,\tilde{z}\}$.

Write $\mu(\gamma)=a\tilde{z}+b\beta+c\gamma$. 
Note that $\tilde{z}$ is in the image $\{\beta,\gamma\}^\perp$ of $\tilde{\tau}_\gamma$.
Substitute $\beta$ for $\tlambda$ in (\ref{eq-tlambda-paired-with-gamma})
to get $c=1$. 
We also have
\[
0=(\gamma,\gamma)=(\mu(\gamma),\mu(\gamma))=a^2(z,z)-2b.
\]
So $b=\frac{a^2}{2}(z,z)$.
Substitute $\tilde{z}$ for $\tlambda$ in (\ref{eq-tlambda-paired-with-gamma})
to get the equality $(a-c)(z,z)=0$.
If $(z,z)\neq 0$, then $a=c=1$.
If $(z,z)=0$, then $b=0$ and Equation (\ref{eq-tlambda-paired-with-gamma}) simplifies to
\[
(\tlambda,\gamma)=(\tlambda,\tilde{z})(a-1)+(\tlambda,\gamma).
\]
We get the equality $a=1$ in this case as well.
We see that $\mu(\gamma)=\tilde{z}+\gamma+\frac{1}{2}(z,z)\beta$,
and so $\mu(\tlambda)=\tilde{g}_{\tilde{z}}(\tlambda)$, for all $\tlambda\in\widetilde{\Lambda}$.
Consequently, $v=\mu(v)=v+(v,\tilde{z})\beta$. We conclude that $(v,\tilde{z})=0$ and so
$\mu$ belongs to the image of $\tilde{g}$.
}
\end{proof}

%
\subsection{Density}
\label{sec-density}
A period $\underline{\ell}$ in $\Omega_{\Lambda_{k3}}$  is said to be {\em special},
if it satisfies the  condition analogous to the one in Definition
\ref{def-special}. We identify $\Omega_{Q_\alpha}$ as a submanifold of $\Omega_{\Lambda_{k3}}$,
via Lemma \ref{lemma-Lambda-alpha-is-isometric-to-v-bar-perp}.
Note that a period $\ell\in \Omega_{\alpha^\perp}$ is special, if and only if the period $q(\ell)$ is.

\begin{lem}
\label{lemma-density-in-a-generic-fiber}
\begin{enumerate}
\item
\label{lemma-item-non-special-iff-dense}
$g(Q_\alpha)$ has a dense orbit in $q^{-1}(\underline{\ell})$, 
if and only if $\underline{\ell}$ is non-special.
\item
\label{lemma-item-if-one-orbit-is-dense-then-every}
If $g(Q_\alpha)$ has a dense orbit in $q^{-1}(\underline{\ell})$, 
then every $g(Q_\alpha)$-orbit in $q^{-1}(\underline{\ell})$ is dense.
\end{enumerate}
\end{lem}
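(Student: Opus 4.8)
The plan is to reduce both statements to a concrete description of the fiber $q^{-1}(\underline{\ell})$ and of the $g(Q_\alpha)$-action on it, and then to invoke a standard density criterion for translation actions on a torus quotient. First I would fix $\underline{\ell}\in \Omega_{Q_\alpha}^+$ and use the section $\tau_\gamma$ of \eqref{eq-section-of-q} to identify $q^{-1}(\underline{\ell})$ with the affine line $\PP[V_{\underline{\ell}}]\setminus\{\PP[\ComplexNumbers\alpha]\}$, where $V_{\underline{\ell}}$ is the two-dimensional space spanned by $\alpha$ and any lift $x$ of a generator of $\underline{\ell}$ to $\alpha^\perp_\Lambda\otimes\ComplexNumbers$. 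Concretely, using \eqref{eq-bar-tau-gamma} a general point of the fiber is $\mathrm{span}_\ComplexNumbers\{x+c\alpha\}$ with $c\in\ComplexNumbers$, so the fiber is an affine copy of $\ComplexNumbers$ with coordinate $c$ (the base point $\tau_\gamma(\underline{\ell})$ corresponding to $c=(\iota(x),\gamma)$). By Lemma \ref{lemma-q-is-Q-alpha-invariant}(\ref{lemma-item-lattice-action-on-alpha-perp}), $g_{[z]}$ sends $x+c\alpha$ to $x+(x,z)\alpha+c\alpha$, i.e. in the coordinate $c$ it acts by the translation $c\mapsto c+(x,z)$. Hence the $g(Q_\alpha)$-action on $q^{-1}(\underline{\ell})\cong\ComplexNumbers$ is by translations through the additive subgroup $\Gamma_{\underline{\ell}}:=\{(x,z) : z\in\alpha^\perp_\Lambda\}=\{(x,z) : z\in Q_\alpha\}\subset\ComplexNumbers$, where we use the isometry $Q_\alpha\cong\bar v^\perp_{\Lambda_{k3}}$ of Lemma \ref{lemma-Lambda-alpha-is-isometric-to-v-bar-perp} and the fact that $(x,\alpha)=0$ makes $z\mapsto(x,z)$ well-defined on $Q_\alpha$.

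With this dictionary, part (\ref{lemma-item-if-one-orbit-is-dense-then-every}) is immediate: a coset of a subgroup $\Gamma\subset\ComplexNumbers$ is dense in $\ComplexNumbers$ if and only if $\Gamma$ itself is dense, since translation by a fixed element is a homeomorphism of $\ComplexNumbers$; so I would simply note that the two orbits differ by a translation. For part (\ref{lemma-item-non-special-iff-dense}) the task is to show $\Gamma_{\underline{\ell}}$ is dense in $\ComplexNumbers$ if and only if $\underline{\ell}$ is non-special. Here $\Gamma_{\underline{\ell}}$ is the image of the lattice $Q_\alpha$ (of rank $b_2(X)-2\geq 21$) under the additive homomorphism $\phi_{\underline\ell}:Q_\alpha\to\ComplexNumbers$, $z\mapsto(x,z)$, whose kernel is exactly $Q_\alpha\cap x^\perp = Q_\alpha^{1,1}(\underline{\ell},\ComplexNumbers)\cap Q_\alpha$ up to the real-versus-complex bookkeeping — more precisely, since $\underline{\ell}=\mathrm{span}\{x\}$ with $(x,x)=0$, $(x,\bar x)>0$, a class $z\in Q_\alpha$ lies in $\ker\phi_{\underline\ell}$ iff $z\perp x$; and the classes in $Q_\alpha$ perpendicular to both $x$ and $\bar x$ are precisely the integral $(1,1)$-classes. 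The closure of the subgroup $\Gamma_{\underline\ell}=\phi_{\underline\ell}(Q_\alpha)$ in $\ComplexNumbers\cong\RR^2$ is a closed connected subgroup plus the image — it is all of $\ComplexNumbers$ unless $\Gamma_{\underline\ell}$ is contained in a proper closed subgroup, i.e. a line or a discrete subgroup; and a standard argument (rank $Q_\alpha\geq 3$ together with the signature/positivity of the restriction of the pairing to the real positive-definite $2$-plane $\ell\oplus\bar\ell$) rules out $\Gamma_{\underline\ell}$ being discrete or contained in a real line unless there is an integral class orthogonal to the $2$-plane spanned by $\mathrm{Re}(x),\mathrm{Im}(x)$. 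That orthogonality condition is exactly the condition that $H^2(S,\Integers)\cap\left(H^{2,0}(S)\oplus H^{0,2}(S)\right)\neq 0$, i.e. that $\underline{\ell}$ is special.

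The main obstacle I expect is the last step: carefully proving that $\phi_{\underline\ell}(Q_\alpha)$ is dense in $\ComplexNumbers$ precisely when $Q_\alpha$ contains no nonzero class orthogonal to both $\mathrm{Re}(x)$ and $\mathrm{Im}(x)$. The clean way is to pass to $\RR$-coefficients: $\phi_{\underline\ell}$ extends to a surjective $\RR$-linear map $Q_\alpha\otimes\RR\to\ComplexNumbers$ (surjective because $(\cdot,\cdot)$ is nondegenerate on $Q_\alpha$ and $\mathrm{Re}(x),\mathrm{Im}(x)$ span a positive-definite plane, so they are not both orthogonal to all of $Q_\alpha\otimes\RR$), and then invoke the elementary fact that the image of a lattice $\Lambda_0\subset \RR^m$ under a surjective linear map $\psi:\RR^m\to\RR^k$ is dense in $\RR^k$ if and only if $\psi(\Lambda_0)$ spans $\RR^k$ and $\Lambda_0\cap\ker\psi$ has rank $m-k$ — equivalently, $\ker\psi$ is defined over $\RationalNumbers$ relative to $\Lambda_0$ — so non-density forces a rational, hence integral, class in $\ker\psi\cap Q_\alpha$, which is a nonzero integral $(2,0)\oplus(0,2)$ class. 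Translating this $\ker\psi$-rationality into the speciality of the $K3$ period, via the identification $Q_\alpha\cong\bar v^\perp_{\Lambda_{k3}}\subset H^2(S(\alpha),\Integers)$ and Definition \ref{def-special}, completes the argument. I would also remark that the ``only if'' direction is the easy one: if $\underline{\ell}$ is special, pick a nonzero integral class $w\in Q_\alpha$ with $w\in H^{2,0}\oplus H^{0,2}$; then $\phi_{\underline\ell}$ kills the rank-$(b_2(X)-3)$ sublattice $w^\perp\cap Q_\alpha$ only partially, but more to the point $\Gamma_{\underline\ell}$ lies in the proper real subspace $\{(x,z):z\}$ cut out by the extra integral relation, so it cannot be dense.
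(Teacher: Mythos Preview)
Your setup is correct and matches the paper: the fiber is an affine copy of $\ComplexNumbers$, the $g(Q_\alpha)$-action is by translations through the subgroup $\Gamma_{\underline{\ell}}=\{(x,z):z\in Q_\alpha\}\subset\ComplexNumbers$, and part~(\ref{lemma-item-if-one-orbit-is-dense-then-every}) follows immediately from this description. The ``only if'' direction of part~(\ref{lemma-item-non-special-iff-dense}) is also essentially right, though the paper's formulation is cleaner: if $\lambda\in Q_\alpha$ lies in the real plane $[\underline{\ell}\oplus\bar{\underline{\ell}}]\cap(Q_\alpha\otimes\RR)$, write $\lambda=t+\bar t$ with $t\in\underline{\ell}$; then $2\,\mathrm{Re}(z,t)=(z,\lambda)\in\Integers$ for all $z\in Q_\alpha$, so $\Gamma_{\underline{\ell}}$ has real part in $\tfrac12\Integers$ and cannot be dense.

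The ``if'' direction has a genuine gap. Your ``elementary fact'' is false as stated: if $\Lambda_0\cap\ker\psi$ has rank $m-k$, i.e.\ $\ker\psi$ is defined over $\RationalNumbers$, then $\psi(\Lambda_0)$ is a rank-$k$ lattice in $\RR^k$, hence \emph{discrete}, not dense. (For a counterexample to the converse implication, take $\psi:\RR^3\to\RR^2$, $(a,b,c)\mapsto(a,b+\sqrt 2\,c)$: the kernel meets $\Integers^3$ only in $0$, yet the image $\Integers\times(\Integers+\sqrt 2\,\Integers)$ is not dense.) The correct Kronecker-type criterion involves the \emph{dual}: $\psi(\Lambda_0)$ is dense in $\RR^k$ iff no nonzero $w\in(\RR^k)^*$ satisfies $\psi^*w\in\Lambda_0^*$. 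In your situation this translates, via the nondegenerate pairing on $Q_\alpha$, to the condition that the real plane $V=\mathrm{span}_\RR\{\mathrm{Re}(x),\mathrm{Im}(x)\}=(\ker\psi)^\perp$ contains no nonzero rational class --- which is exactly non-speciality. You have conflated $\ker\psi$ with $(\ker\psi)^\perp$: you first correctly note that integral classes in $\ker\psi$ are of type $(1,1)$, but then conclude that non-density produces an integral class in $\ker\psi$ of type $(2,0)\oplus(0,2)$, which is self-contradictory.

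If you repair the criterion to the dual form above, your argument goes through and is in fact somewhat slicker than the paper's. The paper instead restricts attention to the transcendental lattice $\Theta(\underline{\ell})\subset Q_\alpha$ (the orthogonal complement of $\ker\phi_{\underline\ell}\cap Q_\alpha$), shows it has rank $\geq 3$ when $\underline{\ell}$ is non-special, proves directly that every corank-$1$ subgroup of $\Theta(\underline{\ell})$ maps onto a spanning set of $\ComplexNumbers$, and then invokes a self-contained lemma (Lemma~\ref{lemma-density-of-Z-in-R-2}: any rank-$\geq 3$ subgroup of $\RR^2$ all of whose corank-$1$ subgroups span is dense), proved by an elementary parallelogram-shrinking argument. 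Your corrected route avoids this auxiliary lemma at the cost of citing the Kronecker density theorem.
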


\begin{proof}
Part \ref{lemma-item-if-one-orbit-is-dense-then-every} follows from the description of the action in 
Lemma \ref{lemma-q-is-Q-alpha-invariant} part \ref{lemma-item-lattice-action-on-alpha-perp}.
We prove part \ref{lemma-item-non-special-iff-dense}.
Fix a period $\ell$ such that $q(\ell)=\underline{\ell}$ and choose
a non-zero element $t$ of the line $\ell$ in
$\alpha^\perp_\Lambda\otimes_\Integers\ComplexNumbers$. Then 
$q^{-1}(\underline{\ell})=\PP[\ComplexNumbers\alpha+\ComplexNumbers t]\setminus\{\PP[\ComplexNumbers\alpha]\}$
and $g_{[z]}(a\alpha+t)=(a+(t,z))\alpha+t$, by
Lemma \ref{lemma-q-is-Q-alpha-invariant} part \ref{lemma-item-lattice-action-on-alpha-perp}. 
The fiber $q^{-1}(\uell)$ has a dense  $g(Q_\alpha)$-orbit, if and only if 
the image of 
\begin{equation}
\label{eq-homomorphism-from-Q-alpha-to-C}
(t,\bullet):Q_\alpha\rightarrow \ComplexNumbers
\end{equation} 
is dense in $\ComplexNumbers$.

Suppose first that $\uell$ is special. 
Set $V:=[\uell\oplus \bar{\uell}]\cap [Q_\alpha\otimes_\Integers\RealNumbers]$.
Let  $\lambda$ be a non-zero element in $V\cap Q_\alpha$.
There exists an element $t\in \uell$, such that $\lambda=t+\bar{t}$.
Given an element $z\in Q_\alpha$, then 
$2Re(z,t)=(z,t)+(z,\bar{t})=(z,\lambda)$ is an integer. Thus, $Re(z,t)$ 
belongs to the discrete subgroup $\frac{1}{2}\Integers$ of $\RealNumbers$. 
Hence, the image of the homomorphism (\ref{eq-homomorphism-from-Q-alpha-to-C}) is not dense
in $\ComplexNumbers$.

Assume next that $\uell$ is non-special. 
Denote by $\Theta(\uell)\subset Q_\alpha$ the lattice orthogonal to 
the kernel of the homomorphism (\ref{eq-homomorphism-from-Q-alpha-to-C}).
$\Theta(\uell)$ is the transcendental lattice of the $K3$-surface with period $\uell$.
We know that $\Theta(\uell)$  has rank at least two, and if the rank of 
$\Theta(\uell)$ is $2$, then the Hodge decomposition is defined over $\RationalNumbers$ and so $\uell$
is special. Thus, the rank of $\Theta(\uell)$ is at least three. 
Let $G\subset \Theta(\uell)$ be a co-rank $1$ subgroup. 
We claim that the image $(t,G)$, 
of $G$ via the homomorphism (\ref{eq-homomorphism-from-Q-alpha-to-C}),
spans $\ComplexNumbers$ as a $2$-dimensional real vector space. 
The latter statement is equivalent to the statement that the image of $G$ in $V^*$, 
under the map $z\mapsto (z,\bullet)$ which has real values on $V$, 
spans $V^*$. The equivalence is clear considering the following isomorphisms of two dimensional real
vector spaces:
\[
\xymatrix{
\ComplexNumbers &
\Hom_\ComplexNumbers(\ell,\ComplexNumbers) \ar[l]_-{ev_t} \ar[r]^{Re}&
\Hom_\RealNumbers(\ell,\RealNumbers) \ar[r]^-{p^*} &
\Hom_\RealNumbers(V,\RealNumbers)=V^*,
}
\]
where $ev_t$ is evaluation at $t$, $Re$ takes $(z,\bullet)$ to its real part $Re(z,\bullet)$, and $p^*$
is pullback via the projection $p:V\rightarrow \ell$
on the $(2,0)$ part.
Assume that the image of $G$ in $V^*$
spans a one-dimensional subspace $W$.
Let $U$ be the subspace of $V$ annihilated by $W$, and hence also by $(z,\bullet)$, $z\in G$.
Then the kernel of the homomorphism
$\Lambda_{k3}\rightarrow U^*$, given by $z\mapsto (z,\bullet)$, 
has co-rank $1$ in $\Lambda_{k3}$.
It follows that the decomposition $\Lambda_{k3}\otimes_\Integers\RealNumbers=U\oplus U^\perp$
is defined over $\RationalNumbers$.
Thus, $U\cap \Lambda_{k3}$ is non-trivial and $\uell$ is special. A contradiction. 
Thus, indeed, the image
$(t,G)$ of $G$ spans $\ComplexNumbers$. Let $Z\subset \ComplexNumbers$ be the image $(t,\Theta(\uell))$
of $\Theta(\uell)$ via the homomorphism (\ref{eq-homomorphism-from-Q-alpha-to-C}).
We have established that $Z$ satisfies the hypothesis of
Lemma \ref{lemma-density-of-Z-in-R-2} below, which implies that the image of 
the homomorphism (\ref{eq-homomorphism-from-Q-alpha-to-C})
is dense in $\ComplexNumbers$.
\end{proof}

\begin{lem}
\label{lemma-density-of-Z-in-R-2}
Let $Z\subset \RealNumbers^2$ be a free additive subgroup of rank $\geq 3$. 
Assume that any co-rank $1$ subgroup of $Z$ spans $\RealNumbers^2$ as a real vector space.
Then $Z$ is dense in $\RealNumbers^2$.
\end{lem}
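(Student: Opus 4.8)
The plan is to work with the closure $\bar{Z}$ of $Z$ in $\RealNumbers^2$, which is a closed subgroup, and to invoke the classification of closed subgroups of $\RealNumbers^2$: any such subgroup decomposes as $\bar{Z}=V\oplus\Gamma$, where $V$ is a linear subspace and $\Gamma$ is a discrete (hence free) subgroup of a complementary subspace, with $\dim V+\rank\Gamma\le 2$. Density of $Z$ is equivalent to $\bar{Z}=\RealNumbers^2$, i.e.\ to $\dim V=2$, so I would assume $\dim V\le 1$ and derive a contradiction from the hypothesis on co-rank $1$ subgroups.

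If $\dim V=0$, then $\bar{Z}=\Gamma$ is discrete, so $Z=\bar{Z}$ is free of rank at most $2$, contradicting $\rank Z\ge 3$. The remaining case is $\dim V=1$: then $V$ is a line and $\rank\Gamma\le 1$. Let $p\colon\bar{Z}\to\Gamma$ be the projection along $V$; its kernel is $V$, so from the exact sequence $0\to Z\cap V\to Z\to p(Z)\to 0$ together with $\rank p(Z)\le\rank\Gamma\le 1$ one gets $\rank(Z\cap V)\ge\rank Z-1\ge 2$. Since $Z\cap V$ is a subgroup of $Z$, its rank is either $\rank Z-1$ or $\rank Z$. In the first case $Z\cap V$ is a co-rank $1$ subgroup of $Z$ lying inside the line $V$, so it spans a subspace of dimension at most $1$, contradicting the hypothesis. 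In the second case $p(Z)$ has rank $0$, hence is trivial (being a subgroup of the torsion-free group $\Gamma$), so $Z\subseteq V$; then any co-rank $1$ subgroup of $Z$---which is nonzero since $\rank Z\ge 2$---again lies in the line $V$ and fails to span $\RealNumbers^2$. Either way the assumption on co-rank $1$ subgroups is violated, so $\dim V=2$ and $Z$ is dense in $\RealNumbers^2$.

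I do not anticipate a genuine obstacle here: the argument is purely formal once one has the structure theorem for closed subgroups of $\RealNumbers^2$. The only points that need a little care are the additivity of rank in the exact sequence $0\to Z\cap V\to Z\to p(Z)\to 0$ and the observation that the hypothesis must be applied to the \emph{specific} co-rank $1$ subgroup $Z\cap V$ produced by the rank count (or, in the degenerate sub-case $Z\subseteq V$, to an arbitrary one), each of which is forced to lie in a line.
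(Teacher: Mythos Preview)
Your proof is correct. The structure theorem for closed subgroups of $\RealNumbers^n$ does all the work, and your case analysis on $\dim V$ is clean; the only delicate step is the rank bookkeeping in the exact sequence $0\to Z\cap V\to Z\to p(Z)\to 0$, which you handle properly.

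The paper takes a genuinely different, more hands-on route. It never invokes the closed-subgroup classification. Instead, it considers the set $\Sigma$ of all $\RealNumbers$-bases $\beta=\{z_1,z_2\}$ of $\RealNumbers^2$ consisting of elements of $Z$, sets $\Abs{\beta}=\Abs{z_1}+\Abs{z_2}$, and argues that density is equivalent to $I:=\inf_{\beta\in\Sigma}\Abs{\beta}=0$ (since $\Integers$-translates of the parallelogram $P_\beta$ cover the plane and $P_\beta$ has diameter $<\Abs{\beta}$). Assuming $I>0$, it picks $\beta$ with $\Abs{\beta}<\tfrac{12}{11}I$, uses the co-rank~$1$ hypothesis to produce an element $w=c_1z_1+c_2z_2\in Z$ with \emph{both} $c_i$ irrational, then reduces $w$ modulo $\Integers z_1+\Integers z_2$ into a small sub-parallelogram so that $\{w,z_2\}$ is a new basis with $\Abs{\{w,z_2\}}<I$, a contradiction.

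What each approach buys: yours is shorter and conceptually transparent, but imports a structural result about Lie groups. The paper's argument is entirely elementary and self-contained, at the cost of some explicit numerical estimates. Your use of the hypothesis is also slightly different: you apply it once to the specific subgroup $Z\cap V$, whereas the paper applies it (twice, once for each coordinate) to manufacture an element with irrational coefficients in a chosen basis.
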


\begin{proof}
Let $\Sigma$ be the set of all bases of $\RR^2$, consisting of elements of $Z$.
Given a basis $\beta\in\Sigma$, $\beta=\{z_1,z_2\}$, set $\Abs{\beta}=\Abs{z_1}+\Abs{z_2}$.
Set
$
I := \inf\{\Abs{\beta} \ : \ \beta\in\Sigma\}.
$
Note that the closed parallelogram $P_\beta$ with vertices $\{0,z_1,z_2,z_1+z_2\}$
has diameter $< \Abs{\beta}$. Furthermore, every point of the plane belongs to a translate of
$P_\beta$ by an element of the subset ${\rm span}_\Integers\{z_1,z_2\}$ of $Z$.
Hence, it suffices to prove that $I=0$.

The proof is by contradiction. Assume that $I>0$. Let $\beta=\{z_1,z_2\}$ be a basis satisfying
$I\leq \Abs{\beta}<\frac{12}{11}I$.
We may assume, without loss of generality, that $\Abs{z_1}\geq \Abs{z_2}$. 

We prove next that there exists an element $w\in Z$, such that $w=c_1z_1+c_2z_2$, where the coefficients $c_i$
are irrational. Set $r:=\rank(Z)$.
Let $z_3, \dots, z_r$ be elements of $Z$ completing 
$\{z_1,z_2\}$ to a subset, which is linearly independent over $\RationalNumbers$. 
Write $z_j=c_{j,1}z_1+c_{j,2}z_2$, for $3\leq j\leq r$. 
Assume that $c_{j,1}$ are rational, for $3\leq j\leq r$. 
Then there exists a positive integer $N$, such that $Nc_{j,1}$ are integers, for all $3\leq j\leq r$. 
Then
\[
\{z_2, Nz_3-Nc_{3,1}z_1, \dots, Nz_r-Nc_{r,1}z_1
\}
\]
spans a co-rank $1$ subgroup of $Z$, which lies on $\RR z_2$. This contradicts the assumption 
on $Z$. Hence, there exists an element $w\in Z$, such that $w=c_1z_1+c_2z_2$, where the coefficient $c_1$
is irrational. Repeating the above argument for $c_2$, we get the desired conclusion.

Choose an element $w$ as above. By adding vectors in ${\rm span}_\Integers\{z_1,z_2\}$, 
and possibly after changing the signs of $z_1$ or $z_2$, we may assume that 
$w=c_1z_1+c_2z_2$, with $0<c_1<\frac{1}{2}$ and
$0< c_2< \frac{1}{2}$.
Then $w$ belongs to the parallelogram
$\frac{1}{2}P_\beta$ with vertices
$\{0, \frac{z_1}{2},\frac{z_2}{2},\frac{z_1+z_2}{2}\}$. 
If $c_1$ and $c_2$ are both larger than $\frac{1}{3}$ replace $w$ by
$z_1+z_2-2w$. We may thus assume further, that at least one $c_i$ is $\le\frac{1}{3}$.
In particular, 
$
\Abs{w}\leq c_1\Abs{z_1}+c_2\Abs{z_2}<\frac{5}{6}\Abs{z_1}.
$
Consider the new basis $\tilde{\beta}:=\{w,z_2\}$ of $\RR^2$. 
Then
$
\Abs{\tilde{\beta}}=\Abs{w}+\Abs{z_2}<\frac{5}{6}\Abs{z_1}+\Abs{z_2}=\Abs{\beta}-\frac{1}{6}\Abs{z_1}
\leq \frac{11}{12}\Abs{\beta}<I.
$
We obtain the desired contradiction.
\hide{
Set $\delta:=\frac{2\epsilon}{1+\epsilon}$.
If $\frac{\Abs{w}}{\Abs{z_1}}<1-\delta$, consider the new basis
$\tilde{\beta}:=\{w,z_2\}$ of $\RR^2$. 
Then 
\[
\Abs{\tilde{\beta}}=\Abs{w}+\Abs{z_2}<\Abs{\beta}-\delta\Abs{z_1}\leq
\Abs{\beta}(1-\delta/2)<(1+\epsilon)(1-\delta/2)I=I.
\]
We obtain the desired contradiction.

The above construction can be carried out for a sequence of positive values of $\epsilon$ approaching zero.
If $1-\delta\leq \frac{\Abs{w}}{\Abs{z_1}}< 1$, then as $\epsilon$ approaches zero, 
$\frac{\Abs{z_2}}{\Abs{z_1}}$ approaches $1$, the angle $\theta$ at the vertex
$0$ of $\frac{1}{2}P_\beta$ approaches zero, and $w$ approaches the vertex
$\frac{z_1+z_2}{2}$ of $\frac{1}{2}P_\beta$ (draw a picture).
But then $w-z_2$ approaches
$\frac{z_1-z_2}{2}$, and 
\[
\Abs{\frac{z_1-z_2}{2}}
=
\frac{1}{2}\sqrt{
\Abs{z_1}^2+
\Abs{z_2}^2-
2\Abs{z_1}\Abs{z_2}\cos(\theta),
}
\]
which approaches $\frac{\Abs{z_1}-\Abs{z_2}}{2}$, which approaches $0$.
In that case consider the basis 
\[\tilde{\beta}:=\{z_2-w,z_2\}\] of $\RR^2$
to get the desired contradiction.
}
\end{proof}




Denote by $J_\alpha\subset \Omega_{\alpha^\perp}$ the union of all the $g(Q_\alpha)$ translates of the
section $\tau_\gamma$ constructed in Equation (\ref{eq-section-of-q}) above. 
\[
J_\alpha:= \bigcup_{y\in Q_\alpha}
g_y
\left[\tau_\gamma\left(\Omega^+_{Q_\alpha}\right)\right].
\]
One easily checks that $g_{[z]}\circ\tau_\gamma=\tau_\delta$, where
$\delta:=\gamma+\iota(z)+(\gamma,\iota(z))\beta+\frac{(z,z)}{2}\beta$,
for all $z\in \alpha^\perp_{\Lambda}$, and so $J_\alpha$ is independent of the choice of $\gamma$.

\begin{prop}
\label{prop-density-of-periods-of-Lagrangian-fibrations}
\begin{enumerate}
\item
\label{cor-item-density-of-J-alpha}
$J_\alpha$ is a dense subset of $\Omega_{\alpha^\perp}^+$. 
\item
\label{cor-item-open-subset-V-must-contain-all-non-special-periods}
If $V$ is a $g(Q_\alpha)$-invariant open subset of $\Omega_{\alpha^\perp}^+$, which contains $J_\alpha$, 
then $V$ contains every non-special period in $\Omega_{\alpha^\perp}^+$.
\item
\label{cor-item-periods-in-J-alpha-are-lagrangian-fibrations}
For every $\ell\in J_\alpha$, there exists a marked pair $(M,\eta)$,
consisting of a smooth projective irreducible holomorphic symplectic manifold $M$ of $K3^{[n]}$-type
and a marking $\eta:H^2(M,\Integers)\rightarrow \Lambda$ with period $\ell$
satisfying the following properties.
\begin{enumerate}
\item
The composition $\iota\circ \eta:H^2(M,\Integers)\rightarrow \widetilde{\Lambda}$
belongs to the canonical $O(\widetilde{\Lambda})$-orbit $\iota_M$
of Theorem \ref{thm-orbit-iota-X}. 
\item
There exists a Lagrangian fibration 
$\pi:M\rightarrow \PP^n$, such that 
the class $\eta^{-1}(\alpha)$ is equal to $\pi^*c_1(\StructureSheaf{\PP^n}(1))$.
\end{enumerate}
\end{enumerate}
\end{prop}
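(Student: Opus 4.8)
The plan is to obtain all three parts as formal consequences of what has already been established: the density of $g(Q_\alpha)$-orbits in non-special Tate--Shafarevich lines (Lemma \ref{lemma-density-in-a-generic-fiber}), the construction in Section \ref{sec-the-section-tau-of-q} of the section $\tau_\gamma$ together with the marked moduli space of sheaves sitting over each of its points, Example \ref{ex-completely-integrable-moduli-spaces-for-each-value-of-the-monodromy-invariant}, and the fact that $g(Q_\alpha)$ is a group of parallel-transport operators fixing $\alpha$ (Theorem \ref{thm-Mon-2-is-stabilizer} and Lemma \ref{lemma-q-is-Q-alpha-invariant}). I would begin with part \ref{cor-item-periods-in-J-alpha-are-lagrangian-fibrations}. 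For a period of the form $\ell=\tau_\gamma(\underline\ell)$ with $\underline\ell\in\Omega^+_{Q_\alpha}$, Section \ref{sec-the-section-tau-of-q} already produces the required marked pair: take $M:=M_H(u)$ and $\eta:=\eta_1$, so that $\eta(H^{2,0}(M))=\tau_\gamma(\underline\ell)$, the embedding $\iota\circ\eta_1$ lies in the canonical orbit $\iota_M$ of Theorem \ref{thm-orbit-iota-X} by Remark \ref{rem-eta-1-is-compatible-with-the-orientations}, and $\eta_1^{-1}(\alpha)=\theta(0,0,1)$; by Example \ref{ex-completely-integrable-moduli-spaces-for-each-value-of-the-monodromy-invariant} this class equals $\pi^*c_1(\StructureSheaf{\PP^n}(1))$ for the support morphism $\pi\colon M\to\linsys{\LB^d}$, which is a Lagrangian fibration with base $\PP^n$ since $h^0(S,\LB^d)=n+1$. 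For a general $\ell\in J_\alpha$, write $\ell=g_y(\tau_\gamma(\underline\ell))$ with $y\in Q_\alpha$ and replace $\eta$ by $g_y\circ\eta$: then $(M,g_y\circ\eta)$ has period $g_y(\tau_\gamma(\underline\ell))=\ell$; since $g_y$ fixes $\alpha$ we still have $(g_y\circ\eta)^{-1}(\alpha)=\eta^{-1}(\alpha)=\pi^*c_1(\StructureSheaf{\PP^n}(1))$; and, writing $z$ for a lift of $y$ to $\alpha^\perp_\Lambda$, the embedding $\iota\circ(g_y\circ\eta)=\tilde{g}_{\iota(z)}\circ(\iota\circ\eta)$ still lies in the $O(\widetilde{\Lambda})$-orbit $\iota_M$, because $\iota_M$ is $O(\widetilde{\Lambda})$-stable and $\tilde{g}_{\iota(z)}\in O(\widetilde{\Lambda})$ by Lemma \ref{lemma-characterization-of-g-z}. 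As $g_y$ is orientation preserving, $(M,g_y\circ\eta)$ stays in $\FM^0_{\alpha^\perp}$.

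Next I would prove part \ref{cor-item-density-of-J-alpha}. Since $J_\alpha$ is $g(Q_\alpha)$-invariant and $q$ is $g(Q_\alpha)$-invariant (Lemma \ref{lemma-q-is-Q-alpha-invariant}\ref{lemma-item-q-is-equivariant}), the intersection of $J_\alpha$ with a fiber $q^{-1}(\underline\ell)$ is the orbit $g(Q_\alpha)\tau_\gamma(\underline\ell)$, which is dense in $q^{-1}(\underline\ell)$ whenever $\underline\ell$ is non-special, by Lemma \ref{lemma-density-in-a-generic-fiber}\ref{lemma-item-non-special-iff-dense}. Given a nonempty open subset $W\subseteq\Omega_{\alpha^\perp}^+$, its image $q(W)$ is a nonempty open subset of $\Omega^+_{Q_\alpha}$, hence contains a non-special period $\underline\ell$, since the special periods form a countable union of proper analytic subvarieties. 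Then $W\cap q^{-1}(\underline\ell)$ is a nonempty open subset of $q^{-1}(\underline\ell)$ and so meets the dense set $J_\alpha\cap q^{-1}(\underline\ell)$; thus $W\cap J_\alpha\ne\emptyset$, proving density.

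For part \ref{cor-item-open-subset-V-must-contain-all-non-special-periods}, let $\ell\in\Omega_{\alpha^\perp}^+$ be a non-special period, so that $\underline\ell:=q(\ell)$ is non-special. The set $V\cap q^{-1}(\underline\ell)$ is open in $q^{-1}(\underline\ell)$, it is $g(Q_\alpha)$-invariant because $g(Q_\alpha)$ preserves the fiber and $V$ is $g(Q_\alpha)$-invariant, and it is nonempty because $\tau_\gamma(\underline\ell)\in J_\alpha\subseteq V$. By Lemma \ref{lemma-density-in-a-generic-fiber}\ref{lemma-item-if-one-orbit-is-dense-then-every} every $g(Q_\alpha)$-orbit in $q^{-1}(\underline\ell)$ is dense, so the Observation from the introduction forces $V\cap q^{-1}(\underline\ell)=q^{-1}(\underline\ell)$; in particular $\ell\in V$.

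The genuinely new content here is slight: all three parts are short consequences of the density lemma, the section $\tau_\gamma$, and Example \ref{ex-completely-integrable-moduli-spaces-for-each-value-of-the-monodromy-invariant}. The only points requiring real care are the compatibility of the modified marking $g_y\circ\eta$ with the canonical $O(\widetilde{\Lambda})$-orbit and with the orientation of the positive cone --- handled by the explicit description $\tilde{g}_{\iota(z)}\in O(\widetilde{\Lambda})^+_{\beta,v}$ of Lemma \ref{lemma-characterization-of-g-z} --- and the invocation of the standard fact that non-special periods are dense in $\Omega^+_{Q_\alpha}$. I expect the verification of these compatibilities to be the only genuine, if minor, obstacle.
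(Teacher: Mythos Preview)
Your proposal is correct and follows essentially the same approach as the paper. The paper's own proof is considerably terser: for part~(\ref{cor-item-density-of-J-alpha}) it simply cites Lemma~\ref{lemma-density-in-a-generic-fiber}, for part~(\ref{cor-item-open-subset-V-must-contain-all-non-special-periods}) it argues via the closed invariant complement in each non-special fiber, and for part~(\ref{cor-item-periods-in-J-alpha-are-lagrangian-fibrations}) it writes the single line $(M,\eta)=(M_H(u),g_z\circ\eta_1)$ --- so your argument is the same route with the details (compatibility of $\iota\circ(g_y\circ\eta)$ with $\iota_M$ via $\tilde{g}_{\iota(z)}\in O(\widetilde{\Lambda})$, and the density of non-special periods needed for part~(\ref{cor-item-density-of-J-alpha})) written out explicitly.
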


\begin{proof}
(\ref{cor-item-density-of-J-alpha})
The density of $J_\alpha$ follows from Lemma
\ref{lemma-density-in-a-generic-fiber}.

(\ref{cor-item-open-subset-V-must-contain-all-non-special-periods})
$V$ intersects every non-special fiber $q^{-1}(\uell)$ in a non-empty open $g(Q_\alpha)$-equivariant
subset of the latter. The complement $q^{-1}(\uell)\setminus V$ is thus a closed 
$g(Q_\alpha)$-equivariant proper subset of the fiber. But any $g(Q_\alpha)$-orbit in
the non-special fiber $q^{-1}(\uell)$ is dense in $q^{-1}(\uell)$, by Lemma
\ref{lemma-density-in-a-generic-fiber}. 
Hence, the complement $q^{-1}(\uell)\setminus V$ must be empty.

(\ref{cor-item-periods-in-J-alpha-are-lagrangian-fibrations})
If $\ell_0$ belongs to the section $\tau_\gamma\left(\Omega^+_{Q_\alpha}\right)$,
then such a pair $(M,\eta):=(M_H(u),\eta_1)$ was constructed in Diagram
(\ref{diagram-marked-M-H-u}), as mentioned in  Remark \ref{rem-eta-1-is-compatible-with-the-orientations}. 
If $\ell=g_z(\ell_0)$, 
$z\in\alpha^\perp_\Lambda$,
set $(M,\eta)=(M_H(u),g_z\circ\eta_1)$.
\end{proof}
%
\section{Primitive isotropic classes and Lagrangian fibrations}
\label{sec-proof-of-main-thm}
We prove Theorem \ref{thm-main}
in this section using the geometry of the moduli 
space $\FM^0_{\alpha^\perp}$ given in Equation
(\ref{eq-FM-alpha-perp}).
Recall that $\FM^0_{\alpha^\perp}$ is a connected component of the moduli space 
of marked pairs $(X,\eta)$ with $X$ of $K3^{[n]}$-type and such that 
$\eta^{-1}(\alpha)$ is a primitive isotropic class of Hodge type $(1,1)$ 
in the boundary of the positive cone in $H^{1,1}(X,\RealNumbers)$.

Fix a connected moduli space $\FM^0_{\alpha^\perp}$ 
as in Equation (\ref{eq-FM-alpha-perp}).
Denote by $\LB_{\eta^{-1}(\alpha)}$ the line bundle on $X$ with
$c_1(\LB)=\eta^{-1}(\alpha)$.
Let $V$ be the subset of $\FM^0_{\alpha^\perp}$ consisting of all pairs $(X,\eta)$, 
such that $\LB_{\eta^{-1}(\alpha)}$ induces a Lagrangian fibration.

\begin{thm}
\label{thm-dimensions-of-h-i}
The image of $V$ via the period map contains every non-special period in
$\Omega_{\alpha^\perp}^+$. 
\end{thm}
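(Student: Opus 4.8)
The plan is to realize $P_0(V)$ as a $g(Q_\alpha)$-invariant open subset of $\Omega_{\alpha^\perp}^+$ that contains the dense set $J_\alpha$, and then to invoke Proposition \ref{prop-density-of-periods-of-Lagrangian-fibrations}(\ref{cor-item-open-subset-V-must-contain-all-non-special-periods}). The first step is to check that $V$ is open in $\FM^0_{\alpha^\perp}$. On this component the flat class $\eta^{-1}(\alpha)$ is always of Hodge type $(1,1)$, so the line bundle $\LB_{\eta^{-1}(\alpha)}$ is defined on every member and varies in a family over $\FM^0_{\alpha^\perp}$; the property of inducing a Lagrangian fibration --- conditions \ref{cond-higher-cohomologies-of-LB-vanish} and \ref{cond-LB-induces-a-Lagrangian-fibration} of Definition \ref{def-LB-induces-a-Lagrangian-fibration} --- is open in such a family by Matsushita's theorem \cite{matsushita}. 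Since the period map $P_0$ is a local homeomorphism \cite{beauville}, it is an open map, so $P_0(V)$ is open in $\Omega_{\alpha^\perp}^+$.

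Next I would check that $V$, and hence $P_0(V)$, is $g(Q_\alpha)$-invariant. For $y=[z]\in Q_\alpha$ the lifted action on $\FM^0_{\alpha^\perp}$ sends $(X,\eta)$ to $(X,g_y\circ\eta)$, and $g_y$ stabilizes $\alpha$ since it lies in $Mon^2(\Lambda,\iota)_\alpha$ by Lemma \ref{lemma-q-is-Q-alpha-invariant}(\ref{lemma-item-g-is-a-homomorphism}); consequently $\LB_{(g_y\circ\eta)^{-1}(\alpha)}=\LB_{\eta^{-1}(\alpha)}$, so $(X,\eta)\in V$ if and only if $(X,g_y\circ\eta)\in V$. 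Because the action on $\FM^0_{\alpha^\perp}$ covers the action on $\Omega_{\alpha^\perp}^+$, the map $P_0$ is $g(Q_\alpha)$-equivariant, and therefore $P_0(V)$ is a $g(Q_\alpha)$-invariant open subset of $\Omega_{\alpha^\perp}^+$.

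Finally I would show $J_\alpha\subseteq P_0(V)$: given $\ell\in J_\alpha$, Proposition \ref{prop-density-of-periods-of-Lagrangian-fibrations}(\ref{cor-item-periods-in-J-alpha-are-lagrangian-fibrations}) furnishes a marked pair $(M,\eta)$ of $K3^{[n]}$-type, with $\iota\circ\eta$ in the canonical orbit $\iota_M$ and $\eta^{-1}(\alpha)$ inducing a Lagrangian fibration on $M$, whose period is $\ell$; by Remark \ref{rem-eta-1-is-compatible-with-the-orientations} together with Proposition \ref{prop-same-connected-component} such a pair belongs to $\FM^0_{\alpha^\perp}$, hence to $V$, so $\ell\in P_0(V)$. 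Thus $P_0(V)$ is a $g(Q_\alpha)$-invariant open subset of $\Omega_{\alpha^\perp}^+$ containing $J_\alpha$, and Proposition \ref{prop-density-of-periods-of-Lagrangian-fibrations}(\ref{cor-item-open-subset-V-must-contain-all-non-special-periods}) then yields that it contains every non-special period in $\Omega_{\alpha^\perp}^+$, as claimed. I expect the main obstacle to be the first step --- extracting from Matsushita's work the precise statement that the Lagrangian-fibration locus is open in the family of pairs $(X,\LB_{\eta^{-1}(\alpha)})$ over $\FM^0_{\alpha^\perp}$, with no pathology arising from a possible jump of $h^0(X,\LB_{\eta^{-1}(\alpha)})$ --- but once that input is granted, the remainder is formal topology built on the density results of the preceding section.
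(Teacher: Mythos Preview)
Your proposal is correct and follows essentially the same route as the paper's proof: openness of $V$ via Matsushita, invariance under the monodromy action fixing $\alpha$, and the use of the section/$J_\alpha$ together with Proposition \ref{prop-density-of-periods-of-Lagrangian-fibrations}(\ref{cor-item-open-subset-V-must-contain-all-non-special-periods}). The only cosmetic differences are that the paper works with the full group $Mon^2(\Lambda,\iota)_\alpha$ rather than its subgroup $g(Q_\alpha)$, and first places the section $\tau_\gamma(\Omega_{Q_\alpha}^+)$ inside $P_0(V)$ before passing to $J_\alpha$ by invariance, whereas you go straight to $J_\alpha$ via Proposition \ref{prop-density-of-periods-of-Lagrangian-fibrations}(\ref{cor-item-periods-in-J-alpha-are-lagrangian-fibrations}); neither difference is substantive.
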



\begin{proof} 
Let $(X,\eta)$ be a marked pair in $\FM_{\alpha^\perp}^0$.
The property that $\eta^{-1}(\alpha)$ is the first Chern class of 
a line-bundle $\LB$ on $X$, which 
induces a Lagrangian fibration $X\rightarrow \linsys{\LB}^*$, 
is an open property in the
moduli space of marked pairs, by a result of Matsushita  \cite{matsushita}. 
$V$ is thus an open subset.

Choose a primitive embedding
$\iota:\Lambda\rightarrow \widetilde{\Lambda}$  with the property that 
$\iota\circ\eta$ belongs to the canonical $O(\widetilde{\Lambda})$-orbit $\iota_X$ of Theorem
\ref{thm-orbit-iota-X}, for all $(X,\eta)$ in $\FM_{\Lambda}^0$.
Let $Mon^2(\Lambda,\iota)$ and its subgroup $Mon^2(\Lambda,\iota)_\alpha$ be the subgroups of $O^+(\Lambda)$ introduced in Lemma \ref{lemma-q-is-Q-alpha-invariant}.
The component $\FM^0_\Lambda$ of the moduli space of marked pairs is invariant 
under $Mon^2(\Lambda,\iota)$, by Theorem \ref{thm-Mon-2-is-stabilizer}. The subset 
$\FM_{\alpha^\perp}^0$ of  $\FM^0_\Lambda$ is invariant under the subgroup
$Mon^2(\Lambda,\iota)_\alpha$. 
Hence, the subset $V$ is $Mon^2(\Lambda,\iota)_\alpha$ invariant.
The construction in section \ref{sec-the-section-tau-of-q} yields a marked pair 
$(M_H(u),\eta_1)$ with period in the image of  the section
$\tau_\gamma:\Omega_{Q_\alpha}^+\rightarrow \Omega_{\alpha^\perp}^+$,
given in Equation (\ref{eq-section-of-q}). Furthermore, the class $\eta_1^{-1}(\alpha)$ induces 
a Lagrangian fibration of $M_H(u)$. 
The marked pair $(M_H(u),\eta_1)$ belongs to $\FM_{\alpha^\perp}^0$, by Proposition
\ref{prop-same-connected-component} 
(Remark \ref{rem-eta-1-is-compatible-with-the-orientations} verifies the conditions of  Proposition
\ref{prop-same-connected-component}).
Hence, $(M_H(u),\eta_1)$  belongs to $V$ and the image of the section
$\tau_\gamma:\Omega_{Q_\alpha}^+\rightarrow \Omega_{\alpha^\perp}^+$
is thus contained in the image of $V$ via the period map.
The period map $P_0$ is $Mon^2(\Lambda,\iota)_\alpha$ equivariant and a local homeomorphism,
by the Local Torelli Theorem \cite{beauville}. 
Hence, the image $P_0(V)$ is an open and $Mon^2(\Lambda,\iota)_\alpha$ invariant subset of
$\Omega_{\alpha^\perp}^+$. Any $Mon^2(\Lambda,\iota)_\alpha$ invariant subset,
which contains the section $\tau_\gamma(\Omega_{Q_\alpha}^+)$, contains also the 
dense subset $J_\alpha$ of Proposition
\ref{prop-density-of-periods-of-Lagrangian-fibrations}. $P_0(V)$ thus contains
every non-special period in $\Omega_{\alpha^\perp}^+$, by Proposition
\ref{prop-density-of-periods-of-Lagrangian-fibrations} (\ref{cor-item-open-subset-V-must-contain-all-non-special-periods}). 
\hide{
We prove the density of $V$ next. Let $(X,\eta)$ be a point of $\FM_{\alpha^\perp}^0$ and $U$ an open subset
of $\FM_{\alpha^\perp}^0$ containing $(X,\eta)$.
A generic point $(Y,\psi)$ of $U$ has the following two properties.
(i) $H^{1,1}(Y,\Integers)$ is cyclic generated by $\psi^{-1}(\alpha)$, and
(ii) The period $P_0(Y,\psi)$ is non-special. $P_0(Y,\psi)$ belongs to $P_0(V)$, by property (ii).
Property (i) implies that the K\"{a}hler cone of $Y$ is equal to the positive cone
\cite[Corollaries 5.7 and 7.2]{huybrechts-basic-results}.
Hence, the isomorphism class of $(Y,\psi)$ is a separated point of $\FM_{\alpha^\perp}^0$ and 
the unique point in the fiber of $P_0$ in $\FM_{\alpha^\perp}^0$,
by Verbitsky's Global Torelli Theorem \ref{thm-Torelli}.
It follows that $(Y,\psi)$ belongs to the intersection $V\cap U$.
We conclude that $V$ is a dense subset of $\FM_{\alpha^\perp}^0$.
}
\end{proof}

We will need the following criterion of Kawamata for a line bundle to be semi-ample.
Let $X$ be a smooth projective variety and $D$ a divisor class on $X$.
Set $\nu(X,D):=\max\{e \ : \ D^e\not\equiv 0\}$, where 
$\equiv$ denotes numerical equivalence.
If $D\equiv 0$, we set $\nu(X,D)=0$.
Denote by $\Phi_{kD}:X\dashedrightarrow \linsys{kD}^*$ the rational map, defined whenever the 
linear system is non-empty.
Set $\kappa(X,D):=\max\{\dim \Phi_{kD}(X) \ : \ k> 0\}$, if $\linsys{kD}$ is non-empty for some positive 
integer $k$,
and $\kappa(X,D):=-\infty$, otherwise.
\begin{thm} 
\label{thm-kawamata-semi-ampleness-criterion}
(A special case of \cite[Theorem 6.1]{kawamata}).
Let $X$ be a smooth projective variety with a trivial canonical bundle and $D$ 
a nef divisor. Assume that $\nu(X,D)=\kappa(X,D)$ and $\kappa(X,D)\geq 0$. 
Then $D$ is semi-ample, i.e., there exists a positive integer $k$ such that the linear system 
$\linsys{kD}$ is base point free.
\end{thm}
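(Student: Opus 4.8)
As this is quoted from \cite{kawamata}, I only outline the strategy in the form needed here. The plan is to run the Iitaka (Kodaira) fibration of $D$ and to use that the hypothesis $\nu(X,D)=\kappa(X,D)$ makes $D$ \emph{abundant}, so that $D$ is $\RationalNumbers$-trivial on the general fibre of that fibration and descends to the base. Set $k:=\kappa(X,D)=\nu(X,D)\geq 0$. First I would fix $m$ sufficiently divisible and resolve the rational map $\Phi_{mD}$, obtaining a birational morphism $\mu\colon X'\rightarrow X$ from a smooth projective variety $X'$ and a fibration with connected fibres $f\colon X'\rightarrow W$ onto a smooth projective $W$ with $\dim W=k$, factoring $\Phi_{mD}\circ\mu$. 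On a very general fibre $F$ the divisor $\mu^*D|_F$ is nef with $\kappa(F,\mu^*D|_F)=0$; the equality $\nu(X,D)=\dim W$ then forces $\nu(F,\mu^*D|_F)=0$, i.e.\ $\mu^*D|_F$ is numerically trivial, and since some multiple of it is effective it follows that $\mu^*D|_F\sim_{\RationalNumbers}0$.

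The next step is to descend $D$ to $W$ modulo vertical corrections. Using the $\RationalNumbers$-triviality of $\mu^*D$ on general fibres, the rigidity of numerically trivial line bundles in families, and a bookkeeping of the fixed part of $|\mu^*mD|$, one writes $\mu^*D\sim_{\RationalNumbers}f^*D_W+\Gamma$ with $D_W$ a nef $\RationalNumbers$-divisor on $W$ and $\Gamma\geq 0$ an $f$-exceptional $\RationalNumbers$-divisor; moreover $D_W$ is big, since $\kappa(W,D_W)=\kappa(X,D)=\dim W$. I would then apply the canonical bundle formula of Fujino--Mori to $f$: there are a boundary $\RationalNumbers$-divisor $\Delta_W\geq 0$ and a nef moduli part $M_W$ with $K_{X'}\sim_{\RationalNumbers}f^*(K_W+\Delta_W+M_W)+R$, where $R\geq 0$ is $\mu$-exceptional. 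Triviality of $K_X$ makes $K_{X'}$ itself effective and $\mu$-exceptional, so that $K_W+\Delta_W+M_W$ is $\RationalNumbers$-linearly equivalent to an anti-effective divisor; hence $D_W-(K_W+\Delta_W)$ is $\RationalNumbers$-linearly the big divisor $D_W$ plus the nef divisor $M_W$ plus an effective divisor, and after a small klt perturbation of $(W,\Delta_W)$ the Kawamata--Shokurov base-point-free theorem applies and yields that $D_W$ is semi-ample on $W$. Consequently $f^*D_W$ is semi-ample on $X'$; the effective $f$-exceptional divisor $\Gamma$ is absorbed, because $H^0(X',\ell(f^*D_W+\Gamma))=H^0(X',\ell f^*D_W)$ for $\ell\gg 0$, so $\mu^*D$ is semi-ample, and semi-ampleness descends along the birational contraction $\mu$ to give that $D$ is semi-ample on $X$.

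The hard part will be the descent $\mu^*D\sim_{\RationalNumbers}f^*D_W+\Gamma$ with $D_W$ \emph{nef and big}, together with the positivity input it must feed to the base-point-free theorem: controlling the vertical but non-$f$-exceptional components of the fixed part, and extracting the bigness and nefness of $D_W-(K_W+\Delta_W)$, is exactly where the equality $\nu(X,D)=\kappa(X,D)$ and Kawamata's semipositivity of $M_W$ are used essentially. In the only situation needed in this paper --- $X$ a projective irreducible holomorphic symplectic manifold, so $K_X=\StructureSheaf{X}$ and $b_1(X)=0$ --- there are simplifications, e.g.\ numerically trivial line bundles on $X$ are torsion, which makes $\mu^*D|_F\sim_{\RationalNumbers}0$ immediate; but the reduction to the base-point-free theorem on $W$ proceeds as above.
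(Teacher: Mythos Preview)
The paper does not prove Theorem~\ref{thm-kawamata-semi-ampleness-criterion}; it is stated as a special case of \cite[Theorem 6.1]{kawamata} and simply cited, with the remark that an alternate proof appears in \cite{fujino}. There is therefore no proof in the paper to compare your proposal against.

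Your outline is a reasonable sketch of the modern approach to abundance-type results via the Iitaka fibration, descent to the base, and the base-point-free theorem. Two remarks are in order. First, Kawamata's original 1985 argument predates the Fujino--Mori canonical bundle formula you invoke; his proof used instead his own weak semipositivity results and a covering trick, so what you wrote is closer in spirit to the later account in \cite{fujino} than to \cite{kawamata} itself. Second, the step where you conclude that $D_W-(K_W+\Delta_W)$ is big and nef from the triviality of $K_X$ is more delicate than your sketch suggests: one needs to control the discriminant part $\Delta_W$ and confirm kltness of $(W,\Delta_W)$ after perturbation, and the claim that $K_W+\Delta_W+M_W$ is ``anti-effective'' is not quite the right formulation (what one actually obtains from $K_{X'}\equiv$ effective $\mu$-exceptional is that $K_W+\Delta_W+M_W$ is pseudo-effective of Kodaira dimension zero, not anti-effective). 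These are precisely the technical points that make the theorem nontrivial, and your honest acknowledgment that ``the hard part will be the descent'' is accurate. Since the paper only uses the statement as a black box, your outline is more than the paper provides, but it should not be mistaken for a complete proof.
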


An alternate proof of Kawamata's Theorem is provided in \cite{fujino}.
A reduced and irreducible divisor $E$ on $X$ is called {\em prime-exceptional},
if the class $e\in H^2(X,\Integers)$ of $E$ satisfies $(e,e)<0$. Consider 
the reflection $R_E:H^2(X,\Integers)\rightarrow H^2(X,\Integers)$, given by
\[
R_E(x) = x-\frac{2(x,e)}{(e,e)}e.
\]
It is known that the reflection $R_E$ by the class of a prime exceptional divisor $E$ 
is a monodromy operator, 
and in particular an integral isometry 
\cite[Cor. 3.6]{markman-prime-exceptional}. 
Let $W(X)\subset O(H^2(X,\Integers))$ be the subgroup generated by reflections
$R_E$ by classes of prime exceptional divisors $E\subset X$. Elements of 
$W(X)$ preserve the Hodge structure, hence $W(X)$ acts on $H^{1,1}(X,\Integers)$.

Let $\Pex_X\subset H^{1,1}(X,\Integers)$ be the set of classes of prime exceptional divisors. 
The {\em fundamental exceptional chamber} of the positive cone $\C_X$ is
the set
\[
\FE_X \ \ := \ \ \{a\in\C_X \ : \ (a,e)>0, \ \mbox{for all} \ e\in\Pex_X\}.
\]
The closure of $\FE_X$ in $\C_X$  is a fundamental domain for the action of $W(X)$
\cite[Theorem 6.18]{markman-torelli}. Let $f:X\dashedrightarrow Y$ be a bimeromorphic map 
to an irreducible holomorphic symplectic manifold $Y$ and $\K_Y$
the K\"{a}hler cone of $Y$. Then $f^*\K_Y$ is an open subset of $\FE_X$. Furthermore, 
the union of $f^*\K_Y$, as $f$ and $Y$ vary over all such pairs, is a dense open subset of $\FE_X$, by a result of Boucksom
\cite{boucksom} (see also 
\cite[Theorem 1.5]{markman-torelli}).

\begin{proof} (of Theorem \ref{thm-main}).
\underline{Step 1:}
Keep the notation in the opening paragraph of section \ref{sec-density-of-periods-of-compactified-jacobians}.
Choose a marking $\eta:H^2(X,\Integers)\rightarrow\Lambda$, such that $\iota\circ\eta$ belongs to the 
canonical $O(\widetilde{\Lambda})$-orbit $\iota_X$. 
Set $\alpha:=\eta(c_1(\LB))$. 
Then $(X,\eta)$ belongs to a component $\FM_{\alpha^\perp}^0$ of the moduli space of marked pairs
of $K3^{[n]}$-type considered in Theorem \ref{thm-dimensions-of-h-i}. 
We use here the assumption that $\LB$ is nef
in order to deduce that 
$\eta^{-1}(\alpha)$ belongs to the boundary of  the positive cone of $X$, used in Theorem \ref{thm-dimensions-of-h-i}.

The period $P_0(X,\eta)$ is non-special, by assumption.
There exists a marked pair $(Y,\psi)$ in $\FM_{\alpha^\perp}^0$
satisfying $P_0(Y,\psi)=P_0(X,\eta)$, such that the
class $\psi^{-1}(\alpha)$ induces a Lagrangian fibration,
by Theorem \ref{thm-dimensions-of-h-i}.
The marked pairs $(X,\eta)$ and $(Y,\psi)$ correspond to inseparable points in the moduli space $\FM^0_{\alpha^\perp}$,
by the Global Torelli Theorem \ref{thm-Torelli}.
Hence, there exists an analytic correspondence 
$Z\subset X\times Y$, 
$Z=\sum_{i=0}^kZ_i$ in $X\times Y$, of pure dimension $2n$, 
with the following properties, by results of Huybrechts
\cite[Theorem 4.3]{huybrechts-basic-results} (see also \cite[Sec. 3.2]{markman-torelli}).
\begin{enumerate}
\item
The homomorphism $Z_*:H^*(X,\Integers)\rightarrow H^*(Y,\Integers)$ is a Hodge isometry, which is
equal to $\psi^{-1}\circ\eta$.
The irreducible
component 
$Z_0$ of the correspondence is the graph of 
a bimeromorphic map $f:X\dashedrightarrow Y$.
\item
The images in $X$ and $Y$ of all  other components  $Z_i$, $i>0$, 
are of positive co-dimension. 
\end{enumerate}

\underline{Step 2:}
We prove next that the line bundle $\LB$ over $X$ is semi-ample. 
We consider separately the projective and non-algebraic cases.

\underline{Step 2.1:}
Assume that $X$ is not projective.\footnote{I thank K. Oguiso and S. Rollenske for pointing out to me that 
in the non-algebraic case the result should follow from the above via the results of 
reference \cite{COP}.} 
We claim that 
$f_*(c_1(\LB))=
\psi^{-1}(\alpha)$. 
The Neron-Severi group $NS(X)$ does not contain any positive class, by Huybrechts
projectivity criterion \cite{huybrechts-basic-results}. Hence, the Beauville-Bogomolov-Fujiki pairing restricts to 
$NS(X)$ as a non-positive pairing with a rank one null sub-lattice spanned by the class $c_1(\LB)$.
Similarly, the Beauville-Bogomolov-Fujiki pairing restricts to 
$NS(Y)$ with a rank one null space spanned by $\psi^{-1}(\alpha)$. 
Hence, $f_*(c_1(\LB))=\pm\psi^{-1}(\alpha)$. Now $\psi^{-1}(\alpha)$  is semi-ample 
and hence belongs to the closure of $\FE_Y$. 
The class $c_1(\LB)$ is assumed nef, and hence belongs to the closure of $\FE_X$.
The bimeromorphic map $f$ 
induces a Hodge-isometry $f_*:H^2(X,\Integers)\rightarrow H^2(Y,\Integers)$, which
maps $\FE_X$ onto $\FE_Y$ \cite{boucksom}.
Hence, $f_*(c_1(\LB))$ belongs to $\overline{\FE}_Y$ as well.
We conclude the equality $f_*(c_1(\LB))=\psi^{-1}(\alpha)$.

Let $\LB_2$ be the line bundle with $c_1(\LB_2)=\psi^{-1}(\alpha)$.
The bimeromorphic map $f:X\dashedrightarrow Y$ is holomorphic in co-dimension one, and
so induces an isomorphism $f_1:\linsys{\LB}\rightarrow \linsys{\LB_2}$ of the two linear systems.
Denote by $\Phi_{\LB_2}:Y\rightarrow \linsys{\LB_2}^*$ the Lagrangian fibration induced by $\LB_2$.
We conclude that $\linsys{\LB}$ is $n$ dimensional and 
the meromorphic map $\Phi_{\LB}:X\dashedrightarrow \linsys{\LB}^*$ is an algebraic reduction of $X$
(see \cite{COP}). 
By definition, an {\em algebraic reduction} of $X$ is a dominant meromorphic map 
$\pi:X\dashedrightarrow B$ to a normal projective variety $B$, such that $\pi^*$ induces an isomorphism of the function 
fields of meromorphic functions \cite{COP}. Only the birational class of $B$ is determined by $X$.
Fibers of the algebraic reduction $\pi$ are defined via a resolution of indeterminacy, 
and are closed connected analytic subsets of $X$.
In our case, the generic fiber of $\Phi_\LB$ 
 is bimeromorphic to the generic fiber of $\Phi_{\LB_2}$. 
The generic fiber of $\Phi_{\LB_2}$ is a complex torus, and hence algebraic, by 
\cite[Prop. 2.1]{campana}. Hence, the generic fiber of $\Phi_\LB$ has algebraic dimension $n$. 
It follows that the line bundle $\LB$ is semi-ample, it is the pullback of an ample line-bundle over $B$,
via a holomorphic reduction map
$\pi:X\rightarrow B$ which is a regular morphism, by 
\cite[Theorems 1.5 and 3.1]{COP}.

\underline{Step 2.2:}
When $X$ is projective 
there exists an element $w\in W(X)$, such that Huybrecht's birational map $f:X\dashedrightarrow Y$
satisfies $f^*\circ \psi^{-1}\circ \eta=w$, by \cite[Theorem 1.6]{markman-torelli}.
Set $\alpha_X:=\eta^{-1}(\alpha)$  and $\alpha_Y:=\psi^{-1}(\alpha)$. 
We get the equality $w(\alpha_X)=f^*(\alpha_Y)$. 

Let $\overline{\FE}_X$ be the closure of the fundamental exceptional chamber $\FE_X$ in $H^{1,1}(X,\RealNumbers)$.
The class $\alpha_X$ is nef, by assumption, and it thus belongs to $\overline{\FE}_X$.
We already know that $\alpha_Y$ is the class of a line bundle, which induces a Lagrangian fibration.
Hence, $f^*(\alpha_Y)$ belongs to $\overline{\FE}_X$. The class $w(\alpha_X)$ thus belongs to the 
intersection $w\left(\overline{\FE}_X\right)\cap \overline{\FE}_X$. 

Let $J$ be the subset of $\Pex_X$ given by 
$J=\{e\in\Pex_X \ : \ (e,\alpha_X)=0\}$. Denote by $W_J$ the subgroup of $W(X)$
generated by reflections $R_e$, for all $e\in J$. Then $W_J$ is equal to 
\[
\{w \in W(X) \ : \ w(\alpha_X) \in \overline{\FE}_X\},
\]
by a general property of crystalographic hyperbolic reflection groups
\cite[Lecture 3, Proposition on page 15]{howlett}. We conclude that $w(\alpha_X)=\alpha_X$
and 
\begin{equation}
\label{eq-alpha-X-is-pullback-of-semiample-class}
\alpha_X=f^*(\alpha_Y). 
\end{equation}

We are ready to prove\footnote{I thank C. Lehn for reference \cite[Prop. 2.4]{lai}, used in an earlier argument, 
and T. Peternell and Y. Kawamata for suggesting  the current more direct argument.} that $\LB$ is semi-ample.
The rational map $f$ is regular in co-dimension one. The map $f$ thus induces an isomorphism 
$f_m:\linsys{\LB^m}\rightarrow \linsys{\LB_2^m}$, for every integer $m$. 
Hence, $\kappa(X,\LB)=\kappa(Y,\LB_2)=n$.
Any non-zero isotropic divisor class $D$ on a
$2n$ dimensional irreducible holomorphic symplectic manifold
satisfies $\nu(X,D)=n$, by a result of Verbitsky \cite{verbitsky-cohomology}. Hence, $\nu(X,\LB)=n$. 
The line bundle $\LB$ is assumed to be nef. Hence, $\LB$ is semi-ample, by 
Theorem \ref{thm-kawamata-semi-ampleness-criterion}. 

\underline{Step 3:}
We return to the general case, where $X$ may or may not be projective. 
In both cases we have seen that there exists a positive integer $m$, such that the linear 
system $\linsys{\LB^m}$ is base point free and $\Phi_{\LB^m}$ is a regular morphism. 
Furthermore, the bimeromorphic map $f:X\dashedrightarrow Y$
is regular in co-dimension one and thus induces an isomorphism 
$f_k:\linsys{\LB^k}\rightarrow \linsys{\LB_2^k}$, for every positive integer $k$. Denote by 
$f_k^*:\linsys{\LB_2^k}^*\rightarrow \linsys{\LB^k}^*$ the transpose of $f_k$. 
We get the equality
$
\Phi_{\LB^k}=f_k^*\circ \Phi_{\LB_2^k}\circ f,
$
for all $k$. Let $V_m:\linsys{\LB_2}^*\rightarrow\linsys{\LB_2^m}^*$ be the Veronese embedding.
We get the equalities
\begin{equation}
\label{eq-closed-immersions}
V_m\circ (f_1^*)^{-1}\circ \Phi_\LB=V_m\circ\Phi_{\LB_2}\circ f=
\Phi_{\LB_2^m}\circ f=(f_m^*)^{-1}\circ \Phi_{\LB^m}.
\end{equation}
Now, $V_m\circ (f_1^*)^{-1}:\linsys{\LB}^*\rightarrow \linsys{\LB_2^m}^*$ is a closed immersion
and the morphism on the right hand side of (\ref{eq-closed-immersions}) is regular.
Hence, the rational map $\Phi_\LB$ is a regular morphism. 
The base locus of the linear system $\linsys{\LB}$ is thus either empty, or a divisor.
The latter is impossible, since $f$ is regular in co-dimension one and $\linsys{\LB_2}$ is base point free.
Hence, $\linsys{\LB}$ is base point free.
\end{proof}

Let $X$ and $\LB$ be as in Theorem \ref{thm-main}, except that we drop the assumption that $\LB$ 
is nef and assume only that $c_1(\LB)$ belongs to the boundary of the 
positive cone. Assume that $X$ is projective. 

\begin{thm}
\label{thm-case-X-projective-L-isotropic-but-not-nef}
There exists an element $w\in W(X)$, a projective 
irreducible holomorphic symplectic manifold $Y$, a birational map $f:X\dashedrightarrow Y$,
and a Lagrangian fibration $\pi:Y\rightarrow \PP^n$, such that $w(\LB)=f^*\pi^*\StructureSheaf{\PP^n}(1)$.
\end{thm}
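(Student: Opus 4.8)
The plan is to obtain this as a fairly direct consequence of Theorem~\ref{thm-dimensions-of-h-i} and the Global Torelli Theorem~\ref{thm-Torelli}, following the route of Step~2.2 in the proof of Theorem~\ref{thm-main} but halting one step earlier: we will \emph{not} try to show that $c_1(\LB)$ itself is the pullback of an ample class, only that some $W(X)$-translate of it is.

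First I would fix a marking $\eta:H^2(X,\Integers)\to\Lambda$ with $\iota\circ\eta$ in the canonical $O(\widetilde{\Lambda})$-orbit $\iota_X$ of Theorem~\ref{thm-orbit-iota-X}, and set $\alpha:=\eta(c_1(\LB))$. By hypothesis $\alpha$ is primitive and isotropic, of Hodge type $(1,1)$, and $\eta^{-1}(\alpha)=c_1(\LB)$ lies on the boundary of the positive cone $\C_X$; so, for the appropriate choice of orientation of $\widetilde{\C}_\Lambda$, the marked pair $(X,\eta)$ lies in a connected component $\FM^0_{\alpha^\perp}$ as in~(\ref{eq-FM-alpha-perp}), and its period $P_0(X,\eta)$ lies in $\Omega^+_{\alpha^\perp}$. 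Since $X$ is non-special, this period is non-special, and Theorem~\ref{thm-dimensions-of-h-i} then produces a marked pair $(Y,\psi)\in V\subset\FM^0_{\alpha^\perp}$ with $P_0(Y,\psi)=P_0(X,\eta)$. By the definition of $V$, the line bundle $\LB_2$ on $Y$ with $c_1(\LB_2)=\psi^{-1}(\alpha)$ induces a Lagrangian fibration in the sense of Definition~\ref{def-LB-induces-a-Lagrangian-fibration}; in particular $\linsys{\LB_2}$ is base point free and $h^0(Y,\LB_2)=n+1$, so $\LB_2\cong\pi^*\StructureSheaf{\PP^n}(1)$ for the induced Lagrangian fibration $\pi:=\Phi_{\LB_2}:Y\to\linsys{\LB_2}^*=\PP^n$.

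Next I would pass to a common birational model via Torelli. Since $P_0(X,\eta)=P_0(Y,\psi)$, the isometry $\psi^{-1}\circ\eta:H^2(X,\Integers)\to H^2(Y,\Integers)$ is a Hodge isometry, hence restricts to a Hodge isometry $NS(X)\to NS(Y)$ preserving the Beauville-Bogomolov-Fujiki form; as $X$ is projective so is $Y$, by Huybrechts' projectivity criterion \cite{huybrechts-basic-results}. The marked pairs $(X,\eta)$ and $(Y,\psi)$ are inseparable points of $\FM^0_{\alpha^\perp}$ by Theorem~\ref{thm-Torelli}, and since both $X$ and $Y$ are projective, \cite[Theorem~1.6]{markman-torelli} (the same input used in Step~2.2 of the proof of Theorem~\ref{thm-main}) provides a birational map $f:X\dashedrightarrow Y$ and an element $w\in W(X)$ with $f^{*}\circ\psi^{-1}\circ\eta=w$. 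Then
\[
w(c_1(\LB))=f^{*}\psi^{-1}\eta(c_1(\LB))=f^{*}\psi^{-1}(\alpha)=f^{*}c_1(\LB_2)=c_1\left(f^{*}\pi^{*}\StructureSheaf{\PP^n}(1)\right),
\]
and since $H^1(X,\StructureSheaf{X})=0$ the Chern class map $\Pic(X)\to H^2(X,\Integers)$ is injective, whence $w(\LB)=f^{*}\pi^{*}\StructureSheaf{\PP^n}(1)$, which is exactly the claimed identity.

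I do not expect a genuine obstacle here: all the real content sits upstream in Theorem~\ref{thm-dimensions-of-h-i}. The two points that need care are (i) checking that $(X,\eta)$ lands in a component $\FM^0_{\alpha^\perp}$ to which Theorem~\ref{thm-dimensions-of-h-i} applies, which is precisely where the hypothesis that $c_1(\LB)$ lies on the boundary of the positive cone (and not of $-\C_X$) enters, and (ii) checking that $X$ and $Y$ are both projective, which is needed to invoke \cite[Theorem~1.6]{markman-torelli}; the latter is automatic, since $X$ and $Y$ share a period and therefore have Hodge-isometric Neron-Severi lattices.
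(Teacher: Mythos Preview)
Your proposal is correct and follows essentially the same route as the paper: the paper's proof simply points back to Step~1 and the first part of Step~2.2 of the proof of Theorem~\ref{thm-main}, obtaining $(Y,\psi)$ from Theorem~\ref{thm-dimensions-of-h-i} and then invoking \cite[Theorem~1.6]{markman-torelli} to produce $f$ and $w\in W(X)$ with $w(c_1(\LB))=f^*(\alpha_Y)$, stopping there without the further reduction to $w=\mbox{id}$ (which used nefness). Your added remark that $Y$ is projective, via the Hodge isometry $\psi^{-1}\circ\eta$ and Huybrechts' criterion, is a nice point of care that the paper leaves implicit.
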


\begin{proof}
Let $(Y,\psi)$ be the marked pair constructed in Step 1 of the proof of Theorem \ref{thm-main}.
Then $Y$ admits a Lagrangian fibration $\pi:Y\rightarrow \PP^n$ and the class
$\pi^*c_1(\StructureSheaf{\PP^n}(1))$ was denoted $\alpha_Y$ in that proof.
In step 2.2 of that proof we showed the existence of  a birational map
$f:X\dashedrightarrow Y$ and an element $w\in W(X)$, such that $w(c_1(\LB))=f^*(\alpha_Y)$
(see Equality (\ref{eq-alpha-X-is-pullback-of-semiample-class})).
\end{proof}


\hide{

In fact, the above conclusion follows from 
an earlier result of Matsushita.
Given a Lagrangian fibration
$\pi:X\rightarrow \linsys{L}^*$, 
one has a isomorphism $R^i\pi_*\StructureSheaf{X}\cong \Omega^i_{\linsys{L}^*}$,
by (\cite{matsushita-higher-direct-images}, Theorem 1.3). Consequently,
$h^i(X,L)=h^i(\linsys{L}^*,\StructureSheaf{\linsys{L}^*}(1))=0$, for all $i>0$,
by Kodaira's vanishing theorem, 
and $h^0(L)=n+1$. 
The semi-continuity theorem  then implies that the same holds away from a closed
complex analytic subset of lower dimension in the connected component of 
$\FM^0_{\alpha^\perp}$.

%
\section{Proof of the main theorem}
Let $n$ be an integer $\geq 2$, 
$d$ a positive integer such that $d^2$ divides $n-1$, and $b$ an integer such that 
$\gcd(d,b)=1$. We construct an example of a pair $(Y,\alpha)$, such that 
$Y$ is of $K3^{[n]}$-type and $\alpha$ is a primitive isotropic class with
monodromy invariant $\left(L_{n,d},(d,b)\right)$ 
(see Example \ref{example-alpha-sum-of-stably-prime-exceptional}).

\begin{question}
\label{question-density}
\begin{enumerate}
\item
\label{question-item-sum-of-two-spe-classes}
Given a $Mon^2(X)$-orbit of primitive isotropic classes $\alpha$, 
describe the subset of $(Y,\eta)\in \FM^0_{\alpha^\perp}$, such that 
$\eta^{-1}(\alpha)$ is the sum of two stably prime-exceptional classes in $H^{1,1}(Y,\Integers)$.
Could this set be dense in $\FM^0_{\alpha^\perp}$? 
\item
Same question, except that 
$\eta^{-1}(\alpha)$ is the sum of a stably prime-exceptional class and a class
of positive degree.
\end{enumerate}
\end{question}

Note that density in Question \ref{question-density} part
(\ref{question-item-sum-of-two-spe-classes}) 
implies that the class $\eta^{-1}(\alpha)$ is effective,
for all $(Y,\eta)$ in $\FM^0_{\alpha^\perp}$, since stably prime-exceptional classes are effective,
and the locus where $\eta^{-1}(\alpha)$ is effective is closed, by the semi-continuity theorem.
It may, however, be the case that density never holds.

\begin{lem}
\label{lemma-sum-of-two-minus-2-classes-is-effective}
Let $X$ be of $K3^{[n]}$-type and assume that $\Pic(X)\cong A$,
where $A$ is the rank $2$ lattice with basis  $\{e_1,e_2\}$ and Gram matrix
$\left(\begin{array}{cc}
-2 & 2
\\
2 & -2
\end{array}\right)$.
Set $\alpha:=e_1+e_2$.
Then  either $\alpha$ is effective or $-\alpha$ is effective.
\end{lem}

\begin{proof}
One of $\alpha$ or $-\alpha$ belongs to the closure of the positive cone  $\C_X$.
We may assume that $\alpha$ does, possibly after 
replacing the basis $\{e_1,e_2\}$ by $\{-e_1,-e_2\}.$
Let $W_{Exc}\subset O(\Lambda)$ be the reflection subgroup 
generated by the two reflections $R_{e_1}$ and $R_{e_2}$, where
\[
R_{e_i}(\lambda) \  \ = \ \ \lambda +(\lambda,e_i)e_i.
\]
Then $W_{Exc}$ is the affine Weyl group of type $\widehat{A}_1$.
The $W_{Exc}$-orbit of $e_1$ is
\[
W_{Exc}\cdot e_1 \ \ \ = \ \ \ \pm e_1 + k\alpha, \ \ \ k\in\Integers.
\]
Choose a K\"{a}hler class $\kappa\in \K_X$, such that 
$\kappa^\perp\cap H^{1,1}(X,\Integers)=(0)$. 
Then $(\kappa,\alpha)>0$.
So $(\kappa,e_1)+(\kappa,e_2)>0$.
We may assume that $(\kappa,e_1)>0$, possibly after interchanging 
the roles of $e_1$ and $e_2$. 
Set $n_0:=\lceil\frac{(\kappa,e_1)}{(\kappa,\alpha)}\rceil$.
Then 
$1+\frac{(\kappa,e_1)}{(\kappa,\alpha)}>n_0>\frac{(\kappa,e_1)}{(\kappa,\alpha)}>0$, 
by our choice of $\alpha$.
Set $\delta_2:=-e_1+n_0\alpha$. If $(\kappa,e_2)>0$, then $n_0=1$ and $\delta=e_2$.
If $(\kappa,e_2)<0$, then
\begin{eqnarray*}
(\kappa,\delta_2)&=&n_0(\kappa,\alpha)-(\kappa,e_1)>0,
\\
(\kappa,\delta_2)-(\kappa,\alpha)&=&(n_0-1)(\kappa,\alpha)-(\kappa,e_1)<0.
\end{eqnarray*}
Set $\delta_1:=\alpha-\delta_2$. Then $(\delta_1,\delta_1)=-2$, 
$\alpha=\delta_1+\delta_2$, and 
$(\kappa,\delta_i)>0$, for $i=1,2$.

The classes $\delta_i$, $i=1,2$,  
are both stably prime-exceptional, by (\cite{markman-prime-exceptional}, Theorem 1.12).
Hence, both $\delta_i$ are effective, and so is $\alpha$.
\end{proof}

\begin{example}
\label{example-alpha-sum-of-stably-prime-exceptional}
Consider the lattice $\widetilde{L}_{n,d}$ with Gram matrix 
$\left(\begin{array}{ccc}
2n-2 & \frac{-b(2n-2)}{d} & 0
\\
\frac{-b(2n-2)}{d} & \frac{b^2(2n-2)}{d^2} & 0
\\
0 & 0 & -2
\end{array}\right)$
in the basis $\{f_1,f_2,f_3\}$. 
The class $bf_1+df_2$ spans the null space of $\widetilde{L}_{n,d}$.
Denote by $Q_{n,d}$ the quotient of $\widetilde{L}_{n,d}$ by its null space. 
Note that $f:=c_1f_1+c_2f_2$ satisfies $(f,f)=\frac{2n-2}{d^2}(c_1d-c_2b)^2$. 
Let $c_1$ and $c_2$ be integers, such that $c_1d-c_2b=1$. 
Let $\bar{f}_i$ be the image of $f_i$ in $Q_{n,d}$ and 
set $q_1:=c_1\bar{f}_1+c_2\bar{f}_2$ and $q_2:=\bar{f}_3$.
Then $Q_{n,d}$ has Gram matrix
$\left(\begin{array}{cc}
\frac{2n-2}{d^2} & 0
\\
0 & -2
\end{array}\right)$
in the basis $\{q_1,q_2\}$.

Let $\Lambda_{K3}$ be the orthogonal direct sum 
$U\oplus U\oplus U\oplus E_8(-1)\oplus E_8(-1)$. 
$O(\widetilde{\Lambda})$-orbits of 
primitive isometric embeddings of $\widetilde{L}_{n,d}$ in the Mukai lattice 
$\widetilde{\Lambda}$ are in one-to-one correspondence with 
$O\left(\Lambda_{K3}\right)$-orbits of primitive isometric embeddings of 
$Q_{n,d}$ into $\Lambda_{K3}$, by (\cite{nikulin}, Proposition 1.17.1). 
There exists a unique
$O(\widetilde{\Lambda})$-orbit of primitive isometric embeddings
$\phi:\widetilde{L}_{n,d}\hookrightarrow \widetilde{\Lambda}$, by 
(\cite{nikulin}, Proposition 1.14.4). Choose one such embedding
$\phi$. Let $\bar{\phi}$ be the corresponding embedding
of $Q_{n,d}$ in $\Lambda_{K3}$.

Set $\tilde{f}_i:=\phi(f_i)$, $v:=\tilde{f}_1$,
$\gamma:=\tilde{f}_2$, $e_2:=\tilde{f}_3$,
$\alpha:=d\gamma+bv=b\tilde{f}_1+d\tilde{f}_2$, 
$e_1:=\alpha-e_2=b\tilde{f}_1+d\tilde{f}_2-\tilde{f}_3$. Then 
$\alpha$ is a primitive isotropic class and $\alpha$ spans the null space of
the image of $\phi$. Hence, $(e_1,e_1)=-2$. Furthermore, $(e_1,e_2)=2$.
Recall that $\Lambda$ is $\phi(v)^\perp_{\widetilde{\Lambda}}$.
The sub-lattice $A:=\Lambda\cap \phi\left(\widetilde{L}_{n,d}\right)$ of $\Lambda$
is the rank $2$ lattice with basis  $\{e_1,e_2\}$ and Gram matrix
$\left(\begin{array}{cc}
-2 & 2
\\
2 & -2
\end{array}\right)$.

Given a period $\ell\in \Omega_\Lambda$, orthogonal to $A$, 
we get a period $\underline{\ell}\in \Omega_{Q_\alpha}\subset \Omega_{K3}$ orthogonal to
$\bar{\phi}\left(Q_{n,d}\right)\subset\Lambda_{K3}$. 
If $\Lambda^{1,1}(\ell,\Integers)=A$, then $\ell$ is the period of
some marked pair $(Y,\eta)$ of $K3^{[n]}$-type, such that one of $\eta^{-1}(\alpha)$
or $-\eta^{-1}(\alpha)$ is effective, by Lemma 
\ref{lemma-sum-of-two-minus-2-classes-is-effective}.
Clearly, $\alpha-bv=d\tilde{f}_2$ is divisible by $d$. Hence, the monodromy invariant of 
$(Y,\eta^{-1}(\alpha))$ is $\left(L_{n,d},(d,b)\right)$.

If $\Lambda^{1,1}(\ell,\Integers)=A$, 
then $\Lambda_{K3}^{1,1}(\underline{\ell},\Integers)=\bar{\phi}\left(Q_{n,d}\right)$ and 
$Q_\alpha^{1,1}(\underline{\ell},\Integers)=\Integers e$, with $(e,e)=-2$.
In other words, $\underline{\ell}$ is the period of a $K3$-surface
with a semi-ample line-bundle of degree $\frac{2n-2}{d^2}$ containing
a $-2$ curve. Equivalently, $\underline{\ell}$ is the period of  a polarized singular 
$K3$ surface of degree $\frac{2n-2}{d^2}$ with a single ordinary double point. 
The set of such periods is {\em not} dense in $\Omega_{Q_\alpha}$.
\end{example}
}

%
\section{Tate-Shafarevich lines and twists}
\label{sec-tate-shafarevich}

%
\subsection{The geometry of the universal curve}
Let $S$ be a projective $K3$ surface, $d$ a positive integer, and
$\LB$ a nef line bundle on $S$ of positive degree,
such that the class $c_1(\LB)$ is indivisible. Set $n:=1+\frac{d^2\deg(\LB)}{2}$.
Let $\C\subset S\times \linsys{\LB^d}$ be the universal curve,
$\pi_i$ the projection from $S\times \linsys{\LB^d}$ to the $i$-th factor, $i=1,2$, and
$p_i$ the restriction of $\pi_i$ to $\C$. We assume in this section the following assumptions
about the line bundle $\LB$. 
\begin{assumption}
\label{assumption}
\begin{enumerate}
\item
\label{assumption-item-bpf}
The linear system $\linsys{\LB^d}$ is base point free.
\item
\label{assumption-item-locus}
The locus in $\linsys{\LB^d}$, consisting of divisors which are non-reduced, or reducible having a singularity which is not an ordinary double point, has co-dimension at least $2$.
\end{enumerate}
\end{assumption}
 
 \begin{rem}
 \label{remark-on-assumption}
 Assumption \ref{assumption} holds whenever $\Pic(S)$ is cyclic 
 generated by $\LB$. 
The base point freeness Assumption \ref{assumption} (\ref{assumption-item-bpf})
follows from \cite[Prop. 1]{mayer}.
Assumption \ref{assumption} (\ref{assumption-item-locus}) 
is verified as follows.
 If $a+b=d$, $a\geq 1$, $b\geq 1$, then the image of
 $\linsys{\LB^a}\times \linsys{\LB^b}$ in $\linsys{\LB^d}$ has co-dimension
 $2ab\left(\frac{n-1}{d^2}\right)-1$. The co-dimension is at least two, except in 
 the case $(n,d)=(5,2)$. In the latter case $\linsys{\LB}\cong\PP^2$,  
 $\linsys{\LB^2}\cong\PP^5$  and  
 the generic curve in the image of
 $\linsys{\LB}\times \linsys{\LB}$ in $\linsys{\LB^2}$ is the union of two smooth curves
 of genus $2$ meeting transversely at two points. Hence, 
 Assumption \ref{assumption} (\ref{assumption-item-locus}) holds in this case as well.
 \end{rem}
 
The morphism $p_1:\C\rightarrow S$ is a projective
hyperplane sub-bundle of the trivial bundle over $S$ with fiber $\linsys{\LB^d}$,
by the base point freeness Assumption \ref{assumption} (\ref{assumption-item-bpf}). 
Assumption \ref{assumption} (\ref{assumption-item-locus}) will be used in the proof of 
Lemma \ref{lemma-vertical-tangent-sheaf-is-pullback-of-cotangent-bundle}.
Consider the exponential short exact sequence over $\C$
\[
0\rightarrow \Integers\rightarrow \StructureSheaf{\C}\rightarrow \StructureSheaf{\C}^*\rightarrow 0.
\]
We get the exact sequence of sheaves of abelian groups over $\linsys{\LB^d}$
\begin{equation}
\label{eq-exact-sequence-of-abelian-groups}
0\longrightarrow R^1p_{2_*}\Integers \longrightarrow 
R^1p_{2_*}\StructureSheaf{\C}\longrightarrow 
R^1p_{2_*}\StructureSheaf{\C}^*\LongRightArrowOf{\deg} 
R^2p_{2_*}\Integers \longrightarrow 0,
\end{equation}
where we work in the complex analytic category. 
Note that $\deg$ above is surjective, since $R^2p_{2_*}\StructureSheaf{\C}$ vanishes.
Set $\Sha:=H^1_{}(\linsys{\LB^d},R^1p_{2_*}\StructureSheaf{\C}^*)$ and 
$\widetilde{\Sha}:=H^1_{}(\linsys{\LB^d},R^1p_{2_*}\StructureSheaf{\C})$. 
Set $Br'(S):=H^2_{}(S,\StructureSheaf{S}^*)$ and  $Br'(\C):=H^2_{}(\C,\StructureSheaf{\C}^*)$.

\begin{lem}
\label{lemma-Sha-is-one-dimensional}
\begin{enumerate}
\item
\label{lemma-item-R-1-p-2-of-O-C-is-the-cotangent-bundle}
There is a natural isomorphism 
$R^1p_{2_*}\StructureSheaf{\C} \cong T^*\linsys{\LB^d}\otimes_{\ComplexNumbers}H^{2,0}(S)^*$.
\item
\label{lemma-item-Sha-is-one-dimensional}
$\widetilde{\Sha}$ is naturally isomorphic to 
$H^{0,2}(\C)$. Consequently, $\widetilde{\Sha}$ is one dimensional.
\item
\label{lemma-item-second-cohomology-of-K3-is-a-direct-summand}
$H^2(\C,\Integers)$ decomposes as a direct sum 
$H^2(\C,\Integers)=p_1^*H^2(S,\Integers)\oplus p_2^*H^2(\linsys{\LB^d},\Integers)$. 
The groups $H^i(\C,\Integers)$ vanish for odd $i$. The Dolbeault cohomologies $H^{p,q}(\C)$ vanish, 
if $\Abs{p-q}>2$.
\item
\label{lemma-item-Tate-Shafarevich-groups-of-S-and-C-are-isomorphic}
The pullback homomorphism $p_1^*:H^2_{}(S,\StructureSheaf{S}^*)\rightarrow 
H^2_{}(\C,\StructureSheaf{\C}^*)$ is an isomorphism.
The Leray spectral sequence yields an isomorphism
\[
b:H^2_{}(\C,\StructureSheaf{\C}^*)\rightarrow H^1(\linsys{\LB^d},R^1p_{2_*}\StructureSheaf{\C}^*).
\]
Consequently, we have the isomorphisms
\[
\xymatrix{
Br'(S) \ar[r]^{p_1^*}_-\cong &
Br'(\C) \ar[r]^-b_-\cong &
\Sha.
}
\]
\end{enumerate}
\end{lem}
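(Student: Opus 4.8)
The plan is to treat the four parts essentially in order, since each later part leans on the earlier ones. For part (\ref{lemma-item-R-1-p-2-of-O-C-is-the-cotangent-bundle}), I would start from the fact that $p_1\colon\C\to S$ exhibits $\C$ as the projectivization of a hyperplane sub-bundle of the trivial bundle with fiber $\linsys{\LB^d}$ over $S$ (Assumption \ref{assumption}(\ref{assumption-item-bpf})). From this one computes $R^\bullet p_{2_*}\StructureSheaf{\C}$ by relative duality on the fibration $p_2$, whose fibers are the curves $C\in\linsys{\LB^d}$: by relative Serre duality $R^1p_{2_*}\StructureSheaf{\C}$ is dual to $p_{2_*}\omega_{\C/\linsys{\LB^d}}$, and adjunction on $S$ gives $\omega_{C}=\StructureSheaf{S}(C)\otimes\StructureSheaf{S}\big|_C=\LB^d\big|_C$ (using $K_S=\StructureSheaf S$). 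Identifying sections of $\LB^d$ over varying $C$ with the tautological quotient on $\linsys{\LB^d}$ one gets $p_{2_*}\omega_{\C/\linsys{\LB^d}}\cong T^*\linsys{\LB^d}\otimes H^0(S,\LB^d)^*\big/(\text{something})$; cleaner is to run the Euler sequence for $\linsys{\LB^d}=\PP(H^0(S,\LB^d)^*)$ together with the defining sequence of $\C$ and the identification $H^0(S,\LB^d)\cong H^2(S,\StructureSheaf S)^*\otimes(\cdots)$ coming from Serre duality on $S$. Tracking the $H^{2,0}(S)$-twist carefully yields the stated natural isomorphism $R^1p_{2_*}\StructureSheaf{\C}\cong T^*\linsys{\LB^d}\otimes_\ComplexNumbers H^{2,0}(S)^*$.

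For part (\ref{lemma-item-second-cohomology-of-K3-is-a-direct-summand}) I would use the projective-bundle-type structure of $p_1$: $\C\to S$ is a Zariski-locally-trivial $\PP^{n-1}$-bundle over $S$ (a hyperplane sub-bundle of a trivial projective bundle), so the Leray–Hirsch theorem applies and gives $H^*(\C,\Integers)\cong H^*(S,\Integers)\otimes H^*(\PP^{n-1},\Integers)$ as a module, with the relative hyperplane class restricting to generators on fibers. In degree $2$ this forces $H^2(\C,\Integers)=p_1^*H^2(S,\Integers)\oplus\Integers\zeta$ where $\zeta$ is that relative class; I then need to identify $\Integers\zeta$ with $p_2^*H^2(\linsys{\LB^d},\Integers)=\Integers\,p_2^*\StructureSheaf{}(1)$, which follows because $p_2^*\StructureSheaf{\linsys{\LB^d}}(1)$ is a hyperplane-bundle-type class whose restriction to a fiber of $p_1$ (namely the hyperplane $\{x\}\times H_x\subset\{x\}\times\linsys{\LB^d}$) is $\StructureSheaf{}(1)$. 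Odd cohomology vanishes because both $H^*(S)$ and $H^*(\PP^{n-1})$ are concentrated in even degree and have no torsion in odd degree ($H^*(S,\Integers)$ is torsion-free). The Hodge-type statement $H^{p,q}(\C)=0$ for $|p-q|>2$ is then immediate from the Künneth-type decomposition of the Hodge structure induced by Leray–Hirsch: $H^k(\C)$ is built from $H^{k-2i}(S)\otimes H^{i,i}(\PP^{n-1})$, and $H^{p,q}(S)=0$ for $|p-q|>2$.

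Parts (\ref{lemma-item-Sha-is-one-dimensional}) and (\ref{lemma-item-Tate-Shafarevich-groups-of-S-and-C-are-isomorphic}) I would deduce from (\ref{lemma-item-R-1-p-2-of-O-C-is-the-cotangent-bundle}) and (\ref{lemma-item-second-cohomology-of-K3-is-a-direct-summand}) via the Leray spectral sequence for $p_2$ and $\StructureSheaf{\C}$. Since $R^0p_{2_*}\StructureSheaf{\C}=\StructureSheaf{\linsys{\LB^d}}$ has no higher cohomology on $\PP^n$ beyond $H^0$, and $R^{\geq 2}p_{2_*}\StructureSheaf{\C}=0$, the Leray sequence degenerates enough to give $H^2(\C,\StructureSheaf{\C})\cong H^1(\linsys{\LB^d},R^1p_{2_*}\StructureSheaf{\C})=\widetilde{\Sha}$; combined with $H^2(\C,\StructureSheaf{\C})=H^{0,2}(\C)$ and the fact, from part (\ref{lemma-item-second-cohomology-of-K3-is-a-direct-summand}), that $H^{0,2}(\C)\cong H^{0,2}(S)$ is one-dimensional, this proves (\ref{lemma-item-Sha-is-one-dimensional}). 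For (\ref{lemma-item-Tate-Shafarevich-groups-of-S-and-C-are-isomorphic}): the isomorphism $b$ comes from the same Leray spectral sequence applied to $\StructureSheaf{\C}^*$, using $R^0p_{2_*}\StructureSheaf{\C}^*\cong\StructureSheaf{\linsys{\LB^d}}^*$ (the fibers $C$ are connected, so global units on fibers are constants) whose $H^2$ on $\PP^n$ vanishes — one has to check $H^0(\PP^n,\mathcal O^*)$-, $H^1$-, $H^2$-contributions, and $H^2(\PP^n,\StructureSheaf{}^*)=\Pic$-type vanishing in the analytic category is where a short argument is needed — so that $H^2(\C,\StructureSheaf{\C}^*)\cong H^1(\linsys{\LB^d},R^1p_{2_*}\StructureSheaf{\C}^*)$. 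The isomorphism $p_1^*\colon Br'(S)\to Br'(\C)$ I would get from the projective-bundle structure of $p_1$ (pullback along a Zariski-locally-trivial projective bundle is an isomorphism on $Br'$ by a Leray argument on $p_1$, using $R^1p_{1_*}\StructureSheaf{\C}^*$ and that $\Pic$ of projective space is $\Integers$ so the relative Picard contributes controllably, together with $H^i(\PP^{n-1},\StructureSheaf{})=0$ for $i>0$).

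The step I expect to be the main obstacle is part (\ref{lemma-item-R-1-p-2-of-O-C-is-the-cotangent-bundle}): pinning down the \emph{natural} identification $R^1p_{2_*}\StructureSheaf{\C}\cong T^*\linsys{\LB^d}\otimes H^{2,0}(S)^*$ with the precise $H^{2,0}(S)$-twist, rather than an abstract isomorphism, requires carefully composing relative Serre duality on $p_2$, adjunction on $S$, the Euler sequence on $\linsys{\LB^d}=\PP(H^0(S,\LB^d)^*)$, and Serre duality on $S$ (which is what injects $H^{2,0}(S)$), and verifying these fit into a commutative diagram; everything downstream is then a fairly mechanical spectral-sequence bookkeeping exercise, and where Assumption \ref{assumption}(\ref{assumption-item-locus}) enters is only later (Lemma \ref{lemma-vertical-tangent-sheaf-is-pullback-of-cotangent-bundle}), not here.
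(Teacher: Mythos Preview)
Your plan for parts (\ref{lemma-item-R-1-p-2-of-O-C-is-the-cotangent-bundle}), (\ref{lemma-item-Sha-is-one-dimensional}), and (\ref{lemma-item-second-cohomology-of-K3-is-a-direct-summand}) is essentially the paper's approach: relative duality plus the Euler sequence for (\ref{lemma-item-R-1-p-2-of-O-C-is-the-cotangent-bundle}), the projective-bundle formula for (\ref{lemma-item-second-cohomology-of-K3-is-a-direct-summand}), and Leray for $p_2$ and $\StructureSheaf{\C}$ for (\ref{lemma-item-Sha-is-one-dimensional}). For the first isomorphism in (\ref{lemma-item-Tate-Shafarevich-groups-of-S-and-C-are-isomorphic}) the paper takes a slightly different route than you propose---it compares the two exponential sequences via the isomorphisms $H^2(S,\Integers)/NS(S)\cong H^2(\C,\Integers)/NS(\C)$ and $H^{0,2}(S)\cong H^{0,2}(\C)$ already supplied by (\ref{lemma-item-second-cohomology-of-K3-is-a-direct-summand})---but your projective-bundle Brauer argument would also work.

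There is, however, a genuine gap in your argument for the isomorphism $b$. You correctly note that $E_2^{2,0}=H^2(\linsys{\LB^d},\StructureSheaf{}^*)=0$ and $E_2^{0,2}=H^0(R^2p_{2_*}\StructureSheaf{\C}^*)=0$, so that $H^2(\C,\StructureSheaf{\C}^*)\cong E_\infty^{1,1}$. But $E_\infty^{1,1}$ is only the \emph{kernel} of the differential
\[
d_2^{1,1}\colon H^1(\linsys{\LB^d},R^1p_{2_*}\StructureSheaf{\C}^*)\longrightarrow H^3(\linsys{\LB^d},\StructureSheaf{\linsys{\LB^d}}^*),
\]
and the target is isomorphic to $H^4(\linsys{\LB^d},\Integers)\cong\Integers$ (via the exponential sequence, since $h^{0,3}$ and $h^{0,4}$ of $\linsys{\LB^d}$ vanish), so it does \emph{not} vanish. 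You have misidentified the obstacle as lying at $E_2^{2,0}$; the real work is showing $d_2^{1,1}=0$. The paper does this by observing that $\coker(d_2^{1,1})=E_\infty^{3,0}=F^3H^3(\C,\StructureSheaf{\C}^*)$ is the image of the edge map $p_2^*\colon H^3(\linsys{\LB^d},\StructureSheaf{}^*)\to H^3(\C,\StructureSheaf{\C}^*)$, and then showing that this $p_2^*$ is injective: under the connecting isomorphisms $H^3(\bullet,\StructureSheaf{}^*)\cong H^4(\bullet,\Integers)$ on both spaces (valid because the relevant $h^{0,3}$ and $h^{0,4}$ vanish, using part (\ref{lemma-item-second-cohomology-of-K3-is-a-direct-summand}) for $\C$), it becomes the injective pullback $p_2^*\colon H^4(\linsys{\LB^d},\Integers)\to H^4(\C,\Integers)$. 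Hence $d_2^{1,1}=0$ and $b$ is an isomorphism.
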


Let $\F$ be a sheaf of abelian groups over $\C$.
Let $F^pH^k(\C,\F)$ be the Leray filtration associated to the morphism $p_2:\C\rightarrow \linsys{\LB^d}$
and $E^{p,q}_\infty:=F^pH^{p+q}(\C,\F)/F^{p+1}H^{p+q}(\C,\F)$ its graded pieces. 
Recall that the $E^{p,q}_2$ terms are
$E^{p.q}_2:=H^p(\linsys{\LB^d},R^qp_{2_*}\F)$
and the differential at this step is $d_2:E^{p,q}_2\rightarrow E^{p+2,q-1}_2$.

\begin{proof}
\ref{lemma-item-R-1-p-2-of-O-C-is-the-cotangent-bundle})
We have the isomorphism
$\StructureSheaf{S\times \linsys{\LB^d}}(\C)\cong\pi_1^*\LB^d\otimes\pi_2^*\StructureSheaf{\linsys{\LB^d}}(1)$.
Apply the functor $R\pi_{2_*}$ to the short exact sequence
$
0\rightarrow 
\StructureSheaf{S\times \linsys{\LB^d}}\rightarrow
\StructureSheaf{S\times \linsys{\LB^d}}(\C)\rightarrow 
\StructureSheaf{\C}(\C)\rightarrow 0
$
to obtain the Euler sequence of the tangent bundle.
\[
0\rightarrow \StructureSheaf{\linsys{\LB^d}}
\rightarrow H^0(S,\LB^d)\otimes_\ComplexNumbers\StructureSheaf{\linsys{\LB^d}}(1)
\rightarrow T\linsys{\LB^d}\rightarrow 0.
\]
Now $\StructureSheaf{\C}(\C)\otimes_\ComplexNumbers H^{2,0}(S)$ is isomorphic to the relative dualizing sheaf 
$\omega_{p_2}$. 
We get the isomorphisms
\[
R^1p_{2_*}\StructureSheaf{\C}\cong 
[R^0p_{2_*}\StructureSheaf{\C}(\C)\otimes_\ComplexNumbers H^{2,0}(S)]^*\cong
[R^0p_{2_*}\StructureSheaf{\C}(\C)]^*\otimes_\ComplexNumbers H^{2,0}(S)^*\cong 
T^*\linsys{\LB^d}\otimes_\ComplexNumbers H^{2,0}(S)^*.
\]

\ref{lemma-item-Sha-is-one-dimensional})
$R^2p_{2_*}\StructureSheaf{\C}$ vanishes, since $p_2$ has one-dimensional fibers.
$H^2(\linsys{\LB^d},p_{2_*}\StructureSheaf{\C})$ vanishes, since  
$p_{2_*}\StructureSheaf{\C}\cong \StructureSheaf{\linsys{\LB^d}}$. The latter isomorphism follow from the fact that
$p_2$ has connected fibers. We conclude that $H^2(\C,\StructureSheaf{\C})$ is isomorphic to the $E^{1,1}_\infty$ graded
summand of its Leray filtration.
The differential $d_2:H^1(\linsys{\LB^d},R^1p_{2_*}\StructureSheaf{\C})\rightarrow 
H^3(\linsys{\LB^d},p_{2_*}\StructureSheaf{\C})$ vanishes, since $H^{0,3}(\linsys{\LB^d})$ vanishes.
Hence, the $E^{1,1}_2$ term $\widetilde{\Sha}:=H^1(\linsys{\LB^d},R^1p_{2_*}\StructureSheaf{\C})$ 
is isomorphic to $H^2(\C,\StructureSheaf{\C})$.

\ref{lemma-item-second-cohomology-of-K3-is-a-direct-summand}) 
The statement is topological and so it suffices to prove it in the case where $\Pic(S)$
is cyclic generated by $\LB$. In this case $\LB$ is ample, and so 
the line bundle $\pi_1^*\LB^d\otimes\pi_2^*\StructureSheaf{\linsys{\LB^d}}(1)$ is ample. 
The Lefschetz Theorem on Hyperplane Sections 
implies that the restriction homomorphism  $H^2(S\times\linsys{\LB^d},\Integers)\rightarrow H^2(\C,\Integers)$
is an isomorphism. 

$\C$ is the projectivization of a rank $n$ vector bundle $F$ over $S$. 
Hence, $H^*(\C,\Integers)$ is the quotient of $H^*(S,\Integers)[x]$, with $x$ of degree $2$, by the 
ideal generated by $\sum_{i=0}^{n+1}c_i(F)x^i$. The image of $x$ in $H^*(\C,\Integers)$
corresponds to the class $\bar{x}:=c_1(\StructureSheaf{\C}(1))$ of Hodge type $(1,1)$.
In particular, $H^*(\C,\Integers)$ is a free $H^*(S,\Integers)$-module
of rank $n$ generated by $1$, $\bar{x}$, \dots, $\bar{x}^{n-1}$. 

\ref{lemma-item-Tate-Shafarevich-groups-of-S-and-C-are-isomorphic})
The vanishing of $H^3(S,\Integers)$ and $H^3(\C,\Integers)$ yields the commutative diagram 
with exact rows:
\[
\xymatrix{
0\ar[r] &
H^2(S,\Integers)/NS(S) \ar[r] \ar[d]_{p_1^*} & 
H^2(S,\StructureSheaf{S}) \ar[r] \ar[d]_{p_1^*}^\cong & 
H^2(S,\StructureSheaf{S}^*) \ar[r] \ar[d]_{p_1^*} & 0
\\
0\ar[r] &
H^2(\C,\Integers)/NS(\C) \ar[r] & H^2(\C,\StructureSheaf{\C}) \ar[r] & 
H^2(\C,\StructureSheaf{\C}^*) \ar[r] & 0
}
\]
Part \ref{lemma-item-second-cohomology-of-K3-is-a-direct-summand} of the Lemma implies that 
the left and middle vertical homomorphism are isomorphisms. It follows that the right vertical homomorphism is an isomorphism as well.

The sheaf $R^2p_{2_*}\StructureSheaf{\C}^*$ vanishes, by the exactness 
of $R^2p_{2_*}\StructureSheaf{\C}\rightarrow R^2p_{2_*}\StructureSheaf{\C}^*\rightarrow R^3p_{2_*}\Integers$ and the 
vanishing of the left and right sheaves due to the fact that 
$p_2$ has one-dimensional fibers.
The sheaf $p_{2_*}\StructureSheaf{\C}^*$ is isomorphic to $\StructureSheaf{\linsys{\LB^d}}^*$,
since $p_2$ has connected complete fibers. Thus, $H^2(\C,\StructureSheaf{\C}^*)$
is isomorphic to the kernel of the differential
\begin{equation}
\label{eq-d-2-1-1}
d_2 : E_2^{1,1}:=H^1(\linsys{\LB^d},R^1p_{2_*}\StructureSheaf{\C}^*)\rightarrow 
E_2^{3,0}:=H^3(\linsys{\LB^d},\StructureSheaf{\linsys{\LB^d}}^*).
\end{equation}
We prove next that $d_2$ vanishes. 
The co-kernel of $d_2$ is equal to $F^3H^3(\C,\StructureSheaf{\C}^*)$.
Now $F^3H^3(\C,\StructureSheaf{\C}^*)$ is equal to the image of 
$p_2^*:H^3(\linsys{\LB^d},\StructureSheaf{\linsys{\LB^d}}^*)\rightarrow H^3(\C,\StructureSheaf{\C}^*)$.
We have a commutative diagram
\[
\xymatrix{
H^3(\C,\StructureSheaf{\C}^*) \ar[r]^-\cong &
H^4(\C,\Integers) 
\\
H^3(\linsys{\LB^d},\StructureSheaf{\linsys{\LB^d}}^*) \ar[r]^-\cong \ar[u]^-{p_2^*} &
H^4(\linsys{\LB^d},\Integers). \ar[u]^-{p_2^*}
}
\]
The horizontal homomorphisms, induced by 
the connecting homomorphism of the exponential sequence,
are isomorphisms, since $h^{0,3}(\C)=h^{0,3}(\linsys{\LB^d})=0$
and $h^{0,4}(\C)=h^{0,4}(\linsys{\LB^d})=0$. 
The right vertical homomorphism is injective. We conclude that the left
vertical homomorphism is injective. Hence the differential $d_2$ in
(\ref{eq-d-2-1-1})  vanishes and 
$H^2(\C,\StructureSheaf{\C}^*)$ is isomorphism to $H^1(\linsys{\LB^d},R^1p_{2_*}\StructureSheaf{\C}^*)$, 
yielding the isomorphism $b$.
\end{proof}

Let $\Sigma\subset H^2(S,\Integers)$ be the sub-lattice generated by classes of irreducible components
of divisors in the linear system $\linsys{\LB^d}$. Denote by $\Sigma^\perp$ the sub-lattice of 
$H^2(S,\Integers)$ orthogonal to $\Sigma$.

\begin{lem}
\label{lemma-Leray-filtration}
\begin{enumerate}
\item
\label{lemma-item-filtration-of-H-2}
The Leray filtration of $H^2(\C,\Integers)$ associated to $p_2$ is
identified as follows:
\begin{eqnarray*}
F^2H^2(\C,\Integers) & = & p_2^*H^2(\linsys{\LB^d},\Integers),
\\
F^1H^2(\C,\Integers) & = & p_2^*H^2(\linsys{\LB^d},\Integers) \ \oplus \ p_1^*\Sigma^\perp.
\end{eqnarray*}
\item
\label{lemma-item-E-p-q}
$E_2^{p,q}=E_\infty^{p,q}$, if $(p,q)=(2,0)$, or $(1,1)$. Consequently, we get the following isomorphisms.
\begin{eqnarray*}
E_2^{2,0}:=H^2(\linsys{\LB^d},p_{2_*}\Integers) & \cong & p_2^*H^2(\linsys{\LB^d},\Integers),
\\
E_2^{1,1}:=H^1(\linsys{\LB^d},R^1p_{2_*}\Integers) & \cong & p_1^*\Sigma^\perp,
\end{eqnarray*}
\item
\label{lemma-item-E-2-1}
If the sub-lattice $\Sigma$ is saturated in $H^2(S,\Integers)$, then $H^2(\linsys{\LB^d},R^1p_{2_*}\Integers)$
vanishes.
\end{enumerate}
\end{lem}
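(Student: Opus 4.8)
The plan is to run the Leray spectral sequence of $p_2\colon\C\to\linsys{\LB^d}$,
\[
E_2^{p,q}=H^p(\linsys{\LB^d},R^qp_{2_*}\Integers)\ \Longrightarrow\ H^{p+q}(\C,\Integers),
\]
which is concentrated in $0\le q\le 2$ because the fibres are curves. The three inputs I would use are Lemma \ref{lemma-Sha-is-one-dimensional}(\ref{lemma-item-second-cohomology-of-K3-is-a-direct-summand}) (the splitting $H^2(\C,\Integers)=p_1^*H^2(S,\Integers)\oplus p_2^*H^2(\linsys{\LB^d},\Integers)$ and the vanishing of $H^{\mathrm{odd}}(\C,\Integers)$), the vanishing of $H^{\mathrm{odd}}(\linsys{\LB^d},\Integers)$, and the fact that $p_2$ is surjective, so $p_2^*\colon H^{2k}(\linsys{\LB^d},\Integers)\to H^{2k}(\C,\Integers)$ is injective. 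A first observation: every differential hitting the bottom row in even total degree vanishes, since $E_\infty^{2k,0}$ is exactly the image of the injective map $p_2^*$, hence cyclic of the same rank as the cyclic group $E_2^{2k,0}=H^{2k}(\linsys{\LB^d},\Integers)$, which forces $E_\infty^{2k,0}=E_2^{2k,0}$. Combined with $E_2^{3,0}=0$, this already gives $E_2^{p,q}=E_\infty^{p,q}$ for $(p,q)\in\{(2,0),(1,1)\}$, which is the degeneracy half of part (\ref{lemma-item-E-p-q}).

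For part (\ref{lemma-item-filtration-of-H-2}) I would then identify the Leray filtration of $H^2(\C,\Integers)$. The edge map identifies $F^2H^2(\C,\Integers)=E_\infty^{2,0}=E_2^{2,0}$ with $p_2^*H^2(\linsys{\LB^d},\Integers)$, while $F^1H^2(\C,\Integers)$ is the kernel of the edge map to $E_\infty^{0,2}$; since $E_\infty^{0,2}=E_3^{0,2}=\ker\bigl(d_2\colon E_2^{0,2}\to E_2^{2,1}\bigr)$ embeds in $E_2^{0,2}=H^0(\linsys{\LB^d},R^2p_{2_*}\Integers)$, this $F^1$ is the subgroup of classes whose restriction to every fibre $\C_b$ vanishes. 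Writing a class as $p_1^*c+p_2^*e$, the summand $p_2^*e$ restricts trivially to fibres; and using the normalisation sequence of $\C_b$ one identifies $H^2(\C_b,\Integers)\cong\bigoplus_i\Integers$ over the irreducible components $C_{b,i}$, with the restriction of $p_1^*c$ having $i$-th coordinate $(c,[C_{b,i}])$. Hence $p_1^*c\in F^1$ iff $(c,[C_{b,i}])=0$ for all components of all members of $\linsys{\LB^d}$, i.e. iff $c\in\Sigma^\perp$. This yields $F^1H^2(\C,\Integers)=p_2^*H^2(\linsys{\LB^d},\Integers)\oplus p_1^*\Sigma^\perp$, and the isomorphism $E_2^{1,1}=E_\infty^{1,1}=F^1/F^2\cong p_1^*\Sigma^\perp$ completes part (\ref{lemma-item-E-p-q}).

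For part (\ref{lemma-item-E-2-1}), the target is $E_2^{2,1}=H^2(\linsys{\LB^d},R^1p_{2_*}\Integers)$. On the antidiagonal $p+q=3$ the spectral sequence degenerates at $E_3$ (higher differentials have zero source or target), so the contribution of $E_2^{2,1}$ to $H^3(\C,\Integers)=0$ is $E_\infty^{2,1}=E_3^{2,1}=\ker\bigl(d_2\colon E_2^{2,1}\to E_2^{4,0}\bigr)\big/\Im\bigl(d_2\colon E_2^{0,2}\to E_2^{2,1}\bigr)=0$. The outgoing $d_2\colon E_2^{2,1}\to E_2^{4,0}$ vanishes by the first paragraph. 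For the incoming one I would use the hypothesis: when $\Sigma$ is saturated, the edge map $H^2(\C,\Integers)\to E_2^{0,2}$ is surjective, whence $E_\infty^{0,2}=E_2^{0,2}$ and $d_2\colon E_2^{0,2}\to E_2^{2,1}$ is zero. Granting this, $E_2^{2,1}=E_3^{2,1}=E_\infty^{2,1}$ is a subquotient of $H^3(\C,\Integers)=0$, hence $0$. For the surjectivity: by part (\ref{lemma-item-filtration-of-H-2}) the image of the edge map is $H^2(\C,\Integers)/F^1H^2(\C,\Integers)\cong H^2(S,\Integers)/\Sigma^\perp$, which, since $H^2(S,\Integers)$ is unimodular and $\Sigma$ is primitive, the pairing identifies with $\Sigma^{*}$; on the other hand a global section of $R^2p_{2_*}\Integers$ is, in the component description above, a coherent assignment of an integer to each component of each fibre, and compatibility under degenerations (components of a general member coming together in a special one) forces it to factor through a homomorphism $\Sigma\to\Integers$, so $E_2^{0,2}\hookrightarrow\Sigma^{*}$ and the edge map is onto.

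I expect the main obstacle to be exactly that last point: the identification of $H^0(\linsys{\LB^d},R^2p_{2_*}\Integers)$ with a subgroup of $\Sigma^{*}$. This requires a careful analysis of the constructible sheaf $R^2p_{2_*}\Integers$, of the co-specialisation maps $H^2(\C_{b_0},\Integers)\to H^2(\C_b,\Integers)$ for $b$ near a reducible member $b_0$, and of the connectedness of the families of reducible divisors in $\linsys{\LB^d}$ — which is precisely where Assumption \ref{assumption}(\ref{assumption-item-locus}) on the singular members enters. Everything else is bookkeeping with the spectral sequence together with the standard fact that a primitive sublattice $\Sigma$ of a unimodular lattice $L$ satisfies $L/\Sigma^\perp\cong\Sigma^{*}$.
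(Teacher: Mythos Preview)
Your argument is essentially identical to the paper's: the same spectral-sequence bookkeeping, the same identification of $F^1H^2(\C,\Integers)$ as the kernel of restriction to fibers, and the same proof of part~(\ref{lemma-item-E-2-1}) via surjectivity of $H^2(\C,\Integers)\to H^0(R^2p_{2_*}\Integers)$ when $\Sigma$ is saturated together with vanishing of $d_2^{2,1}$ from injectivity of $p_2^*$ on $H^4$. Your worry about the embedding $H^0(R^2p_{2_*}\Integers)\hookrightarrow\Sigma^*$ is not misplaced, but the paper simply asserts it and does not invoke Assumption~\ref{assumption}(\ref{assumption-item-locus}) in this lemma at all.
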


\begin{proof} (\ref{lemma-item-filtration-of-H-2}), (\ref{lemma-item-E-p-q})
The sheaf $p_{2_*}\Integers$ is the constant sheaf $\Integers$, since $p_2$
has connected fibers. 
Then $E^{3,0}_2=H^3(\linsys{\LB^d},\Integers)=0$, and so 
$E^{1,1}_\infty=E^{1,1}_2=H^1(\linsys{\LB^d},R^1p_{2_*}\Integers)$. 
$E^{2,0}_2:=H^2(\linsys{\LB^d},p_{2_*}\Integers)$ has rank $1$ and
it maps injectively into 
$H^2(\C,\Integers)$, with image  equal to $p_2^*H^2(\linsys{\LB^d},\Integers)$. 
Thus, $E^{2,0}_2=E^{2,0}_\infty$ and $E^{1,1}_\infty:=F^1H^2(\C,\Integers)/E^{2,0}_\infty$ is
isomorphic to $F^1H^2(\C,\Integers)/p_2^*H^2(\linsys{\LB^d},\Integers)$. 
Finally, $E^{0,2}_2$ is the kernel of
\[
d_2:H^0(\linsys{\LB^d},R^2p_{2_*}\Integers)\rightarrow H^2(\linsys{\LB^d},R^1p_{2_*}\Integers).
\]
Thus, $F^1H^2(\C,\Integers)$ is the kernel of the homomorphism
$H^2(\C,\Integers)\rightarrow H^0(\linsys{\LB^d},R^2p_{2_*}\Integers)$.
The latter kernel is equal to $p_1^*\Sigma^\perp\oplus p_2^*H^2(\linsys{\LB^d},\Integers)$, by 
Lemma \ref{lemma-Sha-is-one-dimensional} 
(\ref{lemma-item-second-cohomology-of-K3-is-a-direct-summand}).
We conclude that
$F^1H^2(\C,\Integers)/p_2^*H^2(\linsys{\LB^d},\Integers)$ is isomorphic to both
$H^1(\linsys{\LB^d},R^1p_{2_*}\Integers)$ and $p_1^*\Sigma^\perp$.

(\ref{lemma-item-E-2-1})
The composition $H^2(\C,\Integers)\rightarrow H^0(R^2p_{2_*}\Integers)\hookrightarrow \Sigma^*$
factors through $H^2(S,\Integers)$. 
If $\Sigma$ is saturated, then the composition is surjective, since $H^2(S,\Integers)$ is unimodular. 
Thus, $d_2^{0,2}:H^0(R^2p_{2_*}\Integers)\rightarrow H^2(R^1p_{2_*}\Integers)$ vanishes.
The sheaf $p_{2_*}\Integers$ is the trivial local system and the homomorphism 
$H^4(\linsys{\LB^d},p_{2_*}\Integers)\cong 
H^4(\linsys{\LB^d},\Integers)\rightarrow H^4(\C,\Integers)$ is the injective pull-back homomorphism $p_2^*$.
Thus the differential
$d_2^{2,1}:H^2(R^1p_{2_*}\Integers)\rightarrow \nolinebreak H^4(p_{2_*}\Integers)$ vanishes.
We conclude that $E_2^{2,1}:=H^2(R^1p_{2_*}\Integers)$ is isomorphic to $E_\infty^{2,1}$.
Now $E_\infty^{2,1}$ vanishes, since $H^3(\C,\Integers)$ vanishes.
\end{proof}

Let $\A^0$ be the kernel of the homomorphism $\deg$, given in (\ref{eq-exact-sequence-of-abelian-groups}). 
Then $\A^0$ is a subsheaf of $R^1p_{2_*}\StructureSheaf{\C}^*$
and we get the short exact sequences
\begin{eqnarray}
\label{eq-short-exact-with-kernel-A-0}
&0\longrightarrow \A^0\longrightarrow R^1p_{2_*}\StructureSheaf{\C}^*
\LongRightArrowOf{\deg} R^2p_{2_*}\Integers\longrightarrow 0,
\\
\label{eq-second-short-exact-sequence-involving-A-0}
&0\longrightarrow R^1p_{2_*}\Integers \longrightarrow 
R^1p_{2_*}\StructureSheaf{\C}\longrightarrow \A^0\longrightarrow 0,
\end{eqnarray}
and the long exact
\[
\cdots \rightarrow
H^1_{}(\linsys{\LB^d},R^1p_{2_*}\Integers)\rightarrow 
H^1_{}(\linsys{\LB^d},R^1p_{2_*}\StructureSheaf{\C})\RightArrowOf{} 
H^1_{}(\linsys{\LB^d},\A^0) \rightarrow \cdots
\]

\begin{lem}
\label{lemma-cohomology-of-A-0}
The group 
$H^0(\linsys{\LB^d},\A^0)$ is isomorphic to $NS(S)\cap \Sigma^\perp$. 
The composite homomorphism 
\[
H^2(S,\Integers)\rightarrow H^{0,2}(S)\RightArrowOf{p_1^*} H^{0,2}(\C) \cong \widetilde{\Sha}
\RightArrowOf{} H^1(\linsys{\LB^d},\A^0)
\]
factors through an injective homomorphism from $H^2(S,\Integers)/[\Sigma^\perp+NS(S)]$
into the kernel of the homomorphism
$H^1(\linsys{\LB^d},\A^0)\rightarrow \Sha$.
\end{lem}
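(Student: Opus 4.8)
Everything should come out of the two short exact sequences (\ref{eq-short-exact-with-kernel-A-0}) and (\ref{eq-second-short-exact-sequence-involving-A-0}) of sheaves on $\linsys{\LB^d}$, combined with the identifications already made in Lemmas \ref{lemma-Sha-is-one-dimensional} and \ref{lemma-Leray-filtration}. First I would compute $H^0(\linsys{\LB^d},\A^0)$. The long exact sequence of (\ref{eq-second-short-exact-sequence-involving-A-0}) reads
\[
0\to H^0(R^1p_{2_*}\Integers)\to H^0(R^1p_{2_*}\StructureSheaf{\C})\to H^0(\A^0)\to H^1(R^1p_{2_*}\Integers)\to H^1(R^1p_{2_*}\StructureSheaf{\C}),
\]
and $H^0(R^1p_{2_*}\StructureSheaf{\C})=H^0(\linsys{\LB^d},T^*\linsys{\LB^d})\otimes_\ComplexNumbers H^{2,0}(S)^*=0$ by Lemma \ref{lemma-Sha-is-one-dimensional}(\ref{lemma-item-R-1-p-2-of-O-C-is-the-cotangent-bundle}), since $\linsys{\LB^d}\cong\PP^n$ carries no global holomorphic $1$-form. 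Hence $H^0(\A^0)$ is identified with the kernel of $H^1(R^1p_{2_*}\Integers)\to H^1(R^1p_{2_*}\StructureSheaf{\C})$. Writing $H^1(R^1p_{2_*}\Integers)\cong p_1^*\Sigma^\perp$ as the summand of $H^2(\C,\Integers)$ appearing in Lemma \ref{lemma-Leray-filtration}(\ref{lemma-item-filtration-of-H-2}),(\ref{lemma-item-E-p-q}) and $H^1(R^1p_{2_*}\StructureSheaf{\C})=\widetilde{\Sha}\cong H^2(\C,\StructureSheaf{\C})$ by Lemma \ref{lemma-Sha-is-one-dimensional}(\ref{lemma-item-Sha-is-one-dimensional}), the map becomes the restriction to $p_1^*\Sigma^\perp$ of the natural homomorphism $H^2(\C,\Integers)\to H^2(\C,\StructureSheaf{\C})$ (here one invokes compatibility of the Leray edge maps with the sheaf inclusion $\Integers\hookrightarrow\StructureSheaf{\C}$). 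Its kernel is $p_1^*\Sigma^\perp\cap NS(\C)$, and by Lemma \ref{lemma-Sha-is-one-dimensional}(\ref{lemma-item-second-cohomology-of-K3-is-a-direct-summand}) together with the Lefschetz $(1,1)$ theorem one has $NS(\C)=p_1^*NS(S)\oplus p_2^*H^2(\linsys{\LB^d},\Integers)$, so $p_1^*\Sigma^\perp\cap NS(\C)=p_1^*(NS(S)\cap\Sigma^\perp)$. This gives $H^0(\A^0)\cong NS(S)\cap\Sigma^\perp$.

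\textbf{The image lies in the kernel.} Next I would identify, via Lemma \ref{lemma-Sha-is-one-dimensional}(\ref{lemma-item-Tate-Shafarevich-groups-of-S-and-C-are-isomorphic}) and its proof, $\Sha$ with $H^2(\C,\StructureSheaf{\C}^*)$ and $\widetilde{\Sha}$ with $H^2(\C,\StructureSheaf{\C})$, and observe that under these identifications the composite $\widetilde{\Sha}\to H^1(\A^0)\to\Sha$ of the maps coming from (\ref{eq-second-short-exact-sequence-involving-A-0}) and (\ref{eq-short-exact-with-kernel-A-0}) is the homomorphism $H^2(\C,\StructureSheaf{\C})\to H^2(\C,\StructureSheaf{\C}^*)$ induced by the exponential sheaf map; indeed the composite of sheaf maps $R^1p_{2_*}\StructureSheaf{\C}\twoheadrightarrow\A^0\hookrightarrow R^1p_{2_*}\StructureSheaf{\C}^*$ is precisely the exponential map $R^1p_{2_*}\StructureSheaf{\C}\to R^1p_{2_*}\StructureSheaf{\C}^*$ of (\ref{eq-exact-sequence-of-abelian-groups}). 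Now the composite of the statement, post-composed with $H^1(\A^0)\to\Sha$, equals $H^2(S,\Integers)\xrightarrow{p_1^*}H^2(\C,\Integers)\to H^2(\C,\StructureSheaf{\C})\to H^2(\C,\StructureSheaf{\C}^*)$ by naturality of $p_1^*$ for the sheaf maps $\Integers\hookrightarrow\StructureSheaf{}\to\StructureSheaf{}^*$, and the last two arrows are consecutive maps in the exponential long exact sequence on $\C$, hence compose to zero. So the image of $H^2(S,\Integers)\to H^1(\A^0)$ lies in $\ker(H^1(\A^0)\to\Sha)$.

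\textbf{Factoring and injectivity.} It remains to see that the composite $H^2(S,\Integers)\to H^1(\A^0)$ kills exactly $\Sigma^\perp+NS(S)$. The subgroup $NS(S)\subset H^{1,1}(S)$ dies already in $H^{0,2}(S)$. For $\Sigma^\perp$, I would note that the composite $\Sigma^\perp\hookrightarrow H^2(S,\Integers)\to H^{0,2}(S)\xrightarrow{p_1^*}\widetilde{\Sha}$ agrees, under the identifications above, with the map $H^1(R^1p_{2_*}\Integers)\to H^1(R^1p_{2_*}\StructureSheaf{\C})=\widetilde{\Sha}$ from (\ref{eq-second-short-exact-sequence-involving-A-0}), whose image is exactly the kernel of $\widetilde{\Sha}\to H^1(\A^0)$; hence $\Sigma^\perp$ maps to $0$ in $H^1(\A^0)$ and the composite factors through $H^2(S,\Integers)/[\Sigma^\perp+NS(S)]$. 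For injectivity of the induced map, suppose $x\in H^2(S,\Integers)$ dies in $H^1(\A^0)$; then the image of $x$ in $H^{0,2}(\C)=\widetilde{\Sha}$ lies in $\ker(\widetilde{\Sha}\to H^1(\A^0))$, which equals the image of $p_1^*\Sigma^\perp$. Thus $p_1^*(x-y)$ dies in $H^2(\C,\StructureSheaf{\C})$ for some $y\in\Sigma^\perp$, i.e.\ $p_1^*(x-y)\in NS(\C)\cap p_1^*H^2(S,\Integers)=p_1^*NS(S)$, whence $x-y\in NS(S)$ and $x\in\Sigma^\perp+NS(S)$.

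\textbf{Main obstacle.} No single step is deep; the whole argument is a matter of threading three sets of identifications — the Leray edge isomorphisms $\widetilde{\Sha}\cong H^2(\C,\StructureSheaf{\C})$, $\Sha\cong H^2(\C,\StructureSheaf{\C}^*)$ and $H^1(R^1p_{2_*}\Integers)\cong p_1^*\Sigma^\perp\subset H^2(\C,\Integers)$ of Lemmas \ref{lemma-Sha-is-one-dimensional} and \ref{lemma-Leray-filtration}, the Hodge-theoretic descriptions of $H^2(S,\Integers)\to H^{0,2}(S)$ and of $NS(\C)$, and the exponential long exact sequences on $S$, on $\C$, and relatively over $\linsys{\LB^d}$ — and checking at each stage that the connecting homomorphisms are compatible with the sheaf maps $\Integers\hookrightarrow\StructureSheaf{\C}\to\StructureSheaf{\C}^*$ and with $p_1^*$. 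The one genuinely new geometric input is the splitting $NS(\C)=p_1^*NS(S)\oplus p_2^*H^2(\linsys{\LB^d},\Integers)$, which follows from the Hodge-structure decomposition of $H^2(\C,\Integers)$ in Lemma \ref{lemma-Sha-is-one-dimensional}(\ref{lemma-item-second-cohomology-of-K3-is-a-direct-summand}).
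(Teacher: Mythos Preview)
Your proof is correct and follows essentially the same route as the paper's. The paper packages the second half of the argument into a single commutative diagram with short exact rows (the top row coming from the long exact sequence of (\ref{eq-second-short-exact-sequence-involving-A-0}), the bottom from the exponential sequence on $S$) and reads off the factoring, the injectivity, and in fact the full short exact sequence $0\to H^2(S,\Integers)/[\Sigma^\perp+NS(S)]\to\Sha^0\to\Sha\to 0$ via the snake lemma; you unpack the same information step by step, checking separately that the image lies in the kernel, that $\Sigma^\perp+NS(S)$ is annihilated, and that nothing else is. The identifications you invoke (Leray edge maps compatible with $\Integers\hookrightarrow\StructureSheaf{\C}\to\StructureSheaf{\C}^*$, and the splitting $NS(\C)=p_1^*NS(S)\oplus p_2^*H^2(\linsys{\LB^d},\Integers)$) are exactly what underlies the paper's diagram.
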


\begin{proof}
The space $H^0(\linsys{\LB^d},R^1p_{2_*}\StructureSheaf{\C})$ vanishes, by Lemma
\ref{lemma-Sha-is-one-dimensional} (\ref{lemma-item-R-1-p-2-of-O-C-is-the-cotangent-bundle}). Hence,
$H^0(\linsys{\LB^d},\A^0)$ is the kernel of the homomorphism 
$H^1(\linsys{\LB^d},R^1p_{2_*}\Integers)\rightarrow \widetilde{\Sha}\cong H^{0,2}(S)$.
Compose the above homomorphism with the isomorphism $\Sigma^\perp \cong H^1(\linsys{\LB^d},R^1p_{2_*}\Integers)$
of Lemma \ref{lemma-Leray-filtration} in order to get the isomorphism
$H^0(\linsys{\LB^d},\A^0)\cong NS(S)\cap \Sigma^\perp$.

We have a commutative diagram with short exact rows
\begin{equation}
\label{eq-j}
\xymatrix{
0 \ar[r] & \frac{\Sigma^\perp}{NS(S)\cap \Sigma^\perp} \ar[r] \ar[d] &
\widetilde{\Sha} \ar[r]^-j \ar[d]_-\cong & 
\ker[H^1(\A^0)\rightarrow H^2(R^1p_{2_*}\Integers)] \ar[r] \ar[d] & 0
\\
0 \ar[r] &
\frac{H^2(S,\Integers)}{NS(S)} \ar[r] &
H^2(S,\StructureSheaf{S}) \ar[r] & 
H^2(S,\StructureSheaf{S}^*) \ar[r] & 0.
}
\end{equation}
The top row is obtained from the long exact sequence of sheaf cohomologies associated to the short exact sequence (\ref{eq-second-short-exact-sequence-involving-A-0}).
The left vertical homomorphism is injective and the right vertical homomorphism 
is surjective. The co-kernel of the former is isomorphic to the kernel of the latter and 
both are isomorphic to $H^2(S,\Integers)/[\Sigma^\perp+NS(S)]$. Setting 
\begin{equation}
\label{eq-Sha-0}
\Sha^0 \ \ := \ \ \ker[H^1(\A^0)\rightarrow H^2(R^1p_{2_*}\Integers)],
\end{equation}
we see that the right vertical homomorphism fits in the short exact sequence
\begin{equation}
\label{eq-kernel-of--homomorphism-from-Sha-0-to-Sha}
0\longrightarrow 
\frac{H^2(S,\Integers)}{\Sigma^\perp+NS(S)} \longrightarrow
\Sha^0\longrightarrow \Sha \longrightarrow 0.
\end{equation}
The statement of the Lemma follows.
\end{proof}

Let $\Sha^0$ be the group given in Equation (\ref{eq-Sha-0}).
Classes of $\Sha$ represent torsors for the relative Picard group scheme, while classes
of $\Sha^0$ represent torsors for the relative $\Pic^0$ group scheme. This comment will be illustrated in 
Example \ref{example-kernel-of-Sha-0-to-Sha} below.

%
\subsection{A universal family of Tate-Shafarevich twists}

Let $S$ be the marked $K3$ surface in Diagram (\ref{diagram-marked-M-H-u})
and $M_H(u)$ the moduli space of 
$H$-stable sheaves of pure one-dimensional support on $S$ in that Diagram.
Recall that $c_1(u)$ is the first Chern class of $\LB^d$, for a nef line-bundle $\LB$
on $S$, and 
the support map $\pi:M_H(u)\rightarrow \linsys{\LB^d}$ is a Lagrangian fibration.

Let $\sigma$ be a section of $R^1p_{2_*}(\StructureSheaf{\C}^*)$ 
over an open subset $U$ of $\linsys{\LB^d}$. Assume that $\sigma$ is the image of a section
$\tilde{\sigma}$ of $R^1p_{2_*}(\StructureSheaf{\C})$ over $U$.
Then $\sigma$ lifts to an automorphism of the open subset $\pi^{-1}(U)$ of $M_H(u)$.
This is seen as follows. Fix a point $t\in \linsys{\LB^d}$ and denote by $C_t$ the corresponding divisor in $S$.
Denote by $\sigma(t)$ the image of $\sigma$ in $H^1(C_t,\StructureSheaf{C_t}^*)$ 
and by $L_{\sigma(t)}$ the line-bundle over $C_t$ with class $\sigma(t)$. 
A sheaf $F$ over $C_t$ is $H$-stable, if and only if $F\otimes L_{\sigma(t)}$ is $H$-stable, 
since tensorization by  $L_{\sigma(t)}$ induces a one-to-one correspondence
between the set of subsheaves, which is slope-preserving, since $L_{\sigma(t)}$ belongs to 
the identity component of the Picard group of $C_t$. 

Let $s$ be an element of $\Sha^0$. We can choose 
a \v{C}ech $1$-co-cycle $\sigma:=\{\sigma_{ij}\}$ for the sheaf $\A^0$
representing $s$  in $\Sha^0$, with respect to an open covering
$\{U_i\}$ of $\linsys{\LB^d}$, such that each $\sigma_{ij}$ is the image of a section
$\tilde{\sigma}_{ij}$ of $R^1p_{2_*}(\StructureSheaf{\C})$, since the homomorphism 
$R^1p_{2_*}(\StructureSheaf{\C})\rightarrow \A^0$
is surjective. 
The co-cycle $\{\sigma_{ij}\}$
may be used to re-glue the open covering
$\pi^{-1}(U_i)$ of $M_H(u)$ to obtain a separated complex manifold $M_\sigma$ together with
a proper map
$
\pi_\sigma:M_\sigma\rightarrow \linsys{\LB^d}.
$
The latter is independent of the choice of the co-cycle, by the following Lemma,
so we denote it by 
\begin{equation}
\label{eq-pi-s}
\pi_s:M_s\rightarrow \linsys{\LB^d}.
\end{equation}

\begin{lem}
\label{lemma-M-s-is-independent-of-choice-of-co-cycle}
Let $\sigma:=\{\sigma_{ij}\}$ and $\sigma':=\{\sigma'_{ij}\}$ be two co-cycles representing the same class 
in $\Sha^0$. Then there exists an isomorphism $h:M_\sigma\rightarrow M_{\sigma'}$
satisfying $\pi_{\sigma'}\circ h=\pi_\sigma$. If the lattice $\Sigma$ of Lemma \ref{lemma-Leray-filtration}
has finite index in $NS(S)$, then 
$h$ depends canonically on $\sigma$ and $\sigma'$. 
\end{lem}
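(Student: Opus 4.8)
The plan is to build $h$ from local automorphisms of $M_H(u)$ of the type constructed in the paragraph preceding the Lemma, and to match the \v{C}ech coboundary relation between $\sigma$ and $\sigma'$ with the compatibility condition for re-gluing via such automorphisms. Recall from that paragraph that a section $\tau$ of $\A^0$ over an open $U\subset\linsys{\LB^d}$ which is the image of a section of $R^1p_{2_*}\StructureSheaf{\C}$ lifts to an automorphism of $\pi^{-1}(U)$ over $U$, acting on the fiber over $t$ by tensoring sheaves with the line bundle of class $\tau(t)\in\Pic^0(C_t)$. Write $g_{ij}$, $g'_{ij}$ for the automorphisms of $\pi^{-1}(U_{ij})$ attached to $\sigma_{ij}$, $\sigma'_{ij}$, used to re-glue $M_\sigma$ and $M_{\sigma'}$.

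First I would pass to a common refinement $\{U_i\}$ of the two open covers. Since $\sigma$ and $\sigma'$ represent the same class of $\Sha^0\subset H^1(\linsys{\LB^d},\A^0)$, after a further refinement the $1$-cocycle $\{\sigma'_{ij}\sigma_{ij}^{-1}\}$ is a \v{C}ech coboundary of $\A^0$, say $\{\sigma'_{ij}\sigma_{ij}^{-1}\}=\delta(\{\tau_i\})$ with $\tau_i\in\A^0(U_i)$. Using the surjectivity of the sheaf homomorphism $R^1p_{2_*}\StructureSheaf{\C}\rightarrow\A^0$ from the short exact sequence (\ref{eq-second-short-exact-sequence-involving-A-0}), I may shrink the $U_i$ so that each $\tau_i$ is the image of a section of $R^1p_{2_*}\StructureSheaf{\C}$; then $\tau_i$ lifts to an automorphism $\phi_i$ of $\pi^{-1}(U_i)$ over $\linsys{\LB^d}$.

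Now $g_{ij}$, $g'_{ij}$ and the $\phi_i$ all act fiberwise by tensoring with line bundles, so composing them corresponds to multiplying classes in $\Pic(C_t)$; the coboundary relation $\{\sigma'_{ij}\sigma_{ij}^{-1}\}=\delta(\{\tau_i\})$ therefore translates into the relations expressing that the local isomorphisms $\phi_i\colon\pi^{-1}(U_i)\rightarrow\pi^{-1}(U_i)$ intertwine the re-gluing data $\{g_{ij}\}$ of $M_\sigma$ with the re-gluing data $\{g'_{ij}\}$ of $M_{\sigma'}$. Hence the $\phi_i$ descend to a global isomorphism $h\colon M_\sigma\rightarrow M_{\sigma'}$, and $\pi_{\sigma'}\circ h=\pi_\sigma$ because each $\phi_i$ commutes with $\pi$. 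The main point requiring care in this step is the bookkeeping of refinements and of the (multiplicative) conventions in the \v{C}ech complex, together with the fact that one must invoke the local lifting to $R^1p_{2_*}\StructureSheaf{\C}$ in order to know that the $\phi_i$ are genuine holomorphic automorphisms; there is no deeper obstacle.

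For the final assertion, note that the only freedom in the construction of $h$ is the choice of the trivializing $0$-cochain $\{\tau_i\}$: two such choices differ, on a common refinement, by a \v{C}ech $0$-cocycle of $\A^0$, i.e. by an element of $H^0(\linsys{\LB^d},\A^0)$, which by Lemma \ref{lemma-cohomology-of-A-0} is isomorphic to $NS(S)\cap\Sigma^\perp$. If $\Sigma$ has finite index in $NS(S)$, then $\Sigma^\perp=NS(S)^\perp$ inside $H^2(S,\Integers)$, so $NS(S)\cap\Sigma^\perp$ is the radical of the intersection form on $NS(S)$, which vanishes by the Hodge index theorem. Thus $\{\tau_i\}$, and hence $h$, is uniquely determined by $\sigma$ and $\sigma'$, which is the claimed canonicity.
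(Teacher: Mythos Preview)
Your proof is correct and follows essentially the same line as the paper's: choose a $0$-cochain in $\A^0$ bounding the difference of the two cocycles, lift it locally through $R^1p_{2_*}\StructureSheaf{\C}\rightarrow\A^0$ to get fiberwise tensoring automorphisms, and observe that these glue to an isomorphism $M_\sigma\rightarrow M_{\sigma'}$ over $\linsys{\LB^d}$; uniqueness follows from $H^0(\A^0)\cong NS(S)\cap\Sigma^\perp=0$ under the finite-index hypothesis. Your justification of this last vanishing via the Hodge index theorem is slightly more explicit than the paper's, which simply cites Lemma~\ref{lemma-cohomology-of-A-0}; the paper also remarks that once $H^0(\A^0)=0$ the refinements become unnecessary, a point you could add for completeness.
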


\begin{proof}
There exists a co-chain $h:=\{h_i\}$ in $C^0(\{U_i\},\A^0)$, such that $h_i\sigma_{ij}=\sigma'_{ij}h_j$,
possibly after refining the covering and restricting the co-cycles $\sigma$ and $\sigma'$ to the refinement.
Each $h_i$ is the image of a section $\tilde{h}_i$ of $R^1{p_{2_*}}\StructureSheaf{\C}$, possibly 
after further refinement of the covering, since the sheaf homomorphism 
$R^1{p_{2_*}}\StructureSheaf{\C}\rightarrow \A^0$ is surjective. 
Hence, $h_i$ lifts canonically to an automorphism of $\pi^{-1}(U_i)$. The 
co-chain $\{h_i\}$  of automorphisms glues to a global isomorphism 
from $M_{\sigma'}$ to $M_\sigma$, by the equality $h_i\sigma_{ij}=\sigma'_{ij}h_j$. 

If $h':=\{h'_i\}$ is another co-chain satisfying the equality $\delta(h)=\sigma(\sigma')^{-1}$, then 
$h^{-1}h'$ is a global section of $\A^0$. The assumption that $\Sigma$ has finite index in $NS(S)$
implies that $H^0(\A^0)$ vanishes, by Lemma \ref{lemma-cohomology-of-A-0}. Hence $h=h'$ and the
above refinements are not needed.
\end{proof}

In the relative setting 
the above construction gives rise to 
a natural proper family
\[
\tilde{\pi}:\M\rightarrow \widetilde{\Sha}\times \linsys{\LB^d},
\]
which restricts over $\{0\}\times \linsys{\LB^d}$
to $\pi:M_H(u)\rightarrow \linsys{\LB^d}$, and over
$\tilde{s}\in\widetilde{\Sha}$ to $\pi_{j(\tilde{s})}:M_{j(\tilde{s})}\rightarrow \linsys{\LB^d}.$ 
Indeed, 
let 
$(\{U_i\},\tilde{\sigma}_{ij})$ be a \v{C}ech  co-cycle representing a non-zero class
$\tilde{\sigma}$ in $H^1(\linsys{\LB^d},R^1p_*\StructureSheaf{\C})$.
Let 
\begin{equation}
\label{eq-function-rescating-sigma-to-tautological-class}
\tau:\widetilde{\Sha}\rightarrow \ComplexNumbers
\end{equation} 
be the 
function satisfying $\tau(x)\tilde{\sigma}=x$.
Then 
$(\{\widetilde{\Sha}\times U_i\},\exp(\tau\tilde{\sigma}_{ij}))$
is a global co-cycle representing the desired family.
Let 
\[
f:\M\rightarrow \widetilde{\Sha} 
\]
be the composition of $\tilde{\pi}$ with the projection to 
$\widetilde{\Sha}$. 

\begin{prop}
\label{prop-family-over-Sha-is-Kahler}
If the weight $2$ Hodge structure of $S$ is non-special, then $M_s$ is K\"{a}hler,
for all $s\in \Sha^0$.
\end{prop}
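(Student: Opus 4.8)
The plan is to exploit that the homomorphism $j\colon\widetilde{\Sha}\to\Sha^0$ of Lemma \ref{lemma-cohomology-of-A-0} is surjective, so that every twist $M_s$, $s\in\Sha^0$, occurs as a fiber of the family $f\colon\M\to\widetilde{\Sha}$, and then to run the density argument from the introduction on $\widetilde{\Sha}$. First I would note: given $s\in\Sha^0$, choose $\tilde{s}\in\widetilde{\Sha}$ with $j(\tilde{s})=s$; then $M_s$ is biholomorphic to the fiber $\M_{\tilde{s}}$, by the construction of $\M$ together with Lemma \ref{lemma-M-s-is-independent-of-choice-of-co-cycle}. Hence it suffices to prove that every fiber $\M_t$, $t\in\widetilde{\Sha}$, is K\"ahler.

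Set $K:=\{t\in\widetilde{\Sha}\ :\ \M_t\text{ is K\"ahler}\}$. The fiber $\M_0$ is the projective variety $M_H(u)$, so $0\in K$ and $K\neq\emptyset$. Next I would argue that $K$ is open: every fiber of $f$ is obtained from $M_H(u)$ by re-gluing along a cocycle whose transition automorphisms act by translations along the fibers of the Lagrangian fibration, hence are isotopic to the identity, so all fibers of $f$ are diffeomorphic to $M_H(u)$ and in particular carry the same Hodge numbers; since no Hodge number jumps along $f$, the Kodaira--Spencer stability theorem for K\"ahler structures shows that $K$ is open. Finally, $\M_t$ and $\M_{t'}$ are biholomorphic whenever $j(t)=j(t')$, i.e. whenever $t-t'\in\ker(j)$; therefore $K$ is invariant under translations by the subgroup $\ker(j)\subseteq\widetilde{\Sha}$.

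It then remains to see that $\ker(j)$ is dense in $\widetilde{\Sha}\cong H^{0,2}(S)\cong\ComplexNumbers$. Since $\Sigma$ is spanned by classes of curves on $S$, we have $\Sigma\subseteq NS(S)$, so the transcendental lattice $T(S)=NS(S)^\perp$ is contained in $\Sigma^\perp$; by the description of $\ker(j)$ as the image of $\Sigma^\perp$ in $\widetilde{\Sha}$ (top row of diagram (\ref{eq-j})), the image of $T(S)$ under $T(S)\hookrightarrow H^2(S,\Integers)\to H^{0,2}(S)\cong\widetilde{\Sha}$ lies in $\ker(j)$. When the weight-two Hodge structure of $S$ is non-special, this image is dense in $\ComplexNumbers$; this is exactly the computation made in the non-special case of the proof of Lemma \ref{lemma-density-in-a-generic-fiber}, which reduces to Lemma \ref{lemma-density-of-Z-in-R-2}. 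Thus $K$ is a nonempty open subset of $\widetilde{\Sha}$ invariant under a group of translations all of whose orbits are dense, so $K=\widetilde{\Sha}$ by the elementary observation of the introduction; hence every $M_s$ with $s\in\Sha^0$ is K\"ahler.

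The main obstacle is the openness of $K$, i.e. the deformation-stability of the K\"ahler property within this particular family. The delicate point is that a small deformation of a compact K\"ahler manifold need not a priori be K\"ahler without control of the variation of Hodge numbers; here that control is supplied by the fact that all fibers of $f$ are diffeomorphic to $M_H(u)$, which places us in the situation covered by the Kodaira--Spencer stability theorem. (If one preferred to avoid invoking Hodge-number constancy, one could instead appeal to Huybrechts' description of the K\"ahler cone of manifolds of $K3^{[n]}$-type, using that every $\M_t$ is a deformation of $M_H(u)$.)
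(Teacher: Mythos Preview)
Your argument is correct and matches the paper's proof: show the K\"ahler locus in $\widetilde{\Sha}$ is open (stability of K\"ahler manifolds), nonempty (the fiber over $0$ is the projective $M_H(u)$), and invariant under translation by $\ker(j)$, then use non-speciality to get density of $\ker(j)$ in $\widetilde{\Sha}\cong H^{0,2}(S)$ via Lemmas \ref{lemma-density-in-a-generic-fiber} and \ref{lemma-density-of-Z-in-R-2}. One small correction: the Kodaira--Spencer stability theorem (the paper cites \cite[Theorem 9.3.3]{voisin-book-vol1}) holds unconditionally for smooth proper families and does not require constancy of Hodge numbers, so your diffeomorphism detour---while correct---is unnecessary for the openness step, and the ``delicate point'' you flag is not actually an obstacle.
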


\begin{proof}
There is an open neighborhood of the origin in $\widetilde{\Sha}$,
over which the fibers of $f$ are K\"{a}hler, by the stability of K\"{a}hler manifolds
\cite[Theorem 9.3.3]{voisin-book-vol1}.
Let $j:\widetilde{\Sha}\rightarrow \Sha^0$ be the homomorphism given in Equation (\ref{eq-j}).
The kernel $\ker(j)$ is isomorphic to the group $[\Sigma^\perp+NS(S)]/NS(S)$, by Lemma 
\ref{lemma-cohomology-of-A-0}.
As a subgroup of the base
$\widetilde{\Sha}$ of the family $f$, the kernel $\ker(j)$ acts on the base. Let $z$ be an element of $\ker(j)$ and 
$\tilde{s}$ an element of $\widetilde{\Sha}$. The fibers $M_{\tilde{s}}$ and $M_{\tilde{s}+z}$ of $f$
are both isomorphic to $M_{j(\tilde{s})}$. Let $V\subset \widetilde{\Sha}$ be the subset consisting 
of points over which the fiber of $f$ is K\"{a}hler. Then $V$ is an open and $\ker(j)$-invariant subset of 
$\widetilde{\Sha}$. 
Note that $\ker(j)$ is a finite index subgroup of $H^2(S,\Integers)/NS(S)$.
The kernel $\ker(j)$ is a dense subgroup of $\widetilde{\Sha}$, 
if and only if the image of $H^2(S,\Integers)/NS(S)$ is dense in
$H^{0,2}(S)$, by Lemma \ref{lemma-Sha-is-one-dimensional} 
(\ref{lemma-item-Tate-Shafarevich-groups-of-S-and-C-are-isomorphic}).
This is indeed the case, by the assumption that the
weight $2$ Hodge structure of $S$ is non-special, and
Lemmas \ref{lemma-density-in-a-generic-fiber} and  
\ref{lemma-density-of-Z-in-R-2}. The complement $V^c$ of $V$ in $\widetilde{\Sha}$ is $\ker(j)$ invariant.
If non-empty, then $V^c$ is dense and closed and so equal to $\widetilde{\Sha}$. But we know
that $V$ is non-empty. Hence, $V=\widetilde{\Sha}$.
\hide{
Let $\Theta(S)\subset H^2(S,\Integers)$ be the transcendental sub-lattice.
Let $\phi:H^1(\linsys{\LB^d},R^1p_{2_*}\Integers)\rightarrow \widetilde{\Sha}$
be the homomorphism given in Equation (\ref{eq-j}), whose image is contained in $\ker(j)$. 
The image of $\Theta(S)$ in $H^{0,2}(S)$ is dense, by the assumption that the
weight $2$ Hodge structure of $S$ is non-special, and
Lemmas \ref{lemma-density-in-a-generic-fiber} and  
\ref{lemma-density-of-Z-in-R-2}.
The density of $\ker(j)$ would follow, once we construct a commutative diagram
\begin{equation}
\label{diagram-of-two-filtrations}
\xymatrix{
\Theta(S) \ar[rr] \ar[d]& & H^{0,2}(S) \ar[d]^{p_2^*}_{\cong}
\\
H^1(\linsys{\LB^d},R^1p_{2_*}\Integers) \ar[r]^{\hspace{3ex}\phi} & \widetilde{\Sha} \ar[r]^{\cong} & H^{0,2}(\C).
}
\end{equation}
Above we used the isomorphism 
$\widetilde{\Sha}=H^1(\linsys{\LB^d},R^1p_{2_*}\StructureSheaf{\C})\cong H^{0,2}(\C)$ 
constructed  in Lemma \ref{lemma-Sha-is-one-dimensional}.

A class in $\Theta(S)$ belongs to the kernel of the restriction from $H^2(S,\Integers)$ to the
second cohomology of an algebraic curve, and so the subspace $p_1^*\Theta(S)$ is contained in 
$F^1H^2(\C,\Integers)$. The left vertical homomorphism in Diagram (\ref{diagram-of-two-filtrations})
is defined to be the composition
of $p_1^*:\Theta(S)\rightarrow F^1H^2(\C,\Integers)$ with the quotient map
$F^1H^2(\C,\Integers)\rightarrow E^{1,1}_\infty\cong H^1(\linsys{\LB^d},R^1p_{2_*}\Integers)$.
The quotient homomorphism $F^1H^2(\C,\Integers)\rightarrow H^{0,2}(\C,\Integers)$
factors through the bottom horizontal homomorphism $\phi$, since $p_2^*H^2(\linsys{\LB^d},\Integers)$ 
restricts trivially to each fiber of $p_2$.
Diagram (\ref{diagram-of-two-filtrations}) is commutative, since 
$p_2^*:H^2(S,\Integers)\rightarrow H^2(\C,\Integers)$ is a morphism of Hodge structures.
}
\end{proof}

\begin{example}
\label{example-kernel-of-Sha-0-to-Sha}
Consider the case where  $d=1$ and 
$\Pic(S)$ is cyclic generated by the line bundle $\LB$ of degree $2n-2$, $n\geq 2$.
Then $H^2(\linsys{\LB^d},R^1p_{2_*}\Integers)$ vanishes, by
Lemma \ref{lemma-Leray-filtration} (\ref{lemma-item-E-2-1}), and 
$\Sha^0=H^1(\A^0)$.
The linear system $\linsys{\LB}$ consists of integral curves, and so
we can find an open covering $\{U_i\}$ of $\linsys{\LB}$, and sections $\zeta_i:U_i\rightarrow \C$,
such that $p_2\circ \zeta_i$ is the identity. Set $D_i:=\zeta_i(U_i)$. 
We get the line bundle
$\StructureSheaf{p_2^{-1}(U_i)}(D_i)$, which restricts to a line bundle 
of degree $1$ on fibers of $p_2$ over points of $U_i$. Let $h_i$ be the section 
of $R^1p_{2_*}\StructureSheaf{\C}^*$ over $U_i$ corresponding to $\StructureSheaf{p_2^{-1}(U_i)}(D_i)$
and denote by $h:=\{h_i\}$
the corresponding co-chain in $C^0(\{U_i\},R^1p_{2_*}\StructureSheaf{\C}^*)$.

Consider the Lagrangian fibrations 
$\pi_0:M_{\LB}(0,\LB,\chi)\rightarrow \linsys{\LB}$ and
$\pi_1:  M_{\LB}(0,\LB,\chi+1)\rightarrow \linsys{\LB}$, for some integer $\chi$.
The push-forward of
every rank $1$ torsion free sheaf on a curve in the linear system $\linsys{\LB}$ is an $\LB$-stable
sheaf on $S$, since the curve is integral. Hence,
the section $h_i$ induces an isomorphism
$h_i:\pi_0^{-1}(U_i)\rightarrow \pi_1^{-1}(U_i)$. 
The co-boundary $(\delta h)_{ij}:=h_jh_i^{-1}$ is a co-cycle in
$Z^1(\{U_i\},\A^0)$ representing a class $s\in \Sha^0$ mapping to the identity in $\Sha$.
The Lagrangian fibration $\pi_s:M_s\rightarrow \linsys{\LB}$, associated to the class $s$ in Equation (\ref{eq-pi-s})
with $u=(0,\LB,\chi)$, 
coincides with $\pi_1:M_{\LB}(0,\LB,\chi+1)\rightarrow \linsys{\LB},$ by the 
commutativity of the following diagram.
\[
\xymatrix{
\pi_0^{-1}(U_j) \ar[d]_{h_j} &
\pi_0^{-1}(U_{ij}) \ar[r]^-{h_i^{-1}h_j} \ar[l]_-{\supset} &
\pi_0^{-1}(U_{ij}) \ar[r]^-{\subset} &
\pi_0^{-1}(U_i) \ar[d]_{h_i}
\\
\pi_1^{-1}(U_j) &
\pi_1^{-1}(U_{ij}) \ar[r]^-{id} \ar[l]_-{\supset} &
\pi_1^{-1}(U_{ij}) \ar[r]^-{\subset} &
\pi_1^{-1}(U_i).
}
\]
The moduli spaces $M_{\LB}(0,\LB,\chi)$ and $M_{\LB}(0,\LB,\chi+1)$ are not isomorphic 
for generic $(S,\LB)$, since their weight $2$ Hodge structures are not Hodge isometric.

The kernel of $\Sha^0\rightarrow \Sha$ is cyclic of order $2n-2$, by the exactness of the sequence
(\ref{eq-kernel-of--homomorphism-from-Sha-0-to-Sha}).
The class $s$ constructed above generates the kernel. This is seen
as follows. The sheaf $R^2p_{2_*}\Integers$ is trivial, in our case, and the
homomorphism $\deg$, given in (\ref{eq-short-exact-with-kernel-A-0}), 
maps the $0$-co-chain $h$ to a global section of $R^2p_{2_*}\Integers$,
which generates $H^0(R^2p_{2_*}\Integers).$ Hence, $\delta h$ generates the
image of the connecting homomorphism 
$H^0(R^2p_{2_*}\Integers)\rightarrow H^1(\A^0)$ associated to the short 
exact sequence (\ref{eq-short-exact-with-kernel-A-0}). The latter image is precisely the kernel 
of $\Sha^0\rightarrow \Sha$.
\end{example}

%
\subsection{The period map of the universal family is \'{e}tale} \hspace{1ex}\\
Denote by
$
T_{\pi_s} := \ker\left[
d\pi_s:TM_s\rightarrow \pi_s^*T\linsys{\LB^d}
\right]
$
the relative tangent sheaf of $\pi_s:M_s\rightarrow \nolinebreak\linsys{\LB^d}$.

\begin{lem}
\label{lemma-vertical-tangent-sheaf-is-pullback-of-cotangent-bundle}
The vertical tangent sheaf $T_{\pi_s}$ 
is isomorphic to $\pi_s^*T^*\linsys{\LB^d}$.
\end{lem}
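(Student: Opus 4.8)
The plan is to build a canonical morphism $\psi\colon\pi_s^{*}T^{*}\linsys{\LB^d}\to T_{\pi_s}$ out of the holomorphic symplectic form, to prove it is an isomorphism over the preimage of the "good locus" $U\subseteq\linsys{\LB^d}$ complementary to the bad locus of Assumption \ref{assumption}(\ref{assumption-item-locus}), and then to extend it to an isomorphism over all of $M_s$. The extension is legitimate because $\linsys{\LB^d}\setminus U$ has codimension $\geq2$, and all fibres of $\pi_s$ have dimension $n$ (they are Lagrangian), so $\pi_s^{-1}(\linsys{\LB^d}\setminus U)$ has codimension $\geq2$ in $M_s$; meanwhile $T_{\pi_s}$ is reflexive, being the kernel of a morphism of locally free sheaves on the smooth manifold $M_s$, and $\pi_s^{*}T^{*}\linsys{\LB^d}$ is locally free, so $\SheafHom(\pi_s^{*}T^{*}\linsys{\LB^d},T_{\pi_s})$ and $\SheafHom(T_{\pi_s},\pi_s^{*}T^{*}\linsys{\LB^d})$ are reflexive and any homomorphism (in either direction) defined off a codimension-$\geq2$ set extends uniquely; an extension that is an isomorphism on a dense open set is a two-sided inverse, so is an isomorphism.

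\textbf{The canonical map.} First I would put a holomorphic symplectic form on $M_s$. The regluing automorphisms $g_{ij}$ of $\pi^{-1}(U_{ij})\subseteq M_H(u)$ act on fibres of $\pi$ by tensoring with a line bundle of relative degree zero, i.e. by translations of the relative generalized Jacobian; such fibrewise translations of a Lagrangian fibration preserve Mukai's symplectic form $\omega$ on $M_H(u)$ (the holomorphic Arnold--Liouville fact). Hence the local forms $\omega|_{\pi^{-1}(U_i)}$ glue to a holomorphic symplectic form $\omega_s$ on $M_s$, giving $\omega_s\colon TM_s\xrightarrow{\sim}\Omega^{1}_{M_s}$. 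Composing its inverse with the conormal inclusion $\pi_s^{*}T^{*}\linsys{\LB^d}\hookrightarrow\Omega^{1}_{M_s}$ defines $\psi\colon\pi_s^{*}T^{*}\linsys{\LB^d}\to TM_s$. Since the fibres of $\pi_s$ are Lagrangian (Mukai, \cite{mukai-symplectic-structure}), the image of $\psi$ is annihilated by $d\pi_s$, so $\psi$ factors through $T_{\pi_s}=\ker(d\pi_s)$; and over the locus where $\pi_s$ is a submersion, $\omega_s$ carries the rank-$n$ subbundle $T_{\pi_s}$ onto the rank-$n$ subbundle $\pi_s^{*}T^{*}\linsys{\LB^d}$, so $\psi$ is an isomorphism there. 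Concretely, on a fibre this is the self-duality $H^{1}(C_t,\StructureSheaf{C_t})\cong H^{0}(C_t,\omega_{C_t})^{*}\cong H^{0}(C_t,N_{C_t/S})^{*}\cong T_t\linsys{\LB^d}{}^{*}$, Serre duality on the Gorenstein curve $C_t$ together with $\omega_{C_t}\cong N_{C_t/S}$, which is the relative form of Lemma \ref{lemma-Sha-is-one-dimensional}(\ref{lemma-item-R-1-p-2-of-O-C-is-the-cotangent-bundle}).

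\textbf{Isomorphism over $\pi_s^{-1}(U)$.} Over $U$ every curve $C_t$ is reduced, with only ordinary double points among its singularities when reducible; hence every fibre of $\pi_s$ over $U$ has at worst nodal singularities, and near each such point $\pi_s$ has a local normal form $(x_1y_1,x_2,\dots,x_n)$. The kernel of the associated Jacobian is the first syzygy module of the regular sequence $(y_1,x_2,\dots,x_n)$ plus the free directions, hence $T_{\pi_s}$ is locally free of rank $n$ over $\pi_s^{-1}(U)$, and the critical locus $\mathrm{Crit}(\pi_s)\cap\pi_s^{-1}(U)$ has codimension $\geq2$ in $M_s$ (codimension $\geq1$ for the discriminant in $U$, times codimension $\geq1$ for the node locus in a fibre). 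Then $\det\psi$ is a section of a line bundle over $\pi_s^{-1}(U)$ that is non-vanishing where $\pi_s$ is a submersion; its zero locus, if non-empty, would be a divisor contained in $\mathrm{Crit}(\pi_s)$, contradicting the codimension bound, so $\psi$ is an isomorphism over $\pi_s^{-1}(U)$. The reflexivity extension of the first paragraph then upgrades this to a global isomorphism $T_{\pi_s}\cong\pi_s^{*}T^{*}\linsys{\LB^d}$ (and in particular shows $T_{\pi_s}$ is locally free).

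\textbf{Main obstacle.} The delicate part is the local analysis over singular curves in $U$: confirming that over $U$ the only local model of $\pi_s$ is the nodal one, that $T_{\pi_s}$ is therefore locally free there, and that $\mathrm{Crit}(\pi_s)$ remains of codimension $\geq2$. This is exactly the point at which Assumption \ref{assumption}(\ref{assumption-item-locus}) is used --- it relegates the worse degenerations to a codimension-$\geq2$ set that is then handled by the reflexivity extension rather than by direct inspection. A secondary point requiring care is the assertion that Mukai's symplectic form is genuinely invariant under the fibrewise tensoring automorphisms, so that $\omega_s$ (and hence the canonical $\psi$) is well defined on $M_s$.
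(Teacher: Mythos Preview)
Your overall strategy---use the holomorphic symplectic form to produce a map between $\pi_s^*T^*\linsys{\LB^d}$ and $T_{\pi_s}$, show it is an isomorphism off a codimension-$\geq 2$ locus, then extend by reflexivity---is the paper's strategy. The gap is in the step you yourself flag as the main obstacle: establishing that $\mathrm{Crit}(\pi_s)$ has codimension $\geq 2$ in $M_s$.

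Your assertion that ``every fibre of $\pi_s$ over $U$ has at worst nodal singularities, and near each such point $\pi_s$ has a local normal form $(x_1y_1,x_2,\dots,x_n)$'' is not justified and is too strong. The open set $U$, the complement of the Assumption~\ref{assumption}(\ref{assumption-item-locus}) locus, contains all \emph{integral} curves in $\linsys{\LB^d}$ regardless of their singularity type---cusps, tacnodes, and worse are allowed. Over such a curve the compactified Jacobian need not have only nodal singularities, and there is no reason for $\pi_s$ to admit the simple normal form you claim. Even over an integral curve with a single node, the statement that $\pi_s$ has precisely that local model is a nontrivial fact about the deformation theory of torsion-free sheaves inside the moduli space, not something one can read off from the curve.

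The paper bypasses local models entirely. It shows instead that for every $t\in U$ the fibre $\pi_s^{-1}(t)$ is \emph{reduced}: for integral curves this is Altman--Iarrobino--Kleiman \cite{aki}, and for reduced curves with only ordinary double points it is Oda--Seshadri \cite{oda-seshadri}. Since $M_s$ is smooth and all fibres have pure dimension $n$ (Matsushita \cite{matsushita-equidimensionality}), the generic point of every irreducible component of every fibre over $U$ is then a submersive point of $\pi_s$, and a dimension count rules out any divisorial component of $\mathrm{sing}(\pi_s)$. Once codimension $\geq 2$ is established, your detour through local freeness of $T_{\pi_s}$ and $\det\psi$ is unnecessary: the paper just observes that $T_{\pi_s}$, being the kernel of a map of vector bundles, is a \emph{saturated} subsheaf of $TM_s$, so its image under $\omega_s$ is saturated in $T^*M_s$ and therefore coincides with the locally free subsheaf $\pi_s^*T^*\linsys{\LB^d}$ with which it agrees off codimension~$2$.
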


\begin{proof}
Let $\mbox{sing}(\pi_s)$ be the support of the co-kernel of the differential
$d\pi_s:TM_s\rightarrow \pi_s^*T\linsys{\LB^d}$. We use 
Assumption \ref{assumption}  to prove that the
co-dimension of $\mbox{sing}(\pi_s)$ in $M_s$ is $\geq 2$. 
The generic fiber of $\pi_s$ is smooth, since $M_s$ is smooth. 
All fibers of $\pi_s$ have pure dimension $n$ 
\cite{matsushita-equidimensionality}.
Hence, the only way $\mbox{sing}(\pi_s)$ could contain a divisor is if
$\pi_s$ has fibers with a non-reduced irreducible component over some 
divisor in $\linsys{\LB^d}$.
The generic divisor in the linear system $\linsys{\LB^d}$
is a smooth curve, by Assumption \ref{assumption} (\ref{assumption-item-bpf}) and 
\cite[Prop. 1]{mayer}.
The fiber of $\pi_s$, over a reduced divisor $C\in \linsys{\LB^d}$, is isomorphic
to the compactified Picard of $C$, consisting of $\LB$-stable sheaves 
of Euler characteristic $\chi$ with pure one-dimensional
support $C$, which are the push forward of  rank $1$ torsion free sheaves over $C$. 
If $C$ is an integral curve, then the moduli space of rank $1$ torsion free sheaves over $C$
with a fixed Euler characteristic is irreducible and reduced \cite{aki}. 
If $C$ is reduced (possibly reducible) with at worst  ordinary double point singularities,
then the compactified Picard is reduced, by a result of Oda and Seshadri
\cite{oda-seshadri}. 
Assumption \ref{assumption} (\ref{assumption-item-locus}) thus implies that 
$\mbox{sing}(\pi_s)$ has co-dimension $\geq 2$ in $M_s$. 

\hide{
Assume next that Assumption \ref{assumption} (\ref{assumption-Pic-S-is-cyclic}) holds. 
If $(n,d)\ne(5,2)$, then Assumption \ref{assumption} (\ref{assumption-item-locus}) follows as well,
by Remark \ref{remark-on-assumption}, and we are done. 
Assume that $(n,d)=(5,2)$ and $\Pic(S)$ is cyclic. The Euler characteristic $\chi$ is relatively prime to $d$. 
$M_\LB(0,\LB^2,\chi)$ and $M_\LB(0,\LB^2,-\chi)$ are naturally isomorphic, 
by \cite[Theorem 5.7]{le-potier}. Tensoring with $\LB$ we get an isomorphism between
$M_\LB(0,\LB^2,\chi)$ and $M_\LB(0,\LB^2,\chi+4)$. Hence, 
we may assume that $\chi=1$ and
$\pi_s:M_s\rightarrow \linsys{\LB^2}$ is a Tate-Shafarevich twist of
$\pi:M_\LB(0,\LB^2,1)\rightarrow \linsys{\LB^2}$.
In our case 
$\linsys{\LB^2}\cong\PP^5$, $\linsys{\LB}\cong\PP^2$, and the general 
point of the image of the Segre embedding $\linsys{\LB}\times\linsys{\LB}\hookrightarrow \linsys{\LB^2}$
corresponds to the union $C$ of two smooth genus $2$ curves $C_1$ and $C_2$ 
in the linear system $\linsys{\LB}$ meeting transversely at two points. 
The fiber of $\pi_s$ over such a point is the compactified Picard of $C$ of Euler 
characteristic $1$ with respect to the polarization $\LB$.  
According to Oda and Seshadri, 
the generic point of each component consists of a locally free sheaf over $C$, which restricts
to $C_1$ with degree $d_1$ and to $C_2$ with degree $d_2$,
such that $d_1+d_2=5$ and the stability condition $d_i-2<5/2$ holds for $i=1,2$ \cite{oda-seshadri}.
The compactified Picard thus has two connected components corresponding to degree vectors 
$(d_1,d_2)=(3,2)$ and $(2,3)$. The generic point of each component is smooth.
Hence, the differential $d\pi_s$ is submersive at the generic point of each of the two components.
Consequently,  $\mbox{sing}(\pi_s)$ has co-dimension $\geq 2$ in $M_s$.
}

Let $U$ be the complement of $\mbox{sing}(\pi_s)$ in $M_s$. 
The isomorphism $TM_s\rightarrow T^*M_s$, induced by a non-degenerate global holomorphic $2$-form,
maps the restriction of $T_{\pi_s}$ to $U$ isomorphically onto the restriction
of $\pi_s^*T^*\linsys{\LB^d}$. 
The isomorphism $TM_s\rightarrow T^*M_s$ must
map $T_{\pi_s}$ as a  subsheaf of the locally free $\pi_s^*T\linsys{\LB^d}$, by the fact that $\mbox{sing}(\pi_s)$
has codimension $\geq 2$.
But $T_{\pi_s}$ is a saturated subsheaf of $TM_s$. Hence, 
the image of $T_{\pi_s}$ is also saturated in $T^*M_s$, and is thus equal to $\pi_s^*T^*\linsys{\LB^d}$. 
\end{proof}

When the $K3$ surface $S$ is non-special, the fibers of the family $f$ are irreducible holomorphic symplectic
manifolds, by Proposition \ref{prop-family-over-Sha-is-Kahler} and the fact that K\"{a}hler deformations 
of an irreducible  holomorphic symplectic
manifold remain such \cite{beauville}.
Denote by 
\begin{equation}
\label{eq-trivialization-eta}
\eta:R^2f_*\Integers\rightarrow (\Lambda)_{\widetilde{\Sha}}
\end{equation}
 the trivialization,
which restricts to the the marking $\eta_1$ in Diagram
(\ref{diagram-marked-M-H-u}) over the point $0\in \widetilde{\Sha}$.
Let $P_f:\widetilde{\Sha}\rightarrow \Omega^+_{\alpha^\perp}$ be the period map of the family $f$ and the marking $\eta$.
Let $dP_f:T_{\tilde{s}}\widetilde{\Sha}\rightarrow H^{2,0}(M_s)^*\otimes H^{1,1}(M_s)$ be
the differential at $\tilde{s}$ of the period map. 

\begin{lem}
\label{lemma-the-differential-of-the-period-map-of-f}
The differential $dP_f$  is injective,  for all $\tilde{s}$ in $\widetilde{\Sha}$, and its image is equal to 
$H^{2,0}(M_s)^*\otimes\pi_s^*H^{1,1}(\linsys{\LB^d})$.
\end{lem}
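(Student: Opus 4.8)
The plan is to factor $dP_f$ through the Kodaira--Spencer map of the family $f$ and then to recognise that map as a Leray edge homomorphism. By the standard description of the differential of a period map, $dP_f$ at a point $\tilde s$ is the composition of the Kodaira--Spencer homomorphism $\rho\colon T_{\tilde s}\widetilde{\Sha}\to H^1(M_s,TM_s)$ with the homomorphism $H^1(M_s,TM_s)\to \Hom(H^{2,0}(M_s),H^1(M_s,\Omega^1_{M_s}))$ obtained by cup product together with the contraction $TM_s\otimes\Omega^2_{M_s}\to\Omega^1_{M_s}$. Since $M_s$ is irreducible holomorphic symplectic, $H^{2,0}(M_s)$ is spanned by a non-degenerate holomorphic $2$-form $\omega$, contraction with which is an isomorphism $TM_s\cong\Omega^1_{M_s}$; hence the second homomorphism is an isomorphism of $H^1(M_s,TM_s)$ onto $H^{2,0}(M_s)^*\otimes H^1(M_s,\Omega^1_{M_s})=H^{2,0}(M_s)^*\otimes H^{1,1}(M_s)$. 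It therefore suffices to prove that $\rho$ is injective and that, under $TM_s\cong\Omega^1_{M_s}$, the image of $\rho$ inside $H^{1,1}(M_s)$ equals $\pi_s^*H^{1,1}(\linsys{\LB^d})$.

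The next step is to describe $\rho$ explicitly. Near $\tilde s$ the family $\M\to\widetilde{\Sha}$ is obtained from $M_s$ by regluing the cover $\{\pi_s^{-1}(U_i)\}$ via the cocycle $\exp(\tau\,\tilde\sigma_{ij})$, where $\{\tilde\sigma_{ij}\}$ is a \v{C}ech cocycle for $R^1p_{2_*}\StructureSheaf{\C}$ and $\tau$ is the linear coordinate of Equation (\ref{eq-function-rescating-sigma-to-tautological-class}); differentiating this construction one finds that $\rho$ sends a class of $\widetilde{\Sha}=H^1(\linsys{\LB^d},R^1p_{2_*}\StructureSheaf{\C})$ to the class of the \v{C}ech cocycle of vertical vector fields $\{v_{ij}\}$ on $M_s$, where $v_{ij}$ is the translation-invariant field on $\pi_s^{-1}(U_{ij})$ determined by $\tilde\sigma_{ij}$ through the action of the relative $\Pic^0$ on the fibres of $\pi_s$. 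In particular $\rho$ factors through $H^1(M_s,T_{\pi_s})\to H^1(M_s,TM_s)$, and the induced map $\widetilde{\Sha}\to H^1(M_s,T_{\pi_s})$ equals the composition of the Leray edge homomorphism $\pi_s^*\colon H^1(\linsys{\LB^d},R^1p_{2_*}\StructureSheaf{\C})\to H^1(M_s,\pi_s^*R^1p_{2_*}\StructureSheaf{\C})$ with the map induced by the bundle homomorphism $\pi_s^*R^1p_{2_*}\StructureSheaf{\C}\to T_{\pi_s}$ coming from the $\Pic^0$-action. The edge homomorphism into an $H^1$ is always injective, and the bundle homomorphism is an isomorphism of locally free sheaves: it is an isomorphism over $M_s\setminus\mbox{sing}(\pi_s)$, where the fibres are smooth abelian varieties whose tangent bundles are trivialised by their Lie algebras $H^1(C_t,\StructureSheaf{C_t})$, and $\mbox{sing}(\pi_s)$ has codimension $\ge 2$ by the argument already used in Lemma \ref{lemma-vertical-tangent-sheaf-is-pullback-of-cotangent-bundle}, so it extends across. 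Hence $\rho$ is injective (and, as $\widetilde{\Sha}$ is one dimensional by Lemma \ref{lemma-Sha-is-one-dimensional}, an isomorphism onto its image).

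Finally one computes the image. Contraction with $\omega$ sends a vertical vector $v$ to $\iota_v\omega$, which annihilates vertical vectors because the fibres of $\pi_s$ are Lagrangian; thus it carries the inclusion $T_{\pi_s}\hookrightarrow TM_s$ to the inclusion $\pi_s^*\Omega^1_{\linsys{\LB^d}}\hookrightarrow\Omega^1_{M_s}$, and on $T_{\pi_s}$ it is precisely the isomorphism of Lemma \ref{lemma-vertical-tangent-sheaf-is-pullback-of-cotangent-bundle} followed by that inclusion. Now $\linsys{\LB^d}\cong\PP^n$, and using Matsushita's isomorphisms $R^q\pi_{s*}\StructureSheaf{M_s}\cong\Omega^q_{\linsys{\LB^d}}$ \cite{matsushita-higher-direct-images} and the projection formula, the Leray spectral sequence of $\pi_s$ with coefficients in $\pi_s^*\Omega^1_{\linsys{\LB^d}}$ has $E_2^{0,1}=H^0(\PP^n,\Omega^1_{\PP^n}\otimes\Omega^1_{\PP^n})$, which vanishes since the Euler sequence embeds $\Omega^1_{\PP^n}$ into $\StructureSheaf{\PP^n}(-1)^{\oplus(n+1)}$; hence the edge homomorphism $H^1(\PP^n,\Omega^1_{\PP^n})\to H^1(M_s,\pi_s^*\Omega^1_{\linsys{\LB^d}})$ is an isomorphism. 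Consequently $H^1(M_s,T_{\pi_s})$ is one dimensional, generated by the pullback of the hyperplane class, and the map $H^1(M_s,T_{\pi_s})\to H^{1,1}(M_s)$ sends this generator to $\pi_s^*c_1(\StructureSheaf{\PP^n}(1))=\alpha\neq 0$, so it is injective with image $\pi_s^*H^{1,1}(\linsys{\LB^d})$. Together with the first paragraph this gives the lemma. The step I expect to be the main obstacle is the bookkeeping in the second paragraph: identifying the Kodaira--Spencer class of the regluing family with the \v{C}ech cocycle $\{v_{ij}\}$ of translation fields and checking that the resulting bundle homomorphism $\pi_s^*R^1p_{2_*}\StructureSheaf{\C}\to T_{\pi_s}$ is compatible, via $\omega$, with the isomorphism of Lemma \ref{lemma-vertical-tangent-sheaf-is-pullback-of-cotangent-bundle}; granting this, the spectral sequence computation and the injectivity are routine.
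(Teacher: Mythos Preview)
Your approach coincides with the paper's: factor $dP_f$ through the Kodaira--Spencer map, observe that the latter lands in $H^1(M_s,T_{\pi_s})$ via the sheaf map $\pi_s^*R^1p_{2_*}\StructureSheaf{\C}\to T_{\pi_s}$, and then compute that $H^1(M_s,T_{\pi_s})$ is one-dimensional with image $\pi_s^*H^{1,1}(\linsys{\LB^d})$ under contraction with the symplectic form. The bookkeeping you flag as the main obstacle---identifying the Kodaira--Spencer class with the \v{C}ech cocycle of vertical translation fields---is exactly what the paper does, and it treats this step as essentially definitional (``the middle triangle commutes, by definition of the family $f$''). For the compatibility of $\tilde\nu\colon\pi_s^*R^1p_{2_*}\StructureSheaf{\C}\to T_{\pi_s}$ with Lemma~\ref{lemma-vertical-tangent-sheaf-is-pullback-of-cotangent-bundle}, the paper simply notes that source and target are both isomorphic to $\pi_s^*T^*\linsys{\LB^d}$ (via Lemmas~\ref{lemma-Sha-is-one-dimensional} and~\ref{lemma-vertical-tangent-sheaf-is-pullback-of-cotangent-bundle}), and since $\End(T^*\PP^n)=\ComplexNumbers$ any nonzero map between them is an isomorphism.

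The actual gap is elsewhere. You invoke Matsushita's isomorphism $R^q\pi_{s_*}\StructureSheaf{M_s}\cong\Omega^q_{\linsys{\LB^d}}$ for \emph{every} $s$, but the theorem in \cite{matsushita-higher-direct-images} is proved for \emph{projective} irreducible symplectic manifolds, and for $s\neq 0$ the twist $M_s$ is typically not projective---indeed that is the whole point of the construction. The paper is careful here: it applies Matsushita only at $s=0$, where $M_0=M_H(u)$ is projective, and then argues separately that $R^1\pi_{s_*}\StructureSheaf{M_s}$ is naturally isomorphic to $R^1\pi_{0_*}\StructureSheaf{M_0}$ for all $s$. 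The reason is that the fibrations $\pi_s$ and $\pi_0$ agree over each $U_i$ of the \v{C}ech cover, and the regluing automorphisms act as translations on the smooth abelian-variety fibres, hence trivially on their $H^1(\StructureSheaf{})$; so the transition functions for the locally free sheaf $R^1\pi_{s_*}\StructureSheaf{M_s}$ coincide with those of $R^1\pi_{0_*}\StructureSheaf{M_0}$. This is a short extra step, but it is not optional.
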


\begin{proof}
Let $\psi:H^{2,0}(M_s)^*\otimes H^1(\linsys{\LB^d},T^*\linsys{\LB^d})\rightarrow H^1(M_s,T_{\pi_s})$
be the composition of 
\[
1\otimes\pi_s^*:H^{2,0}(M_s)^*\otimes H^1(\linsys{\LB^d},T^*\linsys{\LB^d})\rightarrow 
H^0(M_s,\Wedge{2}TM_s)\otimes H^1(M_s,\pi_s^*T^*\linsys{\LB^d})
\]
with the contraction homomorphism 
$H^0(M_s,\Wedge{2}TM_s)\otimes H^1(M_s,\pi_s^*T^*\linsys{\LB^d})\rightarrow H^1(M_s,T_{\pi_s})$.
Let $\kappa_{\tilde{s}}:T_{\tilde{s}}\widetilde{\Sha}\rightarrow H^1(M_s,TM_s)$ be the
Kodaira-Spencer map. We have the commutative diagram.
\[
\xymatrix{
H^{2,0}(M_s)^*\otimes H^1(\linsys{\LB^d},T^*\linsys{\LB^d}) \ar[dr]^-{\psi} 
\ar@/^2pc/[rr]^{1\otimes \pi_s^*}& 
T_{\tilde{s}}\widetilde{\Sha} \ar[r]^-{dP_f} \ar[d]_-{\nu} \ar[dr]^{\kappa_{\tilde{s}}}&
H^{2,0}(M_s)^*\otimes H^{1,1}(M_s)
\\
&
H^1(M_s,T_{\pi_s}) \ar[r]^-{\gamma}&
H^1(M_s,TM_s). \ar[u]_{\cong}
}
\]
Above, 
the right vertical homomorphism is
induced by the sheaf homomorphism $TM_s\rightarrow T^*M_s$, associated to a holomorphic $2$-form, 
and $\gamma$ is induced by the inclusion of the relative
tangent sheaf $T_{\pi_s}$ as a subsheaf of $TM_s$.
The homomorphism $\nu$ is defined as follows.
A tangent vector $\xi$ at a class $\tilde{s}$ of 
$\widetilde{\Sha}$ is represented by a co-cycle of infinitesimal automorphisms - tangent vector fields -
which are vertical, being a limit of translations by local sections of the image of $R^1p_{2_*}\StructureSheaf{\C}$
in $R^1p_{2_*}\StructureSheaf{\C}^*$. So $\xi$ corresponds to 
an element $\nu(\xi)$ in $H^1(M_s,T_{\pi_s})$.

The top right triangle commutes, by Griffiths' identification of the differential of the period map \cite{cggh}.
The middle triangle commutes, by definition of the family $f$. The commutativity of the outer polygon is easily verified.
The top horizontal homomorphism $1\otimes \pi_s^*$ is injective, with image 
equal to the tangent line to the fiber of $q$. Hence, it suffices to prove that 
$\psi$ and $\nu$ have the same image in $H^1(M_s,T_{\pi_s})$. 
The latter statement would follow once we prove that $\nu$ is an isomorphism.

The homomorphism $\nu$ is induced by the pullback 
\[
\pi_s^*H^1(\linsys{\LB^d},R^1p_{2_*}\StructureSheaf{\C})\rightarrow H^1(M_s,\pi_s^*R^1p_{2_*}\StructureSheaf{\C}),
\]
followed by the homomorphism of sheaf cohomologies induced 
by an injective sheaf homomorphism
\[
\tilde{\nu} \ : \ \pi_s^*R^1p_{2_*}\StructureSheaf{\C}\rightarrow T_{\pi_s}.
\]
The domain of $\tilde{\nu}$ is isomorphic to $\pi_s^* T^*\linsys{\LB^d}$, by Lemma 
\ref{lemma-Sha-is-one-dimensional},
and its target is isomorphic to $\pi_s^* T^*\linsys{\LB^d}$, 
by Lemma \ref{lemma-vertical-tangent-sheaf-is-pullback-of-cotangent-bundle}.
Hence, $\tilde{\nu}$ is an isomorphism. It remains to prove that $H^1(M_s,\pi_s^*T^*\linsys{\LB^d})$
is one dimensional. We have the exact sequence
\[
0\rightarrow H^1(\linsys{\LB^d},\pi_{s_*}\pi_s^*T^*\linsys{\LB^d})\rightarrow
H^1(M_s,\pi_s^*T^*\linsys{\LB^d})\rightarrow 
H^0(\linsys{\LB^d},T^*\linsys{\LB^d}\otimes R^1\pi_{s_*}\StructureSheaf{M_s}).
\]
The left hand space is one-dimensional. It remains to prove that the right hand one vanishes.
It suffices to prove that $R^1\pi_{s_*}\StructureSheaf{M_s}$ is isomorphic to $T^*\linsys{\LB^d}$,
since $T^*\linsys{\LB^d}\otimes T^*\linsys{\LB^d}$ does not have any non-zero global sections. 

When $s=0$ and $M_0=M_H(u)$, then $M_0$ is projective and 
$R^1\pi_{0_*}\StructureSheaf{M_0}$ is isomorphic to $T^*\linsys{\LB^d}$,
by \cite[Theorem 1.3]{matsushita-higher-direct-images}.
Let us show that the sheaves $R^1\pi_{s_*}\StructureSheaf{M_s}$ are naturally 
isomorphic to $R^1\pi_{0_*}\StructureSheaf{M_0}$, 
for all $s$ in $\Sha$.
The fibrations $\pi_{s}$ agree, by definition,
over the open sets in a \v{C}ech covering of $\linsys{\LB^d}$, and the gluing transformations 
for the co-cycle representing the class $s$ do not change the induced
sheaf transition functions for the sheaves $R^1\pi_{s_*}\StructureSheaf{M_s}$, as we show next.
The gluing transformations glue locally free sheaves, so it suffices to prove that they agree with those of
$\pi_0$ over a dense open subset of $\linsys{\LB^d}$.
Indeed, if the fiber of $M_H(u)$ over $t\in \linsys{\LB^d}$ is a smooth and projective $\Pic^d(C_t)$, then 
an automorphism of an abelian variety $\Pic^d(C_t)$, acting by translation, acts trivially on the fiber
$H^1(\Pic^d(C_t),\StructureSheaf{\Pic^d(C_t)})$ of $R^1\pi_{*}\StructureSheaf{M_H(u)}$.
\end{proof}

\hide{
\begin{lem}
\label{lemma-surjectivity}
Let $P:\ComplexNumbers\rightarrow \ComplexNumbers$ be a non-constant holomorphic map,
$G\subset (\ComplexNumbers,+)$ a dense subgroup in the analytic topology, and $\mu:G\rightarrow (\ComplexNumbers,+)$
a homomorphism, satisfying
\[
P(g+z)=\mu(g)+P(z),
\]
for all $z\in\ComplexNumbers$ and all $g$ in $G$. Then $P$ is surjective.
\end{lem}

\begin{proof}
Let $z_0\in\ComplexNumbers$ be a point where $dP_{z_0}$ does not vanish. 
Let $U\subset \ComplexNumbers$ be an open neighborhood of  $z_0$, such that $P$ maps $U$ bijectively 
onto $P(U)$. Let $\Sigma$ be a subset of $G$, such that the set
$\{z_0+g \ : \ g\in \Sigma\}$ is a dense subset of $U$. Then the subset 
$\{P(z_0)+\mu(g) \ : \ g\in \Sigma\}$ is dense in $P(U)$.
 We conclude that $\mu(G)$ is dense in  $\ComplexNumbers$.

Choose a rank $2$ subgroup $H$ of $G$, such that 
both $H$ and $\mu(H)$ span $\ComplexNumbers$ over $\RealNumbers$. 
 Then $P$ descends to a holomorphic  map 
 $\overline{P}:\ComplexNumbers/H\rightarrow \ComplexNumbers/\mu(H)$. 
 The domain of $\overline{P}$ is compact, its target is connected and Hausdorff, and $\overline{P}$
 is open.  Hence, $\overline{P}$ is surjective.
\end{proof}
}

%
\subsection{The Tate-Shafarevich line as the base of the universal family}
Let $q:\Omega_{\alpha^\perp}^+\rightarrow\Omega_{Q_\alpha}^+$ be
the morphism given in Equation (\ref{eq-q}).

\begin{thm}
\label{thm-Tate-Shafarevich-orbit}
Assume that the weight $2$ Hodge structure of $S$ is non-special and Assumption
\ref{assumption} holds. Then 
the period map $P_f$ of the family $f$ 
maps $\widetilde{\Sha}$ isomorphically 
onto the fiber of the morphism $q$  through the period of
$M_H(u)$.
\end{thm}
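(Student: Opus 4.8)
The plan is to show that $P_f \colon \widetilde{\Sha} \to \Omega^+_{\alpha^\perp}$ is an injective local homeomorphism whose image is exactly the Tate-Shafarevich line $q^{-1}(q(P_0(M_H(u),\eta_1)))$; since both source and target of $P_f$ composed with $q$ are understood, this suffices. First I would record that $q \circ P_f$ is constant. Indeed, the marking $\eta$ in \eqref{eq-trivialization-eta} was chosen to restrict to $\eta_1$ over $0 \in \widetilde{\Sha}$, and by construction every fiber $M_s$ is a Tate-Shafarevich twist of $\pi \colon M_H(u) \to \linsys{\LB^d}$; the re-gluing is by automorphisms acting by translations on the smooth fibers and inducing the identity on $R^2 p_{2*}\Integers$ at the level of local systems, so the induced variation of Hodge structure stays inside the subspace $\alpha^\perp$ and moreover the image of $H^2$ of the base curve is fixed. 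Concretely, the computation underlying Lemma \ref{lemma-q-is-Q-alpha-invariant}(\ref{lemma-item-lattice-action-on-alpha-perp}) shows the periods move only within a fixed coset of $\ComplexNumbers\alpha$, i.e. inside a single fiber of $q$. Thus $P_f(\widetilde{\Sha})$ is contained in one Tate-Shafarevich line $\ell_0 := q^{-1}(\uell)$, which is a complex line minus a point, hence one-dimensional.

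Next I would invoke Lemma \ref{lemma-the-differential-of-the-period-map-of-f}: $dP_f$ is everywhere injective with image $H^{2,0}(M_s)^*\otimes \pi_s^*H^{1,1}(\linsys{\LB^d})$, which is precisely the tangent space to the fiber of $q$ at $P_f(\tilde s)$. Since $\widetilde{\Sha}$ and $\ell_0$ are both one-dimensional and $dP_f$ is an isomorphism onto the tangent space of $\ell_0$ at each point, $P_f \colon \widetilde{\Sha} \to \ell_0$ is a local biholomorphism; in particular it is an open map, so its image is open in $\ell_0$. It remains to see that the image is all of $\ell_0$ and that $P_f$ is injective. For surjectivity I would use the density/orbit mechanism that recurs in the paper: the kernel $\ker(j) \cong [\Sigma^\perp + NS(S)]/NS(S)$ acts on $\widetilde{\Sha}$ by translation (as in the proof of Proposition \ref{prop-family-over-Sha-is-Kahler}), and $P_f$ is $\ker(j)$-equivariant, where on $\ell_0$ the group $\ker(j)$ acts through the finite-index inclusion into $Q_\alpha$ followed by the translation action $g_{[z]}(a\alpha + t) = (a+(t,z))\alpha + t$ of Lemma \ref{lemma-q-is-Q-alpha-invariant}(\ref{lemma-item-lattice-action-on-alpha-perp}). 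Since $S$ is non-special, Lemmas \ref{lemma-density-in-a-generic-fiber} and \ref{lemma-density-of-Z-in-R-2} give that this $\ker(j)$-orbit is dense in $\ell_0$; the image $P_f(\widetilde{\Sha})$ is a nonempty open $\ker(j)$-invariant subset of $\ell_0$, hence by the elementary Observation in the introduction it equals $\ell_0$.

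Finally, injectivity. The map $P_f \colon \widetilde{\Sha} \to \ell_0$ is a holomorphic covering map of the complex line $\ell_0 \cong \ComplexNumbers$ (an open surjective local biholomorphism between two copies of $\ComplexNumbers$, both simply connected); hence it is a biholomorphism. Alternatively, one checks directly from the explicit co-cycle description $(\{\widetilde{\Sha}\times U_i\},\exp(\tau\tilde\sigma_{ij}))$ in \eqref{eq-function-rescating-sigma-to-tautological-class} that $P_f$ is, after the identification $\widetilde{\Sha}\cong H^{0,2}(\C)$ of Lemma \ref{lemma-Sha-is-one-dimensional}(\ref{lemma-item-Sha-is-one-dimensional}) and $\ell_0 \cong \ComplexNumbers\alpha$ via \eqref{eq-bar-tau-gamma}, an affine-linear isomorphism of one-dimensional complex vector spaces up to translation, so it is globally injective. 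I expect the genuine obstacle to be the first paragraph: pinning down cleanly that the Hodge-theoretic effect of the Tate-Shafarevich re-gluing is exactly translation within one $q$-fiber, with the correct marking normalization, so that $q\circ P_f$ is honestly constant and $\ker(j)$-equivariance holds on the nose — this is where the bookkeeping relating \eqref{eq-trivialization-eta}, Diagram \eqref{diagram-marked-M-H-u}, and Lemma \ref{lemma-q-is-Q-alpha-invariant} must be done carefully. Granting that, the rest is the standard density-plus-openness argument the paper has already set up.
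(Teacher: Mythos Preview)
Your outline has the right architecture --- showing $q\circ P_f$ is constant, then using the \'etale property from Lemma \ref{lemma-the-differential-of-the-period-map-of-f}, then invoking a density-and-openness argument --- but there is a genuine gap in the injectivity step, and the equivariance you need for surjectivity is precisely the content you defer.

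The injectivity argument is not valid as written. A surjective local biholomorphism $\ComplexNumbers\to\ComplexNumbers$ is \emph{not} automatically a covering map; there exist locally univalent entire functions (those with $f'=e^g$ for nonlinear entire $g$) that are neither injective nor proper. So simple connectedness of the target does not force $P_f$ to be a biholomorphism. Your ``alternatively'' --- that $P_f$ is affine-linear from the explicit co-cycle description --- is the right instinct, but you do not carry it out, and it is exactly this that the paper proves.

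The paper's route is genuinely different from your density-plus-covering plan. After reducing to $\Pic(S)$ cyclic (so that $H^0(\A^0)=0$, making certain co-chains unique), the paper constructs for each $\gamma\in c_1(\LB)^\perp$ a \emph{canonical} isomorphism $h_\gamma\colon M_0\to M_{e(\gamma)}$, then builds a monodromy representation $\mu\colon \Gamma\to Mon^2(M_0)$ and proves it is a group homomorphism. By tracking a carefully normalized holomorphic $2$-form $w_x$ across the family, the paper writes $\eta_x(w_x)=\eta_0(w_0)+c_x\alpha$ and shows, via the homomorphism property of $\mu$, that $c\circ e$ is additive on the dense subgroup $e(\Gamma)\subset\widetilde{\Sha}$. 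Continuity then forces $c$ to be $\RealNumbers$-linear, holomorphy forces $\ComplexNumbers$-linearity, and non-constancy (Lemma \ref{lemma-the-differential-of-the-period-map-of-f}) makes it an isomorphism. This affine-linearity argument yields surjectivity and injectivity simultaneously and bypasses the covering-map issue entirely. Your expectation that ``the genuine obstacle'' is the equivariance bookkeeping is correct --- but that bookkeeping (the construction of $h_\gamma$, the normalization of $w_x$, and Claim \ref{claim-mu-is-a-group-homomorphism}) is the actual proof, not a preliminary to it.
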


\begin{proof}
We already know that $P_f$ is non-constant, by Lemma \ref{lemma-the-differential-of-the-period-map-of-f}. 
The statement implies that $P_f$ is an affine linear isomorphism of one-dimensional complex
affine spaces. 
It suffices to prove the statement for a dense subset in moduli, since the condition of being affine linear 
is closed. We may thus assume that $\Pic(S)$ is cyclic generated by $\LB$. Then $H^0(\linsys{\LB^d},\A^0)$
is trivial, by Lemma \ref{lemma-cohomology-of-A-0}. 

Set $\Gamma:=c_1(\LB)^\perp$.
Note that $NS(S)=\Integers c_1(\LB)$ and $\Gamma$ has finite index in 
$H^2(S,\Integers)/NS(S)$. 
Let
\[
e:\Gamma\rightarrow \widetilde{\Sha}
\]
be the composition of the projection $\Gamma\rightarrow H^{0,2}(S)$ with the isomorphisms
$H^{0,2}(S)\cong H^{0,2}(\C)\cong \widetilde{\Sha}$ of Lemma 
\ref{lemma-Sha-is-one-dimensional}. Then $e$ is injective and its image is dense in $\widetilde{\Sha}$, by Lemma 
\ref{lemma-density-in-a-generic-fiber}. 

Given an element $x\in \widetilde{\Sha}$, we get a marked pair $(M_x,\eta_x)$, as above. 
$M_H(u)$ will be denoted  by $M_0$, it being the fiber of $f$ over the origin in $\widetilde{\Sha}$.
We associate next  to an element $\gamma\in \Gamma$ a canonical isomorphism 
\[
h_\gamma: M_0\rightarrow M_{e(\gamma)}.
\]
Let $\tau:\widetilde{\Sha}\rightarrow \ComplexNumbers$ be the function 
given in (\ref{eq-function-rescating-sigma-to-tautological-class}), which was  
used in the construction of the family $f$.
Let $\tilde{\sigma}:=\{\tilde{\sigma}_{ij}\}$ be the co-cycle used in that construction.
Let $a$ be the $1$-co-cycle given by 
$a_{ij}:=\exp(\tau(e(\gamma))\tilde{\sigma}_{ij}))$. Then $M_{e(\gamma)}$
is the Tate-Shafarevich twist of $M_0$ with respect to the co-cycle $a$.
The $1$-co-cycle $a$ is a co-boundary in $Z^1(\{U_i\},\A^0)$, 
by Lemma
\ref{lemma-cohomology-of-A-0} and the definition of $\Gamma$.
Thus, there exists a $0$-co-chain $h:=\{h_i\}$ in $C^0(\{U_i\},\A^0)$, satisfying 
$\delta h=a$. The co-chain $h$ is unique, since $H^0(\A^0)$ is trivial, by our assumption on $S$.
The co-chain $h$ determines the isomorphism $h_\gamma:M_0\rightarrow M_{e(\gamma)}$
(Lemma \ref{lemma-M-s-is-independent-of-choice-of-co-cycle}).

We define next a monodromy representation associated to  the family $f$.
Denote by $h_{\gamma_*}:H^2(M_0,\Integers)\rightarrow H^2(M_{e(\gamma)},\Integers)$
the isomorphism induced by $h_\gamma$.
Let
\[
\mu \ : \ \Gamma \rightarrow Mon^2(M_0)
\]
be given by the composition $\mu_\gamma:=\eta_0^{-1}\circ\eta_{e(\gamma)}\circ h_{\gamma_*}$
of the parallel-transport operator 
$\eta_0^{-1}\circ\eta_{e(\gamma)}$ and the isomorphism $h_{\gamma_*}$.

\begin{claim}
\label{claim-mu-is-a-group-homomorphism}
The map $\mu$ is a group homomorphism.
\end{claim}

\begin{proof}
Let $\gamma_1$, $\gamma_2$ be elements of $\Gamma$ and set $\gamma_3:=\gamma_1+\gamma_2$.
Let the topological space  $B$ be the quotient of $\widetilde{\Sha}$ obtained by identifying
the four points $0$, $e(\gamma_1)$, $e(\gamma_2)$, $e(\gamma_3)$. 
The family $f$ descends to a family $\bar{f}:\overline{\M}\rightarrow B$ by 
identifying the  fiber $M_{e(\gamma_i)}$ with $M_0$ via the isomorphisms
$h_{\gamma_i}$, $1\leq i \leq 3$. Then $\mu_{\gamma_i}$ is the monodromy operator corresponding to any
loop in $B$, which is the image of some continuous path from $0$ to $e(\gamma_i)$ in 
$\widetilde{\Sha}$. Let $\bar{0}\in B$ be the image of $0\in \widetilde{\Sha}$. 
The statement now follows from the fact that the monodromy representation of $\pi_1(B,\bar{0})$ 
in $H^2(M_0,\Integers)$ is a group homomorphism.
\end{proof}

The image of $\widetilde{\Sha}$ via
the period map is contained in the fiber of $q$, since the differential of the morphism
$q\circ P_f$ vanishes, by Lemma 
\ref{lemma-the-differential-of-the-period-map-of-f}.
It follows that the variation of Hodge structures of the local system  $R^2f_*\Integers$
over $\widetilde{\Sha}$ is the pullback of the one over the fiber of $q$ via the period map $P_f$. 
Let $\eta$ be the trivialization of $R^2f_*\Integers$ given in Equation (\ref{eq-trivialization-eta}).
Given a point $x\in\widetilde{\Sha}$, 
set $\alpha_{x}:=\eta_{x}^{-1}(\alpha)$. Then 
$\alpha_{x}=\pi_{x}^*(c_1(\StructureSheaf{\linsys{\LB^d}}(1)))$ and 
the sub-quotient variation of Hodge structures
$\alpha_{x}^\perp/\Integers\alpha_{x}$ is trivial. 

%
The vertical tangent sheaf $T_{\pi_x}$ is naturally isomorphic to $T_{\pi_0}$,
as we saw in the last paragraph of the proof of Lemma 
\ref{lemma-the-differential-of-the-period-map-of-f}. 
The $2$-form $w_x$ induces an isomorphism $\pi_{x_*}T_{\pi_x}\RightArrowOf{w_x} T^*\linsys{\LB^d}$,
by Lemma \ref{lemma-vertical-tangent-sheaf-is-pullback-of-cotangent-bundle}.
We get the composite isomorphism 
$\pi_{0_*}T_{\pi_0}\cong \pi_{x_*}T_{\pi_x}\RightArrowOf{w_x} T^*\linsys{\LB^d}.$
Let $w_x$ be the unique holomorphic $2$-form, for which the composite
isomorphism is equal to $\pi_{0_*}T_{\pi_0}\RightArrowOf{w_0} T^*\linsys{\LB^d}.$
Such a form $w_x$ exists, since the endomorphism algebra of $T^*\linsys{\LB^d}$
is one dimensional.

We show next 
that the class of $w_x$ is the $(2,0)$ part of the flat deformation of the class of $w_0$ in the local system
$R^2f_*\ComplexNumbers$. It suffices to prove the local version of that statement.
Let $x_0$ be a point of $\widetilde{\Sha}$. 
There is a differentiable trivialization of $f:\M\rightarrow \widetilde{\Sha}$, over an open analytic 
neighborhood $U$ of $x_0$, and a $C^\infty$ family of complex structures $J_x$, $x\in U$, 
such that $(M_{x_0},J_x)$ is biholomorphic to $M_x$. Furthermore, 
the complex structures $J_{x_0}$ and $J_x$ restrict to the same complex structure
on each fiber of $\pi_{x_0}$ and $\pi_{x_0}$ is holomorphic with respect to both. 
Both complex structures induce the same complex structure on
$\Hom\left(T_{\pi_{x_0}},
\pi_{x_0}^*T^*_{\RealNumbers}\linsys{\LB^d}\right)$ and the two forms 
$w_{x_0}$ and $w_x$ induce the same section in the complexification of that bundle. 
Hence, the difference $w_{x_0}-w_x$ is a closed $2$-form in 
$\pi_{x_0}^*\Wedge{2}T^*_{\RealNumbers}\linsys{\LB^d}\otimes_\RealNumbers\ComplexNumbers$.
Being closed, the latter $2$-form must be the pull-back of a closed $2$-form $\theta$ 
on $\linsys{\LB^d}$,
since fibers of $\pi_{x_0}$ are connected. 
Now the cohomology class of $\pi_{x_0}^*\theta$ 
is of type $(1,1)$ with respect to all complex structures, since 
$H^{1,1}(\linsys{\LB^d})=H^2(\linsys{\LB^d},\ComplexNumbers)$. 
Hence, the class of $w_x$ is the $(2,0)$ part of 
the class of $w_{x_0}$ with respect to the complex structure $J_x$.

There exists a constant $c_x\in \ComplexNumbers$, such that 
the equality
\[
\eta_x(w_x)=\eta_0(w_0)+c_x\alpha
\]
holds in $\Lambda_\ComplexNumbers$, by the characterization of $\omega_x$ in the above paragraph. 
The function $c:\widetilde{\Sha}\rightarrow\ComplexNumbers$ defined above is
equivalent to the period map $P_f$ and is thus holomorphic and its derivative is no-where vanishing,
by Lemma \ref{lemma-the-differential-of-the-period-map-of-f}.
If $x=e(\gamma)$, 
we get $\eta_0^{-1}\eta_{e(\gamma)}(w_{e(\gamma)})=w_0+c_{e(\gamma)}\alpha_0$,
Now 
$h_\gamma(w_0)=w_{e(\gamma)}$, by definition of $w_x$, $x\in \widetilde{\Sha}$,
and the construction of $h_\gamma$.
We get the equality
\begin{equation}
\label{eq-mu-gamma-is-a-translation}
\mu_\gamma(w_0)=w_0+c_{e(\gamma)}\alpha_0.
\end{equation}

The composition $c\circ e:\Gamma\rightarrow \ComplexNumbers$ is a 
group homomorphism,
\[
c(e(\gamma_1)+e(\gamma_2)) = c(e(\gamma_1))+c(e(\gamma_2)),
\]
by Equation (\ref{eq-mu-gamma-is-a-translation}) and 
Claim \ref{claim-mu-is-a-group-homomorphism}.
The image $e(\Gamma)$ is dense in $\widetilde{\Sha}$ and so 
$e(\Gamma)\times e(\Gamma)$ is dense in $\widetilde{\Sha}\times \widetilde{\Sha}$.
We conclude that $c$ is a group homomorphism, 
$c(x_1+x_2)=c(x_1)+c(x_2)$, for all $(x_1,x_2)\in\widetilde{\Sha}\times \widetilde{\Sha}$.
Continuity of $c$ implies that it is a linear transformation of real vector spaces. Indeed,
given $x_1$, $x_2$ in $\widetilde{\Sha}$, 
$c(ax_1+bx_2)=ac(x_1)+bc(x_2)$, for all $a, b\in \Integers$, hence also for all $a, b \in \RationalNumbers$,
and continuity implies that the equality holds also for all $a, b \in \RealNumbers$. The map $c$ is holomorphic, hence it is 
a linear transformation of one-dimensional complex vector spaces, which is an isomorphism, since $c$ is non-constant. 
This completes the proof of Theorem \ref{thm-Tate-Shafarevich-orbit}.
\end{proof}

Let $X$ be an irreducible holomorphic symplectic manifold of $K3^{[n]}$-type and $\pi:X\rightarrow \PP^n$
a Lagrangian fibration. Set $\alpha:=\pi^*c_1(\StructureSheaf{\PP^n}(1))$. 
Let $d$ be the divisibility of $(\alpha,\bullet)$. Let $(S,\LB)$ be the semi-polarized $K3$ surface associated to $(X,\alpha)$
in Diagram  (\ref{diagram-marked-M-H-u}) and $\chi$ the Euler characteristic of the Mukai vector $u$
in that diagram. Choose a $u$-generic polarization $H$ on $S$.

\begin{thm}
\label{thm-X-is-birational-to-a-Tate-Shafarevich-twist}
Assume that $X$ is non-special and $(S,\LB)$ satisfies Assumption \ref{assumption}. 
Then 
$X$ is bimeromorphic to a Tate-Shafarevich twist of the Lagrangian fibration
$M_H(0,\LB^d,\chi)\rightarrow \nolinebreak \linsys{\LB^d}$.
\end{thm}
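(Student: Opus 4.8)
The plan is to combine Theorem \ref{thm-Tate-Shafarevich-orbit} with Verbitsky's Global Torelli Theorem \ref{thm-Torelli}, with essentially all the analytic content already provided by the former. First I would fix a marking $\eta:H^2(X,\Integers)\rightarrow\Lambda$ with $\iota\circ\eta$ in the canonical $O(\widetilde{\Lambda})$-orbit $\iota_X$ and compatible with the orientation of $\widetilde{\C}_\Lambda$ used to define $\FM^0_\Lambda$, and set $\alpha:=\eta(\pi^*c_1(\StructureSheaf{\PP^n}(1)))$. This class is nef, primitive, isotropic and of Hodge type $(1,1)$, hence it lies on the boundary of the positive cone of $X$, so $(X,\eta)$ is a point of the component $\FM^0_{\alpha^\perp}$. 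By construction, the semi-polarized $K3$ surface $(S,\LB)$ attached to $(X,\alpha)$ in Diagram (\ref{diagram-marked-M-H-u}) has period $\underline{\ell}:=q(P_0(X,\eta))$ in $\Omega^+_{Q_\alpha}$; since a period of $\Omega_{\alpha^\perp}$ is special exactly when its image under $q$ is, the hypothesis that $X$ is non-special forces $\underline{\ell}$, i.e.\ $S$, to be non-special. Together with the standing Assumption \ref{assumption} this places us in the situation of Theorem \ref{thm-Tate-Shafarevich-orbit}, applied to $(S,\LB)$ and to $M_H(u)=M_H(0,\LB^d,\chi)$.

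Next I would invoke Theorem \ref{thm-Tate-Shafarevich-orbit}: the period map $P_f$ of the family $f:\M\rightarrow\widetilde{\Sha}$ is an isomorphism of $\widetilde{\Sha}$ onto the fiber of $q$ through the period of $M_H(u)$, and this fiber is $q^{-1}(\underline{\ell})$ because $P_0(M_H(u),\eta_1)=\tau_\gamma(\underline{\ell})$ with $\tau_\gamma$ a section of $q$. Since $P_0(X,\eta)$ lies in $q^{-1}(\underline{\ell})$, there is a unique $s\in\widetilde{\Sha}$ with $P_f(s)=P_0(X,\eta)$. The fiber $M_s$ of $f$ over $s$ is, by the very construction of $f$, the Tate-Shafarevich twist $\pi_{j(s)}:M_{j(s)}\rightarrow\linsys{\LB^d}$ of the Lagrangian fibration $\pi:M_H(0,\LB^d,\chi)\rightarrow\linsys{\LB^d}$ associated to the class $j(s)\in\Sha^0$; by Proposition \ref{prop-family-over-Sha-is-Kahler} it is K\"{a}hler, hence an irreducible holomorphic symplectic manifold of $K3^{[n]}$-type, and it carries the marking $\eta_s$ obtained by restricting the trivialization $\eta$ of Equation (\ref{eq-trivialization-eta}), with $P_0(M_s,\eta_s)=P_f(s)=P_0(X,\eta)$.

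It then remains to place $(M_s,\eta_s)$ and $(X,\eta)$ in the same connected component of the moduli space of marked pairs. The pair $(M_H(u),\eta_1)$ lies in $\FM^0_{\alpha^\perp}$ by Remark \ref{rem-eta-1-is-compatible-with-the-orientations} and Proposition \ref{prop-same-connected-component}, exactly as in the proof of Theorem \ref{thm-dimensions-of-h-i}; since $\widetilde{\Sha}$ is connected, the family $f$ furnishes a parallel-transport operator $\eta_s^{-1}\circ\eta_1$, so $(M_s,\eta_s)$ lies in $\FM^0_\Lambda$, and since $\eta_s^{-1}(\alpha)$ is the pull-back of the hyperplane class under the Lagrangian fibration $\pi_{j(s)}$, hence nef and on the boundary of the positive cone, $(M_s,\eta_s)$ lies in $\FM^0_{\alpha^\perp}$, which is path-connected by Lemma \ref{lemma-connected}. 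As $(X,\eta)$ and $(M_s,\eta_s)$ are points of $\FM^0_{\alpha^\perp}$ with equal periods, the Global Torelli Theorem \ref{thm-Torelli} shows they are inseparable, so $X$ and $M_s$ are bimeromorphic. Since $M_s$ is a Tate-Shafarevich twist of $M_H(0,\LB^d,\chi)\rightarrow\linsys{\LB^d}$, this is the desired conclusion.

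The decisive input is Theorem \ref{thm-Tate-Shafarevich-orbit}, so the only steps needing care are bookkeeping: the transfer of non-speciality from $X$ to $S$ across the fibration $q$, and the verification that the marking of $X$ coming from $\pi$ and the marking of $M_s$ coming from the family $f$ land in one and the same connected component of marked pairs, for which the orientation- and $\iota$-compatibility of Remark \ref{rem-eta-1-is-compatible-with-the-orientations} together with the connectedness Lemma \ref{lemma-connected} are what is needed. I do not expect a genuine obstacle beyond these verifications.
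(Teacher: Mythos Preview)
Your proposal is correct and follows essentially the same approach as the paper: invoke Theorem \ref{thm-Tate-Shafarevich-orbit} to produce a Tate-Shafarevich twist $M_s$ with the same period as $(X,\eta)$ in the same component $\FM^0_{\alpha^\perp}$, then apply Global Torelli to conclude bimeromorphism. You spell out the bookkeeping (transfer of non-speciality to $S$, component membership of $(M_s,\eta_s)$) more explicitly than the paper, which instead packages the Torelli step by pointing to Step~1 of the proof of Theorem~\ref{thm-main}; the paper also records the extra fact $f^*(\alpha')=\alpha$ via Step~2, but this is not needed for the bare statement.
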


\begin{proof}
Fix a marking $\eta:H^2(X,\Integers)\rightarrow \Lambda$. 
Starting with the period of $(X,\eta)$, 
Theorem \ref{thm-Tate-Shafarevich-orbit} exhibits a marked triple
$(X',\alpha',\eta')$, with  $\eta'(\alpha')=\eta(\alpha)$,
in the same connected component $\FM_{\eta(\alpha)^\perp}^+$ 
as the triple
$(X,\alpha,\eta)$, such that the class $\alpha'$ is semi-ample as well
and the periods $P(X,\eta)$ and $P(X',\eta')$ are equal. 
Furthermore, the Lagrangian fibration $\pi':X'\rightarrow \linsys{\LB^d}$
induced by $\alpha'$ is a Tate-Shafarevich twist of $\pi_0:M_H(0,\LB^d,\chi)\rightarrow \linsys{\LB^d}$.
Step 1 of the proof of Theorem
\ref{thm-main}
yields a bimeromorphic map $f:X\dashedrightarrow X'$,
which is shown in Step 2 of that proof to satisfy $f^*(\alpha')=\alpha$
(see Equation (\ref{eq-alpha-X-is-pullback-of-semiample-class})).
\end{proof}

\begin{proof} (Of Theorem \ref{thm-Tate-Shafarevich-orbit-in-introduction})
The condition that $NS(X)\cap \alpha^\perp$ is cyclic generated by $\alpha$ 
implies that the semi-polarized $K3$ surface $(S,\LB)$, associated to $(X,\alpha)$, 
has a cyclic Picard group generated by $\LB$. Assumption
\ref{assumption} thus holds, by Remark \ref{remark-on-assumption}.
Theorem \ref{thm-Tate-Shafarevich-orbit-in-introduction} thus follows from
Theorem \ref{thm-X-is-birational-to-a-Tate-Shafarevich-twist}.
\end{proof}

\hide{
Denote the transcendental lattice of $S$ by $\Theta(S)$.
Let $\rho:\Theta(S)\rightarrow \widetilde{\Sha}$ be the composition of the inclusion
of $\Theta(S)\subset H^2(S,\Integers)$, the projection $H^2(S,\Integers)\rightarrow H^{0,2}(S)$,
and the isomorphism $H^{0,2}(S)\cong \widetilde{\Sha}$ of 
Lemma \ref{lemma-Sha-is-one-dimensional}.
Let $\tilde{\mu}:\Theta(S)\rightarrow Mon^2(M_H(v),\Integers)$ be the monodromy representation of the family $f$
(??? define ???).
Then $\eta_1^{-1}(\alpha)$ is invariant under the image of $\tilde{\mu}$ and the induced action
on $\eta_1^{-1}(\alpha)^\perp/\Integers\eta_1^{-1}(\alpha)$ is trivial (??? why).
Hence, the image of $\tilde{\mu}$ belongs to the kernel of the
homomorphism $h$ given in (\ref{eq-h}). Thus, the action factors, via a homomorphism 
$\Theta(S)\rightarrow Q_\alpha:=\alpha^\perp/\Integers\alpha$,
through the homomorphism 
$g:Q_\alpha \rightarrow Mon^2(\Lambda,\iota)$, by
Lemma \ref{lemma-q-is-Q-alpha-invariant} part (\ref{lemma-item-ker-h-equals-im-g}).
Let $\mu:\Theta(S)\rightarrow \Aut(q^{-1}(\uell))$
be the homomorphism induced by $\tilde{\mu}$. 
The image of $\mu$ acts on $q^{-1}(\uell)$ by translations, as shown in Equation
(\ref{eq-homomorphism-from-Q-alpha-to-C}). The subgroup
$G:=\rho(\Theta(S))$ is dense in $\widetilde{\Sha}$, since $S$ is non-special. 
Hence, the period map $P_f:\widetilde{\Sha}\rightarrow q^{-1}(\uell)$ is surjective, by
Lemma \ref{lemma-surjectivity}. $P_f$ is \'{e}tale, by Lemma \ref{lemma-the-differential-of-the-period-map-of-f}.
We conclude that $P_f$ is an isomorphism, since $q^{-1}(\uell)$ is simply connected.
}

\hide{
Strategy:
Fix $\tilde{s}\in \widetilde{\Sha}$ and 
let $\alpha_{\tilde{s}}\in H^2(M_{\tilde{s}},\Integers)$ be $\pi_{\tilde{s}}^*c_1(\StructureSheaf{\linsys{\LB^d}}(1))$.
Show that the Hodge structure of $Q_{\tilde{s}}:=\alpha_{\tilde{s}}^\perp/\Integers\alpha_{\tilde{s}}$
is constant, hence Hodge isometric to that of the orthogonal complement $c_1(\LB)^\perp$
to $c_1(\LB)$  in $H^2(S,\Integers)$. 

Consider the short exact sequence 
$0\rightarrow \Integers\rightarrow \StructureSheaf{M_s}\rightarrow \StructureSheaf{M_s}^*\rightarrow 0$
and the associated long exact sequence of sheaves of abelian groups over $\linsys{\LB^d}$
\[
0\rightarrow R^1\pi_{s_*}\Integers\rightarrow 
R^1\pi_{s_*}\StructureSheaf{M_s}\rightarrow R^1\pi_{s_*}\StructureSheaf{M_s}^*
\RightArrowOf{\delta_s} R^2\pi_{s_*}\Integers \rightarrow \cdots
\]
Set $\B_s^0:=\ker(\delta_s)$. Again we get a short exact sequence
\[
0\ \rightarrow R^1\pi_{s_*}\Integers\rightarrow 
R^1\pi_{s_*}\StructureSheaf{M_s}\rightarrow \B^0_s\rightarrow 0.
\]
Denote by $\B_s^\bullet$ the complex $R^1\pi_{s_*}\StructureSheaf{M_s}\rightarrow \B^0_s$
with the morphism from the above short exact sequence. Then 
$\B_s^\bullet$ is quasi-isomorphic to $R^1\pi_{s_*}\Integers$. 
The complex $\B_s^\bullet$ is naturally isomorphic to the complex
$\B_0^\bullet$ associated to
$\pi:\M_H(u)\rightarrow \linsys{\LB^d}$, since the fibrations $\pi_{\tilde{s}}$ agree, by definition,
over the open sets in the \v{C}ech covering of $\linsys{\LB^d}$, and the gluing transformations 
for the co-cycle representing the class $j(\tilde{s})$ do not change the induced
sheaf transition functions for the sheaves $R^1\pi_{s_*}\StructureSheaf{M_s}$ and $R^1\pi_{s_*}\StructureSheaf{M_s}^*$.
For example, if the fiber of $M_H(u)$ over $t\in \linsys{\LB^d}$ is $\Pic^d(C_t)$, then 
an automorphism of an abelian variety $\Pic^d(C_t)$, acting by translation, acts trivially on the fibers
$H^1(\Pic^d(C_t),\StructureSheaf{\Pic^d(C_t)})$ and 
$\Pic[\Pic^d(C_t)]$ of $R^1\pi_{*}\StructureSheaf{M_H(u)}$ and $R^1\pi_{*}\StructureSheaf{M_H(u)}^*$.

Let $F^pH^2(M_s,\Integers)$ be the filtration induced by the Leray spectral sequence 
associated to the morphism $\pi_s:M_s\rightarrow \linsys{\LB^d}$.
Let $E_\infty^{p,q}$ be the graded summands, and set $E_2^{p,q}:=H^p(\linsys{\LB^d},R^q\pi_{s_*}\Integers)$.
We claim (??? prove it ???) that we have the isomorphisms
\[
E^{1,1}_\infty \cong E_2^{1,1} := H^1(\linsys{\LB^d},R^1\pi_{s_*}\Integers) \cong \alpha_s^\perp/\Integers\alpha_s.
\]
Set $Q_s:=\alpha_s^\perp/\Integers\alpha_s$ and endow it with the induced Hodge structure.
$R^1\pi_{s_*}\StructureSheaf{M_s}$ is isomorphic to 
$R^1\pi_*\StructureSheaf{M_H(u)}$, which is isomorphic to $T^*\linsys{\LB^d}$, and
the homomorphism 
\begin{equation}
\label{eq-period-of-Q-s}
H^1_{}(\linsys{\LB^d},\B^\bullet_s) \rightarrow H^1_{}(\linsys{\LB^d},R^1\pi_*\StructureSheaf{M_s})
\end{equation}
coincides with the homomorphism $Q_s\rightarrow Q_s^{0,2}$, which 
determines the period of $Q_s$. The homomorphism (\ref{eq-period-of-Q-s})
is determined by the complex $\B^\bullet_s$, which is constant with respect to $s$.
Hence, the Hodge structure of $Q_s$ is constant.


Next show that the period map of $R^2f_*\Integers$ is non-constant and 
$\Theta(S)$-equivariant. Conclude that it maps $\widetilde{\Sha}$ onto the fiber of $q$.
The tangent bundle to the fiber of $q$ at a point $\ell$ over $\uell$ is 
$T_\ell\PP[\ell\oplus \ComplexNumbers\alpha]$, which is isomorphic to 
$\Hom(\ell,\ComplexNumbers\alpha)$.
Now the lines $\uell:=H^{2,0}(S)$ and $\ell$ are naturally isomorphic, as are
$\ComplexNumbers\alpha$ and $H^{1,1}(\linsys{\LB^d})$. Lemma
\ref{lemma-Sha-is-one-dimensional} part 
(\ref{lemma-item-Sha-is-one-dimensional}) identified 
$T_\ell({q^{-1}(\uell)})$ and $\widetilde{\Sha}=T_{0}\widetilde{\Sha}$.
We claim that this identifies the differential of the period map for the family $f$ when $s=0$ and $M_s=M_H(u)$. 

We work out the identification of the differential of the period map at a general point 
of $\widetilde{\Sha}$.
Set $s:=j(\tilde{s})$.
According to Griffiths' identification, the differential of the period map is the composition of
the Kodaira Spencer map $\kappa_{\tilde{s}}:T_{\tilde{s}}\widetilde{\Sha}\rightarrow H^1(M_s,TM_s)$ with
the natural map
\[
H^1(M_s,TM_s)\rightarrow \Hom(H^{2,0}(M_s),H^{1,1}(M_s)).
\]
We identify $\kappa_{\tilde{s}}$ first.
A tangent vector $\xi$ at a class $\tilde{s}$ of 
$\widetilde{\Sha}$ corresponds to a co-cycle of infinitesimal automorphisms - a tangent vector field -
which is vertical, being a limit of translations by local sections of $\A^0$. So $\xi$ corresponds to 
an element $\kappa_{\tilde{s}}(\xi)$ in the image of 
$H^1(M_s,T_{\pi_s})$ in $H^1(M_s,TM_s)$, where 
\[
T_{\pi_s} := \ker\left[
d\pi_s:TM_s\rightarrow \pi_s^*T\linsys{\LB^d}
\right].
\]
The contraction with holomorphic two-forms yields the sheaf homomorphism
\[
T_{\pi_s}\otimes_\ComplexNumbers H^{2,0}(M_s)\rightarrow \pi^*T^*\linsys{\LB^d}.
\]
Taking first \v{C}ech cohomology, we get
\[
H^1(M_s,T_{\pi_s})\otimes H^{2,0}(M_s)\rightarrow H^1(M_s,\pi_s^*T^*\linsys{\LB^d})
\cong H^1(\linsys{\LB^d},T^*\linsys{\LB^d}). 
\]
The last isomorphism follows from the vanishing of 
$H^0(\linsys{\LB^d},R^1\pi_{s_*}\pi_s^*T^*\linsys{\LB^d})$, since
$R^1\pi_{s_*}\pi_s^*T^*\linsys{\LB^d}$ is isomorphic to $T^*\linsys{\LB^d}\otimes T^*\linsys{\LB^d}$.
}
%

\end{document}